\newtheorem{theorem}{Theorem}[section]
\newtheorem*{theorem*}{Theorem}
\newtheorem*{maintheorem}{Main Theorem}
\newtheorem{lemma}[theorem]{Lemma}
\newtheorem*{morselemma}{Morse Lemma}
\newtheorem{proposition}[theorem]{Proposition}
\newtheorem*{proposition*}{Proposition}
\newtheorem*{brownscriterion}{Brown's Criterion}
\newtheorem{corollary}[theorem]{Corollary}
\newtheorem*{corollary*}{Corollary}
\newtheorem*{conjecture*}{Conjecture}
\theoremstyle{definition}
\newtheorem{definition}[theorem]{Definition}
\newtheorem{remark}[theorem]{Remark}
\newtheorem{observation}[theorem]{Observation}
\newtheorem{example}[theorem]{Example}
\newcommand{\Z}{\mathbb{Z}}
\newcommand{\N}{\mathbb{N}}
\newcommand{\Poset}{\mathcal{P}}
\newcommand{\spraige}{\mathscr{S}}
\newcommand{\match}{\mathcal{M}}
\newcommand{\hatch}{\mathcal{H}}
\newcommand{\matcharc}{\mathcal{MA}}
\newcommand{\hatcharc}{\mathcal{HA}}
\newcommand{\restrarc}{\mathcal{RA}}
\newcommand{\cover}{\mathcal{U}}
\newcommand{\fiberjoin}{\mathscr{J\!V\!F}}
\newcommand{\abs}[1]{\lvert{#1}\rvert}
\DeclareMathOperator{\image}{Im}
\DeclareMathOperator{\Stab}{Stab}
\DeclareMathOperator{\lk}{lk}
\DeclareMathOperator{\st}{st}
\DeclareMathOperator{\dlk}{{\lk}{\downarrow}}
\DeclareMathOperator{\dst}{{\st}{\downarrow}}
\DeclareMathOperator{\Bot}{bot}
\newcommand{\braiges}{\mathscr{B}}
\newcommand{\pbraiges}{\mathscr{P\!B}}
\newcommand{\elbraigecpx}{\mathcal{EB}}
\newcommand{\braigecpx}{\mathcal{B}}
\newcommand{\elpbraigecpx}{\mathcal{EPB}}
\newcommand{\pbraigecpx}{\mathcal{PB}}
\newcommand{\family}{\mathscr{F}}
\newcommand{\arcfam}{\mathscr{A\!F}}
\newcommand{\braigefam}{\mathscr{B\!F}}
\newcommand{\cosets}{\mathcal{U}}
\newcommand{\nerve}{\mathcal{N}}
\DeclareMathOperator{\CC}{CC}
\DeclareMathOperator*{\bigjoin}{\text{\raisebox{-0.2ex}{\LARGE $\ast$}}}
\let\bigast\bigjoin  
\let\Bigast\Bigjoin
\newcommand{\defeq}{\mathrel{\vcentcolon =}}
\DeclareMathOperator{\id}{id}
\DeclareMathOperator{\F}{F}
\newcommand{\Fbr}%
   {F_{\operatorname{br}}}                 
\newcommand{\Vbr}%
   {V_{\operatorname{br}}}                 
\newcommand{\Tbr}%
   {T_{\operatorname{br}}}                 
\newcommand{\VbrH}%
   {\widehat{V}_{\operatorname{br}}}       
\newcommand{\FbrH}%
   {\widehat{F}_{\operatorname{br}}}       
\numberwithin{equation}{section}
\newlength{\stdbskip}
\newlength{\bskipabstract}
\newlength{\bskipmaintext}
\newlength{\deltatextwidth}
\newlength{\deltatextheight}
\begin{document}

\title{The braided Thompson's groups are of type~$\F_\infty$}
\date{\today}
\subjclass[2010]{Primary 20F65;   
                 Secondary 20F36, 
                 57M07,           
                 20F05}           

\keywords{Thompson's group, finiteness properties, braid group, surface, arc
complex, matching complex, higher generation}

\author[K.-U.~Bux]{Kai-Uwe Bux}

\author[M.~G.~Fluch]{Martin G.~Fluch}

\author[M.~Marschler]{Marco Marschler}

\author[S.~Witzel]{Stefan Witzel}

\author[M.~C.~B.~Zaremsky]{Matthew C.~B.~Zaremsky \\~\\ With an Appendix by Matthew C.~B.~Zaremsky}

\begin{abstract}
\setlength{\baselineskip}{\bskipabstract}
We prove that the braided Thompson's groups $\Vbr$ and $\Fbr$ are of type $\F_\infty$, confirming a conjecture by John Meier.
The proof involves showing that matching complexes of arcs on surfaces are highly connected.\\
\noindent In an appendix, Zaremsky uses these connectivity results to exhibit families of subgroups of the pure braid group that are highly generating, in the sense of Abels and Holz.
\end{abstract}

\maketitle
\thispagestyle{empty}


\setlength{\baselineskip}{\bskipmaintext}

\noindent
A group is of \emph{type} $F_\infty$ if it admits a classifying space
whose $n$-skeleton is compact for every $n$. The case $n=2$ shows that
such a group is in particular finitely presented. Prominent examples
of groups of type $\F_\infty$ include Thompson's groups $F$, $T$ and
$V$, and the braid groups $B_n$.  A braided variant of Thompson's
group $V$, which we will denote $\Vbr$, was introduced independently
by Brin and Dehornoy~\cite{brin07,dehornoy06}. This group contains $F$
as a subgroup, along with copies of the braid group $B_n$ for each
$n\in\N$, and was shown to be finitely presented by
Brin~\cite{brin06}.  Brady, Burillo, Cleary and Stein~\cite{brady08}
introduced another braided Thompson's group, which we denote $\Fbr$,
and which contains the pure braid groups~$PB_n$ in a similar way to how
$\Vbr$ contains $B_n$.  They also proved that~$\Fbr$ is finitely
presented.  The notation used in~\cite{brin07,brady08} is~$BV$
and~$BF$.  The relationship between~$V$ and~$\Vbr$ is in many ways
reminiscent of the relationship between a Coxeter group and its
corresponding Artin group. For example, there is a presentation of~$V$
that can be converted to a presentation for~$\Vbr$ by dropping the
relations that the generators are involutions~\cite{brin06}. 

In this paper we prove that the braided Thompson's groups are of type
$\F_\infty$. In the case of~$\Vbr$, this was conjectured by John Meier
already in~$2001$. This question was also discussed in~\cite[Remark~5.1~(1)]{funar08} and
\cite[Remark~3.3]{funar11}.

\begin{maintheorem}\label{thrm:maintheorem}
The braided Thompson's groups $\Vbr$ and $\Fbr$ are of type~$\F_\infty$.
\end{maintheorem}

Our proof is geometric. The starting point is that each braided
Thompson's group acts naturally on an associated poset complex. One
key step is to restrict this action to an invariant cubical subcomplex
that is smaller and therefore easier to understand locally. We call
this cube complex a \emph{Stein space} because a similar space was
first studied by Stein \cite{stein92} for the group $F$.

The descending links arising in the study of the local structure are
modeled by \emph{matching complexes on a surface}, which may be of
independent interest. These complexes are given by a graph together
with a surface containing the vertices of the graph, and consist of
arc systems that yield a matching of the graph. We show these
complexes to be highly connected for certain families of graphs.

\begin{theorem*}[Theorem~\ref{thrm:surface_matching_conn} and Corollary~\ref{cor:surface_line_matching_conn}]
The matching complex on a surface for the complete graph on $n$
vertices is $(\lfloor \frac{n-2}{3} \rfloor-1)$-connected. For the
linear graph on $n$ vertices it is $(\lfloor \frac{n-2}{4}
\rfloor-1)$-connected. 
\end{theorem*}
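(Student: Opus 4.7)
The natural strategy is induction on $n$, treating both graph families simultaneously. For the inductive step I would fix a distinguished vertex $v_0$ of $\Gamma$ and write $\matcharc(\Gamma)$ as the union of $M_0 := \matcharc(\Gamma \setminus \{v_0\})$ (matchings whose arcs avoid $v_0$) and $M_1 := \bigcup_\alpha \st(\alpha)$ (the union of stars of arcs $\alpha$ at $v_0$). The subcomplex $M_0$ is handled directly by the induction hypothesis applied to the smaller graph.

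To control $M_1$ I would use that each star $\st(\alpha)$ is a simplicial cone, hence contractible, and that a $(k{+}1)$-fold intersection $\st(\alpha_0) \cap \cdots \cap \st(\alpha_k)$ is either empty---if the $\alpha_i$ fail to be pairwise disjoint or share a far endpoint---or is naturally identified with $\matcharc$ of $\Gamma$ with $v_0$ and the $k{+}1$ far endpoints of the $\alpha_i$ removed, to which the induction hypothesis applies. The nerve of this cover is the complex of pairwise disjoint arcs at $v_0$ with distinct far endpoints, which should be contractible via a Hatcher-style arc-surgery / flow argument (deformation retract onto the star of a fixed $v_0$-arc). A generalized nerve theorem then gives the connectivity of $M_1$, and a Mayer--Vietoris argument combines this with the induction on $M_0$ to yield the stated bound on $\matcharc(\Gamma)$.

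The main obstacle is the quantitative bookkeeping: extracting exactly the factors $1/3$ and $1/4$ requires the induction to lose only a carefully tracked amount of connectivity per vertex removed. A uniform inductive hypothesis must be formulated over a class of graphs closed under vertex deletion; for $K_n$ all induced subgraphs are complete, but for the linear graph $L_n$ the deletion of an interior vertex produces a disjoint union of two paths. This forces us to work with general matching arc complexes on disjoint unions and to invoke the join identity
\[
\matcharc(\Gamma_1 \sqcup \Gamma_2) \;\cong\; \matcharc(\Gamma_1) * \matcharc(\Gamma_2),
\]
whose connectivities add with an offset of two. The worse constant $1/4$ in the linear case presumably reflects the fact that an interior deletion splits $L_n$ into two pieces whose joint connectivity is optimized only at a symmetric split, so an adversarial choice of $v_0$ near one end of $L_n$ yields the weaker rate.
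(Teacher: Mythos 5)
Your decomposition of $\matcharc(\Gamma)$ into $M_0$ (arcs missing a distinguished vertex) and $M_1$ (stars of arcs at that vertex) is close in spirit to the paper's stratification, but the Mayer--Vietoris step is where the argument genuinely fails, and it fails at exactly the point the paper identifies as the crux. By induction, $M_0=\matcharc(\Gamma-v_0)$ is only as connected as the theorem predicts for the \emph{smaller} graph, and in the critical congruence classes (e.g.\ $n\equiv 2 \bmod 3$ for $K_n$) this is one less than the connectivity you are trying to establish for $\matcharc(\Gamma)$. Gluing a highly connected $M_1$ onto $M_0$ along a highly connected intersection cannot raise the connectivity of the union above that of $M_0$: in the Mayer--Vietoris sequence $\tilde H_k(M_0\cap M_1)\to \tilde H_k(M_0)\oplus \tilde H_k(M_1)\to \tilde H_k(X)\to\tilde H_{k-1}(M_0\cap M_1)$ you still have to show that the image of $\tilde H_k(M_0)$ dies in $\tilde H_k(X)$ in the top critical degree, and no amount of connectivity bookkeeping on the three pieces gives you that. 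This is precisely the ``important point'' flagged in Section~3.3: the sublevel complex is \emph{not} as connected as the target, so one must prove that the inclusion $\matcharc(\Gamma)^{q=0}\hookrightarrow\matcharc(\Gamma)$ induces the \emph{trivial} map on $\pi_m$ in the relevant range. The paper does this by first homotoping a sphere into the sublevel set (Morse Lemma), then making the map simplexwise injective (Lemma~\ref{lem:injectifying}, which itself needs link-connectivity estimates), and then running a Hatcher-type flow that pushes arcs one at a time over the degree-one vertex $w$ into the star of a fixed arc $\beta$, the key estimate being the connectivity of the mutual link $\lk(\alpha)\cap\lk(\alpha')$. None of this appears in your proposal, and it is the hard, novel part of the proof; your ``Hatcher-style flow'' is invoked only to contract the nerve of the cover of $M_1$, which does not address the problem.

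Two secondary points. First, on a surface there are infinitely many isotopy classes of arcs joining $v_0$ to a fixed far endpoint, and two such classes generally have a nonempty mutual link; so your claim that stars of $v_0$-arcs sharing a far endpoint intersect trivially is false, the cover of $M_1$ is infinite, and its nerve is not the complex of ``pairwise disjoint arcs at $v_0$ with distinct far endpoints.'' (Intersections of stars are mutual links on a \emph{cut}, possibly disconnected, surface, so even the identification of nonempty intersections needs the general $(S,P)$ formulation the paper uses.) Second, your explanation of the constant $1/4$ is off: you get to choose the distinguished vertex (the paper takes a degree-one endpoint), and the weaker constant comes from the fact that forming $\lk(\alpha)\cap\lk(\alpha')$ in a subgraph of a linear graph can discard up to four edges, versus the count that produces $1/3$ for $K_n$. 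Note also that the paper's first proof for $K_n$ runs in the opposite direction from yours: it Morse-theoretically descends from the contractible complex $\hatcharc(K_n)$ to $\matcharc(K_n)$ using a defect function, rather than building $\matcharc(K_n)$ up from a subcomplex.
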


The proof for the linear graph is more subtle than that for the
complete graph. It requires new techniques to verify that the
connectivity increases as we build up the complex from a smaller, less
highly connected one. This approach was inspired by discussions with
Andy Putman about a preprint~\cite{putman13} of his. 

In the appendix, Zaremsky uses the connectivity result for matching
complexes on surfaces for linear graphs to produce examples of highly
generating families of subgroups, in the sense of Abels and
Holz~\cite{abels93}, for the pure braid group. 

The Main~Theorem can be viewed as part of a general attempt to
understand how the finiteness properties of a group change when it is
braided. Another instance of this question concerns the \emph{braided
  Houghton groups} $BH_n$. In \cite{degenhardt00} Degenhardt
conjectures that for any~$n$, $BH_n$ is of type $F_{n-1}$ but not of
type $F_n$. He proves this for $n \le 3$, and also proves that~$BH_n$
is of type $\F_2$ for all $n\ge 3$ and of type~$\F_3$ for all $n\ge
4$; see also \cite{funar07}. In the realm of braided Thompson's
groups, Funar and Kapoudjian \cite{funar08, funar11} showed that the
\emph{braided Ptolemy-Thompson groups}~$T^\sharp$ and~$T^*$ are
finitely presented and that $T^*$ is asynchronously combable, and for
that reason of type $\F_3$ and conjectured to be of type~$\F_\infty$. 

\medskip

In Section~\ref{sec:braided_V} we recall the definitions of $\Vbr$ and
$\Fbr$, using the language from \cite{brady08}.  We also introduce
``spraiges'', or ``split-braid-merge diagrams'', along with the
important notion of ``dangling''. The Stein space $X$ is constructed
in Section~\ref{sec:def_stein_space} along with an invariant,
cocompact filtration $(X^{\le n})_n$. In Section~\ref{sec:surfaces}
matching complexes on surfaces are introduced and shown to be highly
connected. These connectivity results are then used in
Section~\ref{sec:desc_link_conn} to show that the filtration~$(X^{\le
  n})_n$ is asymptotically highly connected. Finally we prove the
Main~Theorem in Section~\ref{sec:proof_main_theorem}. In the appendix,
Zaremsky further analyzes matching complexes on surfaces, and related
complexes, to deduce properties of higher generation for pure braid
groups.


\subsection*{Acknowledgments}

We are grateful to Andy Putman for suggesting a new strategy
to handle the complexes $\matcharc(\Gamma)$ in
Section~\ref{sec:surfaces}, and for referring us to his
paper~\cite{putman13}.
We also thank Matt Brin and John Meier for explaining the backstory of
this problem to us, and the anonymous referee for corrections and a helpful comment.
The project was carried out by the research
group C8 of the SFB~701 in Bielefeld, and all five authors are
grateful for the support of the SFB. The fourth and fifth named
authors also gratefully acknowledge support of the SFB~878 in M\"unster.


\section{The braided Thompson's groups}
\label{sec:braided_V}

Thompson's groups $F$ and $V$ have been studied at length, and possess
many unusual and interesting properties.  For example, $F$ is a
torsion-free group of type~$\F_\infty$ with infinite cohomological
dimension, and $V$ is an infinite simple group of type~$\F_\infty$.
An introduction to $F$ and $V$ can be found in~\cite{cannon96}, in
which the \emph{paired tree diagrams} approach to the groups is
discussed.  This is the approach that we will take here as well.  We
will follow the definitions given in~\cite{brady08}, where in addition
the braided Thompson's groups $\Vbr$ and $\Fbr$ are defined in terms
of \emph{braided paired tree diagrams}.  We will also, less formally, 
picture elements of these groups in the language of \emph{strand
diagrams}, as in~\cite{belk07}.

\subsection{The group}\label{sec:braided_V_setup}

We first recall the definition of $V$.  By a \emph{rooted binary tree} we mean
a finite tree such that every vertex has degree $3$, except the
\emph{leaves}, which have degree $1$, and the \emph{root}, which has
degree $2$ (or degree $1$ if the root is also a leaf).  Usually we
draw such trees with the root at the top and the nodes descending from
it, down to the leaves.  A non-leaf node together with the two nodes
directly below it is called a \emph{caret}.  If the leaves of a caret
in $T$ are leaves of $T$, we will call the caret \emph{elementary}.
Note that a rooted binary tree always consists of $(n-1)$ carets and
has $n$ leaves, for some $n\in\N$.

By a \emph{paired tree diagram} we mean a triple $(T_-,\rho,T_+)$
consisting of two rooted binary trees~$T_-$ and~$T_+$ with the same
number of leaves $n$, and a permutation~$\rho\in S_n$.  The leaves
of~$T_-$ are labeled $1,\dots,n$ from left to right, and for each $i$,
the $\rho(i)^{\text{th}}$ leaf of~$T_+$ is labeled $i$.  There is an equivalence
relation on paired tree diagrams given by reductions and expansions.
By a \emph{reduction} we mean the following: Suppose there is an
elementary caret in~$T_-$ with left leaf labeled $i$ and right leaf
labeled~$i+1$, and an elementary caret in~$T_+$ with left leaf labeled~$i$ and right leaf labeled~$i+1$.  Then we can ``reduce'' the diagram
by removing those carets, renumbering the leaves and replacing
$\rho$ with the permutation~$\rho'\in S_{n-1}$ that sends the new leaf
of~$T_-$ to the new leaf of~$T_+$, and otherwise behaves like~$\rho$.
The resulting paired tree diagram~$(T'_-,\rho',T'_+)$ is then said to
be obtained by \emph{reducing}~$(T_-,\rho, T_+)$.  The reverse
operation to reduction is called \emph{expansion}, so $(T_-,\rho,T_+)$
is an expansion of $(T'_-,\rho',T'_+)$.  A paired tree diagram is
called \emph{reduced} if there is no reduction possible.  Thus an
equivalence class of paired tree diagrams consists of all diagrams
having a common reduced representative.  Such reduced representatives
are unique.  See Figure~\ref{fig:reduction_V} for an idea of reduction
of paired tree diagrams.

\begin{figure}[t]
\centering
\begin{tikzpicture}[line width=1pt, scale=0.7]
  \draw
   (2,-2) -- (0,0) -- (-2,-2)
   (-1.5,-1.5) -- (-1,-2)
   (-1,-1) -- (0,-2)
   (1,-2) -- (1.5,-1.5);
  \filldraw 
   (0,0) circle (1.5pt)
   (-1,-1) circle (1.5pt)
   (-1.5,-1.5) circle (1.5pt)
   (-2,-2) circle (1.5pt)
   (1.5,-1.5) circle (1.5pt)
   (2,-2) circle (1.5pt)
   (1,-2) circle (1.5pt)
   (-1,-2) circle (1.5pt)
   (0,-2) circle (1.5pt);
  \node at (-2,-2.5) {$1$};
  \node at (-1,-2.5) {$2$};
  \node at (0,-2.5) {$3$};
  \node at (1,-2.5) {$4$};
  \node at (2,-2.5) {$5$};

  \begin{scope}[xshift=5.5cm, yscale=-1]
   \draw
    (-2,2) -- (0,0) -- (2,2)
    (-0.5,0.5) -- (1,2)    (-0.5,1.5) -- (0,1)
    (-1,2) --(-0.5,1.5) -- (0,2);
   \filldraw
    (0,0) circle (1.5pt)
    (-0.5,0.5) circle (1.5pt)
    (-2,2) circle (1.5pt)
    (0,1) circle (1.5pt)
    (-0.5,1.5) circle (1.5pt)
    (-1,2) circle (1.5pt) 
    (0,2) circle (1.5pt)
    (1,2) circle (1.5pt)
    (2,2) circle (1.5pt);
   \node at (-2,2.5) {$3$};
   \node at (-1,2.5) {$1$};
   \node at (0,2.5) {$2$}; 
   \node at (1,2.5) {$5$};
   \node at (2,2.5) {$4$};
 \end{scope}

  \begin{scope}[xshift=-1cm, yshift=-4cm]
   \draw
   (0,-1.5) -- (1.5,0) -- (3,-1.5)   (0.5,-1) -- (1,-1.5)   (2,-1.5) --
(2.5,-1);
  \filldraw 
   (0,-1.5) circle (1.5pt)
   (1.5,0) circle (1.5pt)
   (3,-1.5) circle (1.5pt)
   (0.5,-1) circle (1.5pt)
   (1,-1.5) circle (1.5pt)
   (2,-1.5) circle (1.5pt)
   (2.5,-1) circle (1.5pt);
   \node at (0,-2) {$1$};
   \node at (1,-2) {$2$};
   \node at (2,-2) {$3$};
   \node at (3,-2) {$4$};
  \end{scope}

  \begin{scope}[xshift=3.5cm, yshift=-5.5cm, yscale=-1]
   \draw
    (0,0) -- (1.5,-1.5) -- (3,0)   (1,-1) -- (2,0)   (1.5,-0.5) -- (1,0);
   \filldraw 
    (0,0) circle (1.5pt)
    (1.5,-1.5) circle (1.5pt)
    (3,0) circle (1.5pt)
    (1,-1) circle (1.5pt)
    (2,0) circle (1.5pt)
    (1.5,-0.5) circle (1.5pt)
    (1,0) circle (1.5pt);
   \node at (0,0.5) {$2$}; 
   \node at (1,0.5) {$1$};
   \node at (2,0.5) {$4$}; 
   \node at (3,0.5) {$3$};
  \end{scope}
\end{tikzpicture}
\caption{Reduction, of the top paired tree diagram to the bottom one.}
\label{fig:reduction_V}
\end{figure}
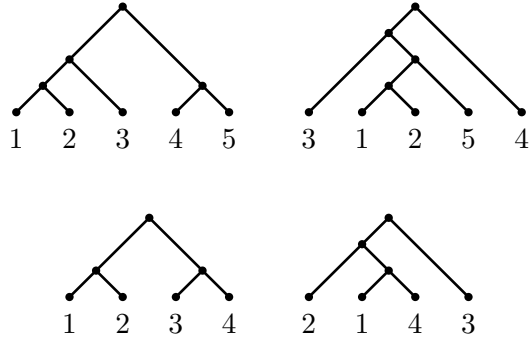

There is a binary operation $\ast$ on the set of equivalence classes
of paired tree diagrams.  Let $T=(T_-,\rho,T_+)$ and $S=(S_-,\xi,S_+)$
be reduced paired tree diagrams.  By applying repeated expansions to
$T$ and $S$ we can find representatives $(T'_-,\rho',T'_+)$ and
$(S'_-,\xi',S'_+)$ of the equivalence classes of $T$ and
$S$, respectively, such that~$T'_+ = S'_-$.  Then we
declare $T\ast S$ to be $(T'_-,\rho'\xi',S'_+)$.  This operation is well defined on the equivalence classes, and is a group operation \cite{brin07,cannon96}.

\begin{definition}
    \label{def:V_F}
    Thompson's group $V$ is the group of equivalence classes of paired
    tree diagrams with the multiplication~$\ast$.  Thompson's group $F$ is
    the subgroup of~$V$ consisting of elements where the permutation
    is the identity.
\end{definition}

To deal with braided Thompson's groups it will become convenient to
have the following picture in mind for paired tree diagrams.  Think of
the tree $T_+$ drawn beneath $T_-$ and upside down, i.e., with the
root at the bottom and the leaves at the top.  The permutation $\rho$
is then indicated by arrows pointing from the leaves of $T_-$ to the
corresponding paired leaf of $T_+$.  See Figure~\ref{fig:element_of_V}
for this visualization of (the unreduced representation of) the
element of $V$ in Figure~\ref{fig:reduction_V}.

\begin{figure}[t]
\centering
\begin{tikzpicture}[line width=1pt]
	\draw
	(2,-2) -- (0,0) -- (-2,-2)
	(-1.5,-1.5) -- (-1,-2)
	(-1,-1) -- (0,-2)
	(1,-2) -- (1.5,-1.5);
	\draw[->](-1.9,-2.1) -> (-1.1,-2.9); \draw[->](-0.9,-2.1) ->
(-0.1,-2.9); \draw[->](-0.1,-2.1) -> (-1.9,-2.9); \draw[->](1.1,-2.1) ->
(1.9,-2.9); \draw[->](1.9,-2.1) -> (1.1,-2.9);
        \filldraw 
	( 0  , 0) circle (1.5pt)
	(-1  ,-1) circle (1.5pt)
	(-1.5,-1.5) circle (1.5pt)
	(-2  ,-2) circle (1.5pt)
	( 1.5,-1.5) circle (1.5pt)
	( 2  ,-2) circle (1.5pt)
	( 1  ,-2) circle (1.5pt)
	(-1  ,-2) circle (1.5pt)
	( 0  ,-2) circle (1.5pt);

	\begin{scope}[yshift=-5cm]
	    \draw
	    (-2,2) -- (0,0) -- (2,2)
	    (-0.5,0.5) -- (1,2)    (-0.5,1.5) -- (0,1)
	    (-1,2) --(-0.5,1.5) -- (0,2);
	    \filldraw
	    ( 0  ,0) circle (1.5pt)
	    (-0.5  ,0.5) circle (1.5pt)
	    (-2  ,2) circle (1.5pt)
	    ( 0  ,1) circle (1.5pt)
	    (-0.5,1.5) circle (1.5pt)
	    (-1  ,2) circle (1.5pt) 
	    ( 0  ,2) circle (1.5pt)
	    ( 1  ,2) circle (1.5pt)
	    ( 2  ,2) circle (1.5pt);
	\end{scope}
\end{tikzpicture}
\caption{An element of $V$.}
\label{fig:element_of_V}
\end{figure}
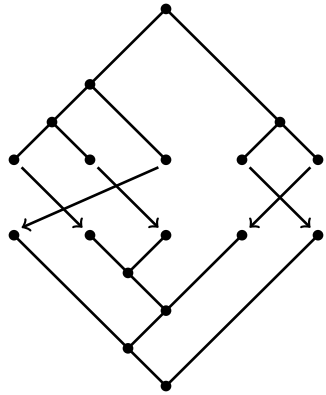

In the braided version $\Vbr$ of $V$, the permutations of leaves are
replaced by braids between the leaves.  Again following~\cite{brady08}
we will first introduce \emph{braided paired tree diagrams} and then
copy the construction of $V$ given above to define $\Vbr$.  Then we
will mention how things change for~$\Fbr$.

\begin{definition}
    A \emph{braided paired tree diagram} is a triple $(T_-,b,T_+)$
    consisting of two rooted binary trees $T_-$ and $T_+$ with the
    same number of leaves $n$ and a braid~$b \in B_n$.
\end{definition}

We draw braided paired tree diagrams with $T_+$ upside down and below
$T_-$, and the strands of the braid connecting leaves.  This is
analogous to the visualization of paired tree diagrams in
Figure~\ref{fig:element_of_V}, and examples of braided paired tree
diagrams can be seen in Figure~\ref{fig:reduction_Vbr}.

As with $V$, we can define an equivalence relation on the set of
braided paired tree diagrams using the notions of reduction and
expansion.  It is easier to first define expansion and then take
reduction as the reverse of expansion.  Let $\rho_b\in S_n$ denote the
permutation corresponding to the braid $b\in B_n$.  Let $(T_-,b,T_+)$
be a braided paired tree diagram.  Label the leaves of $T_-$ from $1$
to $n$, left to right, and for each $i$ label the
$\rho_b(i)^{\text{th}}$ leaf of~$T_+$ by~$i$.  By the~$i^{\text{th}}$
strand of the braid we will always mean the strand that begins at
the~$i^{\text{th}}$ leaf of $T_-$, i.e., we count the strands from the
top.  An \emph{expansion} of $(T_-,b,T_+)$ amounts to the following.
For some $1\le i\le n$, replace~$T_\pm$ with trees $T_\pm'$ obtained from
$T_\pm$ by adding a caret to the leaf labeled~$i$.  Then replace $b$
with a braid $b' \in B_{n+1}$, obtained by ``doubling'' the
$i^{\text{th}}$ strand of $b$.  The triple $(T_-',b',T_+')$ is an
\emph{expansion} of $(T_-,b,T_+)$. As with paired tree diagrams, 
\emph{reduction} is the reverse of expansion, so $(T_-,b,T_+)$ is a reduction of
$(T_-',b',T_+')$.  See
Figure~\ref{fig:reduction_Vbr} for an idea of reduction of braided
paired tree diagrams.

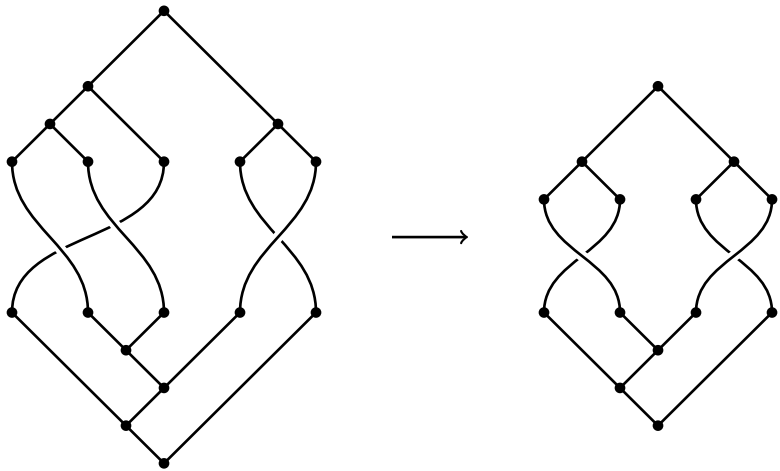
\begin{figure}
\centering
\begin{tikzpicture}[line width=1pt]
  \draw
   (0,-2) -- (2,0) -- (4,-2)   (0.5,-1.5) -- (1,-2)   (1,-1) -- (2,-2)  
(3.5,-1.5) -- (3,-2)
   (2,-2) to [out=-90, in=90] (0,-4)   (3,-2) to [out=-90, in=90] (4,-4);
  \draw[white, line width=4pt]
   (0,-2) to [out=-90, in=90] (1,-4)   (1,-2) to [out=-90, in=90] (2,-4)  
(4,-2) to [out=-90, in=90] (3,-4);
  \draw
   (0,-2) to [out=-90, in=90] (1,-4)   (1,-2) to [out=-90, in=90] (2,-4)  
(4,-2) to [out=-90, in=90] (3,-4);
  \draw
   (0,-4) -- (2,-6) -- (4,-4)   (1.5,-5.5) -- (3,-4)   (1,-4) -- (2,-5)   (2,-4)
-- (1.5,-4.5);

  \filldraw
   (0,-2) circle (1.5pt)   (2,0) circle (1.5pt)   (4,-2) circle (1.5pt)  
(0.5,-1.5) circle (1.5pt)   (1,-2) circle (1.5pt)   (1,-1) circle (1.5pt)  
(2,-2) circle (1.5pt)   (3.5,-1.5) circle (1.5pt)   (3,-2) circle (1.5pt);
  \filldraw
   (0,-4) circle (1.5pt)   (2,-6) circle (1.5pt)   (4,-4) circle (1.5pt)  
(1.5,-5.5) circle (1.5pt)   (3,-4) circle (1.5pt)   (1,-4) circle (1.5pt)  
(2,-5) circle (1.5pt)   (2,-4) circle (1.5pt)   (1.5,-4.5) circle (1.5pt);
  \draw[->]
   (5,-3) -> (6,-3);

 \begin{scope}[xshift=7cm,yshift=-1cm]
  \draw
   (0,-1.5) -- (1.5,0) -- (3,-1.5)   (0.5,-1) -- (1,-1.5)   (2,-1.5) -- (2.5,-1)
   (1,-1.5) to [out=-90, in=90] (0,-3)   (2,-1.5) to [out=-90, in=90] (3,-3);
  \draw[white, line width=4pt]
   (0,-1.5) to [out=-90, in=90] (1,-3)   (3,-1.5) to [out=-90, in=90] (2,-3);
  \draw
   (0,-1.5) to [out=-90, in=90] (1,-3)   (3,-1.5) to [out=-90, in=90] (2,-3);
  \draw
   (0,-3) -- (1.5,-4.5) -- (3,-3)   (1,-4) -- (2,-3)   (1.5,-3.5) -- (1,-3);

  \filldraw
   (0,-1.5) circle (1.5pt)   (1.5,0) circle (1.5pt)   (3,-1.5) circle (1.5pt)  
(0.5,-1) circle (1.5pt)   (1,-1.5) circle (1.5pt)   (2,-1.5) circle (1.5pt)  
(2.5,-1) circle (1.5pt);
  \filldraw
   (0,-3) circle (1.5pt)   (1.5,-4.5) circle (1.5pt)   (3,-3) circle (1.5pt)  
(1,-4) circle (1.5pt)   (2,-3) circle (1.5pt)   (1.5,-3.5) circle (1.5pt)  
(1,-3) circle (1.5pt);
 \end{scope}
\end{tikzpicture}
\caption{Reduction of braided paired tree diagrams.}
\label{fig:reduction_Vbr}
\end{figure}

Two braided paired tree diagrams are equivalent if one is obtained
from the other by a sequence of reductions or expansions.  The
multiplication operation~$\ast$ on the equivalence classes is defined
the same way as for~$V$. It is a well defined group operation \cite{brin07}.

\begin{definition}
    The braided Thompson's group $\Vbr$ is the group of equivalence
    classes of braided paired tree diagrams with the
    multiplication~$\ast$.
\end{definition}

A convenient way to visualize multiplication in~$\Vbr$ is via ``stacking'' braided paired tree diagrams. For $g,h\in \Vbr$, each pictured as a tree-braid-tree as before,~$g\ast h$ is obtained by attaching the top of $h$ to the bottom of~$g$ and then reducing the picture via certain moves. We indicate four of these moves in Figure~\ref{fig:reduction_moves}. A ``merge'' followed immediately by a ``split'', or a split followed immediately by a merge, is equivalent to doing nothing, as seen in the top two pictures. Also, splits and merges interact with braids in ways indicated by the bottom two pictures. We leave it to the reader to further inspect the details of this visualization of multiplication in~$\Vbr$. This is closely related to the \emph{strand diagram} model for Thompson's groups in \cite{belk07}. See also Section~1.2 in~\cite{brin07} and Figure~2 of~\cite{burillo09}.

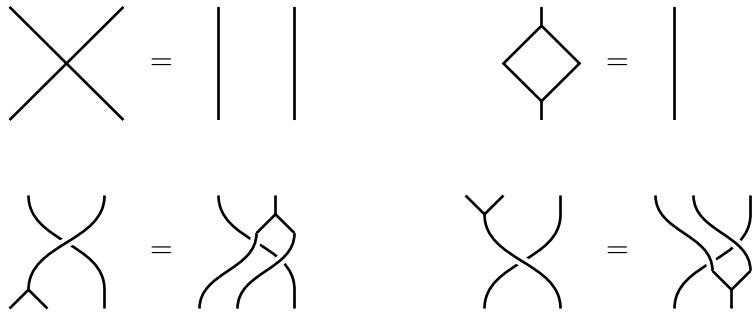
\begin{figure}[b]
\centering
\begin{tikzpicture}[line width=1pt]

  \draw
   (0,0.75) -- (1.5,-0.75)
   (1.5,0.75) -- (0,-0.75);
  \node at (2,0) {$=$};
   
 \begin{scope}[xshift=2.75cm]
  \draw
   (0,0.75) -- (0,-0.75)
   (1,0.75) -- (1,-0.75);
 \end{scope}
 \begin{scope}[xshift=6.5cm]
  \draw
   (0.5,0.75) -- (0.5,0.5) -- (0,0) -- (0.5,-0.5) -- (1,0) -- (0.5,0.5)  
(0.5,-0.5) -- (0.5,-0.75);
  \node at (1.5,0) {$=$};
  \draw
   (2.25,0.75) -- (2.25,-0.75);
 \end{scope}
 \begin{scope}[yshift=-2.5cm]
  \draw
   (0.25,0.75) to [out=-90, in=90, looseness=1] (1.25,-0.5) -- (1.25,-0.75);
  \draw[white, line width=4pt]
   (1.25,0.75) to [out=-90, in=90, looseness=1] (0.25,-0.5);
  \draw
   (1.25,0.75) to [out=-90, in=90, looseness=1] (0.25,-0.5);
  \draw
   (0,-0.75) -- (0.25,-0.5) -- (0.5,-0.75);
  \node at (2,0) {$=$};
 \end{scope}
 
 \begin{scope}[yshift=-2.5cm, xshift=2.5cm]
  \draw
   (0.25,0.75) to [out=-90, in=90, looseness=1] (1.25,-0.5) -- (1.25,-0.75);
  \draw
   (0.75,0.25) -- (1,0.5) -- (1.25,0.25)   (1,0.5) -- (1,0.75);
  \draw[white, line width=4pt]
   (0.75,0.25) to [out=-90, in=90, looseness=1] (0,-0.75)
   (1.25,0.25) to [out=-90, in=90, looseness=1] (0.5,-0.75);
  \draw
   (0.75,0.25) to [out=-90, in=90, looseness=1] (0,-0.75)
   (1.25,0.25) to [out=-90, in=90, looseness=1] (0.5,-0.75);
 \end{scope}
 \begin{scope}[yshift=-2.5cm, xshift=6cm, yscale=-1]
  \draw
   (0.25,0.75) to [out=-90, in=90, looseness=1] (1.25,-0.5) -- (1.25,-0.75);
  \draw[white, line width=4pt]
   (1.25,0.75) to [out=-90, in=90, looseness=1] (0.25,-0.5);
  \draw
   (1.25,0.75) to [out=-90, in=90, looseness=1] (0.25,-0.5);
  \draw
   (0,-0.75) -- (0.25,-0.5) -- (0.5,-0.75);
  \node at (2,0) {$=$};
 \end{scope}
 
 \begin{scope}[yshift=-2.5cm, xshift=8.5cm, yscale=-1]
  \draw
   (0.25,0.75) to [out=-90, in=90, looseness=1] (1.25,-0.5) -- (1.25,-0.75);
  \draw
   (0.75,0.25) -- (1,0.5) -- (1.25,0.25)   (1,0.5) -- (1,0.75);
  \draw[white, line width=4pt]
   (0.75,0.25) to [out=-90, in=90, looseness=1] (0,-0.75)
   (1.25,0.25) to [out=-90, in=90, looseness=1] (0.5,-0.75);
  \draw
   (0.75,0.25) to [out=-90, in=90, looseness=1] (0,-0.75)
   (1.25,0.25) to [out=-90, in=90, looseness=1] (0.5,-0.75);
 \end{scope}
\end{tikzpicture}
\caption{Moves to reduce braided paired tree diagrams after stacking.}
\label{fig:reduction_moves}
\end{figure}

From now on we will just refer to the braided paired tree diagrams as
being the elements of~$\Vbr$, though one should keep in mind that the
elements are actually equivalence classes under the reduction and
expansion operations.

We can also define $\Fbr$ as a subgroup of~$\Vbr$.  Recall that a braid $b$ is called
\emph{pure} if~$\rho_b=\id$.  The elements of~$\Fbr$ are the (equivalence classes of) diagrams where the braid is pure.  The fact that~$\Vbr$ and $\Fbr$ are finitely presented has been known for some time,
and explicit finite presentations are given in~\cite{brin07}
and~\cite{brady08}.  Our current goal is to inspect their higher
finiteness properties, though first we will need some more
language.  We now introduce a class of diagrams that will be used
throughout the rest of this paper.

\subsection{A general class of diagrams}
\label{sec:general_diagrams}

To define the spaces we will use, we need a broader class of diagrams
that generalizes braided paired tree diagrams, namely we will consider
forests instead of trees.  We will also continue to informally use the notion
of strand diagrams.  Here a forest will
always mean a finite linearly ordered union of binary rooted trees.  Given a braided paired
tree diagram $(T_-,b,T_+)$ we call a caret in~$T_-$ a
\emph{split}.  Similarly a \emph{merge} is a caret in $T_+$.
With this terminology, we can call the picture representing the
braided paired tree diagram a \emph{split-braid-merge diagram}, abbreviated
\emph{spraige}.  That is, we first picture one strand splitting up into $n$
strands in a certain way, representing $T_-$.  Then the $n$ strands braid with
each other, representing $b$, and finally according to $T_+$ we merge the
strands back together.  These special kinds of diagrams will also be
called~\emph{$(1,1)$-spraiges}.  More generally:

\begin{definition}[Spraiges]
\label{def:spraiges}
An \emph{$(n,m)$-spraige} is a spraige that begins
on $n$ strands, the \emph{heads}, and ends on $m$ strands, the
\emph{feet}.  As indicated above we can equivalently think of an
$(n,m)$-spraige as a \emph{braided paired forest diagram} $(F_-, b,
F_+)$, where~$F_-$ has $n$ roots,~$F_+$ has $m$ roots and both have
the same number of leaves.  By an~$n$-\emph{spraige} we mean an
$(n,m)$-spraige for some $m$, and by a \emph{spraige} we mean an~$(n,m)$-spraige
for some $n$ and $m$.  Let $\spraige$ denote the set of all spraiges,~$\spraige_{n,m}$ the set of all $(n,m)$-spraiges, and $\spraige_n$ the set of
all $n$-spraiges.
\end{definition}

Note that an $n$-spraige has $n$ heads, but can have any number of
feet.  A function that will be important in what follows is the
``number of feet'' function, which we define as
$f\colon\spraige\to\N$ given by $f(\sigma)=m$ if
$\sigma\in\spraige_{n,m}$ for some $n$.

The pictures in Figure~\ref{fig:spraiges-multiplication} are examples
of spraiges. One can generalize the notion of reduction and expansion of such
diagrams to arbitrary spraiges, and consider equivalence classes under reduction
and expansion.  Each such class has a unique reduced representative, as was the
case for paired tree diagrams and braided paired tree diagrams.  We will just
call an equivalence class of spraiges a spraige, so in particular the elements
of $\Vbr$ are $(1,1)$-spraiges.

\begin{figure}[t]
\centering
\begin{tikzpicture}[line width=1pt,scale=0.7]
  
  \draw
   (0,0) -- (0,-1)   (3,0) -- (3,-1)   (4,0) -- (4,-3.5)
   (1,-1) -- (1.5,0) -- (2,-1);
  \draw
   (3,-1) to [out=270, in=90] (1,-3.5);
  \draw[line width=4pt, white]
   (0,-1) to [out=270, in=90] (2,-3.5);
  \draw
   (0,-1) to [out=270, in=90] (2,-3.5);
  \draw[line width=4pt, white]
   (1,-1) to [out=270, in=90] (0,-3.5);
  \draw
   (1,-1) to [out=270, in=90] (0,-3.5);
  \draw[line width=4pt, white]
   (2,-1) to [out=270, in=90] (3,-3.5);
  \draw
   (2,-1) to [out=270, in=90] (3,-3.5) -- (3.5,-4)
   (2,-3.5) -- (3,-4.5) -- (4,-3.5)
   (0,-3.5) -- (0.5,-4.5) -- (1,-3.5);
  \node at (4.5,-2.25) {$\ast$};
  \filldraw
   (0,0) circle (1.5pt)   (1.5,0) circle (1.5pt)   (3,0) circle (1.5pt)   (4,0)
circle (1.5pt)   (0,-1) circle (1.5pt)   (1,-1) circle (1.5pt)   (2,-1) circle
(1.5pt)   (3,-1) circle (1.5pt)   (4,-1) circle (1.5pt)   (0,-3.5) circle
(1.5pt)   (1,-3.5) circle (1.5pt)   (2,-3.5) circle (1.5pt)   (3,-3.5) circle
(1.5pt)   (4,-3.5) circle (1.5pt)   (3.5,-4) circle (1.5pt)   (0.5,-4.5) circle
(1.5pt)   (3,-4.5) circle (1.5pt);

 \begin{scope}[xshift=5cm]
  \draw
   (0,-1) -- (0.5,0) -- (1,-1)
   (2,-1) -- (2.5,0) -- (3,-1) -- (3,-4.5)
   (1,-1) to [out=270, in=90] (2,-3.5);
  \draw[line width=4pt, white]
   (2,-1) to [out=270, in=90] (0,-3.5);
  \draw
   (2,-1) to [out=270, in=90] (0,-3.5);
  \draw[line width=4pt, white]
   (0,-1) to [out=270, in=90] (1,-3.5);
  \draw
   (0,-1) to [out=270, in=90] (1,-3.5)
   (1,-3.5) -- (1.5,-4.5) -- (2,-3.5)   (0,-3.5) -- (0,-4.5);
  \node at (3.75,-2.25) {$=$};
  \filldraw
   (0.5,0) circle (1.5pt)   (2.5,0) circle (1.5pt)   (0,-1) circle (1.5pt)  
(1,-1) circle (1.5pt)   (2,-1) circle (1.5pt)   (3,-1) circle (1.5pt)   (0,-3.5)
circle (1.5pt)   (1,-3.5) circle (1.5pt)   (2,-3.5) circle (1.5pt)   (3,-3.5)
circle (1.5pt)   (0,-4.5) circle (1.5pt)   (1.5,-4.5) circle (1.5pt)   (3,-4.5)
circle (1.5pt);
 \end{scope}

 \begin{scope}[xshift=9.5cm]
  \draw
   (0.2,0) -- (0.2,-4.5)   (3,0) -- (3,-1)   (4,0) -- (4,-3.5)
   (1,-3.5) -- (1,-1) -- (1.5,0) -- (2,-1)
   (3,-1) to [out=270, in=90] (2,-3.5);
  \draw[line width=4pt, white]
   (2,-1) to [out=270, in=90] (3,-3.5);
  \draw
   (2,-1) to [out=270, in=90] (3,-3.5)
   (1,-3.5) -- (1.5,-4.5) -- (2,-3.5)
   (3,-3.5) -- (3.5,-4.5) -- (4,-3.5);
  \filldraw
   (0.2,0) circle (1.5pt)   (1.5,0) circle (1.5pt)   (3,0) circle (1.5pt)  
(4,0) circle (1.5pt)   (0.2,-1) circle (1.5pt)   (1,-1) circle (1.5pt)   (2,-1)
circle (1.5pt)   (3,-1) circle (1.5pt)   (4,-1) circle (1.5pt)   (0.2,-3.5)
circle (1.5pt)   (1,-3.5) circle (1.5pt)   (2,-3.5) circle (1.5pt)   (3,-3.5)
circle (1.5pt)   (4,-3.5) circle (1.5pt)   (0.2,-4.5) circle (1.5pt)  
(1.5,-4.5) circle (1.5pt)   (3.5,-4.5) circle (1.5pt);
 \end{scope}
\end{tikzpicture}
\caption{Multiplication of spraiges.}
\label{fig:spraiges-multiplication}
\end{figure}
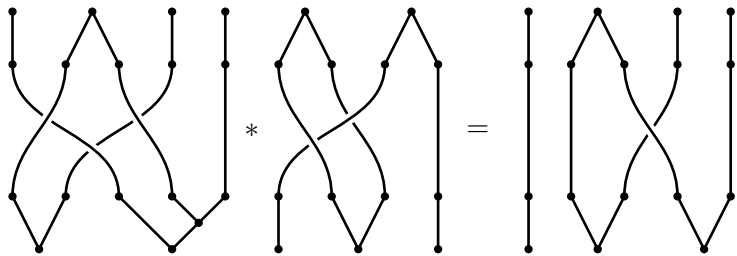

The operation $\ast$ defined for $\Vbr$ can be defined in general
for spraiges, via concatenation of diagrams.  It is only defined for
certain pairs of spraiges, namely we can multiply
$\sigma_1\ast\sigma_2$ for $\sigma_1\in\spraige_{n_1,m_1}$ and
$\sigma_2 \in \spraige_{n_2,m_2}$ if and only if $m_1=n_2$.  In this
case we obtain $\sigma_1 \ast \sigma_2 \in \spraige_{n_1,m_2}$.  The
reader may find it helpful to work out why the multiplication in
Figure~\ref{fig:spraiges-multiplication} holds. As a remark, in the figures, a single-node tree will sometimes be elongated to an edge, for aesthetic reasons.

\begin{remark}
\begin{itemize}
\item[(i)] For every $n\in \N$ there is an identity
$(n,n)$-spraige $1_n$ with respect to $\ast$, namely the spraige
represented by $(1_n,\id,1_n)$.  Here, by abuse of notation, $1_n$
also denotes the trivial forest with $n$ roots.

\item[(ii)] For every $(n,m)$-spraige $(F_-,b,F_+)$ there exists an
inverse $(m,n)$-spraige $(F_+,b^{-1},F_-)$, in the sense that
\begin{align*}
(F_-,b,F_+) \ast (F_+,b^{-1},F_-) & = 1_n \\
\intertext{and}
(F_+,b^{-1},F_-)\ast(F_-,b,F_+) & = 1_m \text{\,.}
\end{align*}

\item[(iii)] $\spraige$ is a groupoid.
\end{itemize}
\end{remark}

\medskip

There are certain forests that will be fundamental to the construction
of the Stein space~$X$ in Section~\ref{sec:def_stein_space}.  For
$n\in \N$ and $J \subseteq \{1,\dots, n\}$, define $F^{(n)}_J$
to be the forest with $n$ roots and $|J|$ carets, with a caret
attached to the $i^{\text{th}}$ root for each $i\in J$.  These forests
are characterized by the property that every caret is elementary, and
we will call any such forest \emph{elementary}.  Define the spraige
$\lambda^{(n)}_J$ to be the $(n,n+|J|)$-spraige $(F^{(n)}_J, \id,
1_{n+|J|})$, and the spraige $\mu^{(n)}_J$ to be its inverse.  If $J =
\{i\}$ write $F^{(n)}_i$, $\lambda^{(n)}_i$ and $\mu^{(n)}_i$ instead.
See Figure~\ref{fig:elem_forests} for an example of an elementary
forest and the corresponding spraiges.

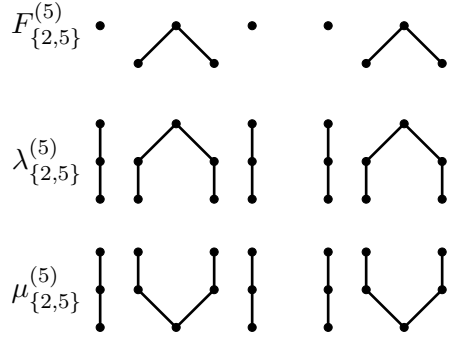
\begin{figure}[t]
\centering
\begin{tikzpicture}
  \draw[line width=1pt]
   (0.5,-0.5) -- (1,0) -- (1.5,-0.5)
   (3.5,-0.5) -- (4,0) -- (4.5,-0.5);
  \filldraw 
   (0,0) circle (1.5pt)
   (1,0) circle (1.5pt)
   (2,0) circle (1.5pt)
   (3,0) circle (1.5pt)
   (4,0) circle (1.5pt)

   (0.5  ,-0.5) circle (1.5pt)
   (1.5  ,-0.5) circle (1.5pt)
   (3.5  ,-0.5) circle (1.5pt)
   (4.5  ,-0.5) circle (1.5pt);
	
  \node at (-0.7,0) {$F_{\{2,5\}}^{(5)}$};

 \begin{scope}[yshift=-1.3cm]
  \draw[line width=1pt]
   (0.5,-1) -- (0.5,-0.5) -- (1,0) -- (1.5,-0.5) -- (1.5,-1)
   (3.5,-1) -- (3.5,-0.5) -- (4,0) -- (4.5,-0.5) -- (4.5,-1)
   (0,0) -- (0,-1)   (2,0) -- (2,-1)   (3,0) -- (3,-1);
  \filldraw 
   (0,0) circle (1.5pt)   (0,-0.5) circle (1.5pt)   (0,-1) circle (1.5pt)
   (1,0) circle (1.5pt)
   (2,0) circle (1.5pt)   (2,-0.5) circle (1.5pt)   (2,-1) circle (1.5pt)
   (3,0) circle (1.5pt)   (3,-0.5) circle (1.5pt)   (3,-1) circle (1.5pt)
   (4,0) circle (1.5pt)

   (0.5,-0.5) circle (1.5pt)   (0.5,-1) circle (1.5pt)
   (1.5,-0.5) circle (1.5pt)   (1.5,-1) circle (1.5pt)
   (3.5,-0.5) circle (1.5pt)   (3.5,-1) circle (1.5pt)
   (4.5,-0.5) circle (1.5pt)   (4.5,-1) circle (1.5pt);
	
  \node at (-0.7,-0.5) {$\lambda_{\{2,5\}}^{(5)}$};
 \end{scope}

 \begin{scope}[yshift=-4cm, yscale=-1]
  \draw[line width=1pt]
   (0.5,-1) -- (0.5,-0.5) -- (1,0) -- (1.5,-0.5) -- (1.5,-1)
   (3.5,-1) -- (3.5,-0.5) -- (4,0) -- (4.5,-0.5) -- (4.5,-1)
   (0,0) -- (0,-1)   (2,0) -- (2,-1)   (3,0) -- (3,-1);
  \filldraw 
   (0,0) circle (1.5pt)   (0,-0.5) circle (1.5pt)   (0,-1) circle (1.5pt)
   (1,0) circle (1.5pt)
   (2,0) circle (1.5pt)   (2,-0.5) circle (1.5pt)   (2,-1) circle (1.5pt)
   (3,0) circle (1.5pt)   (3,-0.5) circle (1.5pt)   (3,-1) circle (1.5pt)
   (4,0) circle (1.5pt)

   (0.5,-0.5) circle (1.5pt)   (0.5,-1) circle (1.5pt)
   (1.5,-0.5) circle (1.5pt)   (1.5,-1) circle (1.5pt)
   (3.5,-0.5) circle (1.5pt)   (3.5,-1) circle (1.5pt)
   (4.5,-0.5) circle (1.5pt)   (4.5,-1) circle (1.5pt);
	
  \node at (-0.7,-0.5) {$\mu_{\{2,5\}}^{(5)}$};
 \end{scope}
\end{tikzpicture}
\caption{The elementary forest $F^{(5)}_{\{2,5\}}$, and the spraiges
$\lambda^{(5)}_{\{2,5\}}$ and $\mu^{(5)}_{\{2,5\}}$.}
\label{fig:elem_forests}
\end{figure}

Fix an $(n,m)$-spraige $\sigma$.  For any forest~$F$ with~$m$ roots
and~$l$ leaves define the \emph{splitting of $\sigma$ by $F$} as
multiplying $\sigma$ by the spraige $(F,\id,1_l)$ from the right.
Similarly a \emph{merging of $\sigma$ by $F'$} is right multiplication
by the spraige $(1_m,\id,F')$, where $F'$ now has $l$ roots and $m$
leaves.  In the case where $F$ (respectively~$F'$) is an elementary
forest, we call this operation \emph{elementary splitting}
(respectively~\emph{elementary merging)}.  See
Figure~\ref{fig:spraige_splitting} for an idea of splitting and
Figure~\ref{fig:spraige_merging} for an idea of elementary merging.

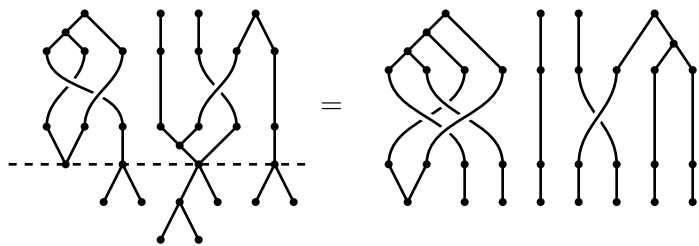
\begin{figure}[t]
\centering
\begin{tikzpicture}[line width=1pt]
  
  \draw
   (0,-0.5) -- (0.5,0) -- (1,-0.5)   (0.25,-0.25) -- (0.5,-0.5)   (1.5,0) --
(1.5,-1.5)   (2,0) -- (2,-0.5)   (2.5,-0.5) -- (2.75,0) -- (3,-0.5) -- (3,-2)
   (0.5,-0.5) to [out=-90, in=90] (0,-1.5)   (2,-0.5) to [out=-90, in=90]
(2.5,-1.5);

  \draw[white, line width=4pt]
   (0,-0.5) to [out=-90, in=90] (1,-1.5)   (2.5,-0.5) to [out=-90, in=90]
(2,-1.5);
  \draw
   (0,-0.5) to [out=-90, in=90] (1,-1.5)   (2.5,-0.5) to [out=-90, in=90]
(2,-1.5);
  \draw[white, line width=4pt]
   (1,-0.5) to [out=-90, in=90] (0.5,-1.5);
  \draw
   (1,-0.5) to [out=-90, in=90] (0.5,-1.5);

  \draw
   (0,-1.5) -- (0.25,-2) -- (0.5,-1.5)   (1,-1.5) -- (1,-2)   (1.5,-1.5) --
(2,-2) -- (2.5,-1.5)   (2,-1.5) -- (1.75,-1.75);

  \draw[dashed]
   (-0.5,-2) -- (3.5,-2);

  \draw
   (0.75,-2.5) -- (1,-2) -- (1.25,-2.5)   (1.75,-2.5) -- (2,-2) -- (2.25,-2.5)  
(2.75,-2.5) -- (3,-2) -- (3.25,-2.5)   (1.5,-3) -- (1.75,-2.5) -- (2,-3);

  \node at (3.75,-1.25) {$=$};

  \filldraw
   (0,-0.5) circle (1pt)   (0.5,-0.5) circle (1pt)   (1,-0.5) circle (1pt)  
(1.5,-0.5) circle (1pt)   (2,-0.5) circle (1pt)   (2.5,-0.5) circle (1pt)  
(3,-0.5) circle (1pt)
   (0.25,-0.25) circle (1pt)   (0.5,0) circle (1pt)   (1.5,0) circle (1pt)  
(2,0) circle (1pt)   (2.75,0) circle (1pt)
   (0,-1.5) circle (1pt)   (0.5,-1.5) circle (1pt)   (1,-1.5) circle (1pt)  
(1.5,-1.5) circle (1pt)   (2,-1.5) circle (1pt)   (2.5,-1.5) circle (1pt)  
(3,-1.5) circle (1pt)
   (0.25,-2) circle (1pt)   (1,-2) circle (1pt)   (1.75,-1.75) circle (1pt)  
(2,-2) circle (1pt)   (3,-2) circle (1pt)
   (0.75,-2.5) circle (1pt)   (1.25,-2.5) circle (1pt)   (1.5,-3) circle (1pt)  
(1.75,-2.5) circle (1pt)   (2,-3) circle (1pt)   (2.25,-2.5) circle (1pt)  
(2.75,-2.5) circle (1pt)   (3.25,-2.5) circle (1pt);

 \begin{scope}[xshift=4.5cm]
  \draw
   (0,-0.75) -- (0.75,0) -- (1.5,-0.75)   (0.5,-0.75) -- (0.25,-0.5)   (1,-0.75)
-- (0.5,-0.25)
   (2,0) -- (2,-2)   (2.5,0) -- (2.5,-0.75)   (3,-0.75) -- (3.5,0) -- (4,-0.75)
-- (4,-2)   (3.5,-2) -- (3.5,-0.75) -- (3.75,-0.4)
   (1,-0.75) to [out=-90, in=90] (0,-2)   (2.5,-0.75) to [out=-90, in=90]
(3,-2);

  \draw[white, line width=4pt]
   (0,-0.75) to [out=-90, in=90] (1,-2)   (0.5,-0.75) to [out=-90, in=90]
(1.5,-2)   (3,-0.75) to [out=-90, in=90] (2.5,-2);
  \draw
   (0,-0.75) to [out=-90, in=90] (1,-2)   (0.5,-0.75) to [out=-90, in=90]
(1.5,-2)   (3,-0.75) to [out=-90, in=90] (2.5,-2);
  \draw[white, line width=4pt]
   (1.5,-0.75) to [out=-90, in=90] (0.5,-2);
  \draw
   (1.5,-0.75) to [out=-90, in=90] (0.5,-2);

  \draw
   (0,-2) -- (0.25,-2.5) -- (0.5,-2)   (1,-2) -- (1,-2.5)   (1.5,-2) --
(1.5,-2.5)   (2,-2) -- (2,-2.5)   (2.5,-2) -- (2.5,-2.5)   (3,-2) -- (3,-2.5)  
(3.5,-2) -- (3.5,-2.5)   (4,-2) -- (4,-2.5);

  \filldraw
   (0,-0.75) circle (1pt)   (0.5,-0.75) circle (1pt)   (1,-0.75) circle (1pt)  
(1.5,-0.75) circle (1pt)   (2,-0.75) circle (1pt)   (2.5,-0.75) circle (1pt)  
(3,-0.75) circle (1pt)   (3.5,-0.75) circle (1pt)   (4,-0.75) circle (1pt)
   (0.25,-0.5) circle (1pt)   (0.5,-0.25) circle (1pt)   (0.75,0) circle (1pt)  
(2,0) circle (1pt)   (2.5,0) circle (1pt)   (3.5,0) circle (1pt)   (3.75,-0.4)
circle (1pt)
   (0,-2) circle (1pt)   (0.5,-2) circle (1pt)   (1,-2) circle (1pt)   (1.5,-2)
circle (1pt)   (2,-2) circle (1pt)   (2.5,-2) circle (1pt)   (3,-2) circle (1pt)
  (3.5,-2) circle (1pt)   (4,-2) circle (1pt)
   (0.25,-2.5) circle (1pt)   (1,-2.5) circle (1pt)   (1.5,-2.5) circle (1pt)  
(2,-2.5) circle (1pt)   (2.5,-2.5) circle (1pt)   (3,-2.5) circle (1pt)  
(3.5,-2.5) circle (1pt)   (4,-2.5) circle (1pt);
 \end{scope}
\end{tikzpicture}
\caption{A splitting of a spraige.}
\label{fig:spraige_splitting}
\end{figure}

\begin{figure}[t]
\centering
\begin{tikzpicture}[line width=1pt]
  
  \draw
   (0,-0.5) -- (0.25,0) -- (0.5,-0.5)   (1,0) -- (1,-0.5)   (1.5,-0.5) --
(1.75,0) -- (2,-0.5)   (2.5,0) -- (2.5,-0.5)
   (0.5,-0.5) to [out=-90, in=90] (0,-1.5)   (1.5,-0.5) to [out=-90, in=90]
(2,-1.5)   (2,-0.5) to [out=-90, in=90] (2.5,-1.5);

  \draw[white, line width=4pt]
   (0,-0.5) to [out=-90, in=90] (1,-1.5)   (2.5,-0.5) to [out=-90, in=90]
(1.5,-1.5);
  \draw
   (0,-0.5) to [out=-90, in=90] (1,-1.5)   (2.5,-0.5) to [out=-90, in=90]
(1.5,-1.5);
  \draw[white, line width=4pt]
   (1,-0.5) to [out=-90, in=90] (0.5,-1.5);
  \draw
   (1,-0.5) to [out=-90, in=90] (0.5,-1.5);

  \draw
   (0,-1.5) -- (0,-2)   (0.5,-1.5) -- (1,-2) -- (1.5,-1.5)   (1,-1.5) --
(1.25,-1.75)   (2,-1.5) -- (2,-2)   (2.5,-1.5) -- (2.5,-2);

  \draw[dashed]
   (-0.5,-2) -- (3,-2);

  \draw
   (0,-2) -- (0.5,-2.5) -- (1,-2)   (2,-2) -- (2.25,-2.5) -- (2.5,-2);

  \node at (3.25,-1.25) {$=$};

  \filldraw
   (0,-0.5) circle (1pt)   (0.25,0) circle (1pt)   (0.5,-0.5) circle (1pt)  
(1,0) circle (1pt)   (1,-0.5) circle (1pt)   (1.5,-0.5) circle (1pt)   (1.75,0)
circle (1pt)   (2,-0.5) circle (1pt)   (2.5,0) circle (1pt)   (2.5,-0.5) circle
(1pt)
   (0,-1.5) circle (1pt)   (0,-2) circle (1pt)   (0.5,-1.5) circle (1pt)  
(1,-2) circle (1pt)   (1.5,-1.5) circle (1pt)   (1,-1.5) circle (1pt)  
(1.25,-1.75) circle (1pt)   (2,-1.5) circle (1pt)   (2,-2) circle (1pt)  
(2.5,-1.5) circle (1pt)   (2.5,-2) circle (1pt)
   (0.5,-2.5) circle (1pt)   (2.25,-2.5) circle (1pt);

 \begin{scope}[xshift=4cm]
  \draw
   (0,-0.5) -- (0.25,0) -- (0.5,-0.5)   (1,0) -- (1,-0.5)   (1.5,-0.5) --
(1.5,0)   (2,0) -- (2,-0.5)
   (0.5,-0.5) to [out=-90, in=90] (0,-1.5)   (1.5,-0.5) to [out=-90, in=90]
(2,-1.5);

  \draw[white, line width=4pt]
   (0,-0.5) to [out=-90, in=90] (1,-1.5)   (2,-0.5) to [out=-90, in=90]
(1.5,-1.5);
  \draw
   (0,-0.5) to [out=-90, in=90] (1,-1.5)   (2,-0.5) to [out=-90, in=90]
(1.5,-1.5);
  \draw[white, line width=4pt]
   (1,-0.5) to [out=-90, in=90] (0.5,-1.5);
  \draw
   (1,-0.5) to [out=-90, in=90] (0.5,-1.5);

  \draw
   (0,-1.5) -- (0.75,-2.25) -- (1,-2)   (0.5,-1.5) -- (1,-2) -- (1.5,-1.5)  
(1,-1.5) -- (1.25,-1.75)   (2,-1.5) -- (2,-2.25);

  \filldraw
   (0,-0.5) circle (1pt)   (0.25,0) circle (1pt)   (0.5,-0.5) circle (1pt)  
(1,0) circle (1pt)   (1,-0.5) circle (1pt)   (1.5,-0.5) circle (1pt)   (1.5,0)
circle (1pt)   (2,-0.5) circle (1pt)   (2,0) circle (1pt)
   (0,-1.5) circle (1pt)   (0.5,-1.5) circle (1pt)   (1,-2) circle (1pt)  
(1.5,-1.5) circle (1pt)   (1,-1.5) circle (1pt)   (1.25,-1.75) circle (1pt)  
(2,-1.5) circle (1pt)   (2,-2.25) circle (1pt)
   (0.75,-2.25) circle (1pt);
 \end{scope}
\end{tikzpicture}
\caption{An elementary merging of a spraige.}
\label{fig:spraige_merging}
\end{figure}
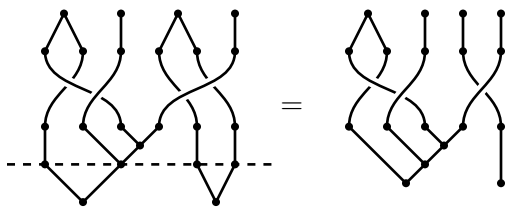

In the special case that $F=F^{(n)}_i$ for $i\in \{1,\dots,n\}$ we can
think of a splitting by~$F$ as simply attaching a single caret to the
$i^{\text{th}}$ foot of a spraige, possibly followed by reductions.
Similarly a merging by $F$ in this case can be thought of as merging
the $i^{\text{th}}$ and $(i+1)^{\text{st}}$ feet together.  In
these cases we will also speak of \emph{adding a split}
(respectively~\emph{merge}) to the spraige.

The following types of spraiges will prove to be particularly
important.  First, a \emph{braige} is defined to be a spraige where
there are no splits, i.e., a spraige of the form~$(1_n,b,F)$ for $b\in
B_n$ and $F$ having $n$ leaves.  Also, when $F$ is elementary we will
call $(1_n,b,F)$ an \emph{elementary braige}.  Analogously to
spraiges, we define \emph{$n$-braiges} and \emph{elementary
$n$-braiges}.

To deal with $\Fbr$, we make the following convention.  Whenever we
want to only consider pure braids, we will attach the modifier
``pure'', e.g., we can talk about pure $n$-spraiges, or elementary
pure $n$-braiges.

\subsection{Dangling spraiges}\label{sec:dangling}

We can identify the braid group $B_n$ with a subgroup of~$\spraige_{n,n}$ via $b\mapsto (1_n,b,1_n)$. In particular for any $n,m\in\N$ there is a right
action of the braid group $B_m$ on $\spraige_{n,m}$, by right multiplication. 
Quotienting out modulo this action encodes the idea that the feet of a spraige may ``dangle''.  See
Figure~\ref{fig:dangle} for an example of the dangling action of $B_2$
on~$\spraige_{4,2}$.

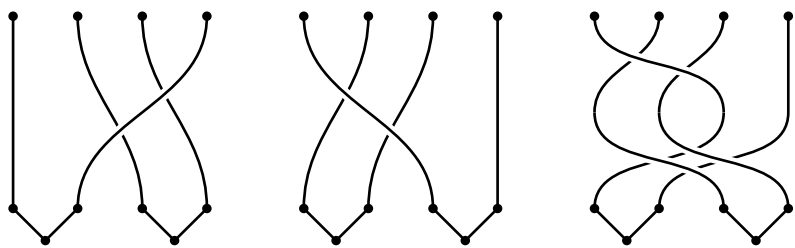
\begin{figure}[t]
\centering
\begin{tikzpicture}[line width=1pt, scale=0.85]
  
  \draw
  (0,-1.5) to [out=270, in=90] (-1,-4.5)
  (1,-1.5) to [out=270, in=90] (0,-4.5)
  (2,-1.5) -- (2,-4.5);
  \draw [line width=4pt, white]
  (-1,-1.5) to [out=270, in=90] (1,-4.5);
  \draw
  (-1,-1.5) to [out=270, in=90] (1,-4.5)
  (-1,-4.5) -- (-0.5,-5) -- (0,-4.5)   (1,-4.5) -- (1.5,-5) -- (2,-4.5);
  \filldraw
  (-1,-1.5) circle (1.5pt)   (0,-1.5) circle (1.5pt)   (1,-1.5) circle (1.5pt)  
(2,-1.5) circle (1.5pt)
  (-1,-4.5) circle (1.5pt)   (0,-4.5) circle (1.5pt)   (1,-4.5) circle (1.5pt)  
(2,-4.5) circle (1.5pt)
  (-0.5,-5) circle (1.5pt)   (1.5,-5) circle (1.5pt);

 \begin{scope}[xshift=-3.5cm,xscale=-1]
  \draw
  (0,-1.5) to [out=270, in=90] (-1,-4.5)
  (1,-1.5) to [out=270, in=90] (0,-4.5)
  (2,-1.5) -- (2,-4.5);
  \draw [line width=4pt, white]
  (-1,-1.5) to [out=270, in=90] (1,-4.5);
  \draw
  (-1,-1.5) to [out=270, in=90] (1,-4.5)
  (-1,-4.5) -- (-0.5,-5) -- (0,-4.5)   (1,-4.5) -- (1.5,-5) -- (2,-4.5);
  \filldraw
  (-1,-1.5) circle (1.5pt)   (0,-1.5) circle (1.5pt)   (1,-1.5) circle (1.5pt)  
(2,-1.5) circle (1.5pt)
  (-1,-4.5) circle (1.5pt)   (0,-4.5) circle (1.5pt)   (1,-4.5) circle (1.5pt)  
(2,-4.5) circle (1.5pt)
  (-0.5,-5) circle (1.5pt)   (1.5,-5) circle (1.5pt);
 \end{scope}

 \begin{scope}[xshift=4.5cm]
  \draw
  (0,-1.5) to [out=270, in=90] (-1,-3)
  (1,-1.5) to [out=270, in=90] (0,-3)
  (2,-1.5) -- (2,-3)
  (1,-3) to [out=270, in=90] (-1,-4.5)
  (2,-3) to [out=270, in=90] (0,-4.5);
  \draw [line width=4pt, white]
  (-1,-1.5) to [out=270, in=90] (1,-3)
  (0,-3) to [out=270, in=90] (2,-4.5)
  (-1,-3) to [out=270, in=90] (1,-4.5);
  \draw
  (-1,-1.5) to [out=270, in=90] (1,-3)
  (0,-3) to [out=270, in=90] (2,-4.5)
  (-1,-3) to [out=270, in=90] (1,-4.5)
  (-1,-4.5) -- (-0.5,-5) -- (0,-4.5)   (1,-4.5) -- (1.5,-5) -- (2,-4.5);
  \filldraw
  (-1,-1.5) circle (1.5pt)   (0,-1.5) circle (1.5pt)   (1,-1.5) circle (1.5pt)  
(2,-1.5) circle (1.5pt)
  (-1,-4.5) circle (1.5pt)   (0,-4.5) circle (1.5pt)   (1,-4.5) circle (1.5pt)  
(2,-4.5) circle (1.5pt)
  (-0.5,-5) circle (1.5pt)   (1.5,-5) circle (1.5pt);
 \end{scope}
\end{tikzpicture}
\caption{Dangling.}
\label{fig:dangle}
\end{figure}

For $\sigma\in\spraige_{n,m}$, denote by $[\sigma]$ the orbit of
$\sigma$ under this action, and call $[\sigma]$ a \emph{dangling
$(n,m)$-spraige}.  We can also refer to a dangling $n$-spraige or
dangling spraige.  The action of $B_m$ preserves the property of being
a braige or elementary braige, so we can also refer to dangling
braiges and dangling elementary braiges.

Let $\Poset$ denote the set of all dangling spraiges, with
$\Poset_{n,m}$ and $\Poset_n$ defined in the obvious way.  When $m=1$,
$B_m$ is trivial, so we will identify $\Poset_{n,1}$ with~$\spraige_{n,1}$ for each~$n$.  In particular we identify~$\Poset_{1,1}$ with $\Vbr$.  Note that if $\sigma\in\spraige_{n,m}$ and~$\tau_1, \tau_2
\in \spraige_{m,\ell}$ with $[\sigma \ast \tau_1] =
[\sigma \ast \tau_2]$, then $[\tau_1]=[\tau_2]$. We will refer to this fact as
\emph{left cancellation}.

There is also a poset structure on $\Poset$.  For $x,y\in\Poset$, with
$x=[\sigma_x]$, say that~$x\le y$ if there exists a forest~$F$
with~$m$ leaves such that~$y=[\sigma_x \ast (F,\id,1_m)]$.  In other
words,~$x\le y$ if~$y$ is obtained from~$x$ via splitting.  It is easy
to see that this is a partial ordering.  Also, if~$x\in\Poset_n$
and~$y\in\Poset$ with~$x\le y$ or~$y\le x$, then~$y\in\Poset_n$.
In other words, two elements are comparable only if they have the same number of heads.  Also define a
relation~$\preceq$ on~$\Poset$ as follows.  If~$x = [\sigma_x] \in
\Poset$ and $y\in\Poset$ such that~$y = [\sigma_x \ast
\lambda^{(n)}_J]$ for some $n\in\N$
and~$J\subseteq\{1,\dots,n\}$, write~$x\preceq y$.  That is,~$x\preceq
y$ if $y$ is obtained from~$x$ via elementary splitting, and this is a
well defined relation with respect to dangling.  If~$x\preceq y$
and~$x\neq y$ then write $x\prec y$.  Note that~$\preceq$ and~$\prec$
are not transitive, though it is true that if~$x\preceq z$ and~$x\le
y\le z$ then $x\preceq y$ and~$y\preceq z$.  This is all somewhat similar to the corresponding
situation for~$F$ and~$V$ discussed for example in Section~4
of~\cite{brown92}.

We remark that a totally analogous construction yields the notion of a
\emph{dangling pure spraige}, where the dangling is now via the action
of the pure braid group.  We also have dangling pure braiges and
dangling elementary pure braiges.  All of the essential results above
still hold.


\section{The Stein space}
\label{sec:def_stein_space}

In this section we construct a space $X$ on which $\Vbr$ acts and which
we call the \emph{Stein space} for~$\Vbr$.  This construction can also
be reproduced using pure braids to get a space $X(\Fbr)$ on which~$\Fbr$ acts; we will say more about this at the end of the section.
Similar spaces, which could be termed the Stein spaces for~$F$
and~$V$, were constructed and discussed
in~\cite{brown92,brown06,stein92}.  Also, a Stein space was
used in~\cite{fluch13} to show that the higher dimensional versions
$sV$ of~$V$ are of type~$\F_\infty$.  In the course of defining our
space~$X$, it should be clear how the Stein spaces for~$F$ and~$V$
would be described with our model.  This construction was given in
some generality in~\cite{stein92}, and was further generalized
in~\cite{farley03} to get finiteness properties (among other things)
for a class of groups called diagram groups, of which~$F$ is an
example.

Our starting point is the poset $\Poset_1$ of dangling $1$-spraiges,
i.e., dangling spraiges with a single head.  As with any poset, we
have the following terminology.  If $x\le z$ and $y\le z$ call $z$ an
\emph{upper bound} of $x$ and $y$.  The minimal elements of the set of
upper bounds of $x$ and $y$ are called \emph{minimal upper bounds}.
If $x$ and $y$ have a unique minimal upper bound $z$ call $z$ the
\emph{least upper bound} of $x$ and $y$.  Similarly define \emph{lower
bounds}, \emph{maximal lower bounds} and \emph{greatest lower bounds}.

We remark that while we construct $X$ starting with $\Poset_1$, the
following results remain essentially unchanged if we start instead
with $\Poset_n$ for some other $n$.  Since we want $\Vbr$ to act
on~$X$ though,~$\Poset_1$ is the right place to start.

\begin{proposition}
\label{prop:sups}
Let $x,y\in\Poset_1$.  Then $x$ and $y$ have a least upper bound.
Also, if~$x$ and~$y$ have a lower bound then they have a greatest
lower bound.
\end{proposition}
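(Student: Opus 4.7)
The plan is to mirror the template used by Stein~\cite{stein92} and Brown~\cite{brown92} for Thompson's groups $F$ and $V$: reduce each assertion to a standard combinatorial fact about rooted binary trees under expansion, and handle the braid data via the dangling action at the feet.

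For least upper bounds, I choose representatives of $x$ and $y$ and extract the head trees $T_x$ and $T_y$ of those representatives. Any two rooted binary trees admit a unique minimal common expansion $T_x \vee T_y$, and I will construct splittings of $x$ and of $y$ whose reduced forms each have head tree equal to $T_x \vee T_y$ (splittings can alter the head tree after reduction because carets in the feet forest $F_x$ can cancel with matching carets in the splitting forest, freeing up additional ``head'' structure). The key subtlety is that these two splittings must agree not just in combinatorial shape but as dangling spraiges: the braid words obtained by lifting $b_x$ and $b_y$ through the two splitting forests will in general differ in the ambient braid group, but the difference lies in the image of $B_m$ lifted through the splitting forest, and is therefore absorbed by the dangling $B_m$-action at the feet. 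This common splitting is the candidate LUB, and minimality follows because any upper bound of $x$ and $y$ has head tree that is a common expansion of $T_x$ and $T_y$, hence an expansion of $T_x \vee T_y$, so the candidate can be split further to reach it.

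For the greatest lower bound I use a finiteness argument. First, the set of lower bounds of a single $x \in \Poset_1$ is finite: any $w \le x$ corresponds to a contraction of the head tree of $x$ (obtained by deleting certain carets), with the braid and feet data on $w$ then being forced modulo dangling by $x$; since the head tree of $x$ is finite, it has only finitely many contractions. Second, the set of common lower bounds of $x$ and $y$ is closed under the LUB operation from the first part: if $w_1, w_2$ are common lower bounds then $x$ and $y$ are themselves upper bounds of $\{w_1, w_2\}$, so by minimality $w_1 \vee w_2 \le x$ and $w_1 \vee w_2 \le y$. A nonempty finite sub-poset closed under pairwise LUB has a maximum, namely the LUB of all of its elements, which is the required greatest lower bound.

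The main technical obstacle is the LUB step: verifying that two splittings originating from $x$ and from $y$ produce the \emph{same} dangling spraige, rather than merely spraiges with the same underlying tree/forest shape. The identification invokes left cancellation from Section~\ref{sec:dangling} together with the right $B_m$-action at the feet, to equate the two lifted braid words modulo dangling. Once this identification is established, the remainder of the proof consists of routine poset bookkeeping.
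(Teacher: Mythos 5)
Both halves of your argument contain steps that fail, even though each half also contains the paper's key idea. For the greatest lower bound, the finiteness claim is false: a vertex of $\Poset_1$ has infinitely many lower bounds, because the braid and feet data of a lower bound are \emph{not} forced by the head tree. Concretely, let $T$ be the single caret and $x=[(T,\id,1_2)]$. For every $d\in B_2$ we have $(T,d,T)\ast(T,\id,1_2)=(T,d,1_2)$ and $[(T,d,1_2)]=[(T,\id,1_2)]=x$ by dangling, so each $[(T,d,T)]$ is a lower bound of $x$; for $d\neq\id$ these are reduced, pairwise distinct elements of $\Vbr$, so there are infinitely many. (The same phenomenon is why the descending links $\elbraigecpx_n$ in Section~2 are infinite complexes.) Your second observation --- that the set of common lower bounds is closed under join --- is correct and is exactly the paper's mechanism, but it must be combined not with finiteness of the set of lower bounds but with the existence of \emph{maximal} lower bounds, which holds because $f$ strictly increases along chains of lower bounds and is bounded by $f(x)$; then any two maximal lower bounds $z,w$ satisfy $z=z\vee w=w$.

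For the least upper bound, the candidate is not determined by the requirement that its head tree be $T_x\vee T_y$, and the minimality step is a non sequitur. Take $T$ with four leaves, $x=[(T,\id,F^{(3)}_{\{1\}})]$ and $y=[(T,\id,F^{(3)}_{\{3\}})]$: both already have head tree $T=T_x\vee T_y$, they are incomparable, and their join is $[(T,\id,1_4)]$, so ``a splitting whose reduced form has head tree $T_x\vee T_y$'' does not single out the join. More seriously, knowing that an upper bound $z$ has head tree expanding that of the candidate does not give ``candidate $\le z$'': comparability in $\Poset_1$ also constrains the feet forest and the braid (the two elements above have equal head trees yet are incomparable), so minimality does not follow from head trees alone. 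The paper avoids constructing the join altogether: it produces \emph{some} upper bound via $x\le[(T_x,\id,1_n)]$, and then shows that any two \emph{minimal} upper bounds coincide by writing both as $[\sigma_x\ast(H_-,\id,1_p)]$ and $[\sigma_x\ast(I_-,\id,1_q)]$ with $[\sigma_x\ast(H_-,d,H_+)]=y=[\sigma_x\ast(I_-,e,I_+)]$, cancelling $\sigma_x$ on the left, and invoking uniqueness of reduced representatives to conclude $H_-=I_-$. That is precisely where the braid/dangling bookkeeping you flag as the ``main technical obstacle'' is discharged, and your proposal does not supply a substitute for it.
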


\begin{proof}
We first claim that $x$ and $y$ have an upper bound.  Represent $x$ by
the spraige~$\sigma_x=(T,b,F)$, where $T$ is a tree with $n$ leaves,
$b\in B_n$ and $F$ is a forest with~$k$ roots and~$n$ leaves, so $x$
is a dangling $(1,k)$-spraige.  Represent $y$ by~$\sigma_y=(U,c,G)$,
where $U$ is a tree with~$m$ leaves, $c\in B_m$, and $G$ is a forest
with~$\ell$ roots and~$m$ leaves, so $y$ is a dangling
$(1,\ell)$-spraige.  Now, $[\sigma_x\ast(F,\id,1_n)] = [(T,\id,1_n)]$,
so~$x\le[(T,\id,1_n)]$.  Similarly~$y\le[(U,\id,1_m)]$.  Since $T$ and
$U$ are both trees,~$[(T,\id,1_n)]$
and~$[(U,\id,1_m)]$ have an upper bound, and hence so do~$x$ and~$y$.

We now claim that there is even a least upper bound.  Again take
$\sigma_x=(T,b,F)$ and $\sigma_y=(U,c,G)$, and suppose~$z$ and~$w$ are
both minimal upper bounds of $x=[\sigma_x]$ and~$y=[\sigma_y]$.  Then
there is a $(k,\ell)$-spraige $(H_-,d,H_+)$ such that
$[\sigma_x\ast(H_-,\id,1_p)]=z$ and~$[\sigma_x\ast(H_-,d,H_+)]=y$, and
there is a $(k,\ell)$-spraige~$(I_-,e,I_+)$ such that
$[\sigma_x\ast(I_-,\id,1_q)]=w$ and $[\sigma_x\ast(I_-,e,I_+)]=y$.
Here $H_-$ has $p$ leaves and $I_-$ has $q$ leaves.  In particular
$[\sigma_x\ast(H_-,d,H_+)]=[\sigma_x\ast(I_-,e,I_+)]$, which by left
cancellation tells us that $[(H_-,d,H_+)]=[(I_-,e,I_+)]$. Moreover,
since~$z$ and~$w$ are minimal upper bounds of~$x$ and~$y$, the
spraiges~$(H_-,d,H_+)$ and~$(I_-,e,I_+)$ are reduced.  By uniqueness
of reduced representatives, we must have in particular that~$H_-=I_-$,
and so~$z=w$.

Finally suppose $x$ and $y$ have maximal lower bounds $z$ and $w$.  We
claim that~$w=z$.  Of course $x$ and $y$ are upper bounds for~$z$
and~$w$, so if $v$ is the least upper bound of~$z$ and~$w$ then~$v$ is
a lower bound of~$x$ and~$y$.  But then since~$z$ and~$w$ are maximal
lower bounds we conclude that~$z = v = w$.
\end{proof}

Now consider the geometric realization $|\Poset_1|$, i.e., the
simplicial complex with a~$k$-simplex for every chain $x_0<\cdots<x_k$
in $\Poset_1$.  We will refer to $x_k$ as the \emph{top} of the
simplex and $x_0$ as the \emph{bottom}.  Call such a simplex
\emph{elementary} if~$x_0\preceq x_k$.

\begin{definition}[Stein space]\label{def:stein_space}
Define the \emph{Stein space} $X$ for $\Vbr$ to be the subcomplex
of~$|\Poset_1|$ consisting of all elementary simplices. 
\end{definition}

Since faces of elementary simplices are elementary, this is a
subcomplex.  In fact there is a coarser cell decomposition of $X$, as
a cubical complex, which we describe as follows.  For $x\le y$ define
the closed interval $[x,y]\defeq \{z\mid x\le z\le y\}$.  Similarly
define the open and half-open intervals $(x,y)$, $(x,y]$ and $[x,y)$.
Note that if $x\preceq y$ then the closed interval $[x,y]$ is a
Boolean lattice, and so the simplices in its geometric realization
piece together into a cube.  The \emph{top} of the cube is $y$ and the
\emph{bottom} is~$x$.  Every elementary simplex is contained in such a
cube, and the face of any cube is clearly another cube.  Also, the
intersection of cubes is either empty or is itself a cube; this is
clear since if $[x,y]\cap[z,w]\neq\emptyset$ then $y$ and $w$ have a
lower bound, and we get that $[x,y]\cap[z,w]=[\sup(x,z),\inf(y,w)]$.
This means that~$X$ has the structure of a cubical complex, in
the sense of~\cite[p.~112, Definition~7.32]{bridson99}.  This is all
very similar to the construction of the Stein spaces for~$F$ and~$V$
given in~\cite{brown92,stein92}.

Recall that $f(x)$ is the number of feet of a spraige $x$.

\begin{lemma}
\label{lem:cube_lemma}
For $x<y$ with $x\not\prec y$, $|(x,y)|$ is contractible.
\end{lemma}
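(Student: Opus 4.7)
The plan is to exhibit a specific ``maximum elementary splitting'' $\hat z \in (x,y)$, show that the subinterval $(x,\hat z]$ is a cone (and hence contractible), and then deformation-retract $|(x,y)|$ onto $|(x,\hat z]|$ via a monotone poset map. Pick a representative $\sigma_x$ of $x$ with $k$ feet and write $y = [\sigma_x \ast (F,\id,1_m)]$ for a forest $F$ with $k$ roots. Let $J \subseteq \{1,\dots,k\}$ be the indices of those roots of $F$ that carry a caret, and set $\hat z \defeq [\sigma_x \ast \lambda^{(k)}_J]$. Since $x<y$, $J$ is nonempty, so $x\prec \hat z$; since $x\not\prec y$, the forest $F$ strictly extends $F^{(k)}_J$, so $\hat z < y$. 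Thus $\hat z \in (x,y)$, and $(x,\hat z]$ consists exactly of the dangling spraiges $[\sigma_x \ast \lambda^{(k)}_{J'}]$ with $\emptyset \neq J' \subseteq J$, which form a punctured Boolean lattice with unique maximum $\hat z$; its realization is a cone on $\hat z$ and is contractible.

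For the retraction, I would apply the same construction to each $z \in (x,y)$: write $z = [\sigma_x \ast (F_z,\id,1)]$ and let $J_z \subseteq \{1,\dots,k\}$ be the indices of the root-attached carets of $F_z$. Since $x<z\le y$, one has $\emptyset \neq J_z \subseteq J$, so $r(z) \defeq [\sigma_x \ast \lambda^{(k)}_{J_z}] \in (x,\hat z]$. The three verifications $r(z)\le z$, $r$ order-preserving, and $r|_{(x,\hat z]} = \id$ are immediate from the description in terms of the sets $J_z$. The standard poset fact that a monotone self-map $f$ of a poset $P$ with $f(p)\le p$ for all $p$ is homotopic to the identity on $|P|$ then shows that $|(x,\hat z]|$ is a deformation retract of $|(x,y)|$, and contractibility follows from the previous paragraph.

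The main obstacle I anticipate is bookkeeping under the dangling equivalence: one must check that, once a representative $\sigma_x$ of $x$ has been fixed, the underlying splitting forests $F$ and $F_z$ (and in particular their root-level caret indices $J$ and $J_z$) depend only on the dangling classes of $y$ and $z$. This should follow from the fact that the right braid action on feet only shuffles strands emerging below the splitting forest and therefore cannot move a caret off a root; left cancellation then pins down the forest. Once this is in place, the rest of the argument is formal and combinatorial.
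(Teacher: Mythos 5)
Your proof is correct and is essentially the paper's argument made explicit: your $r(z)=[\sigma_x\ast\lambda^{(k)}_{J_z}]$ is exactly the paper's ``largest element $z_0$ of $[x,z]$ with $x\preceq z_0$,'' your $\hat z$ is its $y_0$, and the zigzag $z\ge r(z)\le \hat z$ is the conical contractibility criterion the paper cites from Quillen. The well-definedness issue you flag is settled just as you suggest, by left cancellation together with uniqueness of reduced representatives.
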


\begin{proof}
The proof is very similar to the proof of the lemma in Section~4
of~\cite{brown92}.  For any $z\in(x,y]$ let $z_0$ be the largest
element of $[x,z]$ such that $x\preceq z_0$.  By hypothesis
$z_0\in[x,y)$, and by the definition of $\preceq$ it is clear that
$z_0\in(x,y]$, so in fact $z_0\in(x,y)$.  Also, $z_0\le y_0$ for any
$z\in(x,y)$.  The inequalities $z\ge z_0\le y_0$ then imply that
$|(x,y)|$ is contractible, by Section~1.5 of~\cite{quillen78}.
\end{proof}

\begin{corollary}
\label{cor:stein_space_cible}
$X$ is contractible.
\end{corollary}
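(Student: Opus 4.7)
The plan is to show that $|\Poset_1|$ is contractible and that the inclusion $X \hookrightarrow |\Poset_1|$ is a homotopy equivalence.

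For contractibility of $|\Poset_1|$: the poset $\Poset_1$ has a minimum element, namely the class $\bot = [1_1]$ of the trivial $1$-spraige. Every dangling $1$-spraige arises from $\bot$ by splitting (the forest of $x$ itself witnesses $\bot \le x$), so $\bot \le x$ for all $x \in \Poset_1$, making $\bot$ a cone point of the order complex. Hence $|\Poset_1|$ is contractible.

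For the homotopy equivalence $X \simeq |\Poset_1|$: a simplex of $|\Poset_1|$ fails to lie in $X$ precisely when its minimum $x$ and maximum $y$ satisfy $x \not\preceq y$, equivalently, it is contained in a \emph{bad} closed interval $|[x,y]|$ with $x \not\prec y$. I would build a filtration $X = Y_0 \subseteq Y_1 \subseteq \cdots$ of $|\Poset_1|$ by enumerating the bad pairs $(x,y)$ in order of increasing $f(y) - f(x)$ and, at each stage, attaching the simplices of the current bad interval that are not yet present. The statistic $f(y) - f(x)$ is monotone under interval inclusion (since $[x',y'] \subseteq [x,y]$ forces $f(x) \le f(x')$ and $f(y') \le f(y)$), so proper sub-intervals are handled first. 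By Lemma~\ref{lem:cube_lemma}, the open interval $|(x,y)|$ is contractible; together with the explicit retraction $z \mapsto z_0$ from its proof (where $z_0$ is the largest element of $[x,z]$ with $x \preceq z_0$), this shows that both the piece being attached and the attaching locus in the previously built subcomplex are contractible, so each inclusion $Y_i \hookrightarrow Y_{i+1}$ is a homotopy equivalence. Passing to the colimit yields $X \simeq |\Poset_1|$, and contractibility of $X$ follows.

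The main obstacle is identifying the attaching locus at each stage and verifying its contractibility coherently. Lemma~\ref{lem:cube_lemma} handles the interior of each bad interval, but one must choose the ordering of bad pairs carefully so that all overlaps with previously treated intervals have already been absorbed into $Y_i$ before the current stage; the $f(y) - f(x)$ ordering, compatible with interval containment, is designed to make this work.
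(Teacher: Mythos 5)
Your argument has a genuine error in its first step: $\Poset_1$ does \emph{not} have a minimum element, and $\bot=[1_1]$ is not a cone point. The elements lying above $\bot$ are exactly those obtained from $1_1$ by splitting, i.e.\ the classes $[(T,\id,1_n)]$ with $T$ a tree, trivial braid and no merges; a general dangling $1$-spraige $[(T,b,F)]$ is not of this form, so ``the forest of $x$'' does not witness $\bot\le x$. Concretely, any nontrivial $g\in\Vbr=\Poset_{1,1}$ satisfies $f(g)=1=f(\bot)$, and two distinct elements with the same number of feet are never comparable (a nontrivial splitting strictly increases $f$), so $\bot\not\le g$. The correct reason $|\Poset_1|$ is contractible is that $\Poset_1$ is \emph{directed}: by Proposition~\ref{prop:sups} any two elements have a (least) upper bound, and a directed poset has contractible geometric realization. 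This is the reason the paper gives, and with this substitution your first step is repaired.

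Your second step is essentially the paper's proof. The paper resolves the ``main obstacle'' you flag by identifying the attaching locus explicitly: when $|[x,y]|$ (with $x\not\preceq y$) is attached in increasing order of $r([x,y])=f(y)-f(x)$, every simplex of $[x,y]$ that omits $x$ or omits $y$ lies in a strictly smaller interval (elementary, hence in $X$, or bad with smaller $r$-value, hence already attached), while every simplex containing both $x$ and $y$ is new. So the attaching locus is exactly $|[x,y)|\cup|(x,y]|$, which is the suspension of $|(x,y)|$ and hence contractible by Lemma~\ref{lem:cube_lemma}; no separate use of the retraction $z\mapsto z_0$ is needed, and distinct intervals with the same $r$-value meet only in smaller intervals, so the order within a level is irrelevant. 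Modulo the fix above, your proof then coincides with the paper's.
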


\begin{proof}
First note that $|\Poset_1|$ is contractible since $\Poset_1$ is
directed.  Similar to the situation in~\cite{brown92}, we can build up
from $X$ to $|\Poset_1|$ by attaching new subcomplexes, and we claim
that this never changes the homotopy type, so $X$ is contractible.
Given a closed interval $[x,y]$, define~$r([x,y])\defeq f(y)-f(x)$.
As a remark, if~$x\preceq y$ then~$r([x,y])$ is the dimension of
the cube given by $[x,y]$.  We attach the contractible subcomplexes
$|[x,y]|$ for $x\not\preceq y$ to $X$ in increasing order of $r$-value.
When we attach $|[x,y]|$ then, we attach it along
$|[x,y)|\cup|(x,y]|$.  But this is the suspension of $|(x,y)|$, and so
is contractible by the previous lemma.  We conclude that attaching
$|[x,y]|$ does not change the homotopy type, and since $|\Poset_1|$ is
contractible, so is $X$.
\end{proof}

\medskip

There is a natural action of $\Vbr$ on the vertices of $X$.  Namely,
for $g\in\Vbr$ and~$\sigma\in\spraige_1$ with $x=[\sigma]$, define $gx
\defeq [g\ast\sigma]$.  This action preserves the
relations~$\le$ and~$\preceq$, and hence extends to an action on the
whole space.  To prove that $\Vbr$ is of type $\F_\infty$, we will
apply Brown's Criterion to the action of $\Vbr$ on $X$.

\begin{brownscriterion}\cite[Corollary~3.3]{brown87}
\label{brownscriterion}
Let $G$ be a group and $X$ a contractible $G$-CW-complex such that the
stabilizer of every cell is of type $\F_\infty$.  Let $\{X_j\}_{j\ge
1}$ be a filtration of $X$ such that each~$X_j$ is finite
$\textnormal{mod}~G$.  Suppose that the connectivity of the pair
$(X_{j+1},X_j)$ tends to $\infty$ as $j$ tends to $\infty$.  Then $G$
is of type~$\F_\infty$.
\end{brownscriterion}

For each $n\in\N$ define $X^{\le n}$ to be the full subcomplex of~$X$
spanned by vertices~$x$ with $f(x)\le n$.  Note that the $X^{\le n}$
are invariant under the action of~$\Vbr$.\pagebreak[3]

\begin{lemma}[Cocompactness]
\label{lem:cocompactness}
For each $n\ge 1$ the sublevel set $X^{\le n}$ is finite modulo~$\Vbr$.
\end{lemma}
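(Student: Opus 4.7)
The plan is to exploit the cubical structure of $X$ described just before the lemma. Every cube has the form $[x,y]$ with $x\preceq y$, and is uniquely determined by the pair $(x,J)$ with $J\subseteq\{1,\dots,f(x)\}$ such that $y=[\sigma_x\ast\lambda^{(f(x))}_J]$. Since the $\Vbr$-action sends $(x,J)$ to $(gx,J)$, counting $\Vbr$-orbits of cubes in $X^{\le n}$ reduces to counting $\Vbr$-orbits of vertices $x$ with $f(x)\le n$ and, for each such orbit, counting the subsets $J$ satisfying $|J|\le n-f(x)$. The latter count is obviously finite, and since each cube has only finitely many faces, bounding cube orbits bounds simplex orbits.

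Since the $\Vbr$-action preserves $f$, the vertex question decomposes by $m:=f(x)\in\{1,\dots,n\}$, and I would show that for each such $m$ there is in fact a single $\Vbr$-orbit, represented by $x_m:=[(U_m,1,1_m)]$ for any fixed tree $U_m$ with $m$ leaves. Given $x=[(T,b,F)]$ with $T$ a tree of $k$ leaves, $b\in B_k$, and $F$ a forest with $k$ leaves and $m$ roots, I would argue by induction on the number of carets of $F$. If $F=1_m$, then $k=m$, and left-multiplying $x$ by $(U_m,b^{-1},T)\in\Vbr$ sends $x$ directly to $x_m$. Otherwise, because every nontrivial finite forest contains an elementary caret (take any caret of maximal depth), $F$ has an elementary caret at some positions $(i,i+1)$. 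Pick any tree $T'$ with $k$ leaves having an elementary caret at $(i,i+1)$, and left-multiply $x$ by $(T',b^{-1},T)\in\Vbr$ to arrive at the spraige $(T',1,F)$, whose paired elementary carets at $(i,i+1)$ are linked by the identity braid. A single reduction then produces $(T'',1,F')$ with $F'$ still having $m$ roots but one fewer caret than $F$, and the inductive hypothesis closes out the argument.

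The main technical point is this iterative reduction, and its correctness rests on the two elementary observations just mentioned: that $F$ must have an elementary caret whenever $F\neq 1_m$, and that left-multiplication in $\Vbr$ freely reshapes the top tree $T$ and braid $b$, letting us manufacture a matching elementary caret in $T$ on demand. Once the vertex count is bounded by $n$ in this way, the cube count is bounded by $\sum_{m=1}^{n}|\{J\subseteq\{1,\dots,m\}:|J|\le n-m\}|$ and the simplex count by a similar finite sum, completing the proof of cocompactness.
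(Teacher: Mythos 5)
Your proof is correct and follows essentially the same route as the paper: one $\Vbr$-orbit of vertices for each foot-count $k\le n$, plus the observation that only finitely many cubes in $X^{\le n}$ sit above a given vertex (which you make explicit via the parametrization of cubes by pairs $(x,J)$). The only real difference is that you supply an inductive proof of the transitivity of $\Vbr$ on dangling $(1,m)$-spraiges, a fact the paper simply asserts; that argument is sound.
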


\begin{proof}
Note first that for each $k\ge 1$, $\Vbr$ acts transitively on the set
of $(1,k)$-spraiges.  Thus there exists for each $1\le k\le n$ one
orbit of vertices $x$ in $X^{\le n}$ with~$f(x)=k$.  Given a vertex
$x$ with $f(x)=k$ there exist only finitely many cubes~$C_1, \ldots,
C_r$ in the sublevel set $X^{\le n}$ that have $x$ as their bottom.
If $C$ is a cube in $X^{\le n}$ such that its bottom is in the same
orbit as $x$, then the cube~$C$ must be in the same orbit as $C_i$ for
some $1\le i\le r$.  It follows that there can only be finitely many
orbits of cubes in the sublevel set $X^{\le n}$.
\end{proof}

\begin{lemma}[Vertex stabilizers]\label{lem:stabilizers}
Let $x$ be a vertex in $X$ with $f(x)=n$.  The stabilizer
$\Stab_{\Vbr}(x)$ is isomorphic to~$B_n$.
\end{lemma}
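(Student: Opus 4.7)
The plan is to write down an explicit isomorphism using the groupoid structure on $\spraige$. Fix a representative $\sigma \in \spraige_{1,n}$ of $x$, and identify $B_n$ with the subgroup $\{(1_n, b, 1_n) \mid b \in B_n\}$ of $\spraige_{n,n}$ as in Section~\ref{sec:dangling}. Since $\spraige$ is a groupoid, $\sigma$ has a two-sided inverse $\sigma^{-1} \in \spraige_{n,1}$. Define $\varphi \colon B_n \to \Vbr$ by $\varphi(b) \defeq \sigma \ast b \ast \sigma^{-1}$. Associativity of $\ast$ together with $\sigma^{-1} \ast \sigma = 1_n$ makes $\varphi$ a group homomorphism into $\Vbr = \Poset_{1,1}$.

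Next I would check that $\image(\varphi) \subseteq \Stab_{\Vbr}(x)$: by construction $\varphi(b) \ast \sigma = \sigma \ast b$, and since $b \in B_n$ acts trivially on dangling classes in $\Poset_{1,n}$, we get $[\varphi(b) \ast \sigma] = [\sigma \ast b] = [\sigma] = x$. For the reverse inclusion, let $g \in \Stab_{\Vbr}(x)$. Then $[g \ast \sigma] = [\sigma]$ in $\Poset_{1,n}$, so by the definition of dangling there exists $b \in B_n$ with $g \ast \sigma = \sigma \ast b$ in $\spraige_{1,n}$. Multiplying on the right by $\sigma^{-1}$ yields $g = \sigma \ast b \ast \sigma^{-1} = \varphi(b)$, so $\varphi$ is surjective onto $\Stab_{\Vbr}(x)$.

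For injectivity, if $\varphi(b) = 1_1$ in $\Vbr$, then multiplying $\sigma \ast b \ast \sigma^{-1} = 1_1$ on the left by $\sigma^{-1}$ and on the right by $\sigma$ (using the groupoid structure) gives $b = 1_n$, so $b$ is the trivial braid. Thus $\varphi$ is an isomorphism $B_n \cong \Stab_{\Vbr}(x)$. A brief remark addresses well-definedness: a different representative of $x$ has the form $\sigma' = \sigma \ast c$ for some $c \in B_n$, and the corresponding map $b \mapsto \sigma' \ast b \ast (\sigma')^{-1}$ equals $\varphi$ precomposed with conjugation by $c$ in $B_n$, so its image — the stabilizer — is unchanged.

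There is no real obstacle; the argument is essentially the orbit–stabilizer description of a transitive groupoid action, and the only points requiring care are keeping track of which spraiges live in which $\spraige_{n,m}$ and invoking the groupoid inverse and the left cancellation property already recorded in Section~\ref{sec:dangling}.
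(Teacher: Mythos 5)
Your proposal is correct and is essentially the paper's argument: the paper defines the map in the opposite direction, $\psi\colon\Stab_{\Vbr}(x)\to B_n$, $g\mapsto \sigma^{-1}\ast g\ast\sigma$, and notes that its inverse is exactly your $\varphi\colon b\mapsto \sigma\ast b\ast\sigma^{-1}$, with the same use of the groupoid structure, the identification $B_n\hookrightarrow\spraige_{n,n}$, and the definition of dangling. Your additional checks (homomorphism, surjectivity via the definition of dangling, injectivity, dependence on $\sigma$ only up to inner automorphism) all match the paper's reasoning and its closing remark.
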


\begin{proof}
As a first step, identify $B_n$ with its image under the inclusion $B_n\hookrightarrow \spraige_{n,n}$ sending~$b$ to $(1_n,b,1_n)$.

Let $g\in\Stab_{\Vbr}(x)$.  Fix $\sigma\in\spraige_{1,n}$ with
$x=[\sigma]$.  We have $gx=x$, which means that $[g\ast\sigma]=[\sigma]$,
and so in particular $\sigma^{-1} \ast g \ast \sigma \in B_n$.

Define a map
\begin{align*}
\psi\colon \Stab_{\Vbr}(x) & \to B_n
\\
g \qquad & \mapsto \, \sigma^{-1} \ast g \ast \sigma\text{\,.}
\end{align*}
This is an isomorphism, with inverse $b\mapsto \sigma \ast b\ast \sigma^{-1}$. We
remark that $\psi$ depends on the choice of~$\sigma$, and
so is not canonical, but it is uniquely determined up to inner
automorphisms of~$B_n$.
\end{proof}

\begin{definition}
Let $J\subseteq\{1,\dots,n\}$.  Let $b\in B_n$ and let $\rho_b$ be the
corresponding permutation in $S_n$.  If $\rho_b$ stabilizes $J$
set-wise, call $b$ an $J$-\emph{stabilizing} braid.  Let $B_n^J\le
B_n$ be the subgroup of~$J$-stabilizing braids.
\end{definition}

\begin{corollary}[Cell stabilizers]\label{cor:cell_stabs}
Let $x$ be a vertex in $X$, with $f(x)=n$ and $x=[\sigma]$, and let
$F^{(n)}_J$ be an elementary forest.  If
$y=[\sigma\ast\lambda^{(n)}_J]$, then the stabilizer in~$\Vbr$ of the
cube $[x,y]$ is isomorphic to~$B_n^J$.  In particular all cell
stabilizers are of type~$\F_\infty$.
\end{corollary}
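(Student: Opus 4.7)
The plan is to reduce the computation of $\Stab_{\Vbr}([x,y])$ to the vertex stabilizer analysis of Lemma \ref{lem:stabilizers}, and then identify which elements that fix $x$ also fix the top vertex $y$. Since the $\Vbr$-action preserves the partial order $\le$, and $x$, $y$ are the unique minimum and maximum of the cube $[x,y]$, an element of $\Vbr$ stabilizes the cube if and only if it fixes both $x$ and $y$. Hence $\Stab_{\Vbr}([x,y]) = \Stab_{\Vbr}(x) \cap \Stab_{\Vbr}(y)$.

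Next, I would invoke the isomorphism $\psi\colon \Stab_{\Vbr}(x) \to B_n$, $g\mapsto \sigma^{-1}\ast g\ast \sigma$, from Lemma \ref{lem:stabilizers}. An element $g$ with $\psi(g)=b$ satisfies $g\ast\sigma = \sigma\ast b$, so $g$ also fixes $y = [\sigma\ast \lambda^{(n)}_J]$ precisely when there exists a braid $c\in B_{n+|J|}$ with $b\ast\lambda^{(n)}_J = \lambda^{(n)}_J \ast c$ (using left cancellation to strip $\sigma$). The key geometric step is to show that such a factorization exists if and only if $\rho_b(J)=J$, i.e., $b\in B_n^J$. Picturing $b\ast\lambda^{(n)}_J$ as a braid on $n$ strands followed by attaching a caret at each bottom position in $J$, one may ``pull'' these carets up through the braid: since the strand ending at bottom position $i$ originates at top position $\rho_b^{-1}(i)$, the pulled-up carets land at top positions $\rho_b^{-1}(J)$, and each strand of $b$ indexed by $J$ gets doubled, producing a braid $b' \in B_{n+|J|}$. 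This yields the identity $b\ast\lambda^{(n)}_J = \lambda^{(n)}_{\rho_b^{-1}(J)} \ast b'$ as spraiges, which equals $\lambda^{(n)}_J \ast c$ for some braid $c$ if and only if $\rho_b^{-1}(J)=J$, by uniqueness of reduced representatives. Composition with $\psi$ then gives $\Stab_{\Vbr}([x,y]) \cong B_n^J$.

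For the ``in particular'' clause, $B_n^J$ is the preimage of the Young subgroup $S_J \times S_{\{1,\dots,n\}\setminus J}$ under the natural surjection $B_n \to S_n$, and therefore has finite index in $B_n$. Since $B_n$ is of type $\F_\infty$ (a classical fact recalled in the introduction), so is any finite-index subgroup, and hence so is $B_n^J$. Every cell of the cubical complex $X$ is by construction a cube of the form $[x',y']$ with $x'\preceq y'$, so the same argument applies uniformly to every cell. The main obstacle is the spraige computation in the middle paragraph: one has to verify carefully that the ``doubling'' operation produces an honest braid in $B_{n+|J|}$ and, more subtly, that the condition $\rho_b(J)=J$ is both necessary and sufficient for the factorization to hold as spraiges (not merely up to dangling), so that left cancellation legitimately pins down $\Stab_{\Vbr}(y)$ inside $\Stab_{\Vbr}(x)$.
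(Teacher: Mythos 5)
Your proposal is correct and follows essentially the same route as the paper: reduce the cube stabilizer to $\Stab_{\Vbr}(x)\cap\Stab_{\Vbr}(y)$, transport via the isomorphism $\psi$ from the vertex-stabilizer lemma, strip $\sigma$ by left cancellation, and identify the resulting condition $[b\ast\lambda^{(n)}_J]=[\lambda^{(n)}_J]$ with $b\in B_n^J$. Your middle paragraph merely spells out (correctly, via the identity $b\ast\lambda^{(n)}_J=\lambda^{(n)}_{\rho_b^{-1}(J)}\ast b'$ and uniqueness of reduced representatives) the equivalence that the paper asserts in one line.
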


\begin{proof}
First observe that $g\in\Vbr$ stabilizes $[x,y]$ if and only if it
stabilizes $x$ and~$y$.  For~$g\in\Stab_{\Vbr}(x)$ let $b_g \defeq \sigma^{-1} \ast g \ast \sigma \in B_n$, where we identify~$B_n$ as a subgroup of~$\spraige_{n,n}$ as in the previous proof. Then~$g$
stabilizes~$y$ if and only if $[\sigma \ast b_g \ast\lambda^{(n)}_J]
= [\sigma\ast\lambda^{(n)}_J]$, which by left cancellation is
equivalent to~$[b_g \ast \lambda^{(n)}_J] =
[\lambda^{(n)}_J]$.  This in turn is equivalent to~$b_g \in
B_n^J$, and so the cube stabilizer equals $\psi^{-1}(B_n^J)$, where
$\psi\colon\Stab_{\Vbr}(x)\to B_n$ is the map from the previous proof.
Since~$\psi$ is an isomorphism, the first statement follows.

The second statement follows since braid groups are of
type~$\F_\infty$~\cite[Theorem~A]{squier94}, and each~$B_n^J$ has
finite index in $B_n$.
\end{proof}

\medskip

The filtration $\{X^{\le n}\}_{n\ge1}$ of $X$ has so far been shown to
satisfy all the conditions of Brown's Criterion save one, namely that
the connectivity of the pair $(X^{\le n+1},X^{\le n})$ tends to
$\infty$ as $n$ tends to $\infty$.  We will prove this in
Corollary~\ref{cor:pairs_conn}, using discrete Morse theory.  We now
describe the Morse-theoretic tools we will use.

Let $Y$ be a piecewise Euclidean cell complex, and let~$h$ be a map
from the set of vertices of $Y$ to the integers, such that each cell has a unique vertex maximizing~$h$.  Call $h$ a
\emph{height function}, and $h(y)$ the \emph{height} of $y$ for
vertices $y$ in $Y$.  For $t\in\Z$, define $Y^{\le t}$ to be
the full subcomplex of~$Y$ spanned by vertices~$y$ satisfying $h(y)\le
t$.  Similarly define $Y^{< t}$, and let $Y^{=t}$ be the set of
vertices at height~$t$.  The \emph{descending star} $\dst(y)$ of a
vertex $y$ is defined to be the open star of~$y$ in~$Y^{\le y}$.  The
\emph{descending link} $\dlk(y)$ of $y$ is given by the set of ``local
directions'' starting at $y$ and pointing into $\dst(y)$. More details can be found in \cite{bestvina97}, and the following Morse Lemma is a consequence of \cite[Corollary~2.6]{bestvina97}.

\begin{morselemma}
\label{lem:morse_lemma}
With the above setup, the following hold.
 \begin{enumerate}
	\item Suppose that for any vertex $y$ with $h(y)=t$, $\dlk(y)$
	is $(k-1)$-connected.  Then the pair $(Y^{\le t},Y^{<t})$ is
	$k$-connected, that is, the inclusion $Y^{<t}\hookrightarrow
	Y^{\le t}$ induces an isomorphism in~$\pi_j$ for $j<k$, and an
	epimorphism in $\pi_k$.

	\item Suppose that for any vertex $y$ with $h(y)\ge t$,
	$\dlk(y)$ is $(k-1)$-connected.  Then $(Y,Y^{< t})$ is
	$k$-connected.
 \end{enumerate}
\end{morselemma}

\medskip

The first part of the Morse Lemma will be applied to the Stein space,
and the second part will become convenient in
Section~\ref{sec:match_cpxes}.

Every cell of~$X$ has a unique vertex maximizing~$f$, so $f$ is a height function. Hence we can inspect the connectivity of the pair $(X^{\le n},X^{< n})$
by looking at descending links with respect to~$f$.  In the rest of
this section, we describe a convenient model for the descending links,
and then analyze their connectivity in the following sections.

Recall that we identify $\Poset_1$ with the vertex set of $X$, and
cubes in $X$ are (geometric realizations of) intervals $[y,x]$
with $x,y\in\Poset_1$ and $y\preceq x$.  For $x\in\Poset_1$, the
descending star $\dst(x)$ of $x$ in $X$ is the set of cubes $[y,x]$
with top $x$.  For such a cube $C=[y,x]$ let $\Bot(C) \defeq y$ be the
map giving the bottom vertex.  This is a bijection from the
set of such cubes to the set $D(x) \defeq \{y\in\Poset_1\mid y\preceq
x\}$.  The cube $[y',x]$ is a face of~$[y,x]$ if and only if
$y'\in[y,x]$, if and only if $y'\ge y$.  Hence $C'$ is a face of $C$
if and only if $\Bot(C')\ge \Bot(C)$, so $\Bot$ is an order-reversing
poset map.  By considering cubes~$[y,x]$ with $y\neq x$ and
restricting to $D(x) \setminus \{x\}$, we obtain a description of~$\dlk(x)$.  Namely, a simplex in~$\dlk(x)$ is a dangling spraige~$y$
with~$y\prec x$, the rank of the simplex is the number of elementary
splits needed to get from $y$ to $x$ (so the number of elementary
merges to get from~$x$ to~$y$) and the face relation is the reverse of
the relation~$<$ on $D(x)\setminus\{x\}$.  Since $X$ is a cubical
complex, $\dlk(x)$ is a simplicial complex.

\subsection*{A model for the descending link:} If $f(x)=n$, then thanks to left cancellation, $\dlk(x)$ is isomorphic to the simplicial complex $\elbraigecpx_n$ of dangling
elementary $n$-braiges $[(1_n,b,F_J^{(n-|J|)})]$ for
$J\neq\emptyset$, with the face relation given by the reverse of the
ordering~$\le$ in $\Poset_n$.  See Figure~\ref{fig:desc_lk_to_EB} for
an idea of the correspondence between~$\dlk(x)$ and $\elbraigecpx_n$.  We will
usually draw braiges as emerging from a horizontal line, as a visual
reminder of this correspondence.

\begin{figure}
\centering
\begin{tikzpicture}

  \filldraw[lightgray]
   (0,0) -- (1,1) -- (2,0) -- (1,-1) -- (0,0);
  \draw
   (2,0) -- (1,-1) -- (0,0);
  \draw[darkgray, line width=1.5pt]
   (0,0) -- (1,1) -- (2,0);
  \filldraw
   (0,0) circle (1.5pt)   (1,1) circle (1.5pt)   (2,0) circle (1.5pt)   (1,-1)
circle (1.5pt);

  \draw
   (0.75,1.4) to [out=90, in=90, looseness=2] (1.25,1.4) -- (0.75,1.4)
   (0.85,1.4) -- (0.85,1.2)   (0.95,1.4) -- (0.95,1.2)   (1.05,1.4) --
(1.05,1.2)   (1.15,1.4) -- (1.15,1.2);
  \node at (1,1.52) {$x$};

 \begin{scope}[xshift=-1.2cm,yshift=-1cm]
  \draw
   (0.75,1.4) to [out=90, in=90, looseness=2] (1.25,1.4) -- (0.75,1.4)
   (0.85,1.4) -- (0.85,1.2) -- (0.9,1.1) -- (0.95,1.2) -- (0.95,1.4)  
(1.05,1.4) -- (1.05,1.2)   (1.15,1.4) -- (1.15,1.2);
  \node at (1,1.52) {$x$};
 \end{scope}

 \begin{scope}[xshift=1.2cm,yshift=-1cm]
  \draw
   (0.75,1.4) to [out=90, in=90, looseness=2] (1.25,1.4) -- (0.75,1.4)
   (0.85,1.4) -- (0.85,1.2)   (0.95,1.4) -- (0.95,1.2)   (1.05,1.4) --
(1.05,1.2) -- (1.1,1.1) -- (1.15,1.2) -- (1.15,1.4);
  \node at (1,1.52) {$x$};
 \end{scope}

 \begin{scope}[yshift=-2.85cm]
  \draw
   (0.75,1.4) to [out=90, in=90, looseness=2] (1.25,1.4) -- (0.75,1.4)
   (0.85,1.4) -- (0.85,1.2) -- (0.9,1.1) -- (0.95,1.2) -- (0.95,1.4)  
(1.05,1.4) -- (1.05,1.2) -- (1.1,1.1) -- (1.15,1.2) -- (1.15,1.4);
  \node at (1,1.52) {$x$};
 \end{scope}

  \node at (3.5,0) {$\longleftrightarrow$};

  \draw[line width=1.5pt, gray]
   (4.8,0) -- (6.8,0);
  \filldraw[darkgray]
   (4.8,0) circle (1.5pt)   (6.8,0) circle (1.5pt);

 \begin{scope}[xshift=4.7cm, yshift=-0.2cm, scale=2]
  \draw
   (-0.15,0.5) -- (-0.15,0.3) -- (-0.1,0.2) -- (-0.05,0.3) -- (-0.05,0.5)  
(0.05,0.5) -- (0.05,0.3)   (0.15,0.5) -- (0.15,0.3);

  \draw
   (-0.2,0.5) -- (0.2,0.5);

(0.5pt)   (-0.05,0.3) circle (0.5pt)   (-0.05,0.5) circle (0.5pt)   (0.05,0.5)
circle (0.5pt)   (0.05,0.3) circle (0.5pt)   (0.15,0.5) circle (0.5pt)  
(0.15,0.3) circle (0.5pt);
 \end{scope}

 \begin{scope}[xshift=6.9cm, yshift=-0.2cm, scale=2]
  \draw
   (-0.15,0.5) -- (-0.15,0.3)   (-0.05,0.3) -- (-0.05,0.5)   (0.05,0.5) --
(0.05,0.3) -- (0.1,0.2) -- (0.15,0.3) -- (0.15,0.5);

  \draw
   (-0.2,0.5) -- (0.2,0.5);

(0.5pt)   (-0.05,0.3) circle (0.5pt)   (-0.05,0.5) circle (0.5pt)   (0.05,0.5)
circle (0.5pt)   (0.05,0.3) circle (0.5pt)   (0.15,0.5) circle (0.5pt)  
(0.15,0.3) circle (0.5pt);
 \end{scope}

 \begin{scope}[xshift=5.8cm, yshift=-1.3cm, scale=2]
  \draw
   (-0.15,0.5) -- (-0.15,0.3) -- (-0.1,0.2) -- (-0.05,0.3) -- (-0.05,0.5)  
(0.05,0.5) -- (0.05,0.3) -- (0.1,0.2) -- (0.15,0.3) -- (0.15,0.5);

  \draw
   (-0.2,0.5) -- (0.2,0.5);

(0.5pt)   (0.1,0.2) circle (0.5pt)   (-0.05,0.3) circle (0.5pt)   (-0.05,0.5)
circle (0.5pt)   (0.05,0.5) circle (0.5pt)   (0.05,0.3) circle (0.5pt)  
(0.15,0.5) circle (0.5pt)   (0.15,0.3) circle (0.5pt);
 \end{scope}
\end{tikzpicture}
\caption{The correspondence between $\dlk(x)$ and $\elbraigecpx_n$.}
\label{fig:desc_lk_to_EB}
\end{figure}

We will prove that $\elbraigecpx_n$ is highly connected in
Corollary~\ref{cor:desc_link_conn}.  Our proof relies on a complex
that we call the \emph{matching complex on a surface}, which we will
define and analyze in the next section.  Then we will return to
considering dangling elementary braiges in
Section~\ref{sec:desc_link_conn}.

We close this section with some remarks on $\Fbr$.  Restricting to
pure braids everywhere in this section does not affect any of the
proofs, so we can simply say that $X(\Fbr)$ is the contractible
cubical complex of dangling pure $1$-spraiges, understood in the same
way as $X$ (though now dangling is only via pure braids).  We will
also denote by $f$ the height function ``number of feet'' on
$X(\Fbr)$.  The filtration is still cocompact and the stabilizers are
still of type $\F_\infty$, being finite index subgroups of braid
groups.  As for descending links, the descending link of a dangling
pure $(1,n)$-spraige in $X(\Fbr)$ is isomorphic to the simplicial
complex~$\elpbraigecpx_n$ of dangling elementary pure~$n$-braiges.


\section{Matching complexes on surfaces}
\label{sec:surfaces}

Throughout this section $S$ denotes a connected surface, with
(possibly empty) boundary~$\partial S$, and $P$ denotes a finite set
of points in $S\setminus\partial S$.  By an \emph{arc}, we mean a
simple path in $S\setminus\partial S$ that intersects $P$ precisely at
its endpoints, and whose endpoints are distinct.  Our standard
reference for arc complexes is~\cite{hatcher91}.  Note that our
definition of arc is slightly different from the definition given
in~\cite{hatcher91}, in that we do not allow the endpoints of a given
arc to coincide.  Also note that in~\cite{hatcher91}, points in $P$
were allowed to be contained in $\partial S$, and we will not consider
this case here.  In Section~\ref{sec:desc_link_conn} we will only need
the special case where~$S$ is a disk, but to prove the results in this
section we need to use this degree of generality.

A major theme of this section will be the similarities between certain
complexes defined using edges in graphs, and similar complexes defined
using arcs on surfaces.  Of particular interest is the family of
\emph{complete graphs} $K_n$.  The graph $K_n$ is the graph with $n$
nodes and a single edge between any two nodes.  Later we will also be
interested in the family of subgraphs of \emph{linear graphs}~$L_n$.
The linear graph~$L_n$ has~$n+1$ nodes labeled $1$ through~$n+1$, and
$n$ edges, one connecting the node labeled $i$ to the node
labeled~$i+1$ for each~$1\le i\le n$.  Note that when dealing
with~$K_n$,~$n$ is the number of nodes, but when dealing
with~$L_n$,~$n$ is the number of edges.  This is just for the sake of
future ease of notation.

\subsection{The arc complex}
\label{sec:arc_cpx}

Let $\{\alpha_0,\dots,\alpha_k\}$ be a collection of arcs.  If the
$\alpha_i$ are all disjoint from each other except possibly at their
endpoints, and if no distinct~$\alpha_i$ and~$\alpha_j$ are homotopic
relative $P$, we call $\{\alpha_0,\dots,\alpha_k\}$ an \emph{arc
system}.  The homotopy classes, relative $P$, of arc systems form the
simplices of a simplicial complex, with the face relation given by
passing to subcollections of arcs.

\begin{definition}[Arc complex]\label{def:hatcher_cpx}
Let $\Gamma$ be a graph with $|P|$ nodes, and identify $P$ with the
set of nodes of $\Gamma$.  Call an arc in $S$ \emph{compatible} with $\Gamma$
if its endpoints are connected by an edge in $\Gamma$.  Let $\hatcharc(\Gamma)$ be the \emph{arc complex} on $(S,P)$ corresponding
to $\Gamma$, that is the simplicial complex with a $k$-simplex for
each arc system $\{\alpha_0,\dots,\alpha_k\}$ such that all the
$\alpha_i$ are compatible with $\Gamma$.
\end{definition}

We remark
that $\hatcharc(K_n)$ is a proper subcomplex of the space $\mathcal{A}(S,P)$
in~\cite{hatcher91}, since we only consider arcs with two distinct
endpoints.

It will be convenient to consider actual arcs rather than homotopy classes in many of the following arguments. This is justified by the following fact.

\begin{lemma}
\label{lem:arc_representatives}
Given finitely many homotopy classes of arcs $[\alpha_0],\ldots,[\alpha_k]$ there are representatives $\alpha_0,\ldots,\alpha_k$ such that $\abs{\alpha_i \cap \alpha_j}$ is minimal among all representatives of $[\alpha_i]$ and $[\alpha_j]$ for $0 \le i<j \le k$. In particular, any simplex is represented by disjoint arcs.
\end{lemma}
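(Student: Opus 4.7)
The plan is to iteratively reduce a global complexity via an innermost-bigon argument, a standard toolkit from surface topology. Start with arbitrary smooth representatives $\alpha_0,\dots,\alpha_k$ in pairwise general position, and set
\[
N \defeq \sum_{0\le i<j\le k} \abs{\alpha_i\cap\alpha_j}.
\]
I would show that as long as $N$ is not simultaneously minimized over all representatives, it can be strictly decreased by an ambient isotopy supported in a small disk; since $N\in\N$, this terminates and yields the desired representatives.

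The key input is the bigon criterion for arcs on a surface with fixed distinct endpoints: two arcs in general position realize their minimal geometric intersection number on their homotopy classes if and only if they bound no bigon, where a \emph{bigon} is an embedded disk in $S\setminus\partial S$ whose boundary is the union of a sub-arc of $\alpha_i$ and a sub-arc of $\alpha_j$ meeting in two points. This is the arc-analogue of the classical curve version and is proven by lifting to the universal cover.

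Given a pair $(\alpha_i,\alpha_j)$ not in minimal position, the criterion produces a bigon $B$. If the interior of $B$ meets some other arc $\alpha_l$, I would run the innermost-disk argument: each component of $\alpha_l\cap B$ is an embedded arc in $B$ with both endpoints on $\partial B$, and since $\alpha_l$ is simple such a component cuts $B$ into strictly smaller disks at least one of which is again a bigon — now either between $\alpha_l$ and $\alpha_i$, between $\alpha_l$ and $\alpha_j$, or (if the two endpoints lie on different sides of $\partial B$) two bigons of these types. Iterating on the number of intersection points in the interior produces an \emph{innermost} bigon $B'$ for some pair $(\alpha_{i'},\alpha_{j'})$ whose interior is disjoint from every $\alpha_l$. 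An isotopy of $\alpha_{j'}$ supported on a small neighbourhood of $B'$ pushes it across $B'$, decreasing $\abs{\alpha_{i'}\cap\alpha_{j'}}$ by $2$ while leaving every other pairwise intersection count unchanged. Hence $N$ drops by $2$, and the induction proceeds.

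For the final sentence: a simplex of $\hatcharc(\Gamma)$ is by definition the class of an arc system, so the classes $[\alpha_0],\dots,[\alpha_k]$ admit pairwise disjoint representatives (disjointness understood away from the shared endpoints in $P$). Thus each pairwise minimum is zero, and the main statement supplies mutually disjoint representatives. The main obstacle is confirming the bigon criterion in this arc setting and verifying that the innermost-disk extraction really terminates in a bigon between two of the $\alpha_l$ (rather than a disk region with more complicated boundary combinatorics); the case analysis on how a simple sub-arc of $\alpha_l$ enters and exits $\partial B$ handles this cleanly.
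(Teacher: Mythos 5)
Your route is genuinely different from the paper's. The paper avoids all surgery arguments: it treats the points of $P$ as punctures, puts a hyperbolic metric on $S$ (possible once $\abs{P}\ge 3$), and takes the geodesic representative of each class, after which pairwise minimal position is automatic by the results cited from \cite{farb12}. Your bigon-surgery scheme is the standard combinatorial alternative and would prove the same statement without hyperbolic geometry, at the cost of exactly the innermost-disk bookkeeping you flag at the end.

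That bookkeeping is where the proposal has a real error. A component $c$ of $\alpha_l\cap B$ with one endpoint on the $\alpha_i$-side of $\partial B$ and one on the $\alpha_j$-side does \emph{not} cut $B$ into two bigons: each complementary disk is a ``triangle'' whose boundary uses sub-arcs of three different arcs. Consequently your recursion need not terminate in a bigon whose interior is disjoint from all the arcs --- such an empty bigon can simply fail to exist (e.g.\ a third arc crossing $B$ from side to side). The correct repair is to weaken what ``innermost'' must deliver: a component of $\alpha_l\cap B'$ returning to the \emph{same} side of $\partial B'$ does cut off an honest smaller bigon (between $\alpha_l$ and one of the two bounding arcs), so an inclusion-minimal bigon $B'$ contains only side-to-side strands; and pushing across such a $B'$ leaves every $\abs{\alpha_l\cap\alpha_{i'}}$ and $\abs{\alpha_l\cap\alpha_{j'}}$ unchanged, since each side-to-side strand trades its one crossing with the old position of the pushed sub-arc for one crossing with the new position. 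With that observation the total count $N$ still drops by exactly $2$ and your induction goes through. Two smaller points you should also address: the bigon criterion for arcs whose endpoints are marked points must include ``half-bigons'' cut off at a shared endpoint in $P$ (your arcs are allowed to share endpoints), and the minimal value of $\abs{\alpha_i\cap\alpha_j}$ for arcs in a common simplex of $\hatcharc(\Gamma)$ is the number of shared endpoints rather than zero.
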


\begin{proof}
If $\abs{P} \le 2$ there is at most one arc and nothing to show. If $\abs{P} \ge 3$ we consider the points in $P$ as punctures. Then $S$ has negative Euler characteristic so we may equip it with a hyperbolic metric. The following references are stated for closed curves but also apply to arcs, see \cite[Section~1.2.7]{farb12}. For each homotopy class $[\alpha_i]$ we take $\alpha_i$ to be the geodesic within the class \cite[Proposition~1.3]{farb12}. Then any two of the arcs intersect minimally \cite[Corollary~1.9]{farb12}.
\end{proof}

\pagebreak[3]

\begin{proposition}
\label{prop:arc_cpx_contractible}
For any $n\ge2$ the complex $\hatcharc(K_n)$ is contractible.
\end{proposition}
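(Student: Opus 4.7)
The plan is to adapt Hatcher's flow/surgery argument for the contractibility of $\mathcal{A}(S,P)$ from \cite{hatcher91} to the subcomplex $\hatcharc(K_n)$. I fix an arc $\alpha_0$ in $S$ whose two endpoints $p, q \in P$ are distinct; such an $\alpha_0$ exists because $n \geq 2$. The closed simplicial star $\st([\alpha_0])$ of the vertex $[\alpha_0]$ in $\hatcharc(K_n)$ is a cone with apex $[\alpha_0]$, hence contractible, so it is enough to exhibit a deformation retraction of $\hatcharc(K_n)$ onto $\st([\alpha_0])$.

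Given a simplex $\sigma = \{[\alpha_1], \dots, [\alpha_k]\}$, I would choose geodesic representatives $\alpha_1, \ldots, \alpha_k$ realizing minimal pairwise intersections and minimal intersections with $\alpha_0$, using Lemma~\ref{lem:arc_representatives}. If no $\alpha_i$ crosses $\alpha_0$ (and no $[\alpha_i]$ equals $[\alpha_0]$), then $\sigma \cup \{[\alpha_0]\}$ is already a simplex lying in $\st([\alpha_0])$, and I retract $\sigma$ linearly through this simplex toward its vertex $[\alpha_0]$. Otherwise, at each transverse intersection of some $\alpha_i$ with $\alpha_0$, I perform Hatcher's surgery: cut $\alpha_i$ at the intersection point and reconnect the two resulting half-arcs along segments of $\alpha_0$, producing arcs disjoint from $\alpha_0$. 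Replacing each such $\alpha_i$ with the resulting arcs yields an arc system $\sigma^+$ disjoint from $\alpha_0$; then $\sigma^+ \cup \{[\alpha_0]\}$ spans a simplex in $\st([\alpha_0])$, and I retract $\sigma$ through this simplex onto its face containing $[\alpha_0]$.

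The main technical obstacle is that the raw Hatcher surgery can produce arcs whose two endpoints coincide, which are admitted in $\mathcal{A}(S,P)$ but not in $\hatcharc(K_n)$. To stay inside $\hatcharc(K_n)$, the surgery recipe must be chosen to avoid such loops. At each intersection point there are two possible reconnections; the resulting subarc has endpoints in $\{a, b, p, q\}$ where $a, b$ are the endpoints of $\alpha_i$. A case analysis on which (if any) of $p, q$ coincide with $a$ or $b$ shows that, since $p \neq q$ and $a \neq b$, at each crossing one of the two resolutions produces non-loop arcs; a consistent global choice is made by traversing $\alpha_0$ from $p$ to $q$ in a fixed direction and ruling out the loop-producing reconnection at each crossing in turn. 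Finally, one checks that the assignment $\sigma \mapsto \sigma^+$ is functorial under face inclusion (since the surgery of each $\alpha_i$ depends only on $\alpha_i$ and $\alpha_0$), which makes the resulting map a well-defined deformation retraction onto $\st([\alpha_0])$ and yields the conclusion.
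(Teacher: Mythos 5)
Your overall strategy (retract $\hatcharc(K_n)$ onto the star of a fixed arc by Hatcher's flow/surgery, taking care to avoid arcs whose endpoints coincide) is the same as the paper's, but the mechanism you propose for avoiding loops does not work. The claim that ``at each crossing one of the two resolutions produces non-loop arcs'' is false: if $\alpha_i$ has endpoints $p$ and $q$ (the same as $\alpha_0$) and crosses $\alpha_0$ transversally, then resolving toward $p$ turns the $p$-side strand into a loop at $p$, while resolving toward $q$ turns the $q$-side strand into a loop at $q$, so \emph{both} resolutions create a loop. Likewise, if an arc meets $\alpha_0$ in two or more interior points, each middle segment has both of its new endpoints on $\alpha_0$, so under either uniform resolution it becomes a loop based at an endpoint of $\alpha_0$. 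Your fallback of choosing different resolutions at different crossings runs into a second problem: two surgered pieces that run along overlapping subsegments of $\alpha_0$ on the same side, headed toward opposite ends, have interleaved departure points and are forced to intersect, so the ``consistent global choice'' need not produce an arc system at all. Finally, ``retract $\sigma$ through the simplex $\sigma^+\cup\{[\alpha_0]\}$'' is not a continuous map as stated, since $\sigma$ is not a face of that simplex ($\alpha_i$ crosses $\alpha_0$); supplying this continuity is precisely what the band-pinching flow is for, and functoriality of $\sigma\mapsto\sigma^+$ under face inclusion gives a simplicial map but not, by itself, a homotopy to the identity.

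The paper resolves the loop issue differently and more simply: it pushes all the pinched bands toward a single end of $\beta$, accepts that loop arcs may appear at intermediate times, and \emph{discards} them from $p_t$, observing that this keeps $p_t$ in the complex because each $\alpha_i$ has distinct endpoints, so at most one of its two end-segments can degenerate to a loop and at least one non-loop arc always survives. If you replace your case analysis by this ``push to one end and discard loops'' device, your argument becomes essentially the paper's proof.
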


The proof here is essentially the same as the proof of the theorem
in~\cite{hatcher91}, so we will not be overly precise.  Indeed there
is only one extra step, which we will point out when it comes.

\begin{proof}
Fix an arc $\beta$, i.e., a vertex in $\hatcharc(K_n)$.  We will
retract $\hatcharc(K_n)$ to the star of~$\beta$.  We use the ``Hatcher
flow'' introduced in~\cite{hatcher91}.  Let
$\sigma=\{\alpha_0,\dots,\alpha_k\}$ be a simplex in $\hatcharc(K_n)$
and let $p$ be a point in $\sigma$, expressed in terms of barycentric
coordinates $p=\sum_{i=0}^k c_i\alpha_i$, with $c_i\ge0$ and
$\sum_{i=0}^k c_i=1$.  Interpret $p$ geometrically by saying that each
$\alpha_i$ is thickened to a ``band'' of thickness $c_i$.  Wherever
the bands cross $\beta$, pinch them into a single band of thickness
$\theta$.  Now the Hatcher flow is as follows.  At time $t\in[0,1]$,
push $p$ to the point $p_t$ obtained by leaving $(1-t)\theta$ worth of the band in place and pushing the remaining $t\theta$-thick part of the band all
the way to one end of~$\beta$; see Figure~\ref{fig:hatcher_flow}.  The
additional consideration we have to make is, if at any point we create
a new arc whose endpoints coincide, discard this from $p_t$.  This is
allowed, since if none of the $\alpha_i$ are loops then there will
always exist at least one non-loop arc used in $p_t$.  One checks that this flow is continuous and respects the face relation, and at time $t=1$ we have deformed $\hatcharc(K_n)$ into the
star of $\beta$, so we conclude that $\hatcharc(K_n)$ is contractible.
\end{proof}

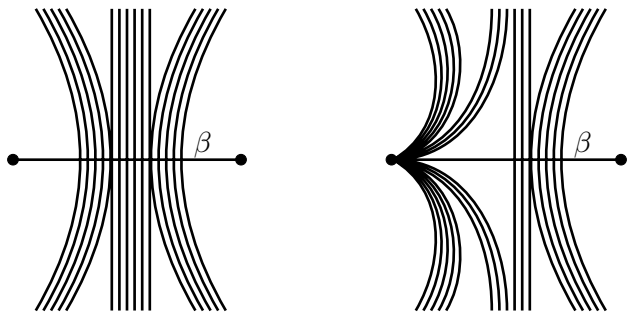
\begin{figure}
\centering
\begin{tikzpicture}

  \draw[line width=1pt]
   (0.5,0) -- (3.5,0)

   (0.8,-2) to [out=60, in=-60] (0.8,2)
   (0.9,-2) to [out=60, in=-60] (0.9,2)
   (1,-2) to [out=60, in=-60] (1,2)
   (1.1,-2) to [out=60, in=-60] (1.1,2)
   (1.2,-2) to [out=60, in=-60] (1.2,2)

   (1.8,-2) -- (1.8,2)
   (1.9,-2) -- (1.9,2)
   (2,-2) -- (2,2)
   (2.1,-2) -- (2.1,2)
   (2.2,-2) -- (2.2,2)
   (2.3,-2) -- (2.3,2)

   (2.9,-2) to [out=120, in=-120] (2.9,2)
   (3,-2) to [out=120, in=-120] (3,2)
   (3.1,-2) to [out=120, in=-120] (3.1,2)
   (3.2,-2) to [out=120, in=-120] (3.2,2)
   (3.3,-2) to [out=120, in=-120] (3.3,2);

  \filldraw 
   (0.5,0) circle (2pt)
   (3.5,0) circle (2pt);

  \node at (3,0.2) {$\beta$};

 \begin{scope}[xshift=5cm]
  \draw[line width=1pt]
   (0.5,0) -- (3.5,0)

   (0.8,-2) to [out=60, in=-30] (0.5,0) to [out=30, in=-60] (0.8,2)
   (0.9,-2) to [out=60, in=-25] (0.5,0) to [out=25, in=-60] (0.9,2)
   (1,-2) to [out=60, in=-20] (0.5,0) to [out=20, in=-60] (1,2)
   (1.1,-2) to [out=60, in=-15] (0.5,0) to [out=15, in=-60] (1.1,2)
   (1.2,-2) to [out=60, in=-10] (0.5,0) to [out=10, in=-60] (1.2,2)

   (1.8,-2) to [out=90, in=-10] (0.5,0) to [out=10, in=-90] (1.8,2)
   (1.9,-2) to [out=90, in=-5] (0.5,0) to [out=5, in=-90] (1.9,2)
   (2,-2) to [out=90, in=0] (0.5,0) to [out=0, in=-90] (2,2)
   (2.1,-2) -- (2.1,2)
   (2.2,-2) -- (2.2,2)
   (2.3,-2) -- (2.3,2)

   (2.9,-2) to [out=120, in=-120] (2.9,2)
   (3,-2) to [out=120, in=-120] (3,2)
   (3.1,-2) to [out=120, in=-120] (3.1,2)
   (3.2,-2) to [out=120, in=-120] (3.2,2)
   (3.3,-2) to [out=120, in=-120] (3.3,2);

  \filldraw 
   (0.48,0) circle (2pt)
   (3.5,0) circle (2pt);

  \node at (3,0.2) {$\beta$};
 \end{scope}
\end{tikzpicture}
\caption{The Hatcher flow.}
\label{fig:hatcher_flow}
\end{figure}

As a remark, note that the above proof yields contractibility for more
general~$\hatcharc(\Gamma)$; the only requirement is that there exists
a node of $\Gamma$
that shares an edge with every other node.

We now want to consider a subspace of $\hatcharc(K_n)$ that is related
to the matching complex of a complete graph, which we call the
matching complex on a surface.  In the next subsection, we will first
cover some background on matching complexes of graphs and establish
some results, and then we will inspect the surface version.

\subsection{Matching complexes}
\label{sec:match_cpxes}

Matching complexes of graphs are defined as follows.

\begin{definition}[Matching complex of a graph]
Let $\Gamma$ be a graph.  The \emph{matching complex} $\match(\Gamma)$
of $\Gamma$ is the simplicial complex with a $k$-simplex for every
collection~$\{e_0,\dots,e_k\}$ of $k+1$ pairwise disjoint edges, with
the face relation given by passing to subcollections.
\end{definition}

Observe that $\match(K_n)$ is non-empty if and only if $n\ge2$, and as
an exercise one can verify that it is connected for $n\ge5$.  As we
will see in Proposition~\ref{prop:matching_cpx_conn}, $\match(K_n)$ is
$(\nu(n)-1)$-connected, where we define $\nu(m)\defeq
\bigl\lfloor\frac{m+1}{3}\bigr\rfloor-1$ for any $m\in\Z$.

Before proving this, we need a technical lemma.

\begin{lemma}
\label{lem:play_with_floor}
For $m_1,\dots,m_{\ell}\in\Z$, we have
\begin{equation*}
\sum_{i=1}^{\ell}\nu(m_i)     
\ge \nu\biggl(\sum_{i=1}^{\ell}m_i-4(\ell-1)\biggr)\text{\,.}
\end{equation*}
\end{lemma}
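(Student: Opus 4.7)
The plan is to reduce to the case $\ell=2$ by induction and then verify that base case with a direct computation on floor functions.

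For the inductive reduction, suppose the inequality holds for some $\ell \ge 2$. Splitting off one summand and applying the inductive hypothesis to the first $\ell$ terms gives
\begin{equation*}
\sum_{i=1}^{\ell+1}\nu(m_i) \;\ge\; \nu\biggl(\sum_{i=1}^{\ell}m_i-4(\ell-1)\biggr) + \nu(m_{\ell+1})\text{.}
\end{equation*}
If we then know the two-term inequality $\nu(a)+\nu(b)\ge \nu(a+b-4)$, applying it with $a=\sum_{i=1}^{\ell}m_i-4(\ell-1)$ and $b=m_{\ell+1}$ yields exactly $\nu\bigl(\sum_{i=1}^{\ell+1}m_i-4\ell\bigr)$, completing the induction. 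The base case $\ell=1$ is trivial.

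For the $\ell=2$ case, unfolding the definition turns the claim into the equivalent inequality
\begin{equation*}
\Bigl\lfloor\tfrac{m_1+1}{3}\Bigr\rfloor + \Bigl\lfloor\tfrac{m_2+1}{3}\Bigr\rfloor \;\ge\; \Bigl\lfloor\tfrac{m_1+m_2}{3}\Bigr\rfloor\text{,}
\end{equation*}
since $\nu(m_1+m_2-4)=\lfloor(m_1+m_2)/3\rfloor-2$ and $\nu(m_1)+\nu(m_2)=\lfloor(m_1+1)/3\rfloor+\lfloor(m_2+1)/3\rfloor-2$. I would verify this by writing $m_i+1 = 3q_i+r_i$ with $r_i\in\{0,1,2\}$, so that the left-hand side is $q_1+q_2$ and $m_1+m_2 = 3(q_1+q_2)+(r_1+r_2-2)$. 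Since $r_1+r_2-2\in\{-2,-1,0,1,2\}$, the quantity $\lfloor(r_1+r_2-2)/3\rfloor$ is either $-1$ or $0$, so the right-hand side is at most $q_1+q_2$.

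There is no real obstacle here: the only mild nuisance is the bookkeeping of the $-1$'s that appear in the definition of $\nu$, but once the inequality is rewritten as a pure statement about floors the case analysis on $r_1+r_2$ is three lines. The induction step then turns the two-term estimate into the full claim without any further difficulty.
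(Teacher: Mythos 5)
Your proposal is correct and follows essentially the same route as the paper: induction on $\ell$ with the two-term inequality $\nu(m_1)+\nu(m_2)\ge\nu(m_1+m_2-4)$ as the key step, which the paper likewise verifies by reducing to a floor inequality and checking residues mod $3$ (your explicit $m_i+1=3q_i+r_i$ bookkeeping is just a cleaner way of writing the same case check).
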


\begin{proof}
We induct on $\ell$.  The case $\ell=1$ is trivially true.  In order
to prove the case~$\ell=2$ we need the following observation:
\begin{equation*}
\biggl\lfloor\frac{m_1}{3}\biggr\rfloor +
\biggl\lfloor\frac{m_2}{3}\biggr\rfloor \ge
\biggl\lfloor\frac{m_{1}+m_{2}+1}{3}\biggr\rfloor-1
\tag{$*$}
\end{equation*}
for any $m_{1}, m_{2}\in \Z$.  It suffices to consider
the cases where~$m_1$ and~$m_2$ are~$0$,~$1$ or~$2$, and these cases
are readily checked.  Thus we obtain
\begin{align*}
\nu(m_1)+\nu(m_2) & =
\biggl\lfloor\frac{m_1+1}{3} \biggr\rfloor
+ \biggl\lfloor\frac{m_2+1}{3}\biggr\rfloor-2\\
& \ge
\biggl\lfloor\frac{m_1+m_2+3}{3}\biggr\rfloor-3 &&
\text{(using the inequality~($*$))}
\\
& =
\biggl\lfloor\frac{m_1+m_2-3}{3}\biggr\rfloor-1
\\[1ex]
& =
\nu(m_1+m_2-4)
\\
\intertext{and this finishes the case $\ell=2$.  Finally, suppose that
$\ell>2$. Then}
\sum_{i=1}^{\ell}\nu(m_i)
& =
\sum_{i=1}^{\ell-1}\nu(m_i)+\nu(m_{\ell})
\\
& \ge
\nu\biggl(\sum_{i=1}^{\ell-1}m_i-4(\ell-2)\biggr) + \nu(m_{\ell})
&& \text{(by induction)}
\\
& \ge\nu\biggl(\sum_{i=1}^{\ell-1}m_i-4(\ell-2)+m_{\ell}-4\biggr)
&& \text{(by the $\ell=2$ case)}
\\
& =
\nu\biggl(\sum_{i=1}^{\ell}m_i-4(\ell-1)\biggr).
\tag*{\qedhere}
\end{align*}
\end{proof}

To prove that $\match(K_n)$ is $(\nu(n)-1)$-connected it will be
convenient to embed it into a contractible space, so we can use the
Morse Lemma.  Let $\hatch(K_n)$ be the simplicial complex with a
simplex for every subgraph of $K_n$ that has the same vertex set as
$K_n$ and has at least one edge, with face relation given by
inclusion.  Hence a~$0$-simplex in~$\hatch(K_n)$ is a subgraph with a
single edge, a~$1$-simplex has two edges, and so forth.  In fact, $\hatch(K_n)$ is isomorphic to
an~$\bigl(\binom{n}{2}-1\bigr)$-simplex, and so is contractible.
Think of~$\match(K_n)$ as a subcomplex of the contractible complex~$\hatch(K_n)$.  Consider a simplex~$\Gamma$ in $\hatch(K_n)$.  Let
$e(\Gamma)$ be the number of edges of $\Gamma$ and let~$r(\Gamma)$ be
the number of non-isolated nodes of $\Gamma$.  Define the
\emph{defect} of~$\Gamma$ to be the number~$d(\Gamma) \defeq
2e(\Gamma)-r(\Gamma)$.  This measures the failure of $\Gamma$ to be in
$\match(K_n)$, in that~$\match(K_n)$ is precisely the set of simplices
of~$\hatch(K_n)$ with defect~$0$.  Note that~$\match(K_n)$ already
contains every~$0$-simplex of~$\hatch(K_n)$.  Also note that a
subgraph~$\Gamma'$ of a graph~$\Gamma$ cannot have higher defect
than~$\Gamma$.  Now define a function~$h(\Gamma) \defeq
(d(\Gamma),-e(\Gamma))$, and consider its values ordered
lexicographically.  Think of~$h$ as a function on the vertex set of
$\hatch(K_n)'$, the barycentric subdivision of $\hatch(K_n)$.  Note
that adjacent vertices have distinct~$e$-values, and hence
distinct~$h$-values, so this is a height function on the vertex set of
$\hatch(K_n)'$.

Consider the descending link $\dlk(\Gamma)$ of $\Gamma$ in
$\hatch(K_n)'$.  There are two types of vertices in $\dlk(\Gamma)$,
namely graphs $\widetilde{\Gamma}>\Gamma$ with
$h(\widetilde{\Gamma})<h(\Gamma)$ and graphs $\Gamma'<\Gamma$
with~$h(\Gamma')<h(\Gamma)$.  Define the \emph{up-link} (respectively
\emph{down-link}) to be the full subcomplex of $\dlk(\Gamma)$ spanned
by vertices of the first type (respectively second type).  Observe
that $h(\widetilde{\Gamma})<h(\Gamma)$ is equivalent to
$d(\widetilde{\Gamma})=d(\Gamma)$, and $h(\Gamma')<h(\Gamma)$ is
equivalent to $d(\Gamma')<d(\Gamma)$.  Any graph in the down-link is a
subgraph of any graph in the up-link, so $\dlk(\Gamma)$ is the join of
the up-link and down-link. See Figure~\ref{fig:defect-1} for an idea of defect, up-link and down-link.

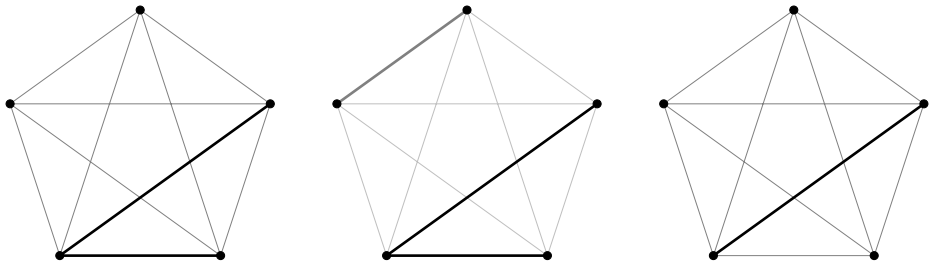
\begin{figure}[t]
\centering
\begin{tikzpicture}
        \coordinate (A) at ( 90:1.8);
	\coordinate (B) at (162:1.8);
	\coordinate (C) at (234:1.8);
	\coordinate (D) at (306:1.8);
	\coordinate (E) at ( 18:1.8);
	
	\draw[gray, very thin] (A) -- (B) -- (C) -- (D) -- (E) -- cycle
	(A) -- (C) -- (E) -- (B) -- (D) -- (A);
	
	\draw[line width=1pt] 
	(C) -- (E)
	(C) -- (D);
	
	\filldraw 
	(A) circle(1.5pt)
	(B) circle(1.5pt)
	(C) circle(1.5pt)
	(D) circle(1.5pt)
	(E) circle(1.5pt);
	
	\begin{scope}[xshift=4.3cm]
	    \coordinate (A) at ( 90:1.8);
	    \coordinate (B) at (162:1.8);
	    \coordinate (C) at (234:1.8);
	    \coordinate (D) at (306:1.8);
	    \coordinate (E) at ( 18:1.8);
	\end{scope}
	
	\draw[lightgray, very thin] (A) -- (B) -- (C) -- (D) -- (E) -- cycle
	(A) -- (C) -- (E) -- (B) -- (D) -- (A);
	
	\draw[line width=1pt] 
	(C) -- (E)
	(C) -- (D);
	\draw[line width=1pt, gray] 
	(A) -- (B);
	
	\filldraw 
	(A) circle(1.5pt)
	(B) circle(1.5pt)
	(C) circle(1.5pt)
	(D) circle(1.5pt)
	(E) circle(1.5pt);
	
	\begin{scope}[xshift=8.6cm]
	    \coordinate (A) at ( 90:1.8);
	    \coordinate (B) at (162:1.8);
	    \coordinate (C) at (234:1.8);
	    \coordinate (D) at (306:1.8);
	    \coordinate (E) at ( 18:1.8);
	\end{scope}
	    
	\draw[gray, very thin] (A) -- (B) -- (C) -- (D) -- (E) -- cycle
	(A) -- (C) -- (E) -- (B) -- (D) -- (A);
	
	\draw[line width=1pt] 
	(C) -- (E);
	
	\filldraw 
	(A) circle(1.5pt)
	(B) circle(1.5pt)
	(C) circle(1.5pt)
	(D) circle(1.5pt)
	(E) circle(1.5pt);
\end{tikzpicture}
\caption{Three vertices in
$\hatch(K_5)$.  From left to right: a graph~$\Gamma$ with defect~$1$,
a graph in the up-link of $\Gamma$ and a graph in the down-link
of~$\Gamma$.}
\label{fig:defect-1}
\end{figure}

\pagebreak[3]

\begin{proposition}
\label{prop:matching_cpx_conn}
The complex $\match(K_n)$ is $(\nu(n)-1)$-connected.
\end{proposition}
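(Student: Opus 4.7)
The plan is to induct on $n$, with small cases verified directly, and for the inductive step apply the second part of the Morse Lemma to the contractible complex $\hatch(K_n)$ (in its barycentric subdivision) equipped with the height function $h=(d,-e)$ ordered lexicographically. Since $\match(K_n)$ sits inside $\hatch(K_n)'$ as the full subcomplex spanned by the defect-zero vertices, establishing that $\dlk(\Gamma)$ is $(\nu(n)-1)$-connected for every $\Gamma$ with $d(\Gamma)>0$ will, via the long exact sequence of the pair and the contractibility of $\hatch(K_n)$, yield the conclusion.

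To estimate $\dlk(\Gamma)$, I would use the join decomposition $\dlk(\Gamma)=U(\Gamma)\ast D(\Gamma)$ together with the formula $\operatorname{conn}(X\ast Y)\geq\operatorname{conn}(X)+\operatorname{conn}(Y)+2$. For the up-link, monotonicity of defect under taking subgraphs forces any $\widetilde\Gamma>\Gamma$ with $d(\widetilde\Gamma)=d(\Gamma)$ to arise by adding to $\Gamma$ a vertex-disjoint collection of edges each joining two vertices isolated in $\Gamma$; hence $U(\Gamma)$ is (a barycentric subdivision of) $\match(K_{|I|})$, where $I$ is the set of isolated vertices of $\Gamma$. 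Writing $r=r(\Gamma)$ and $i=n-r$, the inductive hypothesis yields $\operatorname{conn}(U(\Gamma))\geq\nu(i)-1$.

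For the down-link I would first check that a proper subgraph $\Gamma'<\Gamma$ satisfies $d(\Gamma')=d(\Gamma)$ exactly when $\Gamma'$ arises from $\Gamma$ by deleting a nonempty subset of its isolated edges (single-edge connected components). Thus $D(\Gamma)$ is the order complex of proper non-empty subgraphs of $\Gamma$ after excising these ``trivial deletion'' vertices. In the principal case when $\Gamma$ is connected on $r\geq 3$ vertices it has no isolated edges, so $D(\Gamma)$ is the full barycentric subdivision of $\partial\Delta^{e(\Gamma)-1}\simeq S^{e(\Gamma)-2}$, giving $\operatorname{conn}(D(\Gamma))\geq e(\Gamma)-3\geq r-4$. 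For $\Gamma$ with multiple connected components $C_1,\dots,C_\ell$ one exploits the product structure of the subgraph poset and assembles the individual contributions via Lemma~\ref{lem:play_with_floor}.

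Combining the bounds should give $\operatorname{conn}(\dlk(\Gamma))\geq\nu(n)-1$: in the connected case one obtains $\operatorname{conn}(\dlk(\Gamma))\geq\nu(n-r)+r-3$, and a direct residue-modulo-$3$ check using $\nu(m)=\lfloor(m+1)/3\rfloor-1$ shows this dominates $\nu(n)-1$. The hard part will be the down-link analysis when $\Gamma$ has isolated-edge components (where certain vertices are excised from $\partial\Delta^{e-1}$) or multiple non-trivial components (where the join bound must be combined with the arithmetic lemma); the borderline cases where the bound is attained with equality, such as $\Gamma$ a path on three vertices, are precisely those that force the specific form of $\nu$ and demand Lemma~\ref{lem:play_with_floor} rather than any cruder additivity.
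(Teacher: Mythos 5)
Your outline — induction on $n$, the height $h=(d,-e)$ on $\hatch(K_n)'$, the Morse Lemma against the contractible ambient complex, the join decomposition of $\dlk(\Gamma)$, and the identification of the up-link with $\match(K_{n-r(\Gamma)})$ — is exactly the paper's, and your arithmetic in the connected case is correct (including the observation that a path on three vertices attains the bound). The gap is the down-link in the remaining cases, which is where most graphs with $d(\Gamma)\ge 1$ live: any $\Gamma$ with an isolated edge or with several components. The tools you propose there do not apply. The subgraph poset of $\Gamma=C_1\sqcup\cdots\sqcup C_\ell$ is indeed a product, but the boundary of a join is $(\partial A * B)\cup(A*\partial B)$, not a join, so the down-link does not decompose component-wise. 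And Lemma~\ref{lem:play_with_floor} is an inequality about sums of $\nu$-values for joins of matching complexes — it is what handles the up-link in the surface version, Theorem~\ref{thrm:surface_matching_conn}, where the cut surface disconnects — and it has no bearing on the down-link; it is not needed anywhere in the proof for $\match(K_n)$.

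The missing observation, which makes every case uniform, is the following. Let $A$ be the set of edges of $\Gamma$ that meet some other edge of $\Gamma$ (nonempty since $d(\Gamma)\ge1$). A proper subgraph $\Gamma'$ satisfies $d(\Gamma')=d(\Gamma)$ if and only if $\Gamma'\supseteq A$, so the down-link is the full subcomplex of the $(e(\Gamma)-2)$-sphere of proper nonempty subgraphs spanned by $\{\Gamma' : \Gamma'\not\supseteq A\}$. If $A=\Gamma$ this is the whole sphere; otherwise the poset maps $\Gamma'\le \Gamma'\cup(\Gamma\setminus A)\ge \Gamma\setminus A$ give a conical contraction, so the down-link is contractible. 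In either case it is $(e(\Gamma)-3)$-connected, hence $\dlk(\Gamma)$ is $\bigl(e(\Gamma)+\nu(n-r(\Gamma))-2\bigr)$-connected for \emph{every} $\Gamma$ with $d(\Gamma)\ge1$, and the single computation $e+\nu(n-r)-2=\nu(n+d+e-3)-1\ge\nu(n)-1$ (using only $e\ge2$ and $d\ge1$) closes the induction with no case split on the component structure. Without this (or some substitute argument for disconnected $\Gamma$ and for $\Gamma$ with isolated edges), the proof as proposed does not go through.
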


This result is well-known, see for example~\cite{athanasiadis04,
bjorner94}.  We will prove it here using a method that we will use
later to prove the main result of this section,
Theorem~\ref{thrm:surface_matching_conn}.

\begin{proof}
As a base case, $\match(K_n)$ is non-empty for $n\ge2$.  Now suppose
$n\ge5$.  Since~$\hatch(K_n)$ is contractible and its~$0$-skeleton is
already in~$\match(K_n)$, to show that~$\match(K_n)$ is
$(\nu(n)-1)$-connected it suffices by the Morse Lemma to show that for
any~$\Gamma$ with $e(\Gamma)\ge2$ and~$d(\Gamma)\ge1$, the descending
link~$\dlk(\Gamma)$ is~$(\nu(n)-1)$-connected.  First consider the
down-link.  A subgraph of~$\Gamma'<\Gamma$ fails to
be in the down-link precisely if each edge
in~$\Gamma\setminus\Gamma'$ is disjoint from every other edge
of~$\Gamma$, since then and only then do~$\Gamma$ and~$\Gamma'$ have
the same defect.  Let~$\Gamma_0$ be the subgraph of~$\Gamma$
consisting precisely of all such edges, if any exist.  The space of
all proper subgraphs of~$\Gamma$ is a combinatorial $(e(\Gamma)-2)$-sphere, and the
complement in this space of the down-link is either empty, or is
contractible with cone point~$\Gamma_0$.  Hence the down-link is
either an~$(e(\Gamma)-2)$-sphere or is contractible.  Now consider the
up-link.  The graphs in the up-link are given by adding edges
to~$\Gamma$ that are all disjoint from each other and from the edges
of~$\Gamma$.  Hence the up-link is isomorphic to~$\match(K_{n-r(\Gamma)})$, and
so is~$(\nu(n-r(\Gamma))-1)$-connected
by induction.  Since the down-link is $(e(\Gamma)-3)$-connected, this
tells us that $\dlk(\Gamma)$ is
$(e(\Gamma)+\nu(n-r(\Gamma))-2)$-connected.  Since~$e(\Gamma)\ge 2$
and $d(\Gamma)\ge 1$, we have
\begin{align*}
 e(\Gamma)+\nu(n-r(\Gamma))-2 &= \nu(n+3e(\Gamma)-r(\Gamma)-3)-1 \\
 &= \nu(n+d(\Gamma)+e(\Gamma)-3)-1 \\
 &\ge \nu(n)-1
\end{align*}
and so we conclude that $\dlk(\Gamma)$ is indeed $(\nu(n)-1)$-connected.
\end{proof}

\medskip

We now define the notion of the matching complex on a surface.  We
retain all the notation and definitions from
Section~\ref{sec:arc_cpx}, including the surface $S$ with points~$P$.
Fix a labeling of the points in~$P$ by the numbers $1,\dots,n$, and
identify~$P$ with the set of nodes of $K_n$, as above.

\begin{definition}[Matching complex on a surface]\label{def:match_cpx_surface}
Let $\matcharc(K_n)$ be the subcomplex of $\hatcharc(K_n)$ whose
simplices are given by arc systems whose arcs are pairwise disjoint
including at their endpoints.  For a subgraph $\Gamma$ of $K_n$ let
$\matcharc(\Gamma)$ be the preimage of $\match(\Gamma)$ under the
map $\matcharc(K_n)\to\match(K_n)$ that sends an arc with endpoints
labeled~$i$ and $j$ to the edge of $K_n$ with endpoints $i$ and $j$.
We call $\matcharc(\Gamma)$ the \emph{matching complex} on $(S,P)$
corresponding to $\Gamma$.
\end{definition}

Our goal now is to show that $\matcharc(K_n)$ is
$(\nu(n)-1)$-connected, just like $\match(K_n)$.  Our proof mimics the
proof of Proposition~\ref{prop:matching_cpx_conn}.  As a remark, in
the next section we will analyze the connectivity of $\matcharc(L_n)$ using different methods,
in Corollary~\ref{cor:surface_line_matching_conn}, and also give an
alternate proof that $\matcharc(K_n)$ is $(\nu(n)-1)$-connected.  The
proof that we give in this section for $\matcharc(K_n)$ is more in
line with the analogous situation for graphs, but does not generalize
readily to $\matcharc(L_n)$.

We begin by defining a height function~$h$ on the vertex set of the
barycentric subdivision~$\hatcharc(K_n)'$ of $\hatcharc(K_n)$, similar
to the height function on the vertex set of~$\hatch(K_n)'$.  In fact
we will reuse much of the notation from the $\match(K_n)$ case.  Let
$\sigma$ be a~$k$-simplex in $\hatcharc(K_n)$, represented by the arcs
$\alpha_0,\dots,\alpha_k$.  Choose these arcs to be pairwise disjoint,
except possibly at their endpoints.  Let $A(\sigma) \defeq
\alpha_0\cup\cdots\cup\alpha_k$ as a subspace of~$S$.  Also let
$a(\sigma) \defeq k+1$ be the number of arcs, let $r(\sigma)\defeq
|A\cap P|$ and define the \emph{defect} $d(\sigma) \defeq
2a(\sigma)-r(\sigma)$.  For example if every arc in $\sigma$ has the
same two points as endpoints then $d(\sigma)=2a(\sigma)-2$, the
maximum possible.  The defect is zero if and
only if $\sigma\in\matcharc(K_n)$, so in particular~$\matcharc(K_n)$
is a sublevel set.  Now define $h(\sigma)\defeq
(d(\sigma),-a(\sigma))$ with the lexicographic ordering.  In
particular, adjacent vertices in $\hatcharc(K_n)'$ have
distinct~$h$-values.  As in the case of~$\match(K_n)$, $\dlk(\sigma)$
decomposes as the join of an up-link and a down-link.  The up-link is
given by simplices $\widetilde{\sigma}>\sigma$ with
$d(\widetilde{\sigma})=d(\sigma)$, and the down-link is given by
faces~$\sigma'<\sigma$ with $d(\sigma')<d(\sigma)$; see
Figure~\ref{fig:arc-complex-link}.

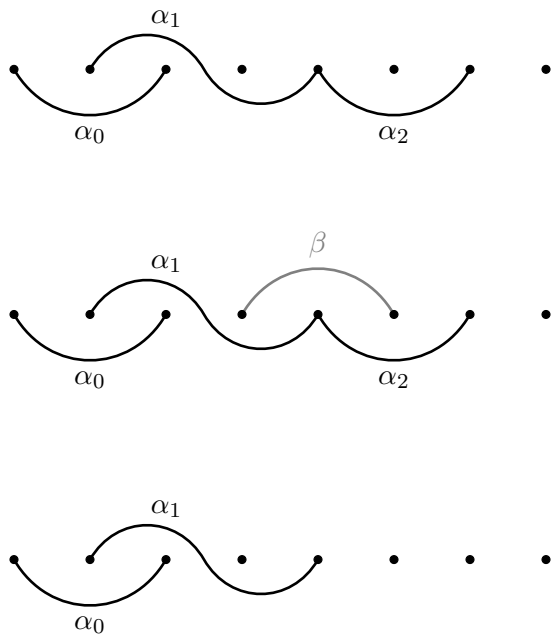
\begin{figure}
\centering
\begin{tikzpicture}
	\draw[line width=1pt]
	(0,0) to [out=-60, in=240,looseness=1.2]  
	      node[pos=0.5, below] {$\alpha_{0}$} (2,0)
	(1,0) to [out= 60, in=120,looseness=1.2] 
	      node[pos=0.65, above] {$\alpha_{1}$} (2.5,0)
	      to [out=-60, in=240,looseness=1.2] (4,0)
	(4,0) to [out=-60, in=240,looseness=1.2]  
	      node[pos=0.5, below] {$\alpha_{2}$} (6,0);
	
        \filldraw 
	(0,0) circle (1.5pt)
	(1,0) circle (1.5pt)
	(2,0) circle (1.5pt)
	(3,0) circle (1.5pt)
	(4,0) circle (1.5pt)
	(5,0) circle (1.5pt)
	(6,0) circle (1.5pt)
	(7,0) circle (1.5pt);  
	
	\begin{scope}[yshift=-3.25cm]
	    \draw[line width=1pt]
	    (0,0) to [out=-60, in=240,looseness=1.2]  
		  node[pos=0.5, below] {$\alpha_{0}$} (2,0)
	    (1,0) to [out= 60, in=120,looseness=1.2] 
		  node[pos=0.65, above] {$\alpha_{1}$} (2.5,0)
		  to [out=-60, in=240,looseness=1.2] (4,0)
	    (4,0) to [out=-60, in=240,looseness=1.2]  
		  node[pos=0.5, below] {$\alpha_{2}$} (6,0);
	    \draw[line width=1pt,gray]
	    (3,0) to [out=60, in=120,looseness=1.2]  
		  node[pos=0.5, above] {$\beta$} (5,0);
		
	    \filldraw 
	    (0,0) circle (1.5pt)
	    (1,0) circle (1.5pt)
	    (2,0) circle (1.5pt)
	    (3,0) circle (1.5pt)
	    (4,0) circle (1.5pt)
	    (5,0) circle (1.5pt)
	    (6,0) circle (1.5pt)
	    (7,0) circle (1.5pt);  
          \end{scope} 
	
	\begin{scope}[yshift=-6.5cm]
	    \draw[line width=1pt]
	    (0,0) to [out=-60, in=240,looseness=1.2]  
		  node[pos=0.5, below] {$\alpha_{0}$} (2,0)
	    (1,0) to [out= 60, in=120,looseness=1.2] 
		  node[pos=0.65, above] {$\alpha_{1}$} (2.5,0)
		  to [out=-60, in=240,looseness=1.2] (4,0);
	   
	    \filldraw 
	    (0,0) circle (1.5pt)
	    (1,0) circle (1.5pt)
	    (2,0) circle (1.5pt)
	    (3,0) circle (1.5pt)
	    (4,0) circle (1.5pt)
	    (5,0) circle (1.5pt)
	    (6,0) circle (1.5pt)
	    (7,0) circle (1.5pt);  
	\end{scope}
\end{tikzpicture}
\caption{From top to bottom: an
arc system $\sigma$ with defect~$1$, a simplex in the up-link of
$\sigma$ and a simplex in the down-link of~$\sigma$.}
\label{fig:arc-complex-link}
\end{figure}

\begin{theorem}
\label{thrm:surface_matching_conn}
The complex $\matcharc(K_n)$ is $(\nu(n)-1)$-connected.
\end{theorem}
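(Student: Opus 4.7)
The plan is to mimic the Morse-theoretic proof of Proposition~\ref{prop:matching_cpx_conn}, now applied to $\hatcharc(K_n)'$ with the height function $h(\sigma) = (d(\sigma), -a(\sigma))$, and to induct on $n$. The base cases $n \le 4$ amount to non-emptiness of $\matcharc(K_n)$, which is immediate for $n \ge 2$. For the inductive step I would invoke Proposition~\ref{prop:arc_cpx_contractible} so that $\hatcharc(K_n)$ is contractible, note that its $0$-skeleton has defect zero and therefore lies in $\matcharc(K_n)$, and use the Morse Lemma to reduce the statement to showing that $\dlk(\sigma)$ is $(\nu(n)-1)$-connected for every $\sigma$ with $a(\sigma) \ge 2$ and $d(\sigma) \ge 1$. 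The descending link decomposes as the join of its down-link and its up-link, which I would bound separately.

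For the down-link I would copy the graph argument verbatim. Call an arc of $\sigma$ \emph{isolated} when both of its endpoints are private to it, let $V_0$ be the set of isolated arcs among the vertices of $\sigma$, and note that $d(\sigma) \ge 1$ forces $V_0$ to be a proper subset of the vertex set $V(\sigma)$. A face $\sigma' < \sigma$ has strictly smaller defect precisely when $\sigma \setminus \sigma'$ contains a non-isolated arc, so the down-link is $\partial \Delta^{V(\sigma) \setminus V_0} * \Delta^{V_0}$. This is the full boundary sphere $S^{a(\sigma)-2}$ when $V_0 = \emptyset$ and contractible otherwise, so in either case it is $(a(\sigma)-3)$-connected.

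For the up-link, unpacking the equality $d(\widetilde\sigma) = d(\sigma)$ shows that the new arcs $\beta_1,\dots,\beta_m$ must be pairwise disjoint including at endpoints, disjoint from $A(\sigma)$, and have all endpoints in $P \setminus (A(\sigma) \cap P)$. Choosing representatives via Lemma~\ref{lem:arc_representatives}, these arcs live on the cut surface $S^* \defeq S \setminus A(\sigma)$, so the up-link is canonically identified with $\matcharc(K_{n-r(\sigma)})$ on $(S^*, P \setminus A(\sigma))$. If $S^*$ has connected components carrying $n_1,\ldots,n_\ell$ marked points, the up-link becomes the iterated join $\bigjoin_{i=1}^{\ell} \matcharc(K_{n_i})$; by the inductive hypothesis each factor is $(\nu(n_i)-1)$-connected, so the join formula combined with Lemma~\ref{lem:play_with_floor} gives up-link connectivity at least $\nu\bigl(n - r(\sigma) - 4(\ell-1)\bigr) + \ell - 2$.

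Joining with the down-link yields descending-link connectivity at least $\nu\bigl(n - r(\sigma) - 4(\ell-1)\bigr) + \ell + a(\sigma) - 3$, and after applying the standard estimate $\nu(m)-\nu(m-k) \le \lceil k/3 \rceil$ the desired bound reduces to the combinatorial inequality $\ell \le a(\sigma) + d(\sigma) - 2$. The main obstacle will be verifying this inequality uniformly across connected surfaces: on a disk it follows from Euler's formula, which gives $\ell = 1 + \beta_1(A(\sigma))$ together with $\beta_1(A(\sigma)) \le a(\sigma) - 1$ (the latter using $d(\sigma) \ge 1$ to rule out $A(\sigma)$ being a matching), while on higher-genus surfaces one needs that non-separating arcs in $\sigma$ can only decrease $\ell$. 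Controlling this count in the face of disconnected cut surfaces, and shepherding it through the arithmetic of Lemma~\ref{lem:play_with_floor}, is precisely what makes the surface case more delicate than the graph case.
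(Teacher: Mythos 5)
Your proposal is essentially the paper's own proof: the same height function $h=(d,-a)$ on $\hatcharc(K_n)'$, the same down-link analysis, the same identification of the up-link with $\bigast_{i=1}^{\ell}\matcharc(K_{n_i})$ fed into Lemma~\ref{lem:play_with_floor}, and the same reduction to a bound on $\ell$ settled by an Euler-characteristic count. One small bookkeeping point: your stated estimate $\beta_1(A(\sigma))\le a(\sigma)-1$ only yields $\ell\le a(\sigma)$, which misses the target $\ell\le a(\sigma)+d(\sigma)-2$ by one when $d(\sigma)=1$; the paper instead proves the stronger inequality $\ell\le d(\sigma)$ by observing that $r(\sigma)-a(\sigma)+\ell\le 1+z$ (with $z$ the number of components of $A(\sigma)$) and that $1+z\le a(\sigma)$ since $d(\sigma)\ge 1$ forces some component to contain at least two arcs --- the same argument, correctly closed, also handles your worry about higher-genus surfaces, since the Euler count is an inequality there.
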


\begin{proof}
The proof runs very similarly to the proof of
Proposition~\ref{prop:matching_cpx_conn}.  Dealing with the up-link is
the biggest difference.

As a base case, $\matcharc(K_n)$ is non-empty for $n\ge2$.  Now
suppose $n\ge5$.  Since~$\hatcharc(K_n)$ is contractible by
Proposition~\ref{prop:arc_cpx_contractible}, and the $0$-skeleton of
$\hatcharc(K_n)$ is already in $\matcharc(K_n)$, to show that
$\matcharc(K_n)$ is $(\nu(n)-1)$-connected it suffices by the Morse
Lemma to show that for any $k$-simplex~$\sigma$ in $\hatcharc(K_n)$
with $k\ge1$ and $d(\sigma)\ge1$, the descending link $\dlk(\sigma)$
is $(\nu(n)-1)$-connected.  First consider the down-link.  A
face~$\sigma'$ of~$\sigma$ fails to be in the down-link precisely when
each arc in~$\sigma\setminus\sigma'$ is disjoint from every other arc
of $\sigma$, since then and only then do $\sigma$ and $\sigma'$ have
the same defect.  Let $\sigma_0$ be the face of $\sigma$ consisting
precisely of all such arcs, if any exist.  The boundary of $\sigma$ is
an $(a(\sigma)-2)$-sphere.  The complement in this space of the
down-link is either empty, or is a cone with cone point
$\sigma_0$.  Hence the down-link is either an $(a(\sigma)-2)$-sphere
or is contractible.

Now consider the up-link.  The simplices in the up-link are given by
adding arcs to~$\sigma$ that are all disjoint from each other and from
the arcs in $\sigma$.  Consider the surface~$S\setminus A$, obtained by cutting out the arcs $\alpha_i$.  Denote the
connected components of $S\setminus A$ by $C_1,\dots,C_{\ell}$, and
let $n_i \defeq |C_i\cap P|$ for each~$1\le i\le \ell$.  Note that 
$\sum_{i=1}^{\ell}n_i + r(\sigma)= n$.  The up-link
is isomorphic to the join $\bigast_{i=1}^{\ell} \matcharc(K_{n_i})$,
which by induction is
$\bigl(\sum_{i=1}^{\ell}\nu(n_i)+\ell-2\bigr)$-connected.  Since the
down-link is~$(a(\sigma)-3)$-connected, this tells us
that~$\dlk(\sigma)$ is $\bigl(a(\sigma) - 2 + \sum_{i=1}^{\ell}
\nu(n_i) + \ell - 1\bigr)$-connected.  By
Lemma~\ref{lem:play_with_floor},
\begin{equation*}
a(\sigma) - 2 + \sum_{i=1}^{\ell} \nu(n_i) + \ell-1 \ge a(\sigma) - 2 +
\nu\biggl(\sum_{i=1}^{\ell} n_i - 4(\ell-1) \biggr) + \ell - 1
\end{equation*}
which equals
\begin{align*}
\nu\biggl(\sum_{i=1}^{\ell}n_i-4(\ell-1)+3a(\sigma)+3\ell-6\biggr)-1
& =
\nu(n-r(\sigma)-\ell+3a(\sigma)-2)-1
\\
& =
\nu(n+d(\sigma)+a(\sigma)-\ell-2)-1.
\end{align*}
Here the first equality follows because $\sum_{i=1}^{\ell}n_i =
n-r(\sigma)$, and the second equality follows from the definition of
defect, namely $d(\sigma)=2a(\sigma)-r(\sigma)$.  Since
$a(\sigma)\ge2$, this last quantity is at least
$\nu(n+d(\sigma)-\ell)-1$.

To show that $\dlk(\sigma)$ is $(\nu(n)-1)$-connected, it now suffices
to show that $d(\sigma)\ge \ell$.  By an Euler characteristic
argument, we know that $r(\sigma)-a(\sigma)+\ell\le 1+z$, where~$z$ is
defined to be the number of connected components of $A(\sigma) =
\bigcup_{i=0}^k \alpha_i$.  Clearly each connected component of
$A(\sigma)$ contains at least one arc, and since $d(\sigma)\ge 1$ at
least one component must have more than one arc.  Hence $1+z\le
a(\sigma)$, which implies that $r(\sigma)+\ell\le 2a(\sigma)$, and so
indeed $\ell\le d(\sigma)$.
\end{proof}

\subsection{Connectivity of $\matcharc(L_n)$}\label{sec:nbr_arcs}

Recall the family of linear graphs $L_n$ from
Section~\ref{sec:match_cpxes}.  In this section we analyze the
connectivity of $\matcharc(L_n)$, by following the line of approach
used in the proof of Proposition~5.2 in~\cite{putman13}.  This
approach can also be adapted to recover the connectivity of
$\matcharc(K_n)$.

We first need a lemma that allows us to make certain assumptions about
maps from spheres to~$\matcharc(\Gamma)$.  To state it we need to
recall some definitions.  By a \emph{combinatorial~$k$-sphere
(respectively~$k$-disk)} we mean a simplicial complex that can be
subdivided to be isomorphic to a subdivision of the boundary of a~$(k+1)$-simplex (respectively to a subdivision of a $k$-simplex).  An
$m$-dimensional \emph{combinatorial manifold} is an $m$-dimensional
simplicial complex in which the link of every simplex~$\sigma$ of
dimension~$k$ is a combinatorial $(m-k-1)$-sphere.  In an~$m$-dimensional \emph{combinatorial manifold with boundary} the link
of a $k$-simplex~$\sigma$ is allowed to be homeomorphic to a
combinatorial~$(m-k-1)$-disk; its \emph{boundary} consists of all the
simplices whose link is indeed a disk.

A simplicial map is called \emph{simplexwise injective} if its
restriction to any simplex is injective.

\begin{lemma}[Reduction to the simplexwise injective case]
\label{lem:injectifying}
Let $Y$ be a compact $m$-dimensional combinatorial manifold.  Let $X$ be a
simplicial complex and assume that the link of every $k$-simplex in
$X$ is $(m-2k-2)$-connected.  Let $\psi \colon Y \to X$ be a
simplicial map whose restriction to $\partial Y$ is simplexwise
injective.  Then after possibly subdividing the simplicial structure of $Y$, $\psi$ is
homotopic relative $\partial Y$ to a simplexwise injective map.
\end{lemma}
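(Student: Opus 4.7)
The plan is to argue by induction on the number of \emph{collapsed edges} of $\psi$, that is, edges $e = [v, w]$ of $Y$ with $\psi(v) = \psi(w)$. This reduction is valid because $\psi$ is simplexwise injective if and only if no edge is collapsed: a failure of injectivity on a simplex $\sigma$ produces two vertices of $\sigma$ with the same image, and the edge between them is collapsed. Moreover, since $\psi|_{\partial Y}$ is simplexwise injective, every collapsed edge lies in the interior of $Y$, so its link in $Y$ is a combinatorial sphere rather than a disk.

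Suppose at least one edge is collapsed, and pick such $e = [v, w]$ with common image $v' \defeq \psi(v) = \psi(w)$. Set $L \defeq \lk_Y(e)$, which is a combinatorial $(m-2)$-sphere. The restriction $\psi|_L$ lands in $\lk_X(v')$, and by the hypothesis applied to the $0$-simplex $\{v'\}$, $\lk_X(v')$ is $(m-2)$-connected. Hence, after subdividing if necessary, $\psi|_L$ extends to a simplicial map $\phi \colon D \to \lk_X(v')$, where $D$ is a combinatorial $(m-1)$-disk with $\partial D = L$.

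Next I would perform a local surgery on $\st_Y(e)$, which as a subcomplex equals the join $e \ast L$. Subdivide this star by gluing $D$ in as an interior $(m-1)$-disk, splitting it into two combinatorial $m$-disks $v \ast D$ and $w \ast D$ glued along $D$. Define a new simplicial map $\widetilde{\psi}$ agreeing with $\psi$ outside $\st_Y(e)$, agreeing with $\phi$ on $D$, and extending by the cone/join structure over $v \ast D$ and $w \ast D$, keeping $v, w \mapsto v'$. This is well defined because $\phi(D) \subseteq \lk_X(v')$, so $\{v'\} \cup \phi(\sigma)$ spans a simplex of $X$ for each simplex $\sigma$ of $D$. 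The maps $\psi$ and $\widetilde{\psi}$ coincide on $\partial \st_Y(e)$, and both send $\st_Y(e)$ into the contractible subcomplex $\st_X(v')$, so they are homotopic relative to $\partial \st_Y(e)$, and in particular relative to $\partial Y$.

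The main obstacle will be guaranteeing that $\widetilde{\psi}$ has strictly fewer collapsed edges than $\psi$. The edge $e$ itself has been subdivided away; the new cone edges $[v, x]$ and $[w, x]$ with $x$ an interior vertex of $D$ map to $[v', \phi(x)]$, which is non-degenerate because $\phi(x) \in \lk_X(v')$ forces $\phi(x) \neq v'$; and the edges of $L$ remain unchanged. The genuinely delicate case is that of edges entirely inside $D$ which $\phi$ might collapse. To control this I would choose $\phi$ recursively: whenever $\phi$ collapses some edge of $D$, apply the same surgery to the pair $(D, \phi)$, now viewed as an $(m-1)$-dimensional combinatorial manifold mapping into $\lk_X(v')$. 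The hypothesis that $\lk_X(\tau)$ is $(m - 2k - 2)$-connected for every $k$-simplex $\tau$ of $X$ is tuned so that the links in $\lk_X(v')$ retain just enough connectivity to close this secondary induction (the numerology being the main technical subtlety of the argument). Iterating the procedure eliminates all collapsed edges in finitely many steps, producing the desired simplexwise injective map homotopic to $\psi$ relative to $\partial Y$.
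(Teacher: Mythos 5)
Your surgery is the same local move as in the paper (replace the closed star of a degenerate simplex by the join of a filling disk with the boundary of that simplex), but your rule for \emph{which} degenerate simplex to operate on --- any collapsed edge $e=[v,w]$ --- breaks the two facts that make the move possible. The paper instead chooses a simplex $\sigma$ of \emph{maximal} dimension $k>0$ with the property that every vertex of $\sigma$ shares its $\psi$-image with another vertex of $\sigma$; maximality is exactly what guarantees (i) $\psi(\lk_Y(\sigma))\subseteq \lk_X(\psi(\sigma))$ and (ii) that $\psi$ restricted to $\lk_Y(\sigma)$ is simplexwise injective. Neither holds for an arbitrary collapsed edge. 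For (i): if $Y$ has a $2$-simplex $[v,w,u]$ with $\psi(v)=\psi(w)=\psi(u)=v'$, then $u\in L=\lk_Y(e)$ but $\psi(u)=v'\notin \lk_X(v')$, so the extension $\phi\colon D\to \lk_X(v')$ of $\psi|_L$ that you want simply does not exist: its prescribed boundary values do not lie in the target. For (ii): a $3$-simplex $[v,w,u_1,u_2]$ with $\psi(u_1)=\psi(u_2)$ puts a collapsed edge into $\partial D=L$, and then your recursive appeal to the lemma for the pair $(D,\phi)$ is illegitimate --- the lemma requires simplexwise injectivity on the boundary, and no homotopy relative to $\partial D$ can repair a collapsed edge sitting in $\partial D$ --- so you cannot make $\phi$ simplexwise injective and the number of collapsed edges need not decrease.

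There is also a quantitative gap in the recursion you defer to ``the numerology'': applying the lemma to the $(m-1)$-dimensional manifold $D$ with target $\lk_X(v')$ requires the link of every $k$-simplex $\tau$ of $\lk_X(v')$ to be $\bigl((m-1)-2k-2\bigr)$-connected, but that link equals $\lk_X(v'\ast\tau)$ with $v'\ast\tau$ a $(k+1)$-simplex of $X$, which the hypothesis only makes $(m-2(k+1)-2)=(m-2k-4)$-connected, one short of the $(m-2k-3)$ you need. The paper's bookkeeping --- induct on the ambient dimension, take $\sigma$ of maximal degenerate dimension $k$ so that the filling disk has dimension $m-k$ while the target is the link of a simplex of dimension at most $(k-1)/2$ --- is what the stated connectivity hypothesis is calibrated against, and it is not reproduced by always working with $k=1$. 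I would restructure your argument around the paper's maximal-simplex choice and verify the inductive connectivity count explicitly rather than asserting it.
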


Compare the statement of the lemma to the statement of the claim in
the proof of Proposition~5.2 in~\cite{putman13}. As a remark, the assumption that $Y$ is compact is not necessary, but it makes the end of the proof simpler.

\begin{proof}
The proof is by induction on $m$ and the statement is trivial for
$m=0$.

\begin{figure}
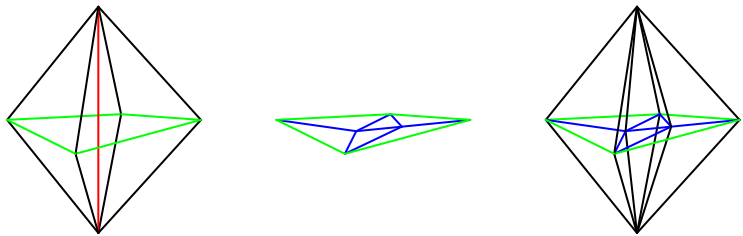

\includegraphics[scale=1.5]{injectifying-0}
\qquad
\includegraphics[scale=1.5]{injectifying-1}
\qquad
\includegraphics[scale=1.5]{injectifying-2}
\caption{Illustration of the proof of Lemma~\ref{lem:injectifying}.
The red edge is the simplex $\sigma$, that is, both of its vertices
are mapped to the same vertex under $\psi$.  The green circle is the
link of $\sigma$.  The link of~$\psi(\sigma)$ is simply connected by
assumption, so $\psi$ can be extended to a filling disk $B$ (blue).}
\label{fig:injectifying}
\end{figure}

If $\psi$ is not simplexwise injective, there exists a simplex whose
vertices do not map to pairwise distinct points.  In particular we can
choose a simplex $\sigma \subseteq Y$ of maximal dimension $k>0$ such
that for every vertex $x$ of $\sigma$ there is another vertex~$y$ of
$\sigma$ with $\psi(x) = \psi(y)$.  By assumption, $\sigma$ is not
contained in $\partial Y$.  Maximality of the dimension of $\sigma$
implies that the restriction of~$\psi$ to the $(m-k-1)$-sphere
$\lk_Y(\sigma)$ is simplexwise injective.  It also implies that
$\psi(\lk_Y(\sigma)) \subseteq \lk_X(\psi(\sigma))$.  Note further
that $\psi(\sigma)$ has dimension at most $(k-1)/2$.  Therefore its
link in $X$ is~$(m-k-1)$-connected by assumption.  Hence there is an
$(m-k)$-disk $B$ with $\partial B = \lk_Y(\sigma)$ and a map~$\varphi
\colon B \to \lk_X(\psi(\sigma))$ such that $\varphi|_{\partial B}$
coincides with $\psi |_{\lk_Y(\sigma)}$.  Inductively applying the
lemma, we may assume that $\varphi$ is simplexwise injective.

We now replace $Y$ by $Y'$, the space obtained by replacing the closed star of
$\sigma$ by~$B * \partial \sigma$.  The map $\psi' \colon Y' \to X$ is the map
that coincides with $\psi$ outside the open star of $\sigma$, coincides with
$\varphi$ on $B$ and is affine on simplices. It is clearly homotopic to~$\psi$,
since the image of~$B$ under~$\varphi$ is contained in $\lk_X(\psi(\sigma))$. 
Since the restriction of $\psi'$ to $B$ is simplexwise injective, the
restriction to any $k$-simplex of $B*\partial \sigma$ is injective. Since $Y$ is compact, by repeating this procedure finitely many times we eventually obtain a map that is simplexwise injective.
\end{proof}

\medskip

We now describe the general procedure we will use to analyze
$\matcharc(\Gamma)$ for a graph~$\Gamma$, after which we will look at
the specific graphs $K_n$ and (subgraphs of) $L_n$.

\subsection*{Our general procedure:} Pick an edge $e$ of $\Gamma$, say with endpoints $v$ and $w$.
Identify the vertex set of $\Gamma$ with the set $P$ of distinguished
points in the surface $S$.  Let
\begin{equation*}
q \colon \matcharc(\Gamma)^{(0)} \to \{0,1,2,3\}
\end{equation*}
be the function that sends an arc to $0$ if it has neither $v$ nor $w$
as endpoints,~$1$ if it has~$v$ but not $w$,~$2$ if it has $w$ but not $v$, and $3$
if it has both.  For any arc $\alpha$, say with endpoints $v_1$ and~$v_2$, the link of~$\alpha$ in $\matcharc(\Gamma)$ is isomorphic to
$\matcharc(\Gamma')$, where~$\Gamma'$ is the graph obtained from
$\Gamma$ by removing the stars of $v_1$ and~$v_2$.  Here the surface
on which the matching complex is being considered is not~$S$, but
rather~$S$ with a new boundary component obtained by ``slicing'' $S$
along $\alpha$.  The idea therefore is to build up from
$\matcharc(\Gamma)^{q=0}$ to $\matcharc(\Gamma)$ by gluing in missing
vertices (arcs) along their relative links, in increasing order of
$q$-value.  Since $\Gamma'$ has fewer vertices and edges than
$\Gamma$, these links will be highly connected by induction.  By the
Morse Lemma, it follows that the pair $(\matcharc(\Gamma),\matcharc(\Gamma)^{q=0})$ is highly connected. An important point to note is that even though $\matcharc(\Gamma)^{q=0}$ is also highly connected by induction, it is not typically as highly connected as we would like $\matcharc(\Gamma)$ to be (since we want the connectivity to go to infinity with the number of edges). For this reason we want to prove that the inclusion~$\iota \colon \matcharc(\Gamma)^{q=0}
\to \matcharc(\Gamma)$ induces the trivial map in~$\pi_k$
up to the desired connectivity bound for $\matcharc(\Gamma)$. This is accomplished as follows.
Fix an arc~$\beta$ with endpoints~$v$ and~$w$, and let $\overline{\psi} \colon
S^m \to \matcharc(\Gamma)^{q=0}$ be a simplicial map.  The goal is to show that $\psi \defeq \iota \circ \overline{\psi}$ is
homotopy equivalent to the constant map sending $S^m$ to~$\beta$, if
$m$ is not too large.  A variant of the Hatcher flow
becomes useful here; we look at arcs crossing $\beta$, choose one
closest to $w$, say $\alpha$, and ``push'' it over $w$ and off of
$\beta$, to the arc $\alpha'$.  See Figure~\ref{fig:matching_flow} for
a visualization.  We can homotope~$\psi$ to the map $\psi'$ using
$\alpha$ in lieu of $\alpha'$, assuming the mutual
link~$\lk(\alpha)\cap\lk(\alpha')$ is sufficiently highly connected,
which we can engineer to be true by induction if the structure of~$\Gamma$ is sufficiently easy to control.  A key step to making this
rigorous is being able to use Lemma~\ref{lem:injectifying}.

\medskip

We first carry out this program for the family of subgraphs of linear
graphs.  A key observation in this setting is that any node has
degree at most~$2$, and so removing the stars of two adjacent nodes
results in removing at most~$3$ edges.

It will be convenient to define $\eta(\ell) \defeq
\bigl\lfloor\frac{\ell-1}{4}\bigr\rfloor$ for $\ell\in\Z$.

\begin{theorem}\label{thrm:surface_subline_matching_conn}
Let $\Gamma_n$ be any subgraph of a linear graph, with $\Gamma_n$
having $n$ edges.  Then $\matcharc(\Gamma_n)$ is
$(\eta(n)-1)$-connected.
\end{theorem}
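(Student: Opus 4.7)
The plan is to proceed by strong induction on $n$. For $n \le 4$ we have $\eta(n) = 0$, so the assertion reduces to the fact that $\matcharc(\Gamma_n)$ is non-empty whenever $\Gamma_n$ has at least one edge. For the inductive step, assume the statement for all graphs with fewer than $n$ edges, let $\Gamma_n$ have $n \ge 5$ edges, pick an edge $e$ of $\Gamma_n$ with endpoints $v, w$ (chosen to be pendant if possible), and introduce the height function $q \colon \matcharc(\Gamma_n)^{(0)} \to \{0,1,2,3\}$ from the paper's general procedure. I will establish two facts: (a) the pair $\bigl(\matcharc(\Gamma_n), \matcharc(\Gamma_n)^{q=0}\bigr)$ is $(\eta(n)-1)$-connected; and (b) the inclusion $\iota \colon \matcharc(\Gamma_n)^{q=0} \hookrightarrow \matcharc(\Gamma_n)$ induces the zero map on $\pi_m$ for every $m \le \eta(n)-1$. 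By the long exact sequence of the pair, (a) and (b) together imply that $\matcharc(\Gamma_n)$ is $(\eta(n)-1)$-connected.

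For (a), I apply the Morse Lemma to a lexicographic refinement of $q$. For any vertex $\alpha$ of $\matcharc(\Gamma_n)$ with $q(\alpha) \ge 1$, its descending link is isomorphic to $\matcharc(\Delta)$, where $\Delta$ is obtained from $\Gamma_n$ by deleting the stars of the endpoints of $\alpha$ together with the remaining vertex in $\{v,w\}$. Because $\Gamma_n$ has maximum degree two, deleting the stars of three vertices removes at most four edges, so $\Delta$ has at least $n-4$ edges. By the inductive hypothesis, $\matcharc(\Delta)$ is $(\eta(n-4)-1)$-connected, and since $\eta(n-4) = \eta(n)-1$, this descending link is $(\eta(n)-2)$-connected, as needed.

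For (b), fix an arc $\beta$ in $S$ with endpoints $v$ and $w$ representing $e$, and let $\overline{\psi} \colon S^m \to \matcharc(\Gamma_n)^{q=0}$ be a simplicial map with $m \le \eta(n)-1$; set $\psi = \iota \circ \overline{\psi}$. First apply Lemma~\ref{lem:injectifying} to replace $\psi$, after subdivision, by a simplexwise injective map. The hypothesis holds: the link of any $k$-simplex of $\matcharc(\Gamma_n)$ is $\matcharc(\Delta')$ for some $\Delta'$ obtained by deleting the stars of $2(k+1)$ vertices, hence with at least $n-3(k+1)$ edges (using maximum degree two and that the removed vertices already span $k+1$ edges of $\Gamma_n$); by induction this link is $(\eta(n)-k-2)$-connected, which exceeds $m-2k-2$. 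Then I run a Hatcher-style flow: iteratively select an arc $\alpha$ in the image of $\psi$ whose crossing with $\beta$ is outermost near $w$ (so that the bigon the crossing cuts off near $w$ contains no other arc of the image), push that crossing across $w$ to produce a replacement arc $\alpha'$, and homotope $\psi$ locally by substituting $\alpha'$ for $\alpha$ on the stars of the vertices mapping to $\alpha$. Simplexwise injectivity ensures the substitutions are unambiguous; the outermost choice, together with the $q=0$ condition on the image, guarantees that the other arcs appearing in the relevant $\psi$-links remain disjoint from $\alpha'$ and share no endpoints with it, so each substitution is a legitimate simplicial homotopy supported inside $\lk(\alpha)\cap\lk(\alpha')$. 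After finitely many such moves the image of $\psi$ is disjoint from $\beta$ and hence lies in $\st_{\matcharc(\Gamma_n)}(\beta)$, which is a cone and therefore contractible, providing the required null-homotopy.

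The hard part is step (b): the standard Hatcher flow must be adapted so that every replacement arc is an admissible vertex of $\matcharc(\Gamma_n)$, i.e., its endpoints are joined by an edge of $\Gamma_n$. This is where the linearity of $\Gamma_n$ does the essential work: the bounded degree keeps the number of edges lost under link-taking small enough for the inductive connectivity estimates to close up, and it also controls the combinatorial types of the arcs that can be involved in a push, so that outermost pushes can be arranged to land in $\matcharc(\Gamma_n)$. The reduction to simplexwise injective maps via Lemma~\ref{lem:injectifying} is, as in Putman's argument \cite{putman13}, what makes the local substitution moves well-defined and lets them be performed independently at different vertices of $S^m$.
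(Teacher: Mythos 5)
Your overall architecture is the same as the paper's: Morse-theoretic buildup from $\matcharc(\Gamma_n)^{q=0}$, reduction to simplexwise injective maps via Lemma~\ref{lem:injectifying}, and a Hatcher-type flow pushing arcs over $w$. Part (a) and your verification of the hypothesis of Lemma~\ref{lem:injectifying} are essentially correct (your edge counts are slightly more generous than the paper's but still land at $\eta(n-4)=\eta(n)-1$, which suffices). However, there is a genuine gap at the decisive step of (b). Having chosen $\alpha$ and produced $\alpha'$, you assert that substituting $\alpha'$ for $\alpha$ is ``a legitimate simplicial homotopy supported inside $\lk(\alpha)\cap\lk(\alpha')$.'' What you actually verify is only that $\psi(\lk_{S^m}(x))\subseteq\lk(\alpha)\cap\lk(\alpha')$, i.e.\ that $\psi'$ is a well-defined simplicial map. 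That does not yet give $\psi\simeq\psi'$: since $\alpha$ and $\alpha'$ share both endpoints, they are \emph{not} joined by an edge of $\matcharc(\Gamma_n)$, so there is no one-step slide from one to the other. One must instead fill the $(m-1)$-sphere $\psi(\lk_{S^m}(x))$ by an $m$-disk $B$ mapping into $\lk(\alpha)\cap\lk(\alpha')$ and then homotope $\psi|_{\st_{S^m}(x)}$ to $\varphi|_B$ through $\st(\alpha)$ and $\psi'|_{\st_{S^m}(x)}$ to $\varphi|_B$ through $\st(\alpha')$. This requires knowing that $\lk(\alpha)\cap\lk(\alpha')$ is $(m-1)$-connected, which is the one computation you never make: the mutual link is $\matcharc(\Gamma_{n'})$ where $\Gamma_{n'}$ is obtained by deleting $e$ and every edge meeting an endpoint of $\alpha$, so $n'\ge n-4$ (here both the degree bound and the pendancy of $w$ are used), whence by induction it is $(\eta(n)-2)$-connected and in particular $(m-1)$-connected. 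This estimate is exactly where the constant $4$ in $\eta$ comes from, so omitting it leaves the quantitative heart of the theorem unproved.

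A secondary point: your closing paragraph misidentifies the difficulty. Admissibility of the replacement arc is automatic, since $\alpha'$ has the \emph{same} endpoints as $\alpha$ and hence is compatible with $\Gamma_n$ whenever $\alpha$ is; the role of linearity (bounded degree) and of choosing $w$ pendant is not to keep $\alpha'$ admissible but to control the number of edges lost when passing to links and to the mutual link, so that the inductive connectivity estimates close up. Also, ``chosen to be pendant if possible'' should be ``pendant,'' full stop: a nonempty subgraph of a linear graph is a disjoint union of paths and always has an edge with an endpoint of degree one, and this choice is what guarantees that $q=2$ never occurs and that the star of $w$ contributes only the single edge $e$ to the count above.
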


\begin{proof}
We induct on $n$, with the base case being that $\matcharc(\Gamma_n)$
is non-empty for~$n\ge1$, which is clear.  Now assume $n\ge5$. 
We will freely apply Lemma~\ref{lem:arc_representatives} to
represent simplices by systems of arcs.
Choose
an edge $e$ in $\Gamma_n$ with at least one endpoint of degree~$1$.
Let~$v$ and~$w$ be the endpoints of $e$, say $w$
has degree~$1$.  Let $q$ be the function defined above.  For an arc~$\alpha$ with $q(\alpha)=1$, the descending link of $\alpha$ with
respect to $q$ is isomorphic to $\matcharc(\Gamma_{n'})$, where~$\Gamma_{n'}$ is a subgraph of~$\Gamma_n$ with $n'$ edges.  Since every vertex
has degree at most~$2$, $n'\ge n-3$, so by induction~$\matcharc(\Gamma_{n'})$
is~$(\eta(n)-2)$-connected.  Similarly if
$q(\alpha)=3$ then the descending link of~$\alpha$ is isomorphic
to~$\matcharc(\Gamma_{n'})$, now with $n'\ge n-2$, so again induction
tells us that~$\matcharc(\Gamma_{n'})$ is $(\eta(n)-2)$-connected.
Note that $q(\alpha)=2$ actually does not occur in the present situation (we defined $q$ this way for the sake of consistency with the alternate proof of Theorem~\ref{thrm:surface_matching_conn} below).

The Morse Lemma now implies that the pair $(\matcharc(\Gamma_n),\matcharc(\Gamma_n)^{q=0})$ is $(\eta(n)-1)$-connected, that is, the inclusion ~$\iota\colon
\matcharc(\Gamma_n)^{q=0}\hookrightarrow\matcharc(\Gamma_n)$ induces an isomorphism in $\pi_m$ for $m \le \eta(n)-2$ and an epimorphism for $m = \eta(n) - 1$. We could now invoke induction and use that $\matcharc(\Gamma_n)^{q=0}$ is $(\eta(n)-2)$-connected to conclude that $\matcharc(\Gamma_n)$ is $(\eta(n)-2)$-connected as well. However, since we even want $\matcharc(\Gamma_n)$ to be $(\eta(n)-1)$-connected, we need a different argument and we may as well apply this for all $m$. We want to show that $\pi_m(\matcharc(\Gamma_n)^{q=0}\hookrightarrow\matcharc(\Gamma_n))$ is trivial
for $m<\eta(n)$. In other words, every sphere in $\matcharc(\Gamma_n)^{q=0}$ of dimension at most $(\eta(n)-1)$ can be collapsed in $\matcharc(\Gamma_n)$.

First we check a hypothesis on $\matcharc(\Gamma_n)$ that allows us to
apply Lemma~\ref{lem:injectifying}, namely that the link of a
$k$-simplex should be $(m-2k-2)$-connected.  A $k$-simplex~$\sigma$ is
determined by $k+1$ disjoint arcs.  Hence, the link of $\sigma$ is
isomorphic to $\matcharc(\Gamma_{n'})$ where~$n'$ is at least
$n-(3k+3)$.  By induction, this is $(\eta(n-3k-3)-1)$-connected.

Moreover,
\begin{align*}
\eta(n-3k-3)-1 &= \Big\lfloor \frac{n-3k-4}{4} \Big\rfloor -1
\\
&\ge
\frac{n-3k-4}{4} - 2
\\[1ex]
&\ge
\eta(n) - 2k - 3 \ge m-2k-2 \text{\,.}
\end{align*}

We conclude that the hypothesis of
Lemma~\ref{lem:injectifying} is satisfied.

Let $S^m$ be a combinatorial $m$-sphere.  Let
$\overline{\psi}\colon S^m\to\matcharc(\Gamma_n)^{q=0}$ be a simplicial map and
let $\psi \defeq \iota \circ \overline{\psi}$.  It suffices by simplicial
approximation~\cite[Theorem~3.4.8]{spanier66} to homotope~$\psi$ to a constant
map.  By Lemma~\ref{lem:injectifying} we may assume~$\psi$ is simplexwise
injective.  Fix an arc~$\beta$ with endpoints~$v$ and $w$. We claim that~$\psi$
can be homotoped in $\matcharc(\Gamma_n)$ to land in the star of $\beta$, which
will finish the proof.  We will proceed in a similar way to the Hatcher flow
used in the proof of Proposition~\ref{prop:arc_cpx_contractible}.  None of the
arcs in the image of~$\psi$ use $v$ or $w$ as vertices, but among the finitely
many such arcs, some might cross~$\beta$.  Pick the one, say $\alpha$,
intersecting $\beta$ at a point closest along $\beta$ to $w$, and let~$x$ be a
vertex of~$S^m$ mapping to $\alpha$.  By simplexwise injectivity, none of the
vertices in $\lk_{S^m}(x)$ map to $\alpha$. Let $\alpha'$ be the arc with the
same endpoints as $\alpha$ such that together $\alpha$ and $\alpha'$ bound a
disk whose interior contains no boundary components, punctures or points of $P$
other than $w$. See Figure~\ref{fig:matching_flow} for an example.  Note that
there is no edge from $\alpha$ to $\alpha'$, so none of the vertices
in~$\lk_{S^m}(x)$ map to $\alpha'$. Note also that
$\psi(\lk_{S^m}(x)) \subseteq \lk \alpha'$ by choice of $\alpha$.

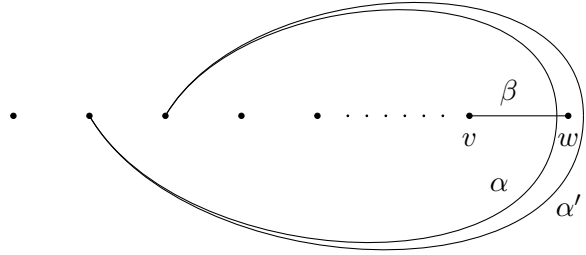
\begin{figure}
\centering
\begin{tikzpicture}

\coordinate (a) at (-2,0);
\coordinate (b) at (-1,0);
\coordinate (w) at (4.3,0);
\coordinate (v) at (3,0);
\coordinate (x) at (4.15,0);
\coordinate (y) at (4.5,0);

\draw (v) -- (w) node [pos=0.4,anchor=south] {$\beta$};
\draw (a) to [out=-60, in=-90] (x);
\draw (b) to [out= 60, in= 90] (x);
\draw (x) + (-0.5,-0.7) node [anchor=north east] {$\alpha$};
\draw (a) to [out=-60, in=-90] (y);
\draw (b) to [out= 60, in= 90] (y);
\draw (y) + (-0.5,-0.9) node [anchor=north west] {$\alpha'$};

\filldraw
	(a) circle (1pt) ++ (-1,0) circle (1pt)
	(b) circle (1pt) ++ (1,0) circle (1pt) ++ (1,0) circle (1pt)
	++ (0.40,0) circle (0.3pt) ++ (0.25,0) circle (0.3pt) ++ (0.25,0) circle
(0.3pt) ++ (0.25,0) circle (0.3pt) ++ (0.25,0) circle (0.3pt) ++ (0.25,0) circle
(0.3pt) 
	(v) circle (1pt) node[below=3pt] {$v$}
	(w) circle (1pt) node[below=3pt] {$w$};
\end{tikzpicture}
\caption{Pushing the arc $\alpha$ over the vertex $w$ to obtain the
arc~$\alpha'$, as described in the proof of
Theorem~\ref{thrm:surface_subline_matching_conn}.}
\label{fig:matching_flow}
\end{figure}
 
Define a simplicial map $\psi'\colon S^m\to\matcharc(\Gamma_n)$ that sends
the vertex $x$ to $\alpha'$ and sends all other vertices $y$ to
$\psi(y)$. We claim that
we can homotope $\psi$ to $\psi'$.  Once we do this, we will have
reduced the number of crossings with $\beta$, and so continuing this
procedure we will have homotoped our map so as to land in the star of
$\beta$, finishing the proof.
 
The mutual link $\lk(\alpha)\cap\lk(\alpha')$ is isomorphic to
$\matcharc(\Gamma_{n'})$, where $\Gamma_{n'}$ now is the graph
obtained from $\Gamma_n$ by removing $e$, and removing any edge
sharing an endpoint with an endpoint of~$\alpha$.  Here~$n'$ is the
number of edges of the resulting graph.  Since~$\Gamma_n$ is a
subgraph of a linear graph, we have thrown out at most~$4$ edges, and
so $n'\ge n-4$.  Hence by induction $\lk(\alpha)\cap\lk(\alpha')$ is
$(\eta(n)-2)$-connected, and in particular~$(m-1)$-connected.  Since
$\lk_{S^m}(x)$ is an~$(m-1)$-sphere, this tells us that there exists
an~$m$-disk $B$ with~$\partial B=\lk_{S^m}(x)$ and a simplicial
map~$\varphi\colon B \to \lk(\alpha)\cap\lk(\alpha')$ so that $\varphi$
restricted to $\partial B$ coincides with $\psi$ restricted
to~$\lk_{S^m}(x)$.  Since the image of~$B$ under~$\varphi$ is
contained in $\lk(\alpha)$, we can homotope~$\psi$,
replacing~$\psi|_{\st_{S^m}(x)}$ with $\varphi$.  Since the image
of~$B$ under~$\varphi$ is contained in $\lk(\alpha')$, we can
similarly homotope $\psi'$, replacing~$\psi'|_{\st_{S^m}(x)}$
with~$\varphi$.  These both yield the same map, so we are finished.
\end{proof}

\begin{corollary}\label{cor:surface_line_matching_conn}
$\matcharc(L_n)$ is $(\eta(n)-1)$-connected.\qed
\end{corollary}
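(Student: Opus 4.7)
The corollary is essentially immediate from Theorem~\ref{thrm:surface_subline_matching_conn}, so the plan amounts to just identifying the hypothesis. The linear graph $L_n$ is itself a subgraph of a linear graph (namely, of itself) with exactly $n$ edges. Thus $L_n$ fits the role of $\Gamma_n$ in the hypothesis of Theorem~\ref{thrm:surface_subline_matching_conn}, and applying that theorem yields that $\matcharc(L_n)$ is $(\eta(n)-1)$-connected.

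There is no hard part here: the whole point of stating Theorem~\ref{thrm:surface_subline_matching_conn} in the generality of arbitrary subgraphs of linear graphs was precisely to make the induction go through (since descending links of arcs cut away pieces of the graph, producing possibly disconnected subgraphs of linear graphs rather than linear graphs themselves). With that generality already proved, the statement for $L_n$ is a trivial specialization.

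So the plan is simply: invoke Theorem~\ref{thrm:surface_subline_matching_conn} with $\Gamma_n = L_n$, and conclude. A one-line proof, essentially just \emph{This is the special case $\Gamma_n = L_n$ of Theorem~\ref{thrm:surface_subline_matching_conn}}, suffices, which is consistent with the \texttt{\textbackslash qed} symbol already appearing at the end of the corollary's statement in the excerpt.
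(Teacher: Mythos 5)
Your proposal is correct and matches the paper's intent exactly: the corollary is the special case $\Gamma_n = L_n$ of Theorem~\ref{thrm:surface_subline_matching_conn}, since $L_n$ (with $n$ edges, in the main body's convention) is a subgraph of a linear graph, and the \texttt{\textbackslash qed} in the statement signals that no further argument is given. Nothing is missing.
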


As a remark, we expect that a better connectivity bound should be
possible.  Indeed, one can check that~$\matcharc(L_n)$ is already
connected for $n\ge4$, and that~$\match(L_n)$ is
$(\nu(n)-1)$-connected, which for large~$n$ is stronger than being $(\eta(n)-1)$-connected.
For now however, we will content ourselves with this bound.

\medskip

Using these techniques, we can also recover the connectivity of
$\matcharc(K_n)$.

\begin{proof}[Alternate proof of Theorem~\ref{thrm:surface_matching_conn}]
The base case is that $\matcharc(K_n)\neq\emptyset$ for $n\ge 2$,
which is clear.  Let $n\ge 5$.  Choose any edge $e$, with endpoints
$v$ and $w$.  Let $q$ be as above.  For an arc~$\alpha$ with
$q(\alpha)=1$, the descending link of $\alpha$ is isomorphic
to~$\matcharc(K_{n-3})$.  If $q(\alpha)=2$ or~$3$, the descending link is
isomorphic to $\matcharc(K_{n-2})$.  In any case, by induction all
descending links are $(\nu(n)-2)$-connected.  Hence we need only check
that~$\iota\colon \matcharc(K_n)^{q=0} \to \matcharc(K_n)$ induces the
trivial map in~$\pi_m$ for~$m<\nu(n)$.
 
First we check the hypothesis of Lemma~\ref{lem:injectifying}.  The
link of a $k$-simplex is a copy of~$\matcharc(K_{n-2k-2})$, which by
induction is $(\nu(n-2k-2)-1)$-connected.  We need this to be bounded
below by $m-2k-2$.  Indeed,
\begin{equation*}
\nu(n-2k-2)-1 \ge \frac{n-2k-4}{3}-2 \ge \nu(n)-2k-3 \ge m-2k-2 \text{\,.}
\end{equation*}
 
Now we consider a simplicial map $\overline{\psi}\colon S^m \to \matcharc(K_n)^{q=0}$, with $\psi \defeq \iota \circ \overline{\psi}$.  We claim that we can homotope
$\psi$ to a constant map.  By the same argument as in the proof
of Theorem~\ref{thrm:surface_subline_matching_conn}, the problem
reduces to inspecting the mutual link~$\lk(\alpha)\cap\lk(\alpha')$,
where $\alpha$ and $\alpha'$ are again as in
Figure~\ref{fig:matching_flow}.  This mutual link is isomorphic to
$\matcharc(K_{n-3})$, since compatible arcs may use any endpoints
other than the endpoints of $\alpha$, or the point $w$.  Hence by
induction $\lk(\alpha)\cap\lk(\alpha')$ is~$(\nu(n)-2)$-connected, and
by the same argument as in the proof of
Theorem~\ref{thrm:surface_subline_matching_conn}, we can eventually
homotope $\psi$ to land in the star of $\beta$, so we are done.
\end{proof}

\subsection{Cyclic graphs}
\label{sec:circle}

Let $C_n$ be the cyclic graph with $n$ nodes, labeled $1$ through~$n$
in sequence.  If $\alpha$ is an arc in $\matcharc(C_n)$ with endpoints
$1$ and $n$, the relative link of $\alpha$ is a copy of
$\matcharc(L_{n-3})$.  Gluing in these arcs, in any order, we build up
from~$\matcharc(L_{n-1})$ to $\matcharc(C_n)$.  Hence it is immediate
from Corollary~\ref{cor:surface_line_matching_conn} and the Morse
Lemma that $\matcharc(C_n)$ is $(\eta(n-1)-1)$-connected. The upshot of
this is that the methods of the present article could also be used to prove that $\Tbr$, ``braided~$T$'', is of type~$\F_\infty$. As far as we know, this group has yet to appear in the literature, so we will not say any more about $\Tbr$ here.


\section{Descending links in the Stein space}
\label{sec:desc_link_conn}

We now return to the Stein space $X$ from Section~\ref{sec:def_stein_space} and
inspect the descending links of vertices with respect to the height
function~$f$.   As explained before, the descending link of a vertex~$x$ with
$f(x)=n$ is isomorphic to the complex $\elbraigecpx_n$ of dangling elementary $n$-braiges
$[(1_n,b,F_J^{(n-|J|)})]$ with~$J\neq\emptyset$.  The idea now is to construct a
projection $\elbraigecpx_n \to \matcharc(K_n)$ and then, having calculated the
connectivity of $\matcharc(K_n)$ in the previous section, use tools of Quillen
\cite{quillen78} to obtain the connectivity of $\elbraigecpx_n$.  As usual we will wait
until the end of the section to mention the ``pure'' case.

The first key observation is that a matching on a linear graph encodes the same
information as an elementary forest. Recall that $L_{n-1}$ is the linear graph
with~$n$ vertices, labeled~$1$ through~$n$, and $n-1$ edges, one connecting~$i$
to~$i+1$ for each~$1\le i<n$. Let $\match(L_{n-1})$ be the matching complex of
$L_{n-1}$.

\begin{observation}\label{obs:matchings_to_forests}
 Elementary forests with~$n$ leaves correspond bijectively to simplices of
$\match(L_{n-1})$. Under the identification, carets correspond to edges. See
Figure~\ref{fig:matchings_to_forests} for an example. 
\end{observation}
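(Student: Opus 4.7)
The plan is to establish the bijection by describing a forward map (forest $\to$ matching) and its inverse (matching $\to$ forest), then noting that it visibly preserves the correspondence between carets and edges.

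First I would fix labelling conventions. Given an elementary forest $F$ with $n$ leaves, label the leaves $1,\dots,n$ from left to right. By the definition of $F = F^{(r)}_J$ every caret of $F$ sits on a single root, so its two leaves are consecutive in this labelling. I assign to the caret whose left leaf is labelled $i$ the edge $\{i,i+1\}$ of $L_{n-1}$, and let $M(F)$ be the set of edges obtained this way as the caret ranges over the carets of $F$. Since distinct carets in $F$ have disjoint leaves, the edges in $M(F)$ are pairwise non-adjacent in $L_{n-1}$, so $M(F)$ is a matching, i.e.\ a simplex of $\match(L_{n-1})$ (with the empty matching corresponding to the forest with no carets).

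Next I would construct the inverse. Given a matching $M = \{\{i_1,i_1+1\},\dots,\{i_k,i_k+1\}\}$ of $L_{n-1}$, I scan the labels $1,\dots,n$ from left to right: for each matched pair $\{i,i+1\} \in M$ I produce a caret whose two leaves are labelled $i$ and $i+1$, and for each label not covered by $M$ I produce a single-node root-leaf with that label. Concatenating these trees from left to right yields an elementary forest $F(M)$ with $n$ leaves labelled $1,\dots,n$ from left to right. A $k$-simplex of $\match(L_{n-1})$ (a matching of size $k+1$) produces a forest with $k+1$ carets, which gives the desired dimension bookkeeping.

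Finally I would verify that $F \mapsto M(F)$ and $M \mapsto F(M)$ are mutually inverse and that, by construction, carets of $F$ correspond bijectively to edges of $M(F)$. Both checks are immediate from the constructions. There is no real obstacle here beyond keeping left-to-right labellings consistent on both sides; the observation amounts to unpacking the definition of $F^{(r)}_J$ and recognising that a choice of a set of non-adjacent positions in $\{1,\dots,n\}$ at which to place carets is exactly the same data as a matching of $L_{n-1}$.
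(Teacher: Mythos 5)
Your construction is correct and is exactly the correspondence the paper leaves to Figure~\ref{fig:matchings_to_forests}: a caret on consecutive leaves $i,i+1$ corresponds to the edge $\{i,i+1\}$ of $L_{n-1}$, and disjointness of carets in an elementary forest translates precisely into the matching condition. The only conventional wrinkle, which you already flag, is that the trivial forest corresponds to the empty matching rather than to an actual simplex of $\match(L_{n-1})$; otherwise this is the intended argument.
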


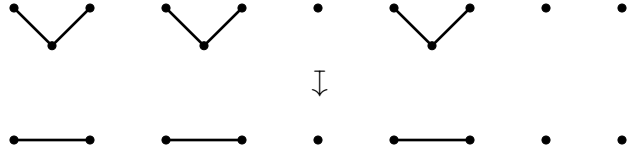
\begin{figure}[t]
\centering
\vskip2ex
\begin{tikzpicture}
  \filldraw
   (0,0) circle (1.5pt)
   (1,0) circle (1.5pt)
   (2,0) circle (1.5pt)
   (3,0) circle (1.5pt)
   (4,0) circle (1.5pt)
   (5,0) circle (1.5pt)
   (6,0) circle (1.5pt)
   (7,0) circle (1.5pt)
   (8,0) circle (1.5pt)

   (0.5,-0.5) circle (1.5pt)
   (2.5,-0.5) circle (1.5pt)
   (5.5,-0.5) circle (1.5pt);

  \draw[line width=1pt]
   (0,0) -- (0.5,-0.5) -- (1,0)
   (2,0) -- (2.5,-0.5) -- (3,0)
   (5,0) -- (5.5,-0.5) -- (6,0);
  
  \node[rotate=-90] at (4,-1) {$\mapsto$};

 \begin{scope}[yshift=-1.75cm]
  \filldraw 
   (0,0) circle (1.5pt)
   (1,0) circle (1.5pt)
   (2,0) circle (1.5pt)
   (3,0) circle (1.5pt)
   (4,0) circle (1.5pt)
   (5,0) circle (1.5pt)
   (6,0) circle (1.5pt)
   (7,0) circle (1.5pt)
   (8,0) circle (1.5pt);
  
  \draw[line width=1pt]
   (0,0) -- (1,0)   (2,0) -- (3,0)   (5,0) -- (6,0);
 \end{scope}
\end{tikzpicture}
\vskip2ex
\caption{An example of the bijective correspondence between elementary forests
with~$9$ leaves and simplices of $\match(L_8)$.}
\label{fig:matchings_to_forests}
\end{figure}

In light of the observation, we can denote an elementary~$n$-braige by
$(b,\Gamma)$, where~$b\in B_n$ and~$\Gamma$ is a simplex in~$\match(L_{n-1})$.
As usual, the equivalence class under dangling will be denoted $[(b,\Gamma)]$.

Let~$S$ be the unit disk, and fix an embedding $L_{n-1} \hookrightarrow S$ of
the linear graph with~$n-1$ edges into~$S$. Let~$P$ be the image of the vertex
set, so~$P$ is a set of~$n$ points in~$S$, labeled~$1$ through~$n$. With these
data in place we can consider~$\matcharc(K_n)$, the matching complex on $(S,P)$,
and we have an induced embedding of simplicial complexes $\match(L_{n-1})
\hookrightarrow \matcharc(K_n)$.  The braid group $B_n$ on $n$ strands is
isomorphic to the mapping class group of the~$n$-punctured disc $D_n$,
cf.~\cite{birman74}. Since $S\setminus P=D_n$, we have an action of $B_n$
on~$\matcharc(K_n)$.  In what follows it will be convenient to consider this as
a right action (much as dangling is a right action on braiges), so for~$b\in
B_n$ and $\sigma\in\matcharc(K_n)$ we will write~$(\sigma)b$ to denote the image
of~$\sigma$ under~$b$.

Define a map $\pi$ from $\elbraigecpx_n$ to $\matcharc(K_n)$ as follows. We
view~$\match(L_{n-1})$ as a subcomplex of~$\matcharc(K_n)$, so we can associate
to any elementary~$n$-braige~$(b,\Gamma)$ the arc complex $(\Gamma)b^{-1}$ in
$\matcharc(K_n)$. By construction, the map $(b,\Gamma)\mapsto (\Gamma)b^{-1}$ is
well defined on equivalence classes under dangling, so we obtain a simplicial
map \begin{align*}
\pi \colon \elbraigecpx_n & \to \matcharc(K_n) \\
[(b,\Gamma)] & \mapsto (\Gamma)b^{-1} \text{ .}
\end{align*}
Note that~$\pi$ is surjective, but not injective.

One can visualize this map by considering the merges as arcs, then ``combing
straight'' the braid and seeing where the arcs are taken, as in
Figure~\ref{fig:BMD-to-arc-cplx}. Note that the resulting simplex $(\Gamma)b^{-1}$
of $\matcharc(K_n)$ has the same dimension as the simplex~$[(b,\Gamma)]$ of
$\elbraigecpx_n$, namely one less than the number of edges in~$\Gamma$.

\begin{figure}[t]
\centering
\begin{tikzpicture}
    \clip(-.30,0.4) rectangle (1.80,-2.95);
    
    \draw[line width=3pt, white] (0.5,0) to [out=270, in=90] (0,-2);
    \draw[line width=1pt] (0.5,0) to [out=270, in=90] (0,-2);
    
    \draw[line width=3pt, white] (0,0) to [out=270, in=90] (1,-2);    
    \draw[line width=1pt] (0,0) to [out=270, in=90] (1,-2);    
    
    \draw[line width=3pt, white] (1,0) to [out=270, in=90] (0.5,-2);
    \draw[line width=1pt] (1,0) to [out=270, in=90] (0.5,-2);
    
    \draw[line width=1pt] (1.5,0) to (1.5,-2);
    
    \draw[line width=1pt] (0,-2) -- (0,-2.1) -- (0.25,-2.5) -- 
    (0.5,-2.1) -- (0.5,-2.0);
    
    \draw[line width=1pt] (1,-2) -- (1,-2.1) -- (1.25,-2.5) -- 
    (1.5,-2.1) -- (1.5,-2.0);
    
    \filldraw[red] 
    (0,-2.1) circle (1pt) 
    (0.5,-2.1) circle (1pt)
    (1,-2.1) circle (1pt) 
    (1.5,-2.1) circle (1pt);
    
    \draw[line width=1pt] (-0.25,0) -- (1.75,0);
    
\end{tikzpicture}
\qquad
\begin{tikzpicture}
    \clip(-.30,0.4) rectangle (1.80,-2.95);
    \draw[line width=3pt, white] (0.5,0) to [out=270, in=90] (0,-2);
    \draw[line width=1pt] (0.5,0) to [out=270, in=90] (0,-2);
    
    \draw[line width=3pt, white] (0,0) to [out=270, in=90] (1,-2);    
    \draw[line width=1pt] (0,0) to [out=270, in=90] (1,-2);    
    
    \draw[line width=3pt, white] (1,0) to [out=270, in=90] (0.5,-2);
    \draw[line width=1pt] (1,0) to [out=270, in=90] (0.5,-2);
    
    \draw[line width=1pt] (1.5,0) to (1.5,-2);
    
    \draw[line width=1pt] (0,-2) -- (0,-2.5)   
    (0.5,-2.5) -- (0.5,-2.0);
    
    \draw[line width=1pt] (1,-2) -- (1,-2.5) 
    (1.5,-2.5) -- (1.5,-2.0);
    
    \draw[line width=1pt,red] 
    (0,-2.5) -- (0.5,-2.5)
    (1,-2.5) -- (1.5,-2.5); 
    \filldraw[red] 
    (0,-2.5) circle (1pt) -- (0.5,-2.5) circle (1pt)
    (1,-2.5) circle (1pt) -- (1.5,-2.5) circle (1pt); 
    
    \draw[line width=1pt] (-0.25,0) -- (1.75,0);

\end{tikzpicture}
\qquad
\begin{tikzpicture}
    \clip(-.30,0.4) rectangle (1.80,-2.95);
    \draw[line width=1pt,red] 
    (0,-2.5) to [out=-30, in=210] (1,-2.5)
    (0.5,-2.5) to [out=30, in=150] (1.5,-2.5); 
    \draw[line width=3pt,white] (1,-2) -- (1,-2.4);
    
    \draw[line width=3pt, white] (0.5,-1) to [out=270, in=90] (0,-2);
    \draw[line width=1pt] (0.5,0) -- (0.5,-1) to [out=270, in=90] (0,-2);
    
    \draw[line width=3pt, white] (0,-1) to [out=270, in=90] (0.5,-2);    
    \draw[line width=1pt] (0,0) -- (0,-1) to [out=270, in=90] (0.5,-2);    
    
    \draw[line width=3pt, white] (1,0) to [out=270, in=90] (1,-2);
    \draw[line width=1pt] (1,0) to [out=270, in=90] (1,-2);
    
    \draw[line width=3pt, white] (1.5,0) to (1.5,-2);
    \draw[line width=1pt] (1.5,0) to (1.5,-2);
    
    \draw[line width=1pt] (0,-2) -- (0,-2.5)   
    (0.5,-2.5) -- (0.5,-2.0);

    \draw[line width=1pt] (1,-2) -- (1,-2.5)
    (1.5,-2.5) -- (1.5,-2.0);
    \filldraw[red] 
    (0,-2.5) circle (1pt) (0.5,-2.5) circle (1pt)
    (1,-2.5) circle (1pt) (1.5,-2.5) circle (1pt); 

    \draw[line width=1pt] (-0.25,0) -- (1.75,0);

\end{tikzpicture}
\qquad
\begin{tikzpicture}
    \clip(-.30,0.4) rectangle (1.80,-2.95);
    \draw[line width=1pt,red]
    (-0.25,-2.5) to [out=90, in=100] (0.5,-2.5)
    (-0.25,-2.5) to [out=-90, in=230] (1,-2.5)
    (0.0,-2.5) to [out=300, in=225] (0.75,-2.5)
    (0.75,-2.5) to [out=45, in=150] (1.5,-2.5); 
    \draw[line width=3pt,white] (1,-2) -- (1,-2.4);
    \draw[line width=3pt,white] (0,-2) -- (0,-2.4);

    \draw[line width=3pt, white] (0.5,0) to [out=270, in=90] (0.5,-2);
    \draw[line width=1pt] (0.5,0) to [out=270, in=90] (0.5,-2);
    
    \draw[line width=3pt, white] (0,0) to [out=270, in=90] (0,-2);    
    \draw[line width=1pt] (0,0) to [out=270, in=90] (0,-2);    
    
    \draw[line width=3pt, white] (1,0) to [out=270, in=90] (1,-2);
    \draw[line width=1pt] (1,0) to [out=270, in=90] (1,-2);
    
    \draw[line width=3pt, white] (1.5,0) to (1.5,-2);
    \draw[line width=1pt] (1.5,0) to (1.5,-2);
    
    \draw[line width=1pt] (0,-2) -- (0,-2.5)   
    (0.5,-2.5) -- (0.5,-2.0);

    \draw[line width=1pt] (1,-2) -- (1,-2.5)
    (1.5,-2.5) -- (1.5,-2.0);
    \filldraw[red] 
    (0,-2.5) circle (1pt) (0.5,-2.5) circle (1pt)
    (1,-2.5) circle (1pt) (1.5,-2.5) circle (1pt); 

    \draw[line width=1pt] (-0.25,0) -- (1.75,0);
    
\end{tikzpicture}
\caption{From braiges to arc
systems.  From left to right the pictures show the process of
``combing straight'' the braid.}
\label{fig:BMD-to-arc-cplx}
\end{figure}
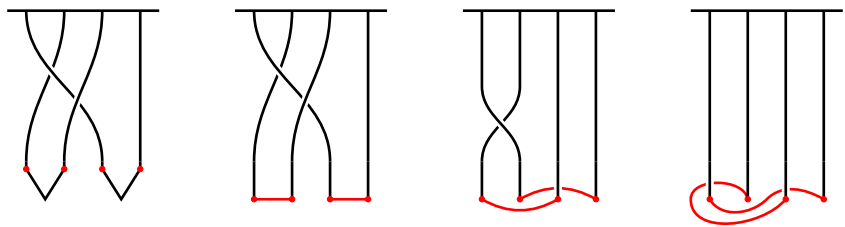

\medskip

The next lemma and proposition are concerned with the fibers of~$\pi$.

\pagebreak[3]

\begin{lemma}\label{lem:spx_fiber}
 Let~$E$ and~$\Gamma$ be simplices in~$\match(L_{n-1})$, such that~$E$ has one edge and~$\Gamma$ has~$e(\Gamma)$ edges. Let $[(b,E)]$ and $[(c,\Gamma)]$ be dangling elementary~$n$-braiges. Suppose that their images under the map~$\pi$ are contained in a simplex of $\matcharc(K_n)$.  Then there exists a simplex in $\elbraigecpx_n$ that contains~$[(b,E)]$ and~$[(c,\Gamma)]$.
\end{lemma}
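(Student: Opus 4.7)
The plan is to work through the identification $\elbraigecpx_n \cong \dlk(x)$ established at the end of Section~\ref{sec:def_stein_space}, where $x$ is any vertex of $X$ of height $f(x)=n$. Under this identification, $[(b,E)]$ and $[(c,\Gamma)]$ correspond to cubes $[y_1,x]$ and $[y_2,x]$ in the Stein space with common top $x$, where $y_1,y_2 \prec x$ in $\Poset_1$. A common ambient simplex in $\elbraigecpx_n$ is then nothing but a cube $[y,x] \subseteq X$ containing both; this amounts to a common lower bound $y$ of $y_1$ and $y_2$ in $\Poset_1$ satisfying $y \preceq x$ so that $[y,x]$ is an (elementary) cube. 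By the analogue of Proposition~\ref{prop:sups} for $\Poset_n$, which has essentially the same proof as in $\Poset_1$, it suffices to exhibit any common lower bound at all.

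The arc-system hypothesis is what produces this common lower bound. Fix a representative spraige $\sigma$ of $x$; then the $n$ feet of $\sigma$ are labeled $1,\dots,n$ and identified with the punctures of the $n$-punctured disc. The merge encoded by $[(b,E)]$ collapses the pair of feet at the endpoints of the arc $\alpha = (E)b^{-1}$, and the merges in $[(c,\Gamma)]$ collapse the endpoint-pairs of the arcs in $A = (\Gamma)c^{-1}$. The assumption that $\{\alpha\} \cup A$ is an arc system of $|\Gamma|+1$ pairwise disjoint arcs translates into the statement that the corresponding $|\Gamma|+1$ pairs of feet are pairwise disjoint subsets of $\{1,\dots,n\}$. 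I would then construct $y$ by performing all $|\Gamma|+1$ merges simultaneously; concretely, I would use dangling on both $(b,E)$ and $(c,\Gamma)$ to produce a single braid $d \in B_n$ together with matchings $E',\Gamma' \in \match(L_{n-1})$ satisfying $[(d,E')] = [(b,E)]$, $[(d,\Gamma')] = [(c,\Gamma)]$, and such that $\Delta := E' \cup \Gamma'$ is again a matching on $L_{n-1}$. The simplex $[(d,\Delta)]$ then has both $[(b,E)]$ and $[(c,\Gamma)]$ as faces, via the face relation that requires a common braid representative and nested matchings.

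The main obstacle I anticipate is the dangling synchronization step: since $\pi$ is not injective on vertices, matching $\pi$-images does not automatically yield a common braid $d$ realizing both dangling classes. This is precisely where the disjointness of arcs is essential. The twist discrepancies between dangling classes with the same $\pi$-image are absorbed by the two right dangling actions of $B_{n-1}$ (on the feet of $(b,E)$) and of $B_{n-|\Gamma|}$ (on the feet of $(c,\Gamma)$); because the pairs of feet merged by $E$ and by $\Gamma$ are disjoint subsets of $\{1,\dots,n\}$, these two actions live on independent coordinates and can be combined without interfering with one another. I would make this rigorous by working at the level of $\Poset_n$ via left cancellation, solving for $d$, $\delta_1 \in B_{n-1}$, and $\delta_2 \in B_{n-|\Gamma|}$ with $b\widetilde{\delta_1} = c\widetilde{\delta_2} = d$ and $\rho_{\delta_1}(E) \cap \rho_{\delta_2}(\Gamma) = \emptyset$, using the disjointness of merge-pairs to guarantee solvability, and then transferring the resulting common lower bound back to $\Poset_1$. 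That $y \preceq x$ holds is automatic from the construction: the elementary forest from $y$ to $x$ has exactly one caret per merged pair, and these carets sit at distinct roots precisely because the pairs are pairwise disjoint.
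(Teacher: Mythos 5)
Your overall strategy---find a single braid $d$ representing both dangling classes and take $[(d,E'\cup\Gamma')]$---is the same as the paper's, but the step that is supposed to produce $d$ does not work. You translate the hypothesis, namely that the arcs $(E)b^{-1}$ and $(\Gamma)c^{-1}$ form an arc system (pairwise disjoint \emph{as arcs in the surface}), into the statement that the corresponding pairs of feet are pairwise disjoint subsets of $\{1,\dots,n\}$, and everything afterwards uses only this weaker combinatorial condition. That cannot suffice: since $\pi$ is simplicial, if $[(b,E)]$ and $[(c,\Gamma)]$ spanned a simplex of $\elbraigecpx_n$ then their $\pi$-images would span a simplex of $\matcharc(K_n)$; but one easily has disjoint endpoint pairs whose arcs intersect essentially (take $n=4$, $E=\{1,2\}$, $\Gamma=\{3,4\}$, $c=\id$, and $b$ a pure braid winding strand $2$ around strand $4$, so that $(E)b^{-1}$ crosses the straight arc from $3$ to $4$). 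Your argument would manufacture a common simplex there too, which is impossible. The precise point of failure is the solvability of $b\widetilde{\delta_1}=c\widetilde{\delta_2}$: this asks that $c^{-1}b$ factor as an element of the image of the $\Gamma$-cloning map times an element of the image of the $E$-cloning map, and that is a genuine restriction on the braid. The two dangling actions do not ``live on independent coordinates'': each of the two cloned subgroups consists of braids on all $n$ strands, they do not commute, and their product is a proper subset of $B_n$. The geometric disjointness of the arcs is exactly the certificate that such a factorization exists, and this is what the paper's proof extracts: it first uses the transitive left $B_n$-action on simplices to normalize $c=\id$ with $\Gamma$ in standard position, and then observes that, because the arc $(E)b^{-1}$ misses the straight arcs of $\Gamma$, the class $[(b,E)]$ has a representative in which $b$ has clones over every edge of $\Gamma$; hence $[(b,\Gamma)]=[(\id,\Gamma)]$ and $[(b,\Gamma\cup E)]$ is the desired simplex. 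Any correct proof must use the disjointness of the arcs themselves at this point.

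A secondary issue: in your first paragraph you claim that, by the analogue of Proposition~\ref{prop:sups}, it suffices to exhibit \emph{any} common lower bound of $y_1$ and $y_2$. That proposition only produces a greatest lower bound $z$; you still need $z\preceq x$ for $[z,x]$ to be a cube and hence give a simplex of $\dlk(x)$. This implication is in fact true (writing $x=z\ast F$ with $F=F_1F_{J_1}=F_2F_{J_2}$ and $F\setminus F_i$ elementary, one checks that $F\setminus(F_1\cap F_2)$ is elementary), but it needs its own short argument, and it is unnecessary if, as in the paper, you exhibit the common coface directly.
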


\begin{proof}
We may assume that $[(b,E)]$ is not contained in $[(c,\Gamma)]$.

There is an action of $B_n$ on $\elbraigecpx_n$ (``from above''), given by
$b'[(c',\Gamma')]=[(b'c',\Gamma')]$.  One can check that for each $k\ge 0$, this
action is transitive on the~$k$-simplices of~$\elbraigecpx_n$. We can therefore assume
without loss of generality that $c=\id$, and $\Gamma$ is the subgraph
of~$L_{n-1}$ whose edges are precisely those connecting~$j$ to~$j+1$, for
$j\in\{1,3,\dots,2e(\Gamma)-1\}$.

Now there is an arc $\alpha$ representing~$\pi([(b,E)])$ that is disjoint from $\Gamma$.  
This
disjointness ensures that, after dangling, we can assume the following condition
on~$b$: for each edge of $\Gamma$, say with endpoints~$j$ and~$j+1$, $b$ can be
represented as a braid in such a way that the~$j^{\text{th}}$ and
$(j+1)^{\text{st}}$ strands of~$b$ run straight down, parallel to each other,
and no strands cross between them.  In particular $[(b,\Gamma)]=[(\id,\Gamma)]$,
so $[(b,\Gamma\cup E)]$ is a simplex in $\elbraigecpx_n$ with $[(b,E)]$
and~$[(\id,\Gamma)]$ as faces.
\end{proof}

\begin{proposition}\label{prop:fibers}
Let $\sigma$ be a $k$-simplex in $\matcharc(K_n)$ with vertices $v_0,
\ldots, v_k$.  Then
\begin{equation*}
\pi^{-1}(\sigma)=\Bigast_{j=0}^k\pi^{-1}(v_j).
\end{equation*}
In particular $\pi^{-1}(\sigma)$ is $k$-spherical.
\end{proposition}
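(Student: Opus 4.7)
The plan is to verify both inclusions in the claimed equality of subcomplexes of $\elbraigecpx_n$, and then deduce the spherical conclusion from a standard join-connectivity argument. The vertices of $\elbraigecpx_n$ are, by Observation~\ref{obs:matchings_to_forests}, precisely the one-caret braiges $[(b,E)]$. Such a vertex lies in $\pi^{-1}(\sigma)$ if and only if $\pi([(b,E)]) = (E)b^{-1}$ equals one of the (distinct) vertices $v_j$ of $\sigma$, and in that case $j$ is unique. Hence the sets $\pi^{-1}(v_0),\ldots,\pi^{-1}(v_k)$ partition the vertex set of $\pi^{-1}(\sigma)$, as is necessary for a join decomposition.

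The inclusion $\pi^{-1}(\sigma) \subseteq \Bigast_{j=0}^{k}\pi^{-1}(v_j)$ is essentially formal: a simplex $[(c,\Gamma)]$ of $\pi^{-1}(\sigma)$ has vertices $[(c,E')]$ indexed by the carets $E'$ of $\Gamma$, and these partition according to $\pi$-image into subsets, each of which is a face of $[(c,\Gamma)]$ and hence a simplex of some $\pi^{-1}(v_j)$. For the reverse inclusion I would induct on the number $m$ of vertices in a given join-simplex $\{w_{j_1},\ldots,w_{j_m}\}$ (with $w_{j_i} \in \pi^{-1}(v_{j_i})$ and the $j_i$ distinct). The case $m = 1$ is trivial. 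For $m \ge 2$, induction yields a simplex $\tau'$ of $\elbraigecpx_n$ with vertex set $\{w_{j_1},\ldots,w_{j_{m-1}}\}$ lying in $\pi^{-1}(\sigma)$; since $\pi(\tau')$ and $\pi(w_{j_m})$ are both contained in the single simplex $\sigma$ of $\matcharc(K_n)$, Lemma~\ref{lem:spx_fiber} supplies a simplex of $\elbraigecpx_n$ containing both the one-caret braige $w_{j_m}$ and the elementary braige $\tau'$, whose vertex set must therefore be exactly $\{w_{j_1},\ldots,w_{j_m}\}$. The step requiring the most care is verifying that this combined simplex really does contain $\tau'$ as a face; this rests on the dangling adjustment made in the proof of Lemma~\ref{lem:spx_fiber}, which ensures that the representative $b$ in the combined simplex $[(b,\Gamma \cup E)]$ satisfies $[(b,\Gamma)] = [(c,\Gamma)]$.

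Finally, for the spherical conclusion, each $\pi^{-1}(v_j)$ is nonempty (since $\pi$ is surjective) and $0$-dimensional, hence $(-1)$-connected of dimension $0$. Iterating the join-connectivity estimate ``if $A$ is $a$-connected and $B$ is $b$-connected, then $A \ast B$ is $(a+b+2)$-connected'' shows that $\Bigast_{j=0}^{k}\pi^{-1}(v_j)$ is $(k-1)$-connected and has dimension $k$, i.e., it is $k$-spherical.
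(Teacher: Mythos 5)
Your proof is correct and follows essentially the same route as the paper: both inclusions are checked directly, with the nontrivial one handled by induction on the number of join vertices using Lemma~\ref{lem:spx_fiber} to attach one more vertex to an already-constructed simplex, and the $k$-sphericity then follows from the standard join-connectivity estimate applied to the $k+1$ nonempty $0$-dimensional vertex fibers. The extra care you take in noting that the combined simplex from Lemma~\ref{lem:spx_fiber} genuinely contains the inductively obtained face is a fair point, but it is already built into the statement of that lemma, so no further argument is needed.
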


\begin{proof}
The equation expresses an equality of abstract simplicial complexes with the same vertex set.

``$\subseteq$'':\quad This inclusion is just saying that vertices in $\pi^{-1}(\sigma)$ that are connected by an edge map to distinct vertices under $\pi$, which is clear.

\smallskip

``$\supseteq$'':\quad The $0$-skeleton of
$\bigast_{j=0}^k\pi^{-1}(v_j)$ is automatically contained in $\pi^{-1}(\sigma)$.  Now
assume that the same is true of the $r$-skeleton, for some $r\ge0$.  Let
$\tau$ be an $(r+1)$-simplex in~$\bigast_{j=0}^k\pi^{-1}(v_j)$, and
decompose $\tau$ as the join of a vertex $[(b,E)]$ and
an~$r$-simplex~$[(c,\Gamma)]$.  By induction, these are both in
$\pi^{-1}(\sigma)$, and by Lemma~\ref{lem:spx_fiber} they share a simplex
in~$\elbraigecpx_n$.  The minimal dimensional such simplex maps to
$\sigma$ under $\pi$, so we are done.
\end{proof}

Recall that in Sections~\ref{sec:match_cpxes} and~\ref{sec:nbr_arcs}
we defined the integers $\nu(n) = \bigl\lfloor
\frac{n+1}{3}\bigr\rfloor - 1$ and~$\eta(n) = \bigl\lfloor
\frac{n-1}{4}\bigr\rfloor$.

\begin{corollary}[Connectivity of descending links]
\label{cor:desc_link_conn}
$\elbraigecpx_n$ is $(\nu(n)-1)$-connected.  Hence for any vertex~$x$ in $X$ with~$f(x)=n$, $\dlk(x)$ is
$(\nu(n)-1)$-connected.
\end{corollary}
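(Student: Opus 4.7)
The plan is to transfer the connectivity bound from the base $\matcharc(K_n)$ to the total space $\elbraigecpx_n$ through the simplicial projection $\pi \colon \elbraigecpx_n \to \matcharc(K_n)$ constructed earlier in this section. Two key ingredients are already in place. First, by Theorem~\ref{thrm:surface_matching_conn}, the base $\matcharc(K_n)$ is $(\nu(n)-1)$-connected. Second, by Proposition~\ref{prop:fibers}, for every $k$-simplex $\sigma = \{v_0,\ldots,v_k\}$ of $\matcharc(K_n)$ the preimage $\pi^{-1}(\sigma)$ decomposes as the join $\Bigast_{j=0}^{k} \pi^{-1}(v_j)$ of its non-empty vertex-fibers, and is therefore $k$-spherical. (Surjectivity of $\pi$ onto vertices, which guarantees non-emptiness of each $\pi^{-1}(v_j)$, was noted when $\pi$ was defined.)

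The conclusion then follows from an application of Quillen's fiber theorem (Theorem~9.1 of~\cite{quillen78}, or the associated simplicial projection lemma), which asserts that a simplicial map whose simplex-preimages decompose as joins of non-empty vertex-fibers transports the connectivity of the base to the total space. Applied to our $\pi$ with connectivity bound $\nu(n)-1$, this gives the first claim, namely that $\elbraigecpx_n$ is $(\nu(n)-1)$-connected. The second claim, about descending links, follows at once from the identification $\dlk(x) \cong \elbraigecpx_n$ available for any vertex $x$ with $f(x)=n$, which was established at the end of Section~\ref{sec:def_stein_space} via left cancellation.

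The main obstacle is isolating the correct form of the Quillen tool: because the individual vertex-fibers are merely non-empty and not uniformly highly connected, a naive fiber lemma requiring a uniform connectivity bound on fibers cannot be applied directly. Rather, it is the join structure on each $\pi^{-1}(\sigma)$ that makes $\pi$ behave as a combinatorial analog of a trivial fibration, and Quillen's argument (effected via a double-complex or homotopy-colimit analysis) is precisely tailored to transfer connectivity in this setting.
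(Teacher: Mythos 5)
Your overall route is the paper's route: project $\elbraigecpx_n$ onto $\matcharc(K_n)$ via $\pi$, feed in the connectivity of the base from Theorem~\ref{thrm:surface_matching_conn} and the join decomposition of fibers from Proposition~\ref{prop:fibers}, and close with Quillen's Theorem~9.1. However, there is a genuine gap in how you invoke Quillen. The principle you state --- that a simplicial map whose simplex-preimages are joins of non-empty vertex fibers ``transports the connectivity of the base to the total space'' --- is false. Take $B$ to be a single edge $[v_0,v_1]$ (contractible) and $E$ the complete bipartite graph on fibers $\pi^{-1}(v_0)=\{a_1,a_2\}$ and $\pi^{-1}(v_1)=\{b_1,b_2\}$; then $\pi^{-1}([v_0,v_1])$ is exactly the join of the vertex fibers, yet $E$ is a $4$-cycle, hence not simply connected, while $B$ is contractible. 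So the join structure does not make $\pi$ a combinatorial trivial fibration, and something more is needed.

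What is needed --- and what the paper's proof supplies but your proposal never mentions --- is the third hypothesis of Quillen's theorem: a connectivity bound on the links of simplices in the \emph{base}. Concretely, for a $k$-simplex $\sigma$ of $\matcharc(K_n)$ one has $\lk(\sigma)\cong\matcharc(K_{n-2k-2})$, which by Theorem~\ref{thrm:surface_matching_conn} is $(\nu(n-2k-2)-1)$-connected, and a short computation with $\nu$ shows this is at least $(\nu(n)-k-2)$-connected. Together with the base being $(\nu(n)-1)$-connected and the fibers of $k$-simplices being $(k-1)$-connected (which is exactly what the join of $k+1$ non-empty complexes gives), these are the hypotheses under which Theorem~9.1 of~\cite{quillen78} yields that $\elbraigecpx_n$ is $(\nu(n)-1)$-connected. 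In the counterexample above this link condition fails (the link of the edge in $B$ is empty), which is precisely why connectivity is not transported there. Your identification of $\dlk(x)$ with $\elbraigecpx_n$ for the second assertion is fine; the missing verification of the link condition is the only gap, but it is an essential one.
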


\begin{proof}
We know that $\matcharc(K_n)$ is $(\nu(n)-1)$-connected by Proposition~\ref{prop:matching_cpx_conn}.  For any
$k$-simplex $\sigma$ in $\matcharc(K_n)$, $\pi^{-1}(\sigma)$
is~$(k-1)$-connected by Proposition~\ref{prop:fibers}.  Also, $\lk(\sigma)$ is isomorphic to $\matcharc(K_{n-2k-2})$, which is $(\nu(n-2k-2)-1)$-connected and
hence $(\nu(n)-k-2)$-connected.  It follows
from~\cite[Theorem~9.1]{quillen78} that $\elbraigecpx_n$ is
$(\nu(n)-1)$-connected.
\end{proof}

In the pure case, we consider descending links of vertices in
$X(\Fbr)$.  For a vertex~$x$ with~$n+1$ feet, $\dlk(x)$ is isomorphic
to the poset $\elpbraigecpx_{n+1}$ of dangling elementary pure
$(n+1)$-braiges.  This projects onto the complex $\matcharc(L_n)$,
using an analogous projection as from $\elbraigecpx_n$ to
$\matcharc(K_n)$.  (Recall that $L_n$ is indexed by the number of
edges, not nodes.)  By the same argument as in the previous proof, we
can get the connectivity of $\elpbraigecpx_{n+1}$ from that of
$\matcharc(L_n)$.

\begin{corollary}[Pure case]\label{cor:pure_desc_lk_conn}
$\elpbraigecpx_{n+1}$ is $(\eta(n)-1)$-connected.\qed
\end{corollary}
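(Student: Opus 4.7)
The plan is to mirror the proof of Corollary~\ref{cor:desc_link_conn} verbatim, replacing $\elbraigecpx_n$ by $\elpbraigecpx_{n+1}$, $\matcharc(K_n)$ by $\matcharc(L_n)$, the integer $\nu$ by $\eta$, and the full braid group action throughout by its restriction to the pure braid group. First I would spell out the projection $\pi\colon \elpbraigecpx_{n+1} \to \matcharc(L_n)$: an elementary pure $(n+1)$-braige $(b,\Gamma)$, where $\Gamma$ is a matching in $L_n$ (under the bijection of Observation~\ref{obs:matchings_to_forests}) and $b \in PB_{n+1}$, is sent to the arc system $(\Gamma)b^{-1}$. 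Because $b$ is pure it fixes each puncture, so $(\Gamma)b^{-1}$ is still compatible with $L_n$; and the map is plainly invariant under right dangling by pure braids, so $\pi$ is a well defined simplicial map.

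Next I would check the three hypotheses of Quillen's Theorem~9.1 from \cite{quillen78}. First, by Corollary~\ref{cor:surface_line_matching_conn}, $\matcharc(L_n)$ is $(\eta(n)-1)$-connected. Second, the analogs of Lemma~\ref{lem:spx_fiber} and Proposition~\ref{prop:fibers} go through essentially word for word: the pure braid group still acts transitively on elementary pure $n$-braiges with a fixed underlying forest of a given combinatorial type, and the untangling trick (choosing a representative of $b$ whose strands run straight between the endpoints of each edge of $\Gamma$, using that a disjoint representative of $\pi([(b,E)])$ exists) still produces a common simplex of $\elpbraigecpx_{n+1}$ containing any prescribed vertex-fiber pair. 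Thus $\pi^{-1}(\sigma)$ is a join of vertex-fibers, hence $(k-1)$-connected when $\sigma$ is a $k$-simplex.

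The one step that requires a little extra care is bounding the connectivity of $\lk(\sigma)$ for a $k$-simplex $\sigma$ of $\matcharc(L_n)$. Unlike the complete-graph case, removing the $2k+2$ endpoints of the arcs of $\sigma$ from $L_n$ does not return a linear graph but a disjoint union of shorter linear pieces. Since deleting the two endpoints of one arc kills at most three edges of $L_n$, the resulting subgraph $\Gamma'$ has at least $n-3(k+1)$ edges, and $\lk(\sigma) \cong \matcharc(\Gamma')$ where the ambient surface is the cut-open disc. Theorem~\ref{thrm:surface_subline_matching_conn}, whose statement was deliberately made to apply to \emph{any} subgraph of a linear graph, then shows that this link is $(\eta(n-3k-3)-1)$-connected. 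A short arithmetic check with floors yields $\eta(n-3k-3)-1 \ge \eta(n)-k-2$, which is exactly the bound Quillen needs. Feeding these three facts into \cite[Theorem~9.1]{quillen78} delivers the $(\eta(n)-1)$-connectivity of $\elpbraigecpx_{n+1}$.

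I expect the only real obstacle to be arithmetic rather than structural: the function $\eta$ behaves slightly worse than $\nu$, so one has to be a bit attentive in verifying the numerical inequality needed for Quillen. The crucial enabling fact is that Theorem~\ref{thrm:surface_subline_matching_conn} was proved in the generality of arbitrary subgraphs of a linear graph, which precisely accommodates the disconnected remnants produced when deleting vertex stars from $L_n$.
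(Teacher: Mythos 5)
Your proposal is correct and follows exactly the route the paper intends: the paper's own justification is the paragraph preceding the corollary, which says to repeat the argument of Corollary~\ref{cor:desc_link_conn} using the projection $\elpbraigecpx_{n+1}\to\matcharc(L_n)$, the join decomposition of fibers, and Quillen's Theorem~9.1 (and the detail you flag — that links of simplices in $\matcharc(L_n)$ are matching complexes of \emph{subgraphs} of linear graphs, so the generality of Theorem~\ref{thrm:surface_subline_matching_conn} is exactly what is needed — is precisely the point the authors verify when they redo this computation in Proposition~\ref{prop:braiges_to_arcs} of the appendix). Your floor inequality $\eta(n-3k-3)-1\ge\eta(n)-k-2$ checks out, so there is no gap.
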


From the above corollaries and the Morse Lemma, we conclude the following

\begin{corollary}[Connectivity of pairs in the filtration]\label{cor:pairs_conn}
For each $n\ge1$, the pair $(X^{\le n},X^{<n})$ is~$\nu(n)$-connected
and the pair $(X(\Fbr)^{\le n},X(\Fbr)^{<n})$ is~$\eta(n-1)$-connected.\qed
\end{corollary}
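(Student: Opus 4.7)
The plan is to apply the Morse Lemma (Lemma~\ref{lem:morse_lemma}(1)) directly, with the piecewise Euclidean cell complex being~$X^{\le n}$ and the height function being the restriction of $f$ (the ``number of feet'' function). Recall that $X^{<n}$ is, by definition, the full subcomplex of~$X$ spanned by vertices of $f$-value strictly less than~$n$; in the notation of the Morse Lemma this is $(X^{\le n})^{<n}$. So part (1) of the Morse Lemma tells us that if for every vertex $y$ of $X^{\le n}$ with $f(y)=n$ the descending link $\dlk(y)$ is $(\nu(n)-1)$-connected, then the pair $(X^{\le n},X^{<n})$ is $\nu(n)$-connected.

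First I would verify that the descending link of such a $y$ computed in $X^{\le n}$ agrees with the one computed in~$X$. This is essentially automatic: every cube of~$X$ having~$y$ at the top sits inside $X^{\le n}$, since the vertices of such a cube are elements of $[z,y]$ with $z\preceq y$, and hence satisfy $f(\cdot)\le f(y)=n$. Thus the descending star of~$y$ in $X^{\le n}$ is the same as in~$X$, and so is the descending link.

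Next I would invoke Corollary~\ref{cor:desc_link_conn}, which states that for any vertex $x$ in~$X$ with $f(x)=n$ the descending link $\dlk(x)$ is $(\nu(n)-1)$-connected. Combined with the preceding observation and part (1) of the Morse Lemma, this yields that $(X^{\le n},X^{<n})$ is $\nu(n)$-connected, which is the first half of the statement.

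For the second half I would repeat the argument verbatim, replacing~$X$ by $X(\Fbr)$. The only change is that the descending link of a vertex $x$ in $X(\Fbr)$ with $f(x)=n$ is isomorphic to $\elpbraigecpx_n$, which by Corollary~\ref{cor:pure_desc_lk_conn} is $(\eta(n-1)-1)$-connected. Applying the Morse Lemma with $k=\eta(n-1)$ gives that the pair $(X(\Fbr)^{\le n},X(\Fbr)^{<n})$ is $\eta(n-1)$-connected. There is no real obstacle here; the entire work lies in the preceding sections, where the matching complexes on surfaces were shown to be highly connected and the descending links in the Stein space were identified with fiber spaces over those matching complexes.
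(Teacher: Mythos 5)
Your proof is correct and is exactly the paper's argument: the paper derives this corollary by combining part (1) of the Morse Lemma with Corollaries~\ref{cor:desc_link_conn} and~\ref{cor:pure_desc_lk_conn}, precisely as you do (your check that descending links in $X^{\le n}$ agree with those in $X$ is harmless but unnecessary, since the Morse Lemma applied to $Y=X$ with $t=n$ already yields the pair $(X^{\le n},X^{<n})$ directly).
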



\section{Proof of the Main Theorem}
\label{sec:proof_main_theorem}

We are now in a position to prove our main theorem.

\begin{proof}[Proof of Main Theorem]
Consider the action of~$\Vbr$ on the complex~$X$, which is
contractible by Corollary~\ref{cor:stein_space_cible}.  We want to
apply Brown's Criterion.  By Corollary~\ref{cor:cell_stabs}, all cell
stabilizers are of type~$\F_\infty$.  By
Lemma~\ref{lem:cocompactness}, each~$X^{\le n}$ is finite
modulo~$\Vbr$, and by Corollary~\ref{cor:pairs_conn} the connectivity
of the pair $(X^{\le n},X^{<n})$ tends to~$\infty$ as~$n$ tends
to~$\infty$.  Hence Brown's Criterion tells us that~$\Vbr$ is of
type~$\F_\infty$.  A parallel argument applies to~$\Fbr$ acting on
$X(\Fbr)$, and so~$\Fbr$ is of type~$\F_\infty$.
\end{proof}


\vspace{2\bigskipamount}

\clearpage

\markleft{\uppercase{K.-U. Bux, S. Witzel, and M.~C.~B.~Zaremsky}}

\vspace{2\bigskipamount}

\clearpage

\thispagestyle{empty}
\markright{\uppercase{Erratum to: The braided Thompson's groups are of type~$\F_\infty$}}

\setlength{\baselineskip}{\stdbskip}

\begin{center}
    \textbf{\uppercase{Erratum to: The braided Thompson's groups are of type~$\F_\infty$}}
    
    \bigskip
    
    \uppercase{\small Kai-Uwe Bux, Stefan Witzel, and Matthew C.~B.~Zaremsky}
    
    \bigskip
\end{center}

\setlength{\baselineskip}{\bskipmaintext}

\noindent
There is a mistake in Lemma~3.9, which has no consequences for the rest of the article. The assumption that the link of every $k$-simplex in $X$ be $(m-2k-2)$-connected is insufficient to get the induction step to work, and needs to be replaced by $(m-k-2)$-connected. The correct statement therefore reads:

\setcounter{section}{3}
\setcounter{theorem}{8}
\begin{lemma}[Reduction to the simplexwise injective case]
\label{lem:injectifying}
Let $Y$ be a compact $m$-dimensional combinatorial manifold.  Let $X$ be a
simplicial complex and assume that the link of every $k$-simplex in
$X$ is $(m-k-2)$-connected.  Let $\psi \colon Y \to X$ be a
simplicial map whose restriction to $\partial Y$ is simplexwise
injective.  Then after possibly subdividing the simplicial structure of $Y$, $\psi$ is
homotopic relative $\partial Y$ to a simplexwise injective map.
\end{lemma}

We became aware of this mistake through discussions with S\o ren Galatius involving an equivalent result with the correct bound \cite[Theorem~2.4]{GalatiusRandalWilliams18}.

In the new formulation the old proof applies verbatim, but we extend the presentation to confirm that the induction step, which breaks down when using the old bound, now works.

\begin{proof}
The proof is by induction on $m$ and the statement is trivial for
$m=0$.

If $\psi$ is not simplexwise injective, there exists a simplex whose
vertices do not map to pairwise distinct points.  In particular we can
choose a simplex $\sigma \subseteq Y$ of maximal dimension $k>0$ such
that for every vertex $x$ of $\sigma$ there is another vertex~$y$ of
$\sigma$ with $\psi(x) = \psi(y)$.  By assumption, $\sigma$ is not
contained in $\partial Y$.  Maximality of the dimension of $\sigma$
implies that the restriction of~$\psi$ to the $(m-k-1)$-sphere
$\lk_Y(\sigma)$ is simplexwise injective.  It also implies that
$\psi(\lk_Y(\sigma)) \subseteq \lk_X(\psi(\sigma))$.  Note further
that $\psi(\sigma)$ has dimension at most $(k-1)/2 \le k-1$.  Therefore its
link in $X$ is~$(m-k-1)$-connected by assumption.  Hence there is an
$(m-k)$-disk $B$ with $\partial B = \lk_Y(\sigma)$ and a map~$\varphi
\colon B \to \lk_X(\psi(\sigma))$ such that $\varphi|_{\partial B}$
coincides with $\psi |_{\lk_Y(\sigma)}$.  Inductively applying the
lemma, we may assume that $\varphi$ is simplexwise injective.

This inductive step is indeed possible because if $\tau$ is a $d$-simplex in $\lk_X(\psi(\sigma))$ then $\psi(\sigma) \vee \tau$ is a simplex of dimension at most $(k-1)/2+d+1 \le k+d$ such that $\lk_{\lk_X(\psi(\sigma))}(\tau) = \lk_X(\psi(\sigma) \vee \tau)$. By assumption that complex is $((m-k) - d - 2)$-connected.

We now replace $Y$ by $Y'$, the space obtained by replacing the closed star of
$\sigma$ by~$B * \partial \sigma$.  The map $\psi' \colon Y' \to X$ is the map
that coincides with $\psi$ outside the open star of $\sigma$, coincides with
$\varphi$ on $B$ and is affine on simplices. It is clearly homotopic to~$\psi$,
since the image of~$B$ under~$\varphi$ is contained in $\lk_X(\psi(\sigma))$. 
Since the restriction of $\psi'$ to $B$ is simplexwise injective, the
restriction to any $k$-simplex of $B*\partial \sigma$ is injective. Since $Y$ is compact, by repeating this procedure finitely many times we eventually obtain a map that is simplexwise injective.
\end{proof}

This change is inconsequential for the rest of the article, because in both applications of Lemma~3.9 the new bound is still met. Indeed in the proof of Theorem~3.10 the estimate reads
\[
\eta(n-3k-3)-1 = \Big\lfloor \frac{n-3k-4}{4} \Big\rfloor -1 \ge \eta(n) - \Big\lfloor\frac{3}{4}k\Big\rfloor - 2 > m-k-2 \text{,}
\]
because $m < \eta(n)-1$, and in the alternate proof of Theorem~3.8 it reads
\[
\nu(n-2k-2)-1 = \Big\lfloor\frac{n-2k-1}{3}\Big\rfloor-1 \ge \nu(n)-\Big\lfloor\frac{2}{3}k\Big\rfloor-2 > m-k-2 \text{\,}
\]
because $m < \nu(n)-1$.
(Recall that $\nu(n) = \bigl\lfloor\frac{n+1}{3}\bigr\rfloor-1$ and $\eta(n) =\bigl\lfloor\frac{n-1}{4}\bigr\rfloor$.)

\medskip

The following example shows that the original formulation of the lemma is incorrect, and not just its proof.

\begin{example}
Let $Y$ consist of two triangles with vertices $a,b,c$ and $b,c,d$, respectively. Then $Y$ is a $2$-dimensional combinatorial manifold whose boundary is the circle $[a,b] \cup [b,d] \cup [d,c] \cup [c,a]$. Let $X$ be a single edge $[v,w]$. The link of each vertex of $X$ is a single point, hence contractible, while $\lk [v,w] = \emptyset$ is $(2 - 2\cdot 1 -2)$-connected (which is an empty condition) but not $(2 - 1 - 2)$-connected (which would mean non-empty). Consider the simplicial map that takes $a$ and $d$ to $v$ and $b$ and $c$ to $w$. Its restriction to the boundary is simplexwise injective. However, invariance of domain forbids that an interior point of $Y$ could have a neighborhood that is mapped injectively to $X$.

The proof of the lemma would introduce an additional vertex $x$ in the interior of $[v,w]$ and then map $[b,c]$ to $[x,w]$. However, then the induction fails since the non-empty link of $[b,c]$ cannot be mapped to the empty link of $[x,w]$.
\end{example}


\vspace{2\bigskipamount}

\clearpage

\thispagestyle{empty}
\markleft{\uppercase{M.~C.~B.~Zaremsky}}
\markright{\uppercase{Appendix: Higher generation for pure braid groups}}

\setcounter{section}{0}

\renewcommand{\thesection}{A.\arabic{section}}
\renewcommand{\theHsection}{A.\arabic{section}}

\setlength{\baselineskip}{\stdbskip}

\begin{center}
    \textbf{\uppercase{Appendix: Higher generation for pure braid groups}}
    
    \bigskip
    
    \uppercase{\small by Matthew C.~B.~Zaremsky}
    
    \bigskip
\end{center}

\setlength{\baselineskip}{\bskipmaintext}

In this appendix we use techniques and results from the main body of
the paper to derive higher generation properties for families of
subgroups of pure braid groups.  The notion of a family of subgroups
of a group being \emph{highly generating} was introduced by Abels and
Holz \cite{abels93}.  It is a very natural condition, with many strong
consequences, but to date few examples have been explicitly
constructed of highly generating subgroups for ``interesting'' groups.
One prominent existing example, given by Abels and Holz, is standard
parabolic subgroups of Coxeter groups, or standard parabolic subgroups
of groups with a~$BN$-pair.  The relevant geometry is given by Coxeter
complexes and buildings.  Higher generation is also used in~\cite{meier98} as a tool to calculate the Bieri--Neumann--Strebel--Renz
invariants of right-angled Artin groups.

As an addition to the collection of interesting examples, we produce
two classes of families of subgroups of the~$n$-string pure braid
group~$PB_n$ that we show to be highly generating.  In the first case
the geometry is given by complexes of arcs on a surface, related to
the complexes $\matcharc(L_n)$ from
Definition~\ref{def:match_cpx_surface}.  In the second case the
geometry is given by complexes of ``dangling flat braiges'', related
to the complexes $\elpbraigecpx_n$ analyzed in
Section~\ref{sec:desc_link_conn}.

\medskip

In Section~\ref{sec:higher_gen} we recall some definitions and results
from \cite{abels93}, and establish a criterion for detecting coset
complexes in Proposition~\ref{prop:fund_dom_general}.  In
Section~\ref{sec:arcs} we define the \emph{restricted arc complex} on
a surface, and in Section~\ref{sec:braiges} we define the complex of
\emph{dangling flat pure braiges}.  The relevant families of subgroups
of~$PB_n$ are defined in the paragraphs before
Lemma~\ref{lem:nerve_to_arcs} and
Corollary~\ref{cor:nerve_to_braiges}, and in
Definition~\ref{def:restrictive_fams}.  Finally in
Section~\ref{sec:conn_cpxes} we calculate the connectivity of these
complexes and deduce that the families of subgroups are highly
generating.  See Propositions~\ref{prop:restrictive_arc_conn}
and~\ref{prop:restrictive_braige_conn} for the exact bounds.


\section{Higher generation}
\label{sec:higher_gen}

Higher generation is defined using nerves of coverings of groups by
cosets.  The relevant definitions are as follows.

\begin{definition}[Nerve]\label{def:nerve}
Let $X$ be a set and $\cover$ a collection of subsets covering~$X$.
The \emph{nerve} of the cover~$\cover$, denoted~$\nerve(\cover)$, is a
simplicial complex with vertex set~$\cover$, such that pairwise
distinct vertices $U_0,\dots,U_k$ span a~$k$-simplex if and only if~$U_0\cap\cdots\cap U_k\neq \emptyset$.
\end{definition}

The type of nerve we are interested in is the following \emph{coset
complex}.

\begin{definition}[Coset complex and higher generation]\label{def:high_gen}
    Let $G$ be a group and~$\family$ a family of subgroups.  Let
    $\cosets \defeq \coprod\limits_{H\in\family}G/H$ be the covering of~$G$
    by cosets of subgroups in~$\family$.  We call~$\nerve(\cosets)$
    the \emph{coset complex} of~$G$ with respect to~$\family$, and
    denote it $\CC(G,\family)$.  We say that $\family$
    $n$-\emph{generates}~$G$ if~$\CC(G,\family)$ is~$(n-1)$-connected,
    and $\infty$-\emph{generates}~$G$ if~$\CC(G,\family)$ is
    contractible.
\end{definition}

The following theorem indicates some ways higher generation can be
used.  The first part says that~$1$-generation equals generation, and
the second part says that a~$2$-generating family yields a
decomposition of~$G$ as an amalgamated product.

\begin{theorem}\label{thrm:alg_to_geom}\cite[Theorem~2.4]{abels93}
    Let $\family=\{H_\alpha \mid \alpha\in \Lambda\}$ be a family of
    subgroups of~$G$.
    \begin{enumerate}
	\item $\family$ is~$1$-generating if and only if
	$\bigcup H_\alpha$ generates~$G$.
  
	\item $\family$ is~$2$-generating if and only if the natural
	map $\coprod\limits_\cap H_\alpha \to G$ is an
	isomorphism.
 \end{enumerate}
\end{theorem}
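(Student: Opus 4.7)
The plan is to analyze the $G$-action on $\CC(G,\family)$, which admits the simplex $\Delta$ with vertex set $\{H_\alpha\}_{\alpha \in \Lambda}$ (viewed as the cosets containing the identity) as a strict fundamental domain. The stabilizer of the vertex $H_\alpha$ is $H_\alpha$ itself, and more generally the stabilizer of the face of $\Delta$ spanned by $H_{\alpha_0}, \ldots, H_{\alpha_k}$ is $H_{\alpha_0} \cap \cdots \cap H_{\alpha_k}$. Both parts of the theorem will then be read off from this action.

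For part (1), I would observe that $\CC(G,\family)$ is connected if and only if every vertex $gH_\alpha$ lies in the component of some vertex of $\Delta$. Given a factorization $g = h_1 h_2 \cdots h_m$ with $h_i \in H_{\alpha_i}$, the sequence of cosets
\begin{equation*}
H_\alpha,\ H_{\alpha_1},\ h_1 H_{\alpha_2},\ h_1 h_2 H_{\alpha_3},\ \ldots,\ h_1 \cdots h_{m-1} H_{\alpha_m},\ gH_\alpha
\end{equation*}
is an edge path, since consecutive cosets contain the common element $h_1 \cdots h_j$. Conversely, traversing an edge path from $H_\alpha$ to $gH_\alpha$ expresses $g$ as a product of elements of $\bigcup H_\alpha$. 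Hence connectedness is equivalent to $G = \langle \bigcup H_\alpha \rangle$, which proves (1).

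For part (2), let $\widetilde{G} \defeq \coprod_\cap H_\alpha$ denote the amalgamated pushout of the $H_\alpha$ along their pairwise intersections, and let $\phi \colon \widetilde{G} \to G$ be the canonical map; surjectivity follows from (1), since $2$-generation implies $1$-generation. For injectivity I would form the coset complex $\CC(\widetilde{G}, \widetilde{\family})$ for the family $\widetilde{\family}$ of canonical images of the $H_\alpha$ inside $\widetilde{G}$. By construction this new complex has the same strict fundamental domain $\Delta$ with the same stabilizer pattern as $\CC(G,\family)$, so the $\phi$-equivariant simplicial map $\CC(\widetilde{G}, \widetilde{\family}) \to \CC(G,\family)$ is locally bijective and therefore a covering map. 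Simple connectedness of $\CC(G,\family)$ forces this covering to be trivial, so $\ker \phi$ acts trivially on $\CC(\widetilde{G}, \widetilde{\family})$; but $\ker \phi$ is contained in each stabilizer, and the only element contained in \emph{every} $H_\alpha$ that is also killed by all the amalgamation relations is the identity, so $\ker \phi$ is trivial.

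The main obstacle is justifying rigorously that the comparison map $\CC(\widetilde{G}, \widetilde{\family}) \to \CC(G, \family)$ is a covering map, which amounts to showing that locally around each simplex the two complexes look identical: this is exactly the universal property of the pushout defining $\widetilde{G}$, carefully unwound one simplex at a time. The cleanest packaging of this step is via Haefliger's theory of complexes of groups, where $\Delta$ together with the family $\{H_\alpha \cap \cdots\}$ forms a simple complex of groups whose fundamental group is $\widetilde{G}$ and whose development is $\CC(\widetilde{G},\widetilde{\family})$; developability combined with simple connectivity of the development identifies $\widetilde{G}$ with $G$. Part (1) is by contrast a direct unwinding of definitions, and can be treated first as a warm-up.
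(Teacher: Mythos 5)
First, note that the paper does not prove this statement at all—it is quoted verbatim from Abels--Holz \cite[Theorem~2.4]{abels93}—so there is no in-paper argument to compare against; what follows assesses your proof on its own terms. Your treatment of part~(1) is correct and is the standard edge-path argument: the chain of cosets $H_\alpha, H_{\alpha_1}, h_1H_{\alpha_2},\dots$ works because consecutive cosets share the partial product $h_1\cdots h_j$, and the converse direction reads a factorization off of any edge path. Your strategy for the forward implication of part~(2) is also essentially sound: the canonical maps $H_\alpha\to\widetilde{G}$ are injective because they are split by $\phi$ followed by the inclusion into $G$, one checks that $\widetilde{H}_\alpha\cap\widetilde{H}_\beta$ is exactly the image of $H_\alpha\cap H_\beta$, and from this the unique-simplex-lifting property of $\CC(\widetilde{G},\widetilde{\family})\to\CC(G,\family)$ follows, so the map is a covering; simple connectivity of the base and connectivity of the total space (which you should note follows from part~(1) applied to $\widetilde{G}$) force it to be an isomorphism. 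Your closing sentence is garbled, though: the clean finish is that $\ker\phi$ fixes the vertex $\widetilde{H}_\alpha$, hence lies in $\widetilde{H}_\alpha$, and $\phi$ restricted to $\widetilde{H}_\alpha$ is injective, so $\ker\phi$ is trivial.

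The genuine gap is that part~(2) is an \emph{if and only if}, and you prove only one direction. Nothing in your argument addresses the converse: assuming $\coprod\limits_\cap H_\alpha\to G$ is an isomorphism, why is $\CC(G,\family)$ simply connected? This does not come for free from your setup. Under that hypothesis your covering map is the identity, so the covering-space argument yields no information; what is actually needed is the statement that $\CC(\widetilde{G},\widetilde{\family})$ is \emph{always} simply connected for the colimit group $\widetilde{G}$ (equivalently, that $\pi_1(\CC(G,\family))\cong\ker\phi$ once the complex is connected). That is a van Kampen--type computation—one must show every edge loop is a consequence of the ``triangle'' relations coming from the multiplication in the $H_\alpha$ and the identifications along the intersections—and it is roughly as much work as the direction you did prove. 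As it stands, your write-up establishes ``$2$-generating $\Rightarrow$ isomorphism'' but not the converse, so the theorem as stated is only half proven.
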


Here by $\coprod\limits_\cap H_\alpha$ we mean the amalgamated product
of the $H_\alpha$ over their intersections.  We remark that another
equivalent condition in part~$(1)$ is that the map~$\coprod\limits_\cap H_\alpha \to G$ be surjective.

An important observation about coset complexes is that the action of
the group on the complex has a very nice fundamental domain.

\begin{observation}[Fundamental domain]\label{obs:fund_dom}
    With the above notation, assume~$\family$ is finite.  Since
    $\bigcap\limits_{H\in\family}H\neq\emptyset$, we see that
    $\family$ itself is the vertex set of a maximal simplex
    in~$\CC(G,\family)$.  This maximal simplex, which we call~$C$, is
    a fundamental domain for the action of~$G$ on~$\CC(G,\family)$ by
    left multiplication.
\end{observation}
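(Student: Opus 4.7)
The plan is to unpack the definitions of $\CC(G,\family)$ and its $G$-action, using the key fact that two cosets of the same subgroup are either equal or disjoint. Throughout, I will identify each $H \in \family$ with the vertex $eH = H$ of $\CC(G,\family)$, so that $\family$ is naturally a subset of $\cosets$.

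First I would verify that $\family$ spans a maximal simplex $C$. Since the identity $e$ lies in every $H \in \family$, the intersection $\bigcap_{H \in \family} H$ is non-empty, so by Definition~\ref{def:nerve} the finite set $\family$ spans a simplex. To see that $C$ is maximal, I would suppose for contradiction that $C$ could be enlarged by a vertex $v \in \cosets \setminus \family$. Then $v = gH_0$ for some $H_0 \in \family$ with $g \notin H_0$, and the requirement $gH_0 \cap H_0 \neq \emptyset$ would contradict the disjointness of distinct cosets of $H_0$.

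To show that $C$ is a fundamental domain, I would show that every simplex of $\CC(G,\family)$ is in the $G$-orbit of a unique face of $C$. For existence, given a simplex $\sigma = \{g_0 H_0, \dots, g_k H_k\}$, I would pick any $x \in \bigcap_i g_i H_i$; writing $x = g_i h_i$ with $h_i \in H_i$ gives $x^{-1} g_i \in H_i$, hence $x^{-1} \cdot g_i H_i = H_i$, so $x^{-1} \cdot \sigma$ equals the face $\{H_0, \dots, H_k\}$ of $C$. For uniqueness, if $g \cdot \{H_{i_0}, \dots, H_{i_k}\} = \{H_{j_0}, \dots, H_{j_k}\}$ as vertex sets, then each $gH_{i_s}$ equals some $H_{j_t}$, which forces $g \in H_{j_t}$ and hence $H_{i_s} = H_{j_t}$, so the two faces coincide. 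No step is expected to be a genuine obstacle: the argument is essentially a bookkeeping exercise exploiting coset disjointness and the simplicial nature of the left $G$-action on $\CC(G,\family)$.
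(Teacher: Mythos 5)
Your proposal is correct and follows essentially the same route as the paper: translate every simplex to a face of $C$ by a common coset representative drawn from the non-empty intersection, and deduce uniqueness from the fact that $gH_i = H_j$ forces $g \in H_j$ and hence $H_i = H_j$. You additionally spell out the maximality of $C$ via coset disjointness, which the paper asserts without detail; this is a welcome but minor elaboration.
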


\begin{proof}
 For any simplex $\sigma$ in $\CC(G,\family)$, there exist $H_0,\dots,H_k\in\family$ and $g\in G$ such that the vertices of $\sigma$ are the cosets $gH_i$ for $0\le i\le k$. Then $g^{-1} \sigma$ is a face of~$C$. This shows that every~$G$-orbit intersects $C$, and indeed intersects~$C$ uniquely since if $gH_i=H_j$ then $g\in H_i=H_j$.
\end{proof}

A sort of converse of this observation is the following proposition, which allows us to detect highly generating families of subgroups as stabilizers of ``nice'' actions.

\begin{proposition}[Detecting coset complexes]\label{prop:fund_dom_general}
 Let $G$ be a group acting by simplicial automorphisms on a simplicial complex~$X$, with a single maximal simplex~$C$ as fundamental domain. Let
 $$\family\defeq \{\Stab_G(v)\mid v\text{ is a vertex of }C\}.$$
 Then $\CC(G,\family)$ is isomorphic to $X$ as a simplicial~$G$-complex.
\end{proposition}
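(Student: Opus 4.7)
The plan is to construct an explicit $G$-equivariant simplicial isomorphism $\phi \colon X \to \CC(G,\family)$. Since $C$ is a fundamental domain for the $G$-action, every vertex $v$ of $X$ lies in the orbit of a unique vertex $w_v$ of $C$; choose any $g_v \in G$ with $g_v w_v = v$. The ambiguity in $g_v$ is precisely right multiplication by $\Stab_G(w_v) \in \family$, so the coset $g_v \Stab_G(w_v)$ is well defined. I would define
\begin{equation*}
\phi(v) \defeq g_v \Stab_G(w_v) \, ,
\end{equation*}
which is manifestly $G$-equivariant: $\phi(hv) = (hg_v)\Stab_G(w_v) = h\phi(v)$.

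Next I would verify that $\phi$ is a bijection on vertices. Injectivity: if $\phi(v) = \phi(v')$, then the two cosets are cosets of the \emph{same} subgroup in $\family$, forcing $w_v = w_{v'}$ (different vertices of $C$ have stabilizers that are distinct elements of $\family$, because they are distinct coset representatives in $\CC(G,\family)$ — here I am using that the vertex set of $\CC(G,\family)$ is the disjoint union $\cosets = \coprod_{H \in \family} G/H$). Then $g_v^{-1}g_{v'} \in \Stab_G(w_v)$, so $v = g_v w_v = g_{v'} w_{v'} = v'$. Surjectivity: any coset $g\Stab_G(w)$ with $w$ a vertex of $C$ is hit by the vertex $gw$ of $X$.

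Now for the simplicial structure, I would check both implications. If $v_0,\dots,v_k$ span a simplex of $X$, then by the fundamental domain hypothesis this simplex is $G$-translate of a face of $C$: there is $h \in G$ and vertices $w_0,\dots,w_k$ of $C$ with $v_i = h w_i$ for all $i$. Thus $\phi(v_i) = h \Stab_G(w_i)$, and $h$ lies in the common intersection $\bigcap_i \phi(v_i)$, so $\{\phi(v_0),\dots,\phi(v_k)\}$ spans a simplex in $\CC(G,\family)$. Conversely, if the cosets $g_{v_i}\Stab_G(w_{v_i})$ have a common element $h$, then $hw_{v_i} = g_{v_i} w_{v_i} = v_i$ for each $i$; the vertices $w_{v_0},\dots,w_{v_k}$ all lie in $C$, and because $C$ is a single simplex they span a face of $C$, so their $h$-translates $v_0,\dots,v_k$ span a simplex of $X$.

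The main obstacle is the converse direction of the simplicial check: it is exactly the place where the hypothesis that $C$ is a \emph{single} maximal simplex (not merely a subcomplex) is essential, since we need arbitrary subsets of the vertex set of $C$ to span simplices in $X$. Once that is in hand, combining bijectivity, $G$-equivariance, and the simplex correspondence yields the desired isomorphism of simplicial $G$-complexes.
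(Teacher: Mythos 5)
Your proof is correct and is essentially the paper's argument run in the opposite direction: the paper defines the inverse map $\CC(G,\family)\to X$ by $g\Stab_G(v)\mapsto gv$ and checks the same three points (well-definedness/equivariance, bijectivity from the fundamental domain property, and the simplex correspondence via common left coset representatives together with the fact that $C$ is a single simplex). Your version spells out both directions of the simplicial check more explicitly than the paper does, but there is no substantive difference in method.
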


\begin{proof}
 Define a map $\phi\colon \CC(G,\family) \to X$ by sending the coset~$g\Stab_G(v)$ to the vertex~$gv$ of~$X$. This is a $G$-invariant map between the~$0$-skeleta, and it induces a simplicial map since the vertices of a simplex in $\CC(G,\family)$ can be represented as cosets with a common left representative. Since~$C$ is a fundamental domain,~$\phi$ is bijective.
\end{proof}

A good first example is when~$X$ is a tree, on which a group~$G$ acts edge transitively and without inversion. Then Theorem~\ref{thrm:alg_to_geom} and Proposition~\ref{prop:fund_dom_general} imply that~$G$ decomposes as an amalgamated product. Namely, if~$e$ is a fundamental domain with endpoints~$v$ and~$w$, then $G=G_v\ast_{G_e}G_w$ (this is standard Bass--Serre theory). Indeed, the vertex stabilizers are not just~$2$-generating, but~$\infty$-generating.

This example is generalized by looking at groups acting on buildings.

\begin{example}[Buildings]\label{ex:bldgs}
 Let $G$ be a group acting chamber transitively on a building~$\Delta$, by type preserving automorphisms. See~\cite{abramenko08} for the relevant background. Let~$C$ be the fundamental chamber, and let~$\family\defeq \{\Stab_G(v)\mid v$ is a vertex of $C\}$. Then $\CC(G,\family)\cong\Delta$, and so $\family$ is highly generating for~$G$. More precisely, if~$\Delta$ is spherical of dimension~$n$ then $\family$ is $n$-generating, and if~$\Delta$ is not spherical then $\family$ is $\infty$-generating. If the action is not just chamber transitive, but is even \emph{Weyl transitive}, as in~\cite[Chapter~6]{abramenko08}, then the stabilizers $\Stab_G(v)$ are precisely the maximal \emph{standard parabolic subgroups}. An even stronger condition is that the action is \emph{strongly transitive}, in which case~$G$ has a~$BN$-pair, and we recover the situation in \cite[Section~3.2]{abels93}.
\end{example}

We also have examples from the world of Artin groups.

\begin{example}[Deligne complexes]\label{ex:deligne}
 Background for this example can be found in~\cite{charney95}. Let~$(A,S)$ be an Artin system with associated Coxeter system $(W,S)$. For~$T\subseteq S$ let $A_T$ (respectively $W_T$) be the subgroup generated by $T$. Let~$\widehat{\family}\defeq \{A_T \mid T\subseteq S\}$ and $\family\defeq \{A_T \mid T\subseteq S$ with $|W_T|<\infty\}$. The coset complexes~$\CC(A,\widehat{\family})$ and~$\CC(A,\family)$ are, up to homotopy equivalence, the \emph{Deligne complex} and \emph{modified Deligne complex} of $A$. The connectivity of these complexes, and hence the higher generation properties of these families of subgroups, is tied to the~$K(\pi,1)$ Conjecture described in~\cite{charney95}. Namely, $\family$ is conjecturally $\infty$-generating; see~\cite[Conjecture~2]{charney95}. This is known to hold for many Artin groups, including for braid groups.
\end{example}


\section{Some variations on arc complexes and braige complexes}\label{sec:arcs_and_braiges}

In this section we define and analyze some complexes on which the braid group and pure braid group act.  In the first subsection we look at the \emph{restricted arc complex} on a surface, and in the second subsection we look at the complex of \emph{flat dangling (pure) braiges}.  The restricted arc complex here will provide a coset complex for~$PB_n$ using arc stabilizers as subgroups. The flat dangling pure braige complex will provide a coset complex for~$PB_n$ using subgroups obtained via the ``strand cloning maps''. These subgroups are smaller than the arc stabilizers, and more visualizable when using strand pictures for braids. We will save the connectivity calculations for Section~\ref{sec:conn_cpxes}, after which we will conclude that these families of subgroups are highly generating.

\subsection{Arc complexes}\label{sec:arcs}

We maintain the definitions and notation from Section~\ref{sec:arc_cpx}. Consider~$\hatcharc(\Gamma)$ for $\Gamma$ a subgraph of $K_n$ with the same vertex set.

\medskip
\textbf{Terminological convention:} Throughout this appendix, a \emph{subgraph} $\Gamma''$ of a graph $\Gamma'$ always has the same vertex set as $\Gamma'$.
\medskip

Given an arc system $\sigma=\{\alpha_0,\dots,\alpha_k\}$ in $\hatcharc(\Gamma)$, denote by $\Gamma_\sigma$ the following subgraph of $\Gamma$. Every vertex of~$\Gamma$ is a vertex of~$\Gamma_\sigma$, and an edge~$e$ of~$\Gamma$ is in $\Gamma_\sigma$ if and only if the endpoints of~$e$ are the endpoints of some~$\alpha_i$. Call $\Gamma_\sigma$ \emph{faithful} if it has precisely~$(k+1)$ edges.  Since we only consider simplicial graphs, i.e., there are no loops or multiple edges, this condition is equivalent to saying that no distinct~$\alpha_i$,~$\alpha_j$ share both endpoints (they may share one).

The complex we are presently interested in is a complex $\restrarc(\Gamma)$, which we will call the \emph{restricted arc complex}.

\begin{definition}[Restricted arc complex]\label{def:restr_arc_cpx}
 The \emph{restricted arc complex} $\restrarc(\Gamma)$ on~$(S,P)$ corresponding to $\Gamma$ is the subcomplex of $\hatcharc(\Gamma)$ consisting of arc systems~$\sigma$ for which $\Gamma_\sigma$ is faithful. We may also write $\restrarc(S,P,\Gamma)$.
\end{definition}

We could equivalently require that the subspace of~$S$ given by the union of the arcs is a simplicial graph, i.e., has no multiple edges. In this way we can view~$\restrarc(\Gamma)$ as the complex of embeddings of subgraphs of~$\Gamma$ into~$S$ that send vertices in a prescribed way to the points of~$P$.

\medskip

\textbf{Notational convention:} Throughout this appendix, $L_n$ denotes not the linear graph with~$n$ edges, but rather the linear graph with~$n$ vertices, and hence~$n-1$ edges.
\medskip

The $\Gamma=L_n$ case is especially nice, since all of~$L_n$ can be embedded into any connected surface. In fact, every simplex of~$\restrarc(L_n)$ is a face of a maximal simplex of dimension~$n-2$.  See Figure~\ref{fig:arcs} for some examples of arc systems.

\begin{figure}[t]
\centering
\begin{tikzpicture}[line width=0.8pt]
  
  \filldraw
   (0,0) circle (1.5pt)   (1,0) circle (1.5pt)   (2,0) circle (1.5pt)   (3,0) circle (1.5pt)   (4,0) circle (1.5pt)   (5,0) circle (1.5pt)   (6,0) circle (1.5pt)   (7,0) circle (1.5pt);

  \draw
   (1,0) to [out=-30, in=-150, looseness=1] (2,0)
   (2,0) to [out=45, in=90, looseness=1.5] (4.5,0) to [out=-90, in=-80, looseness=1.3] (3,0)
   (2,0) to [out=-30, in=-150, looseness=1] (3,0)
   (5,0) to [out=-30, in=-150, looseness=1] (6,0);
    
 \begin{scope}[yshift=-2.5cm]
  \filldraw
   (0,0) circle (1.5pt)   (1,0) circle (1.5pt)   (2,0) circle (1.5pt)   (3,0) circle (1.5pt)   (4,0) circle (1.5pt)   (5,0) circle (1.5pt)   (6,0) circle (1.5pt)   (7,0) circle (1.5pt);

  \draw
   (1,0) to [out=-30, in=-150, looseness=1] (2,0)
   (2,0) to [out=45, in=90, looseness=1.5] (4.5,0) to [out=-90, in=-80, looseness=1.3] (3,0)
   (5,0) to [out=-30, in=-150, looseness=1] (6,0);
 \end{scope}

 \begin{scope}[yshift=-5cm]
  \filldraw
   (0,0) circle (1.5pt)   (1,0) circle (1.5pt)   (2,0) circle (1.5pt)   (3,0) circle (1.5pt)   (4,0) circle (1.5pt)   (5,0) circle (1.5pt)   (6,0) circle (1.5pt)   (7,0) circle (1.5pt);

  \draw
   (2,0) to [out=45, in=90, looseness=1.5] (4.5,0) to [out=-90, in=-80, looseness=1.3] (3,0)
   (5,0) to [out=-30, in=-150, looseness=1] (6,0);
 \end{scope}

\end{tikzpicture}

\caption{From top to bottom, an arc system in $\hatcharc(L_8) \setminus \restrarc(L_8)$, one in $\restrarc(L_8) \setminus \matcharc(L_8)$ and one in $\matcharc(L_8)$.}
\label{fig:arcs}
\end{figure}

\begin{remark}\label{rmk:graph_embedding}
 Embedding graphs into surfaces is an interesting enterprise in its own right, so the complex~$\restrarc(\Gamma)$ may be of further general interest. For instance, the dimension of~$\restrarc(S,P,\Gamma)$ is one less than the number of edges in a maximal subgraph of~$\Gamma$ embeddable into~$(S,P)$.
\end{remark}

\medskip

Recall that~$B_n$ acts on $\hatcharc(K_n)$, and this action stabilizes~$\matcharc(K_n)$ and~$\restrarc(K_n)$. For general~$\Gamma$,~$B_n$ will not necessarily stabilize~$\hatcharc(\Gamma)$, since general braids may not stabilize~$P$ pointwise. However, pure braids do stabilize~$P$ pointwise, and so~$PB_n$ stabilizes $\hatcharc(\Gamma)$, $\matcharc(\Gamma)$ and $\restrarc(\Gamma)$ for any~$\Gamma$.

Denote by $[m]$ the set $\{1,\dots,m\}$ for $m\in\N$. Let~$S$ be the unit disk, and fix an embedding $L_n \hookrightarrow S$ of the linear graph with $n$ vertices into~$S$. Let~$P$ be the image of the vertex set, so~$P$ is a set of~$n$ points in~$S$, labeled~$1$ through~$n$. Under this embedding, the edges of~$L_n$ yield a maximal simplex of~$\restrarc(L_n)$, which we will denote~$C$. For each~$\emptyset\neq J \subseteq [n-1]$ define $\sigma_J$ to be the face of~$C$ consisting only of those arcs with endpoints~$j,j+1$ for~$j\in J$. In particular, $\sigma_J$ is a $(|J|-1)$-simplex in $\restrarc(L_n)$.

For each $\emptyset\neq J \subseteq [n-1]$ define
$$PB_n^J \defeq \Stab_{PB_n}(\sigma_J)$$
and set $\arcfam_n \defeq \{PB_n^J \mid \emptyset\neq J \subseteq [n-1]$ with $|J|=1\}$.

\begin{lemma}\label{lem:nerve_to_arcs}
 The coset complex $\CC(PB_n,\arcfam_n)$ and the restricted arc complex~$\restrarc(L_n)$ are isomorphic as simplicial $PB_n$-complexes.
\end{lemma}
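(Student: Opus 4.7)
The plan is to apply Proposition~\ref{prop:fund_dom_general} to the action of $PB_n$ on $\restrarc(L_n)$, using $C$ as the fundamental domain. The vertices of $C$ are precisely the arcs corresponding to the edges $\{j,j+1\}$ of $L_n$ for $j\in[n-1]$, so the stabilizers of the vertices of $C$ form exactly the family $\arcfam_n$. Once I verify that $C$ is a fundamental domain for the action, Proposition~\ref{prop:fund_dom_general} immediately yields the claimed isomorphism of simplicial $PB_n$-complexes.

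First I would note that $PB_n$ does act on $\restrarc(L_n)$: pure braids fix each point of $P$, so they preserve the labels of arc endpoints, and hence the faithfulness condition in Definition~\ref{def:restr_arc_cpx}. To show that $C$ is a fundamental domain, it suffices to check (a) that $PB_n$ acts transitively on the set of maximal simplices of $\restrarc(L_n)$, and (b) that distinct faces of $C$ lie in distinct $PB_n$-orbits. These two conditions are enough because every simplex of $\restrarc(L_n)$ extends to a maximal one: given any arc system, its arcs form an embedded forest in $S$, which does not separate the disk, so one can successively draw in arcs realizing the missing edges of $L_n$, disjoint from those already present except possibly at shared endpoints, to complete a full labeled embedding of $L_n$.

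Condition (b) is immediate. If $g\in PB_n$ satisfies $g\sigma_J=\sigma_K$, then each arc of $\sigma_J$ has endpoints at the points of $P$ labeled $j$ and $j+1$ for some $j\in J$; since $g$ fixes $P$ pointwise, its image has endpoints at the same labeled points, and must equal an arc of $\sigma_K$. This forces $j\in K$, and symmetry gives $J=K$.

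The main obstacle is condition (a), which is the surface-topological statement that any two labeled simplicial embeddings of $L_n$ into $(S,P)$ are related by a pure braid. Given a maximal simplex whose underlying union of arcs is a tree $T\subset S$, I would construct a diffeomorphism of $S$ fixing $\partial S$ and $P$ pointwise that carries the standard tree $T_C$ to $T$: both $T_C$ and $T$ are spanning trees of $P$ in the disk, so each has a regular neighborhood that is a disk containing $P$ in a labeled configuration, and each complement of such a regular neighborhood in $S$ is an annulus bounded externally by $\partial S$. One first produces a diffeomorphism between the two regular-neighborhood disks sending $T_C$ to $T$ and fixing the labeled points (which exists by the classical uniqueness of embedded trees in a disk with prescribed vertex positions), then extends over the complementary annuli using that orientation-preserving diffeomorphisms of an annulus fixing the outer boundary are unique up to isotopy modulo Dehn twists, the latter being pure braids that do not affect the orbit of the tree. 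The resulting mapping class lies in $PB_n$ and takes $C$ to the chosen maximal simplex, establishing transitivity.
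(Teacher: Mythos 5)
Your proposal is correct and follows essentially the same route as the paper: both reduce to showing that $C$ is a fundamental domain and then invoke Proposition~\ref{prop:fund_dom_general}, with the uniqueness of orbit representatives among faces of $C$ coming from purity. The only difference is that the paper simply asserts transitivity of $PB_n$ on maximal simplices, whereas you supply the standard change-of-coordinates argument (regular neighborhoods of the spanning trees, extension over the complementary annulus, ambiguity absorbed by a boundary Dehn twist) justifying it.
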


\begin{proof}
 It suffices by Proposition~\ref{prop:fund_dom_general} to show that~$C$ is a fundamental domain for the action of~$PB_n$ on~$\restrarc(L_n)$.  A maximal simplex of~$\restrarc(L_n)$ is an embedding of~$L_n$ into~$S$, such that the vertex labeled~$i$ maps to the point in~$P$ labeled~$i$, for each~$1\le i\le n$. Any such simplex is in the~$PB_n$-orbit of~$C$. Moreover, if $p\sigma_J=\sigma_K$ for $p\in PB_n$ and $\sigma_J,\sigma_K$ are faces of~$C$, then since~$p$ is pure we know that~$J=K$. We conclude that~$C$ is a fundamental domain.
\end{proof}

In Section~\ref{sec:conn_cpxes} we will calculate the connectivity of~$\restrarc(L_n)$, and deduce that~$\arcfam_n$ is highly generating for~$PB_n$. Before doing that, we describe another complex with a nice~$PB_n$ action.

\pagebreak[3]

\subsection{Flat braige complexes}\label{sec:braiges}

\begin{definition}[Flat braiges]\label{def:flat_braige}
 A \emph{flat braige} on~$n$ strands is a pair $(b,\Gamma)$, consisting of a braid $b\in B_n$ and a subgraph~$\Gamma$ of~$L_n$. If the edges of~$\Gamma$ are disjoint, we call~$(b,\Gamma)$ \emph{elementary}. If the braid is pure, then the braige is a \emph{(flat) pure braige}. See Figure~\ref{fig:braiges} for some examples.
\end{definition}

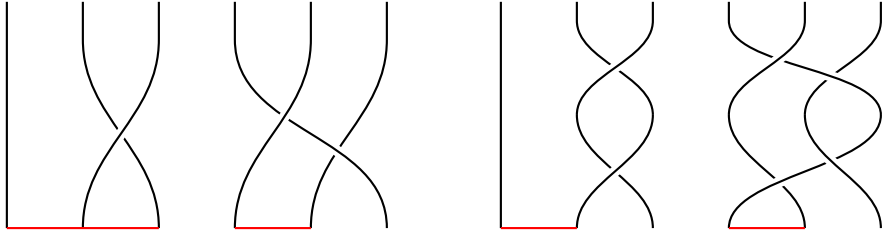
\begin{figure}[t]
\centering
\begin{tikzpicture}[line width=0.8pt]
  \draw
   (0,0) -- (0,-3)   (1,0) -- (1,-0.5) to [out=-90, in=90, looseness=1] (2,-3);
  \draw[white, line width=4pt]
   (2,0) -- (2,-0.5) to [out=-90, in=90, looseness=1] (1,-3);
  \draw
   (2,0) -- (2,-0.5) to [out=-90, in=90, looseness=1] (1,-3)
   (5,0) -- (5,-0.5) to [out=-90, in=90, looseness=1] (4,-3);
  \draw[white, line width=4pt]
   (3,0) -- (3,-0.5) to [out=-90, in=90, looseness=1] (5,-3);
  \draw
   (3,0) -- (3,-0.5) to [out=-90, in=90, looseness=1] (5,-3);
  \draw[white, line width=4pt]
   (4,0) -- (4,-0.5) to [out=-90, in=90, looseness=1] (3,-3);
  \draw
   (4,0) -- (4,-0.5) to [out=-90, in=90, looseness=1] (3,-3);
  \draw[red]
   (0,-3) -- (2,-3)   (3,-3) -- (4,-3);
 \begin{scope}[xshift=6.5cm, yscale=0.5]
  \draw
   (0,0) -- (0,-6)   (1,0) -- (1,-0.5) to [out=-90, in=90, looseness=1] (2,-3)   (1,-3) to [out=-90, in=90, looseness=1] (2,-6);
  \draw[white, line width=4pt]
   (2,0) -- (2,-0.5) to [out=-90, in=90, looseness=1] (1,-3)   (2,-3) to [out=-90, in=90, looseness=1] (1,-6);
  \draw
   (2,0) -- (2,-0.5) to [out=-90, in=90, looseness=1] (1,-3)   (2,-3) to [out=-90, in=90, looseness=1] (1,-6)
   (5,0) -- (5,-0.5) to [out=-90, in=90, looseness=1] (4,-3)   (3,-3) to [out=-90, in=90, looseness=1] (4,-6);
  \draw[white, line width=4pt]
   (3,0) -- (3,-0.5) to [out=-90, in=90, looseness=1] (5,-3)   (5,-3) to [out=-90, in=90, looseness=1] (3,-6);
  \draw
   (3,0) -- (3,-0.5) to [out=-90, in=90, looseness=1] (5,-3)   (5,-3) to [out=-90, in=90, looseness=1] (3,-6);
  \draw[white, line width=4pt]
   (4,0) -- (4,-0.5) to [out=-90, in=90, looseness=1] (3,-3)   (4,-3) to [out=-90, in=90, looseness=1] (5,-6);
  \draw
   (4,0) -- (4,-0.5) to [out=-90, in=90, looseness=1] (3,-3)   (4,-3) to [out=-90, in=90, looseness=1] (5,-6);
  \draw[red]
   (0,-6) -- (1,-6)   (3,-6) -- (4,-6);
 \end{scope}

\end{tikzpicture}

\caption{A flat braige on~$6$ strands and an elementary pure braige on~$6$ strands.}
\label{fig:braiges}
\end{figure}

Note the fundamental difference between flat braiges here and ``braiges'', as in Section~\ref{sec:general_diagrams}. For flat braiges, a ``merge'' amounts to just choosing some pairs of adjacent strands that should be stuck together at the bottom with edges. With braiges however, the merging is more subtle; strands merge two at a time, not in a square shape but in more of a triangle, and a new strand continues down out of the merge. This new strand may merge further with other strands, but one must keep track of the order of merging. However, the notions of elementary braiges are the same here and as before, since it does not matter in which order the merges occur. The spraige in Figure~\ref{fig:spraige_merging} is a good example of how, before, we kept track of the order of merging, but with flat braiges as in Figure~\ref{fig:braiges}, we do not, and so the bottom of the picture is flattened out.

Let $\braiges_n(L_n)$ be the set of all flat braiges on~$n$ strands.  There is a left action of $B_n$ on~$\braiges_n(L_n)$, via $b(c,\Gamma) \defeq (bc,\Gamma)$. We can think of $\braiges_n(L_n)$ as a simplicial complex, where $(b,\Gamma)$ is a face of~$(b',\Gamma')$ if $b=b'$ and $\Gamma'$ is a subgraph of~$\Gamma$. Restricting to pure braids, we get the set~$\pbraiges_n(L_n)$ of flat pure braiges, with an action of~$PB_n$. A nice feature of this is that~$(\id,L_n)$ is a fundamental domain for the action of~$B_n$ on~$\braiges_n(L_n)$, or~$PB_n$ on~$\pbraiges_n(L_n)$. However, it is easy to see that~$\braiges_n(L_n)$ and~$\pbraiges_n(L_n)$ stand little chance of being connected, since we can only ``move'' by changing the merges, and not the braid. To get a highly connected complex, we will consider an equivalence relation on these complexes via the notion of dangling, as in Section~\ref{sec:dangling}. First we need to define what it means for a strand in a braid to be a \emph{clone}.

\begin{definition}[Clones]\label{def:clones}
 Let $b\in B_n$. Number the strands of~$b$ from left to right at their tops by~$1,\dots,n$. Let~$\rho_b$ be the permutation induced by~$b$ under~$B_n\to S_n$. Think of~$b$ as living in~$3$-space~$\mathbb{R}^3$, with the top of the~$i^{\text{th}}$ strand at the point~$(i,1,0)$ and the bottom at~$(\rho_b(i),0,0)$, for each~$i\in[n]$. In particular all the tops and bottoms of the strands are in the~$xy$-plane. Note that for any given strand,~$b$ has a representation wherein that strand is entirely contained in the~$xy$-plane. Now suppose that for some~$i\in[n-1]$,~$b$ can be represented in such a way that the~$i^{\text{th}}$ and~$(i+1)^{\text{st}}$ strands are \emph{simultaneously} in the~$xy$-plane, and moreover, no strands of the braid other than those two intersect the closed region of the~$xy$-plane bounded by the two strands and the line segments from~$(i,1,0)$ to~$(i+1,1,0)$ and from~$(\rho_b(i),0,0)$ to~$(\rho_b(i+1),0,0)$.
 In this case we will refer to the~$(i+1)^{\text{st}}$ strand as a \emph{clone}, specifically a \emph{clone of the $i^{th}$ strand}. Note that necessarily~$\rho_b(i+1)=\rho_b(i)+1$.
\end{definition}

Our convention is to always consider the strand on the right to be the clone of the strand on the left, as opposed to the other way around. See Figure~\ref{fig:clone} for an example.

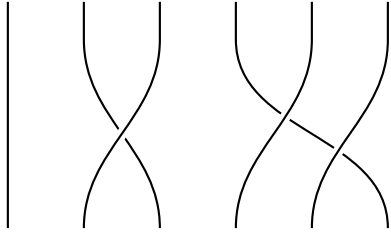
\begin{figure}[t]
\centering
\begin{tikzpicture}[line width=0.8pt]
  \draw
   (0,0) -- (0,-3)   (1,0) -- (1,-0.5) to [out=-90, in=90, looseness=1] (2,-3);
  \draw[white, line width=4pt]
   (2,0) -- (2,-0.5) to [out=-90, in=90, looseness=1] (1,-3);
  \draw
   (2,0) -- (2,-0.5) to [out=-90, in=90, looseness=1] (1,-3)
   (3,0) -- (3,-0.5) to [out=-90, in=90, looseness=1] (5,-3);
  \draw[white, line width=4pt]
   (5,0) -- (5,-0.5) to [out=-90, in=90, looseness=1] (4,-3);
  \draw
   (5,0) -- (5,-0.5) to [out=-90, in=90, looseness=1] (4,-3);
  \draw[white, line width=4pt]
   (4,0) -- (4,-0.5) to [out=-90, in=90, looseness=1] (3,-3);
  \draw
   (4,0) -- (4,-0.5) to [out=-90, in=90, looseness=1] (3,-3);
\end{tikzpicture}

\caption{The sixth strand is a clone of the fifth.}
\label{fig:clone}
\end{figure}

For each $i\in[n-1]$ there is a \emph{cloning map} $\kappa_i \colon B_{n-1} \to B_n$ given by cloning the~$i^{\text{th}}$ strand. This is not a homomorphism, but becomes one when restricted to~$\kappa_i \colon PB_{n-1} \to PB_n$. For $I=\{i_1,\dots,i_r\}\subseteq[n-r]$, with $i_1<\cdots<i_r$, define the cloning map $\kappa_I \defeq \kappa_{i_1} \circ \cdots \circ \kappa_{i_r} \colon B_{n-r} \to B_n$. The restriction $\kappa_I \colon PB_{n-r} \to PB_n$ is again a homomorphism. Now for $J=\{j_1,\dots,j_r\}\subseteq[n-1]$, with $j_1<\cdots<j_r$, let~$I_J\subseteq[n-r]$ be the set $\{j_i-(i-1) \mid 1\le i\le r\}$. The point is that a braid~$b\in B_n$ is in the image of~$\kappa_{I_J}$ if and only if for each~$j\in J$, the~$(j+1)^{\text{st}}$ strand is a clone of the~$j^{\text{th}}$ strand. Denote the subset of such braids by~$B_n^{(J)}$, and the sub\emph{group} of such pure braids by~$PB_n^{(J)}$. (The parentheses distinguish~$PB_n^{(J)}$ from the arc system stabilizer~$PB_n^J$ from the previous section.)

We can now define the equivalence relation between flat braiges, given by dangling. This is closely related to the notion of dangling in Section~\ref{sec:dangling}.

\begin{definition}[Dangling flat braiges]\label{def:dangling}
 Let $(b,\Gamma)$ be a flat braige on~$n$ strands, and number the vertices of~$\Gamma$ by~$1,\dots,n$ from left to right. Let~$J_\Gamma \subseteq [n-1]$ be the set of left endpoints of edges of~$\Gamma$. Now consider any braid~$c$ from the set~$B_n^{(J_\Gamma)}$. For each~$j\in J_\Gamma$, we know that~$\rho_c(j+1)=\rho_c(j)+1$, so there is a subgraph of~$L_n$ whose edges are precisely those connecting~$\rho_c(j)$ and $\rho_c(j+1)$ for~$j\in J_\Gamma$. Call this graph~$\Gamma^c$. The point is that, if we draw~$c$ below the braige, and ``pull'' the merges through~$c$, we get the flat braige~$(bc,\Gamma^c)$. Now declare that~$(b,\Gamma)$ is equivalent to~$(bc,\Gamma^c)$ for each~$c\in B_n^{(J_\Gamma)}$. One checks that this is an equivalence relation, called equivalence \emph{under dangling}. Denote the equivalence class of~$(b,\Gamma)$ by~$[(b,\Gamma)]$, and call it a \emph{dangling flat braige}. The idea is that the top of a braige is static, but the strands at the bottom are free to ``dangle'', modulo the 
restriction that the merges remain rigid (and oriented) during the dangling. We analogously get the notion of a \emph{dangling flat pure braige}, where we only consider~$c$ as above coming from~$PB_n^{(J_\Gamma)}$, so in particular~$\Gamma^c$ always equals~$\Gamma$ in the pure case. An example of dangling can be seen in Figure~\ref{fig:flat_dangling}, and refer back to Figure~\ref{fig:dangle} for comparison with the non-flat case.
\end{definition}

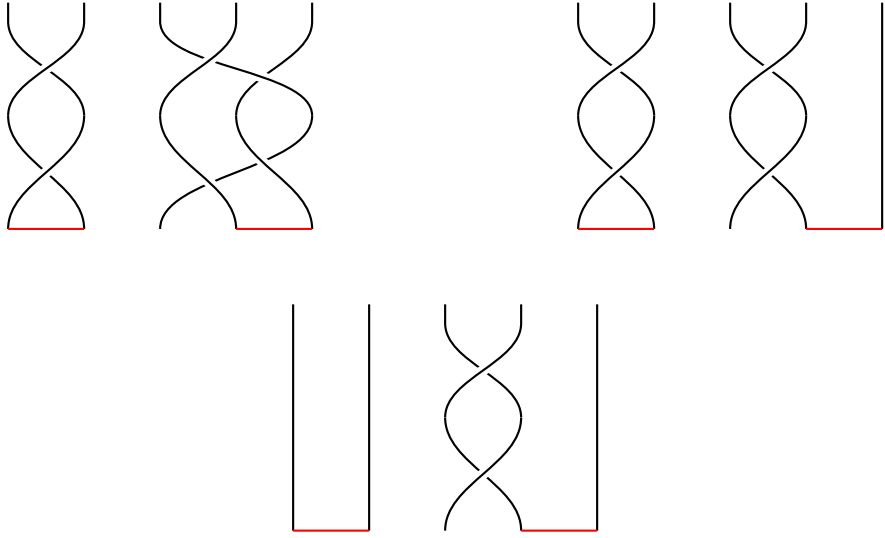
\begin{figure}[t]
\centering
\vskip0.25cm
\begin{tikzpicture}[line width=0.8pt, yscale=0.5]
  \draw
   (1,0) -- (1,-0.5) to [out=-90, in=90, looseness=1] (2,-3)   (1,-3) to [out=-90, in=90, looseness=1] (2,-6);
  \draw[white, line width=4pt]
   (2,0) -- (2,-0.5) to [out=-90, in=90, looseness=1] (1,-3)   (2,-3) to [out=-90, in=90, looseness=1] (1,-6);
  \draw
   (2,0) -- (2,-0.5) to [out=-90, in=90, looseness=1] (1,-3)   (2,-3) to [out=-90, in=90, looseness=1] (1,-6)
   (5,0) -- (5,-0.5) to [out=-90, in=90, looseness=1] (4,-3)   (5,-3) to [out=-90, in=90, looseness=1] (3,-6);
  \draw[white, line width=4pt]
   (3,0) -- (3,-0.5) to [out=-90, in=90, looseness=1] (5,-3)   (3,-3) to [out=-90, in=90, looseness=1] (4,-6);
  \draw
   (3,0) -- (3,-0.5) to [out=-90, in=90, looseness=1] (5,-3)   (3,-3) to [out=-90, in=90, looseness=1] (4,-6);
  \draw[white, line width=4pt]
   (4,0) -- (4,-0.5) to [out=-90, in=90, looseness=1] (3,-3)   (4,-3) to [out=-90, in=90, looseness=1] (5,-6);
  \draw
   (4,0) -- (4,-0.5) to [out=-90, in=90, looseness=1] (3,-3)   (4,-3) to [out=-90, in=90, looseness=1] (5,-6);
  \draw[red]
   (1,-6) -- (2,-6)   (4,-6) -- (5,-6);

 \begin{scope}[xshift=7.5cm]
  \draw
   (1,0) -- (1,-0.5) to [out=-90, in=90, looseness=1] (2,-3)   (1,-3) to [out=-90, in=90, looseness=1] (2,-6);
  \draw[white, line width=4pt]
   (2,0) -- (2,-0.5) to [out=-90, in=90, looseness=1] (1,-3)   (2,-3) to [out=-90, in=90, looseness=1] (1,-6);
  \draw
   (2,0) -- (2,-0.5) to [out=-90, in=90, looseness=1] (1,-3)   (2,-3) to [out=-90, in=90, looseness=1] (1,-6)
   (3,0) -- (3,-0.5) to [out=-90, in=90, looseness=1] (4,-3)   (3,-3) to [out=-90, in=90, looseness=1] (4,-6);
  \draw[white, line width=4pt]
   (4,0) -- (4,-0.5) to [out=-90, in=90, looseness=1] (3,-3)   (4,-3) to [out=-90, in=90, looseness=1] (3,-6);
  \draw
   (4,0) -- (4,-0.5) to [out=-90, in=90, looseness=1] (3,-3)   (4,-3) to [out=-90, in=90, looseness=1] (3,-6)
   (5,0) -- (5,-6);
  \draw[red]
   (1,-6) -- (2,-6)   (4,-6) -- (5,-6);
 \end{scope}

 \begin{scope}[xshift=3.75cm,yshift=-8cm]
  \draw
   (1,0) -- (1,-6)   (2,0) -- (2,-6)
   (3,0) -- (3,-0.5) to [out=-90, in=90, looseness=1] (4,-3)   (3,-3) to [out=-90, in=90, looseness=1] (4,-6);
  \draw[white, line width=4pt]
   (4,0) -- (4,-0.5) to [out=-90, in=90, looseness=1] (3,-3)   (4,-3) to [out=-90, in=90, looseness=1] (3,-6);
  \draw
   (4,0) -- (4,-0.5) to [out=-90, in=90, looseness=1] (3,-3)   (4,-3) to [out=-90, in=90, looseness=1] (3,-6)
   (5,0) -- (5,-6);
  \draw[red]
   (1,-6) -- (2,-6)   (4,-6) -- (5,-6);
 \end{scope}
\end{tikzpicture}

\caption{The two (elementary) braiges on the top are equivalent under pure dangling, but are \emph{not} equivalent to the third.}
\label{fig:flat_dangling}
\end{figure}

The key difference between dangling for flat braiges and dangling for braiges is a matter of which braid is considered to be the one acting. For braiges, the braid acting by dangling has as many strands as feet of the braige; for flat braiges, the braid acting is the image of this braid under a cloning map, so has as many strands as there are strands of the flat braige just above the merges.

Let~$\braigecpx_n(L_n)$ be the set of equivalence classes under dangling of flat braiges in~$\braiges_n(L_n)$. The simplicial structure of the latter induces a simplicial structure on the former, for example the faces of~$[(b,\Gamma)]$ are precisely of the form~$[(bc,\Gamma')]$, for~$c\in B_n^{(J_\Gamma)}$ and~$\Gamma'$ a subgraph of~$\Gamma^c$. Also let~$\pbraigecpx_n(L_n)$ be the set of dangling flat pure braiges. The faces of a dangling flat pure braige~$[(p,\Gamma)]$ are the dangling pure braiges of the form~$[(pc,\Gamma')]$ for~$c\in PB_n^{(J_\Gamma)}$ and $\Gamma'$ a subgraph of~$\Gamma$. Heuristically, in~$\braigecpx_n(L_n)$ we can move around not only by changing the merges, but now also by changing the braid in certain controlled ways, so~$\braigecpx_n(L_n)$ and~$\pbraigecpx_n(L_n)$ stand a chance of being connected (for large enough~$n$), and even highly connected. In the pure case we can also define $\pbraigecpx_n(\Gamma)$ for any subgraph~$\Gamma$ of~$L_n$, by only considering flat
braiges from $\pbraiges_n(\Gamma)$. We also have the subcomplexes of dangling \emph{elementary} braiges or dangling elementary pure braiges, denoted~$\elbraigecpx_n(L_n)$ and~$\elpbraigecpx_n(L_n)$ respectively. In the pure case, note that $\elpbraigecpx_n(L_n)$ is identical to the complex $\elpbraigecpx_n$ analyzed in Section~\ref{sec:desc_link_conn}; in particular we already know its connectivity. Moreover in the pure case we can use any subgraph~$\Gamma$ of~$L_n$, and get the complex~$\elpbraigecpx_n(\Gamma)$. This will be an important subcomplex for proving that~$\pbraigecpx_n(\Gamma)$ is highly connected.

The left action of~$B_n$ on~$\braiges_n(L_n)$ induces an action of~$B_n$ on~$\braigecpx_n(L_n)$; for~$c\in B_n$ we have $c[(b,\Gamma)] \defeq [(cb,\Gamma)]$. Similarly,~$PB_n$ acts from the left on~$\pbraigecpx_n(L_n)$, and indeed stabilizes~$\pbraigecpx_n(\Gamma)$ for any subgraph~$\Gamma$ of~$L_n$. The action of~$PB_n$ on~$\pbraigecpx_n(L_n)$ is of particular interest, since there is a fundamental domain consisting of a single maximal simplex, namely~$[(\id,L_n)]$. This tells us that~$\pbraigecpx_n(L_n)$ is a coset complex, using the family of stabilizers of faces of~$[(\id,L_n)]$.

\begin{lemma}[Stabilizers of dangling braiges]\label{lem:braige_stabs}
 Let~$\Gamma$ be a subgraph of~$L_n$. Then the stabilizer $\Stab_{PB_n}([(\id,\Gamma)])$ is precisely the subgroup $PB_n^{(J_\Gamma)}$.
\end{lemma}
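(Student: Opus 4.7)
The proof plan is to unpack the definition of the dangling equivalence relation in the pure case and then verify both containments directly.

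First I would recall explicitly what $[(\id,\Gamma)]$ means inside $\pbraigecpx_n(L_n)$. By Definition~\ref{def:dangling}, in the pure setting the equivalence class of a flat pure braige $(b,\Gamma')$ consists of all $(bc,(\Gamma')^c)$ with $c\in PB_n^{(J_{\Gamma'})}$. The crucial simplification in the pure case is that $\rho_c=\id$ whenever $c$ is pure, so $(\Gamma')^c=\Gamma'$. Hence
\[
[(\id,\Gamma)]=\{(c,\Gamma)\mid c\in PB_n^{(J_\Gamma)}\}
\]
as a set of representatives, and the underlying graph is constant across the class.

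Next I would verify the containment $PB_n^{(J_\Gamma)}\subseteq \Stab_{PB_n}([(\id,\Gamma)])$. Given $p\in PB_n^{(J_\Gamma)}$, acting on the left gives $p\cdot[(\id,\Gamma)]=[(p,\Gamma)]$, and by the description above this equals $[(\id,\Gamma)]$ since $p\in PB_n^{(J_\Gamma)}$. For the reverse containment, suppose $p\in PB_n$ with $p\cdot[(\id,\Gamma)]=[(\id,\Gamma)]$. Then $(p,\Gamma)$ is dangling-equivalent to $(\id,\Gamma)$, so there exists $c\in PB_n^{(J_\Gamma)}$ with $p=\id\cdot c=c$, so that $p$ itself lies in $PB_n^{(J_\Gamma)}$.

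There is essentially no obstacle beyond carefully tracking the definitions; the only subtle point is observing that purity of $c$ forces $\Gamma^c=\Gamma$, which prevents the graph from changing under dangling and makes the equivalence class easy to describe. Once this is in place, the stabilizer computation reduces to reading off the two containments from the definition of $PB_n^{(J_\Gamma)}$ as the subgroup of pure braids for which the strand pairs indexed by $J_\Gamma$ are clones.
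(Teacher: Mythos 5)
Your proof is correct and follows essentially the same route as the paper: both directions are read off directly from the description of the dangling class $[(\id,\Gamma)]=\{(c,\Gamma)\mid c\in PB_n^{(J_\Gamma)}\}$, with the observation that purity forces $\Gamma^c=\Gamma$ (already noted in the paper's definition of dangling). No gaps.
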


\begin{proof}
 First let~$p\in PB_n^{(J_\Gamma)}$. Then $p[(\id,\Gamma)]=[(p,\Gamma)]=[(\id,\Gamma)]$. Now suppose $p[(\id,\Gamma)]=[(\id,\Gamma)]$, so $[(p,\Gamma)]=[(\id,\Gamma)]$. Then there exists $c\in PB_n^{(J_\Gamma)}$ such that $(p,\Gamma)=(c,\Gamma)$. But this implies that~$p=c$, so we are done.
\end{proof}

Let $\braigefam_n \defeq \{PB_n^{(J_\Gamma)} \mid \Gamma$ is a subgraph of $L_n$ with one edge$\}$.

\begin{corollary}\label{cor:nerve_to_braiges}
 $\CC(PB_n,\braigefam_n)$ is isomorphic to $\pbraigecpx_n(L_n)$ as a simplicial~$PB_n$-complex.
\end{corollary}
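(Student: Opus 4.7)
The plan is to apply Proposition~\ref{prop:fund_dom_general} to the action of $PB_n$ on $\pbraigecpx_n(L_n)$, using the maximal simplex $[(\id,L_n)]$. Once we verify that this simplex is a fundamental domain, the vertex stabilizers are identified by Lemma~\ref{lem:braige_stabs} as exactly the members of $\braigefam_n$, and the conclusion is immediate.

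First I would check that every simplex of $\pbraigecpx_n(L_n)$ lies in the $PB_n$-orbit of a face of $[(\id,L_n)]$. By definition, an arbitrary simplex has the form $[(p,\Gamma)]$ for some $p\in PB_n$ and some subgraph $\Gamma\subseteq L_n$. Since the left action of $PB_n$ only alters the braid, we have $p^{-1}\cdot[(p,\Gamma)]=[(\id,\Gamma)]$, which is the face of $[(\id,L_n)]$ spanned by the vertices $[(\id,E)]$ as $E$ ranges over the one-edge subgraphs of $\Gamma$.

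Next I would verify uniqueness of the orbit representative, i.e., that distinct faces of $[(\id,L_n)]$ lie in distinct $PB_n$-orbits. Suppose $p\cdot[(\id,\Gamma_1)]=[(\id,\Gamma_2)]$, so that $[(p,\Gamma_1)]=[(\id,\Gamma_2)]$. In the pure case the dangling relation never changes the underlying graph (the braids $c\in PB_n^{(J_\Gamma)}$ used to dangle satisfy $\Gamma^c=\Gamma$), hence $\Gamma_1=\Gamma_2$; then $(p,\Gamma_1)$ and $(\id,\Gamma_1)$ are dangling-equivalent, which forces $p\in PB_n^{(J_{\Gamma_1})}=\Stab_{PB_n}([(\id,\Gamma_1)])$, so $p$ fixes the face in question. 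This shows $[(\id,L_n)]$ is a strict fundamental domain and a single maximal simplex.

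With the fundamental domain property established, Proposition~\ref{prop:fund_dom_general} applies. The vertices of $[(\id,L_n)]$ are exactly the dangling flat pure braiges $[(\id,\Gamma)]$ with $\Gamma$ a one-edge subgraph of $L_n$, and by Lemma~\ref{lem:braige_stabs} their stabilizers are the subgroups $PB_n^{(J_\Gamma)}$ that comprise $\braigefam_n$. Hence $\CC(PB_n,\braigefam_n)\cong \pbraigecpx_n(L_n)$ as simplicial $PB_n$-complexes. I expect no serious obstacle here; the only subtle point is the observation that in the pure dangling relation the underlying graph is preserved, which keeps the stabilizer computation clean and the fundamental domain genuinely ``strict''.
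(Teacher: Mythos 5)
Your proposal is correct and takes essentially the same route as the paper, which simply cites Proposition~\ref{prop:fund_dom_general} together with the fact that $[(\id,L_n)]$ is a fundamental domain; you have merely written out the transitivity and uniqueness checks (and the appeal to Lemma~\ref{lem:braige_stabs}) that the paper leaves implicit.
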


\begin{proof}
 This is immediate from Proposition~\ref{prop:fund_dom_general}, since~$[(\id,L_n)]$ is a fundamental domain.
\end{proof}

In the next section we will calculate the connectivity of $\restrarc(L_n)$ and $\pbraigecpx_n(L_n)$, and hence of $\CC(PB_n,\arcfam_n)$ and $\CC(PB_n,\braigefam_n)$, from which we deduce higher generation.

\medskip

We close this section by setting up a generalization of the complexes we have constructed.  Note that in the definition of $\arcfam_n$ we require~$|J|=1$, and in the definition of~$\braigefam_n$ we require~$\Gamma$ to have only one edge (this is the same as saying~$|J_\Gamma|=1$).  The subgroups in these families consist of braids that, respectively, stabilize some arc, or feature at least one cloned strand. Of course, as~$n$ grows, it becomes increasingly ``easy'' for a braid to be very complicated while still featuring a cloned strand, or stabilizing an arc. Hence, higher generation becomes an even more interesting question if we consider requirements like, e.g., \emph{all but~5 strands are clones}. (Observe that any of the standard generators of~$PB_n$ satisfy this very requirement.)

\begin{definition}[More restrictive families]\label{def:restrictive_fams}
 Let $s\in\N$.  Define 
 \begin{align*}
 \arcfam_n^s & \defeq \{PB_n^J \mid J\subseteq[n-1]  \text{ with } |J|=s\}.
 \\
 \intertext{Also define}
 \braigefam_n^s & \defeq \{PB_n^{(J_\Gamma)} \mid \Gamma \text{ is a subgraph of $L_n$ with $s$ edges}\}.
 \end{align*}
Hence $\arcfam_n^1=\arcfam_n$ and $\arcfam_n^{n-1}=\{Z(PB_n)\}$, and also $\braigefam_n^1=\braigefam_n$ and $\braigefam_n^{n-1}=\{\{1\}\}$.
\end{definition}


\section{Connectivity of the complexes}\label{sec:conn_cpxes}

For $\ell\in\Z$ define $\eta(\ell) \defeq \lfloor \frac{\ell-2}{4} \rfloor$. The main goal of this section is to prove that~$\restrarc(L_n)$ and~$\pbraigecpx_n(L_n)$ are~$(\eta(n)-1)$-connected. Note that this is slightly different from the function $\eta$ defined before Theorem~\ref{thrm:surface_subline_matching_conn}; we do this because here the symbol $L_n$ denotes a graph with $n-1$ edges and there it had $n$ edges.

\begin{theorem}[Restatement of Theorem~\ref{thrm:surface_subline_matching_conn} using current notation]\label{thrm:matching_arc_conn}
 Let $\Gamma_m$ be a subgraph of~$L_n$ with~$m$ edges. Then~$\matcharc(\Gamma_m)$ is~$(\eta(m+1)-1)$-connected.
\end{theorem}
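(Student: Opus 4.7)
The plan is essentially to observe that this statement is literally Theorem~\ref{thrm:surface_subline_matching_conn} transcribed into the notational conventions of the appendix, and then to verify that the two connectivity bounds agree. The key point is that the symbol $L_n$ has been redefined: in Section~\ref{sec:surfaces} the linear graph $L_n$ has $n$ edges (and $n+1$ vertices), whereas in the appendix (see the notational convention before Figure~\ref{fig:arcs}) $L_n$ has $n$ vertices and $n-1$ edges. Likewise the function $\eta$ has been shifted, from $\eta_{\text{main}}(\ell)=\lfloor(\ell-1)/4\rfloor$ in the main body to $\eta_{\text{app}}(\ell)=\lfloor(\ell-2)/4\rfloor$ here.

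Concretely, the argument would proceed as follows. Let $\Gamma_m$ be a subgraph of the appendix's $L_n$ with $m$ edges. Since $L_n$ here is just the linear graph on $n-1$ edges in the old notation, $\Gamma_m$ is in particular a subgraph of a linear graph in the sense of Theorem~\ref{thrm:surface_subline_matching_conn}, with $m$ edges. Applying that theorem directly yields that $\matcharc(\Gamma_m)$ is $(\eta_{\text{main}}(m)-1)$-connected. It remains to check the numerical identity
\[
\eta_{\text{main}}(m) \;=\; \Big\lfloor\frac{m-1}{4}\Big\rfloor \;=\; \Big\lfloor\frac{(m+1)-2}{4}\Big\rfloor \;=\; \eta_{\text{app}}(m+1),
\]
which is immediate. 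So the connectivity bound $(\eta_{\text{main}}(m)-1)$ coincides with the claimed bound $(\eta_{\text{app}}(m+1)-1)$, and there is nothing further to do.

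Since this is purely a bookkeeping restatement, there is no genuine obstacle; the only thing to be careful about is not to confuse the two conventions for $L_n$ or for $\eta$ when they are invoked elsewhere in the appendix (for instance in Proposition~\ref{prop:restrictive_arc_conn} or Proposition~\ref{prop:restrictive_braige_conn}). No new geometric, topological, or combinatorial input is needed beyond what is already proved in Section~\ref{sec:nbr_arcs}.
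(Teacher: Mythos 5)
Your proposal is correct and matches the paper exactly: the paper offers no separate proof of this theorem, presenting it purely as a notational restatement of Theorem~\ref{thrm:surface_subline_matching_conn}, with the remark preceding it explaining precisely the shift in $\eta$ and the re-indexing of $L_n$ that you verify. Your arithmetic check $\bigl\lfloor\frac{m-1}{4}\bigr\rfloor=\bigl\lfloor\frac{(m+1)-2}{4}\bigr\rfloor$ is the whole content, and it is right.
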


In particular~$\matcharc(L_n)$ is~$(\eta(n)-1)$-connected.

\subsection{Connectivity of arc complexes}\label{sec:arc_conn}

Our first goal is to deduce the connectivity of~$\restrarc(L_n)$ from Theorem~\ref{thrm:matching_arc_conn}. We will use the notion of \emph{defect} from Section~\ref{sec:match_cpxes}. Let $\Gamma_m$ be a subgraph of~$L_n$ with~$m$ edges. For a $k$-simplex $\sigma=\{\alpha_0,\dots,\alpha_k\}$ in~$\restrarc(\Gamma_m)$, define $r(\sigma)$ to be the number of points in $P$ that are used as endpoints of arcs in~$\sigma$. As in Section~\ref{sec:match_cpxes}, define the \emph{defect} $d(\sigma)$ to be $2(k+1)-r(\sigma)$. Let~$h$ be the function on the barycentric subdivision $\restrarc(\Gamma_m)'$ of $\restrarc(\Gamma_m)$ given by~$h(\sigma)=(d(\sigma),-\dim(\sigma))$, ordered lexicographically. Note that~$d(\sigma)=0$ if and only if the arcs are all disjoint, even at their endpoints. Hence, thinking of~$h$ as a height function on the vertices of $\restrarc(\Gamma_m)'$, in the sense of \cite{bestvina97}, we observe that the sublevel set $(\restrarc(\Gamma_m)')^{d=0}$ is precisely $\matcharc(\Gamma_m)'$. Hence we 
can compare the homotopy types of the two complexes using discrete Morse theory, with \cite[Corollary~2.6]{bestvina97} as the guide. The key is to inspect the descending links with respect to~$h$. This is very similar to the procedure used before to deduce connectivity of~$\matcharc(K_n)$ from connectivity of~$\hatcharc(K_n)$, but we will repeat many arguments for convenience.

\begin{proposition}\label{prop:conn_r_arc_cpx}
 $\restrarc(\Gamma_m)$ is $(\eta(m+1)-1)$-connected.
\end{proposition}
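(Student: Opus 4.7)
The plan is to apply discrete Morse theory to the height function $h(\sigma) = (d(\sigma), -\dim(\sigma))$ on the barycentric subdivision $\restrarc(\Gamma_m)'$. Since the $d=0$ sublevel set coincides with $\matcharc(\Gamma_m)'$, which is $(\eta(m+1)-1)$-connected by Theorem~\ref{thrm:matching_arc_conn}, it will suffice by the Morse Lemma, applied one defect stratum at a time above the $d=0$ level, to show that for every vertex $\sigma$ of $\restrarc(\Gamma_m)'$ with $d(\sigma) \geq 1$ the descending link $\dlk(\sigma)$ is $(\eta(m+1)-1)$-connected.

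Following the template of the proof of Theorem~\ref{thrm:surface_matching_conn}, $\dlk(\sigma)$ decomposes as the join of a down-link and an up-link. The down-link is handled exactly as there: letting $\sigma_0 \leq \sigma$ be the subface of arcs of $\sigma$ that are disjoint from every other arc of $\sigma$, the down-link is either a cone on $\sigma_0$ (hence contractible) when $\sigma_0 \neq \emptyset$, or the full boundary $(a(\sigma)-2)$-sphere of $\sigma$ otherwise. For the up-link I will exploit that the linear structure forces $A(\sigma)$ to be a forest: two distinct arcs of $\sigma$ correspond to distinct $L_n$-edges and therefore share at most one endpoint, and the combinatorics of $L_n$ (whose edges cannot be arranged cyclically) prevents the arcs from closing up into any cycle. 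An embedded forest does not separate a disk, so $S \setminus A(\sigma)$ is connected, and the up-link is simply $\matcharc(\Gamma_1)$, where $\Gamma_1$ is the subgraph of $\Gamma_m$ spanned by edges between two unused vertices of $\sigma$. Since $\Gamma_1$ is a disjoint union of sub-intervals of $L_n$, one for each gap between used intervals, $\matcharc(\Gamma_1)$ further splits as a join $\bigast_j \matcharc(\Gamma_{1,j})$, whose factors have the connectivity guaranteed by Theorem~\ref{thrm:matching_arc_conn}.

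The remaining step is arithmetic. The used vertices of $\sigma$ fall into $b = r(\sigma) - a(\sigma)$ maximal intervals; a used interval of length $L$ is incident to at most $L+1$ edges of $L_n$ (and fewer at the boundary of $\{1,\dots,n\}$), so $m_1 \defeq \sum_j m_{1,j} \geq m - r(\sigma) - b$. Combining the connectivity formula for a join, an analogue of Lemma~\ref{lem:play_with_floor} adapted to $\eta$, the down-link's $(a(\sigma)-3)$-connectivity, and the defect identity $r(\sigma) = 2a(\sigma) - d(\sigma)$ then yields the required bound $\eta(m+1)-1$ for $\dlk(\sigma)$. I expect the main obstacle to be this final numerical inequality in the tight cases of small $d(\sigma)$ and $r(\sigma)$: one must verify that the extra connectivity contributed by each join factor $\matcharc(\Gamma_{1,j})$ correctly compensates for the loss of edges incident to used vertices and for the rounding errors in the $\lfloor \cdot/4 \rfloor$ expressions.
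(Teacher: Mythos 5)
Your Morse-theoretic skeleton (defect, lexicographic height, down-link/up-link join) is the same as the paper's, and your down-link analysis is correct. But there is a genuine gap in the up-link: you claim that because $\Gamma_1$ is a disjoint union of subgraphs $\Gamma_{1,j}$ of $L_n$, the complex $\matcharc(\Gamma_1)$ splits as the join $\bigast_j\matcharc(\Gamma_{1,j})$. This is true for matching complexes of \emph{graphs} but false for matching complexes on a \emph{surface}: two arcs whose endpoint sets are disjoint can still have positive geometric intersection number, so an arc system for $\Gamma_{1,1}$ and one for $\Gamma_{1,2}$ need not combine into a simplex. Concretely, on a disk with marked points $1,2,3,4$, an arc from $1$ to $2$ that winds so as to separate $3$ from $4$ meets every arc from $3$ to $4$, so $\{\alpha_{12},\alpha_{34}\}$ is not a simplex of $\matcharc$ even though the edges $\{1,2\}$ and $\{3,4\}$ lie in different components. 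This is exactly the phenomenon that makes the surface complexes harder than their graph counterparts. The fix — and what the paper does — is to not decompose at all: the up-link is a single complex $\matcharc(S',P',\Gamma')$, where $S'$ is the surface cut along the arcs of $\sigma$ (connected, as you observe) and $\Gamma'$ is the subgraph of $\Gamma_m$ obtained by deleting all edges meeting a used vertex. Since every arc of $\sigma$ shares an endpoint with another (the sphere case of the down-link), one counts that at most $r(\sigma)+d(\sigma)=2(k+1)$ edges are deleted, so $\Gamma'$ is a subgraph of a linear graph with at least $m-2k-2$ edges and Theorem~\ref{thrm:surface_subline_matching_conn} applies directly. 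No $\eta$-analogue of Lemma~\ref{lem:play_with_floor} is needed.

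A secondary point: you are demanding one more degree of connectivity from the descending links than the argument requires. Since $\matcharc(\Gamma_m)$ is already $(\eta(m+1)-1)$-connected, it suffices that the inclusion into $\restrarc(\Gamma_m)$ be \emph{surjective} on $\pi_j$ for $j\le\eta(m+1)-1$ (a surjection from a trivial group is enough), and by the Morse Lemma this only needs $\dlk(\sigma)$ to be $(\eta(m+1)-2)$-connected. With the corrected up-link this follows cleanly from
\begin{equation*}
\eta(m-2k-1)+k-1 \;\ge\; \frac{m-2k-3}{4}+k-2 \;\ge\; \eta(m+1)+\frac{k}{2}-\frac{5}{2} \;\ge\; \eta(m+1)-2
\end{equation*}
for $k\ge1$, with no tight cases to worry about. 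The ``main obstacle'' you flag at the end is an artifact of aiming at the unnecessarily strong bound $(\eta(m+1)-1)$.
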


\begin{proof}
 We know that $\matcharc(\Gamma_m)$ is $(\eta(m+1)-1)$-connected by Theorem~\ref{thrm:matching_arc_conn}. We claim that the inclusion $\matcharc(\Gamma_m) \to \restrarc(\Gamma_m)$ induces a surjection in homotopy~$\pi_k$ for $k\leq \eta(m+1)-1$, from which the proposition follows. To prove the claim, it suffices by \cite[Corollary~2.6]{bestvina97} to prove that for $\sigma \in \restrarc(\Gamma_m) \setminus \matcharc(\Gamma_m)$, i.e., $h(\sigma)>0$, the descending link $\dlk(\sigma)$ is $(\eta(m+1)-2)$-connected. We suppose that~$\sigma$ is a $k$-simplex, with $\sigma=\{\alpha_0,\dots,\alpha_k\}$.

 There are two types of arc systems in $\dlk(\sigma)$. First, we could have $\sigma'<\sigma$ and~$h(\sigma')<h(\sigma)$. Then~$\sigma'$ is obtained from~$\sigma$ by removing arcs and strictly decreasing the defect. Call the full subcomplex of $\dlk(\sigma)$ spanned by these~$\sigma'$ the \emph{down-link}. Second, we could have $\widetilde{\sigma}>\sigma$ and $h(\widetilde{\sigma})<h(\sigma)$. Here $\widetilde{\sigma}$ is obtained by adding new arcs to $\sigma$, so that the new arcs are all disjoint from each other and from any existing arcs, even at endpoints. Call the full subcomplex of $\dlk(\sigma)$ spanned by such $\widetilde{\sigma}$ the \emph{up-link}. Any simplex in the down-link is a face of every simplex in the up-link, so~$\dlk(\sigma)$ is the join of the down-link and up-link.

 First consider the down-link. A face~$\sigma'$ of~$\sigma$ fails to be in the down-link if and only if each arc in~$\sigma\setminus\sigma'$ is disjoint from every other arc of $\sigma$, since then and only then do $\sigma$ and~$\sigma'$ have the same defect.  Let $\sigma_0$ be the face of $\sigma$ consisting precisely of all such arcs, if any exist.  Since $d(\sigma)>0$, we know $\sigma_0\neq\sigma$. The boundary of $\sigma$ is a~$(k-1)$-sphere, and the complement in the boundary of the down-link is either empty, or is a cone with cone point $\sigma_0$.  Hence the down-link is either a~$(k-1)$-sphere or is contractible, so in particular is $(k-2)$-connected. At this point we may assume without loss of generality that the down-link is a~$(k-1)$-sphere, and so every arc in~$\sigma$ shares an endpoint with some other arc in~$\sigma$. This means that every edge of~$\Gamma_\sigma$ shares an endpoint with some other edge of~$\Gamma_\sigma$. In particular~$k\ge 1$.

 Now consider the up-link. The simplices in the up-link are given by adding arcs to~$\sigma$ that are all disjoint from each other and from the arcs in $\sigma$. Consider the connected surface~$S' \defeq S \setminus \{\alpha_0,\dots,\alpha_k\}$, obtained by cutting out the arcs~$\alpha_i$. If~$P' \defeq S' \cap P$, then $|P'|=n-r(\sigma)$. Also let $\Gamma_{m-2k-2}'$ be the subgraph of~$\Gamma_m$ obtained by removing the edges of~$\Gamma_\sigma$, and all edges sharing a vertex with any of these, so $\Gamma_{m-2k-2}'$ has at most~$m-2k-2$ edges (here we use the fact that every edge of~$\Gamma_\sigma$ shares an endpoint with some other edge of~$\Gamma_\sigma$). The up-link of~$\sigma$ is isomorphic to the matching complex $\matcharc(S',P',\Gamma_{m-2k-2}')$, which is~$(\eta(m-2k-1)-1)$-connected. Since $\dlk(\sigma)$ is the join of the down- and up-links, we conclude that~$\dlk(\sigma)$ is $(\eta(m-2k-1)+k-1)$-connected.

We have 
\begin{align*}
\eta(m-2k-1)+k-1 & \ge \frac{m-2k-3}{4}+k-2
\\
& \ge \eta(m+1)+\frac{k}{2}-\frac{5}{2} \ge \eta(m+1) -2
\end{align*}
 since $k\ge 1$, and so we are done.
\end{proof}

The next corollary is immediate, keeping in mind that with our notation~$L_n$ has~$n-1$~edges.

\begin{corollary}\label{cor:conn_r_lin_arc_cpx}
 $\restrarc(L_n)$ is $(\eta(n)-1)$-connected.\qed
\end{corollary}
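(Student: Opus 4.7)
The plan is to derive this immediately from Proposition~\ref{prop:conn_r_arc_cpx} by a careful bookkeeping of the indexing conventions. Recall the notational convention explicitly announced earlier in the appendix: in Section~\ref{sec:arcs}, the symbol $L_n$ denotes the linear graph with $n$ vertices, and hence with $n-1$ edges. This differs from the convention of Section~\ref{sec:surfaces}, where $L_n$ had $n$ edges, which is also why $\eta$ has been redefined at the start of Section~\ref{sec:conn_cpxes} as $\eta(\ell) \defeq \lfloor (\ell-2)/4 \rfloor$.

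Given this, the corollary is simply the specialization of Proposition~\ref{prop:conn_r_arc_cpx} to the case $\Gamma_m = L_n$, with $m = n-1$. First I would verify that $L_n$ is indeed a subgraph of itself with the same vertex set (trivial by definition), so it qualifies as a $\Gamma_m$ in the sense of that proposition. Then applying the proposition yields that $\restrarc(L_n) = \restrarc(\Gamma_{n-1})$ is $(\eta((n-1)+1)-1)$-connected, i.e., $(\eta(n)-1)$-connected.

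Since no further work is needed beyond substituting $m = n-1$, there is no real obstacle here. The only thing one must be attentive to is that the two different meanings of $\eta$ and of $L_n$ used in Section~\ref{sec:surfaces} versus in this appendix line up correctly: with $L_n$ having $n-1$ edges, Theorem~\ref{thrm:matching_arc_conn} (invoked inside the proof of Proposition~\ref{prop:conn_r_arc_cpx}) gives that $\matcharc(L_n)$ is $(\eta(n)-1)$-connected, which is the input that propagates through the Morse-theoretic argument comparing $\matcharc(L_n)$ with $\restrarc(L_n)$ to deliver the same bound for the latter.
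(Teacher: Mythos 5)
Your proposal is correct and matches the paper exactly: the corollary is obtained by specializing Proposition~\ref{prop:conn_r_arc_cpx} to $\Gamma_m = L_n$ with $m = n-1$ edges, so that $\eta(m+1) = \eta(n)$. The paper treats this as immediate for precisely the indexing reason you spell out.
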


\begin{corollary}\label{cor:arc_gen}
 $\CC(PB_n,\arcfam_n)$ is $(\eta(n)-1)$-connected, and hence $\arcfam_n$ is $\eta(n)$-generating for~$PB_n$.
\end{corollary}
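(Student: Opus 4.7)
The plan is essentially to invoke two results that have already been set up in the preceding pages and chain them together. The work has all been done: what remains is to read off the conclusion.

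First, I would apply Lemma~\ref{lem:nerve_to_arcs}, which identifies $\CC(PB_n,\arcfam_n)$ with the restricted arc complex $\restrarc(L_n)$ as simplicial $PB_n$-complexes. This reduces the connectivity question about the coset complex to the corresponding question about $\restrarc(L_n)$, which is a purely topological object built from arcs on the disk. Second, I would invoke Corollary~\ref{cor:conn_r_lin_arc_cpx}, which states precisely that $\restrarc(L_n)$ is $(\eta(n)-1)$-connected. Transporting this connectivity across the $PB_n$-equivariant simplicial isomorphism of Lemma~\ref{lem:nerve_to_arcs} yields the first assertion of the corollary.

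For the second assertion, I would simply unpack Definition~\ref{def:high_gen}: by definition, a family $\family$ is $k$-generating for a group $G$ precisely when the coset complex $\CC(G,\family)$ is $(k-1)$-connected. Applying this with $k = \eta(n)$ to the $(\eta(n)-1)$-connectedness established in the first half gives that $\arcfam_n$ is $\eta(n)$-generating for $PB_n$.

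There is no real obstacle here, as the substantive content has been placed in Theorem~\ref{thrm:matching_arc_conn} (connectivity of the matching arc complex on a linear graph, proved via the Putman-style injectifying argument), in Proposition~\ref{prop:conn_r_arc_cpx} (bootstrapping from $\matcharc$ to $\restrarc$ via a defect-based Morse argument), and in Lemma~\ref{lem:nerve_to_arcs} (verifying that the fundamental domain $C$ gives rise to the coset complex description via Proposition~\ref{prop:fund_dom_general}). The corollary is the payoff: a clean higher-generation statement about the pure braid group delivered by topology.
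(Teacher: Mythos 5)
Your proposal is correct and matches the paper's proof exactly: the paper also deduces the corollary immediately from Lemma~\ref{lem:nerve_to_arcs} and Corollary~\ref{cor:conn_r_lin_arc_cpx}, with the higher-generation statement following from Definition~\ref{def:high_gen}. Nothing to add.
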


\begin{proof}
 This is immediate from Lemma~\ref{lem:nerve_to_arcs} and Corollary~\ref{cor:conn_r_lin_arc_cpx}.
\end{proof}

We also want to show that the families $\arcfam_n^s$ from Definition~\ref{def:restrictive_fams} are highly generating. For $s>1$, the coset complex $\CC(PB_n,\arcfam_n^s)$ is obtained up to homotopy equivalence from $\CC(PB_n,\arcfam_n^{s-1})$ by removing the open stars of the vertices, i.e., the cosets $pPB_n^J$ for $|J|=s-1$. Hence the problem amounts to showing high connectivity of links. This is more or less the procedure done in the proof of Theorem~3.3 in \cite{abels93}, in the context of buildings. It is a bit harder here though; links in buildings are themselves buildings, but links in restricted arc complexes are not themselves restricted arc complexes. Nonetheless, we can get the right connectivity without too much extra work.

\begin{lemma}[Links in $\restrarc(\Gamma_m)$]\label{lem:restrarc_lks}
 Let $\sigma=\{\alpha_0,\dots,\alpha_k\}$ be a $k$-simplex in $\restrarc(\Gamma_m)$ for $\Gamma_m$ as above (with~$m$ edges). Then the link~$\lk_{\restrarc(\Gamma_m)}(\sigma)$ is $(\eta(m-k)-1)$-connected.
\end{lemma}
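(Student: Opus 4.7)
The plan is to identify $\lk_{\restrarc(\Gamma_m)}(\sigma)$, up to simplicial isomorphism, with a restricted arc complex on a cut surface equipped with a smaller graph, and then invoke Proposition~\ref{prop:conn_r_arc_cpx}.

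First I would cut $S$ along the arcs $\alpha_0,\ldots,\alpha_k$ to produce a (possibly disconnected) surface $S^\sigma$ with a modified marked point set $P^\sigma$, where each endpoint of some $\alpha_i$ is replaced by the appropriate number of copies, one for each local complementary region. Let $\Gamma^\sigma$ denote the subgraph of $L_n$ obtained from $\Gamma_m$ by deleting the $k+1$ edges corresponding to the arcs of $\sigma$. Since $\sigma$ is a simplex of $\restrarc(\Gamma_m)$, its associated graph $\Gamma_\sigma$ is faithful, so exactly $k+1$ edges are removed, and $\Gamma^\sigma$ has $m-k-1$ edges.

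Next I would verify the simplicial isomorphism
\[
\lk_{\restrarc(\Gamma_m)}(\sigma) \;\cong\; \restrarc(S^\sigma, P^\sigma, \Gamma^\sigma).
\]
An arc system $\tau$ in the link has arcs whose interiors are disjoint from every $\alpha_i$ and which are not parallel to any $\alpha_i$; representatives with minimal intersection (provided by Lemma~\ref{lem:arc_representatives}) descend to arc systems on $S^\sigma$ compatible with $\Gamma^\sigma$, and faithfulness transfers. Conversely, a simplex of $\restrarc(S^\sigma, P^\sigma, \Gamma^\sigma)$ lifts uniquely to an arc system in $S$ that forms a face of $\sigma$ in the link.

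Finally I would apply Proposition~\ref{prop:conn_r_arc_cpx} to $\restrarc(S^\sigma, P^\sigma, \Gamma^\sigma)$, giving the bound
\[
\eta\bigl((m-k-1)+1\bigr)-1 \;=\; \eta(m-k)-1.
\]
When $S^\sigma$ is disconnected, with components $S_1,\ldots,S_r$ carrying $m_1,\ldots,m_r$ edges of $\Gamma^\sigma$ so that $\sum m_i = m-k-1$, the restricted arc complex decomposes as the join $\Bigast_{i=1}^r \restrarc(S_i, P_i, \Gamma_i)$, and combining Proposition~\ref{prop:conn_r_arc_cpx} on each factor with the join connectivity formula and a Lemma~\ref{lem:play_with_floor}-style estimate on $\eta$ recovers the same bound.

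The main obstacle I anticipate is the bookkeeping in the identification step, specifically for arcs in the link that share an endpoint with some $\alpha_i$: each such endpoint becomes several copies in $P^\sigma$, and the correspondence must select the correct copy in a coherent way. A secondary subtlety is verifying that the join estimate goes through cleanly when $S^\sigma$ is disconnected, which reduces to checking an analog of Lemma~\ref{lem:play_with_floor} for the function $\eta(\ell)=\lfloor(\ell-2)/4\rfloor$.
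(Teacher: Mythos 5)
The crux of your argument is the asserted isomorphism $\lk_{\restrarc(\Gamma_m)}(\sigma) \cong \restrarc(S^\sigma,P^\sigma,\Gamma^\sigma)$, and this is exactly the step that does not come for free — indeed the paper explicitly warns that ``links in restricted arc complexes are not themselves restricted arc complexes.'' The difficulty you defer as ``bookkeeping'' is the actual content of the lemma. Concretely: an arc $\beta$ in the link may share an endpoint $p$ with some $\alpha_i$, and there are infinitely many homotopy classes of such $\beta$ that differ by winding around $\alpha_i$ (see Figure~\ref{fig:blow_up_arc}); your cut surface must place the surviving copy of $p$ at a corner of the slit, hence on $\partial S^\sigma$, which is outside the framework of Section~\ref{sec:arc_cpx} (where $P\subseteq S\setminus\partial S$), and if you instead push the copy into the interior you create extra homotopy classes in $S^\sigma$ (arcs slipping through the gap between the copy of $p$ and the slit) that correspond to nothing in the link. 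Even setting that aside, you would still need to prove that two arcs disjoint from $\sigma$ which are homotopic rel $P$ in $S$ are homotopic in $S^\sigma$ (a $\pi_1$-injectivity/innermost-bigon argument that is delicate precisely at shared endpoints), that disjoint realizability transfers in both directions, and that the graph on the duplicated point set is still a subgraph of a linear graph so that Proposition~\ref{prop:conn_r_arc_cpx} applies. None of this is addressed, and your fallback remark about disconnected $S^\sigma$ and a join decomposition is moot here (cutting a disk along arcs realizing a forest does not disconnect it) — it does not rescue the identification.

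The paper's proof accepts that no such isomorphism is available and works with the collapse map $R_k\colon L\to\restrarc(\Gamma_{m-(k+1)})$ obtained by contracting each $\alpha_i$ to a point. This map is surjective but not injective, so one instead decomposes $L$ as the union of the images of sections $\iota_p=p\circ\iota_{\id}$ indexed by elements $p$ of the kernel $K_k$ of the induced epimorphism $\Stab_{PB_n}(\sigma)\to PB_{n-(k+1)}$. Each $\image(\iota_p)$ is an isomorphic copy of $\restrarc(\Gamma_{m-(k+1)})$, hence $(\eta(m-k)-1)$-connected, and the pieces are glued together in increasing order of the ``deviation'' $D(p)$, with each relative link identified with $\restrarc(\Gamma')$ for a graph $\Gamma'$ with at most two fewer edges, hence $(\eta(m-k)-2)$-connected. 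Your proposal contains no analogue of this decomposition-and-gluing step, which is where the real work lies; as written, it reduces the lemma to an unproved (and, in the form stated, false) identification.
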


To make precise the terminology, here by ``link'' we mean the subcomplex of simplices $\tau$ disjoint from $\sigma$ for which there exists a simplex with $\tau$ and $\sigma$ as faces.

\begin{proof}
 Set $L\defeq \lk_{\restrarc(\Gamma_m)}(\sigma)$. An arc system~$\tau$ is in~$L$ if and only if each arc of~$\tau$ is distinct from, but compatible with, every $\alpha_i$.  For such a $\tau$, by retracting each arc $\alpha_i$ to a point, $\tau$ maps to an arc system in $\restrarc(\Gamma_{m-(k+1)})$. Here $\Gamma_{m-(k+1)}$ is a subgraph of $\Gamma_m$ with $m-(k+1)$ edges. More formally, for $0\le d\le k$ consider the homotopy equivalence of surfaces $r_d \colon S\to S_d$, obtained by collapsing $\alpha_i$ to a point, for each $0\le i\le d$. Recall $S=D_n$, and here $S_d$ is just our name for the copy of $D_{n-(d+1)}$ obtained by collapsing these arcs. Here we do not think of $D_n$ as a punctured disk, but rather as a disk with $n$ distinguished points; hence $r_d$ is really a homotopy equivalence. Also let $P_d$ be the image of $P$ under $r_d$. We have induced maps of complexes $R_d \colon L \to \restrarc(\Gamma_{m-(d+1)})$. Note that these maps are surjective, but not injective; see Figure~\ref{fig:blow_up_arc} for an example of the non-injectivity. Note however that the connectivity of $\restrarc(\Gamma_{m-(k+1)})$ is precisely the connectivity we are trying to verify for $L$.

\begin{figure}[t]
\centering
\begin{tikzpicture}[line width=0.8pt, scale=1.3]
  \filldraw
   (0,0) circle (1.5pt)   (1,0) circle (1.5pt)   (2,0) circle (1.5pt)   (3,0) circle (1.5pt);
  \draw[dashed]
   (1,0) -- (2,0);
  \node at (1.5,0.2) {$\alpha_d$};
  \draw
   (0,0) -- (1,0);
 \begin{scope}[xshift=5cm]
 \filldraw
   (0,0) circle (1.5pt)   (1,0) circle (1.5pt)   (2,0) circle (1.5pt)   (3,0) circle (1.5pt);
  \draw[dashed]
   (1,0) -- (2,0);
  \node at (1.5,0.2) {$\alpha_d$};
  \draw
   (0,0) to [out=45, in=90, looseness=1] (2.5,0)
   (2.5,0) to [out=-90, in=-80, looseness=1.3] (1,0);
 \end{scope}
\end{tikzpicture}
\caption{Distinct arcs in the link of $\sigma$ that map to the same arc under $R_d$.}
\label{fig:blow_up_arc}
\end{figure}
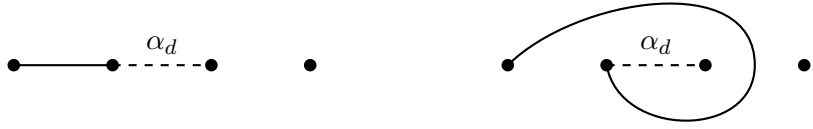

 The $r_d$ also induce epimorphisms $\phi_d \colon \Stab_{PB_n}(\sigma) \to PB_{n-(d+1)}$, with kernels $K_d\defeq \ker(\phi_d)$. Also declare~$K_{-1}$ to be the trivial subgroup. Note that $K_{-1}\le K_0\le \cdots \le K_k$. Colloquially, the pure braids~$p$ in $K_d\setminus K_{d-1}$ are precisely those that do ``twist''~$\alpha_d$ but don't twist any $\alpha_i$ for~$i>d$. For $p\in K_k$, define $D(p)\defeq \min\{d+1\mid p\in K_d\}$. We will call $D(p)$ the \emph{deviation} of $p$; note that~$D(p)=0$ if and only if $p=\id$. Now fix a map $s_{\id} \colon S_k\to S$ with $s_{\id} \circ r_k$ homotopic to the identity. This essentially amounts to fixing a choice of how to ``blow up'' each arc $\alpha_i$ to get from~$S_k$ back to~$S$. We get an induced map $\iota_{\id} \colon \restrarc(\Gamma_{m-(k+1)}) \to L$, with $R_k\circ \iota_{\id}$ equal to the identity on $\restrarc(\Gamma_{m-(k+1)})$. For each $p\in K_k$, set $\iota_p\defeq p\circ \iota_{\id}$. These maps are all injective simplicial maps that can be thought of as 
different choices of how to blow up each $\alpha_i$, and we see that $R_k\circ \iota_p$ is the identity for all $p$. Every arc system in $L$ is the image of an arc system in $\restrarc(\Gamma_{m-(k+1)})$ under some $\iota_p$, so~$L=\bigcup\limits_{p\in PB_n} \image(\iota_p)$. Also, each $\image(\iota_p)$ is isomorphic to $\restrarc(\Gamma_{m-(k+1)})$, and hence is an $(\eta(m-k)-1)$-connected subcomplex of $L$. We now need to glue these $\image(\iota_p)$ together in a clever order, always along $(\eta(m-k)-2)$-connected relative links, from which we will deduce that~$L$ is $(\eta(m-k)-1)$-connected.

 The measurement $D(p)$ provides such an order. For $0\le d\le k$ let $L^d \defeq \bigcup\limits_{D(p)\le d} \image(\iota_p)$. We claim that $L^d$ is $(\eta(m-k)-1)$-connected for all $d$. The base case $d=0$ is clear. For a given $d$, the intersection $\image(\iota_p) \cap \image(\iota_q)$ with $p\neq q$ and~$D(p)=D(q)=d+1$ is contained in $L^d$. This is because~$p$ and~$q$ must twist the arc $\alpha_d$ differently, and so if $\beta$ is an arc in $\image(\iota_p) \cap \image(\iota_q)$ then $\beta$ cannot share endpoints with $\alpha_d$. For this reason, we can build up from $L^d$ to $L^{d+1}$ by attaching the $\image(\iota_p)$ with deviation $d+1$, in any order, and the relative links will always be in $L^d$. Now, for~$p$ with $D(p)=d+1$, we attach $\image(\iota_p)$ to $L^d$ along the intersection $\image(\iota_p)\cap L^d$.  This intersection consists precisely of those arc systems in $\image(\iota_p)$ that do not use arcs sharing endpoints with $\alpha_d$. Applying~$R_k$ (so retracting each $\alpha_i$ to a 
point), this gives us the subcomplex of $\restrarc(\Gamma_{m-(k+1)})$ whose arcs are disjoint from the endpoint obtained by collapsing $\alpha_d$.  But this is just $\restrarc(\Gamma')$ for~$\Gamma'$ a subgraph of~$\Gamma_{m-(k+1)}$ with at most two fewer edges.  This is $(\eta(m-k)-2)$-connected, and so we are done.
\end{proof}

\begin{proposition}\label{prop:restrictive_arc_conn}
 For $s\in\N$, $\CC(PB_n,\arcfam_n^s)$ is $(\eta(n-(s-1))-1)$-connected, and hence~$\arcfam_n^s$ is $(\eta(n-(s-1)))$-generating for $PB_n$.
\end{proposition}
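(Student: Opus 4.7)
The proof will go by induction on $s$, the base case $s=1$ being Corollary~\ref{cor:arc_gen}. The second statement, that $\arcfam_n^s$ is $\eta(n-(s-1))$-generating, will follow directly from the definition of higher generation once the connectivity of the coset complex is established.

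For the inductive step, I will first reinterpret $\CC(PB_n,\arcfam_n^s)$ geometrically. Because pure braids preserve labels, one checks that for every $J\subseteq[n-1]$ with $|J|=s$ we have $PB_n^J = \bigcap_{j\in J} PB_n^{\{j\}}$. Combining this with Proposition~\ref{prop:fund_dom_general} and Lemma~\ref{lem:nerve_to_arcs} identifies $\CC(PB_n,\arcfam_n^s)$ with the \emph{higher nerve} $Y_s$ of $\restrarc(L_n)$ at level $s$: the simplicial complex whose vertices are the $(s-1)$-simplices of $\restrarc(L_n)$ and whose $k$-simplices are collections of $k+1$ such vertices lying in a common simplex of $\restrarc(L_n)$.

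To relate $Y_s$ to the inductive hypothesis, I will introduce the combined coset complex $Z_s := \CC(PB_n,\arcfam_n^{s-1}\cup\arcfam_n^s)$, whose vertex set consists of both the $(s-2)$- and $(s-1)$-simplices of $\restrarc(L_n)$. Because every $\arcfam_n^s$-subgroup is contained in some $\arcfam_n^{s-1}$-subgroup, a standard deformation-retract argument (pushing each $(s-1)$-vertex into the star of an $(s-2)$-face) shows $Z_s \simeq Y_{s-1}$, so by induction $Z_s$ is $(\eta(n-s+2)-1)$-connected. On the other hand, $Y_s$ is obtained from $Z_s$ by deleting the open stars of all $(s-2)$-vertices; equivalently, $Z_s$ is obtained from $Y_s$ by attaching, for each $(s-2)$-simplex $\tau$ of $\restrarc(L_n)$, a cone on its attaching link $L_\tau\subseteq Y_s$. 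A CW-attaching argument then gives that the pair $(Z_s, Y_s)$ is $k$-connected provided each $L_\tau$ is $(k-1)$-connected, and since $\eta(n-s+1)\le\eta(n-s+2)$ this will yield the desired $(\eta(n-s+1)-1)$-connectedness of $Y_s$.

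The main obstacle, and the heart of the argument, is to show that the attaching link $L_\tau$ is $(\eta(n-s+1)-1)$-connected. By definition $L_\tau$ is the full subcomplex of $Y_s$ spanned by those $(s-1)$-simplices $\tau'$ of $\restrarc(L_n)$ for which $\tau\cup\tau'$ is an arc system, i.e., those $\tau'$ supported in $\st_{\restrarc(L_n)}(\tau) = \tau * \lk_{\restrarc(L_n)}(\tau)$. The subcomplex $L_\tau^{\ge} = \{\tau' : \tau\subseteq\tau'\}$ is plainly isomorphic to $\lk_{\restrarc(L_n)}(\tau)$, which by Lemma~\ref{lem:restrarc_lks} is already $(\eta((n-1)-(s-2))-1) = (\eta(n-s+1)-1)$-connected. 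The remaining task is to argue $L_\tau$ deformation retracts onto $L_\tau^{\ge}$; I plan to do this via a Quillen-fiber-type argument using the join decomposition of $\st(\tau)$, by sending each vertex $\tau_0\cup\sigma\in L_\tau$ (with $\tau_0\subseteq\tau$ and $\sigma\in\lk(\tau)$) toward the vertex $\tau\cup\{\alpha\}\in L_\tau^{\ge}$ for a chosen $\alpha\in\sigma$, and checking that the fibers of the induced poset map are contractible. This last verification, which mirrors the style of argument used in \cite[Theorem 3.3]{abels93} for parabolic families of subgroups of a group with a $BN$-pair, is where the technical work concentrates.
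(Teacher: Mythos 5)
Your proposal is correct and follows essentially the same route as the paper: the paper likewise reduces the claim to the $(\eta(n-(s-1))-1)$-connectivity of the links of the $(s-2)$-simplices $\sigma_J$ in $\restrarc(L_n)$ (delegating the star-removal reduction to the procedure of Abels--Holz, Theorem~3.3, where you instead build the intermediate complex $Z_s$ explicitly) and then applies Lemma~\ref{lem:restrarc_lks}. For the one step you leave open, note that your map $\tau_0\cup\sigma\mapsto\tau\cup\{\alpha\}$ is already a simplicial retraction contiguous to the identity --- since $\restrarc(L_n)$ is a flag complex, $\tau\cup\tau_1'\cup\cdots\cup\tau_k'$ being a simplex forces the union of any simplex of $L_\tau$ with its image to be a simplex --- so no Quillen-fiber analysis is actually needed.
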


\begin{proof}
 It suffices to show that for $|J|=s-1$, the link of $PB_n^J$ in $\CC(PB_n,\arcfam_n^{s-1})$ is $(\eta(n-(s-1))-1)$-connected. Equivalently, we need the link of $\sigma_J$ in $\restrarc(L_n)$ to be $(\eta(n-(s-1))-1)$-connected. Since $\sigma_J$ is a $(|J|-1)$-simplex, its link is $(\eta(n-|J|)-1)$-connected by Lemma~\ref{lem:restrarc_lks} (since~$L_n$ has $n-1$ edges), and since~$|J|=s-1$, we conclude that indeed the link is $(\eta(n-(s-1))-1)$-connected.
\end{proof}

\subsection{Connectivity of flat braige complexes}\label{sec:braige_conn}

Now we inspect $\CC(PB_n,\braigefam_n)$, or more accurately $\pbraigecpx_n(L_n)$. To pass from the world of arcs to the world of flat braiges, we will project the flat braiges onto arcs in the following way. For each~$J\subseteq [n-1]$, let~$\sigma_J$ be the simplex of~$\matcharc(L_n)$ defined before Lemma~\ref{lem:nerve_to_arcs}. Consider the action of~$PB_n$ on~$\restrarc(L_n)$ as a right action, and define a map
\begin{align*}
 \pi \colon \pbraigecpx_n(L_n) &\to \restrarc(L_n)\\
[(p,\Gamma)] &\mapsto (\sigma_{J_\Gamma})p^{-1}
\end{align*}
where $J_\Gamma$ is as in Definition~\ref{def:dangling}. We will use~$\pi$ to also denote the restrictions $\elpbraigecpx_n(L_n) \to \matcharc(L_n)$, $\pbraigecpx_n(\Gamma) \to \restrarc(\Gamma)$ and $\elpbraigecpx_n(\Gamma) \to \matcharc(\Gamma)$ for~$\Gamma$ a subgraph of~$L_n$. As in Section~\ref{sec:desc_link_conn}, think of~$\pi$ as the procedure of combing the braid straight and watching where the arcs get moved.

\begin{proposition}[Flat braige connectivity from arc connectivity]\label{prop:braiges_to_arcs}
 For $\Gamma_m$ a subgraph of~$L_n$ with~$m$ edges, $\elpbraigecpx_n(\Gamma_m)$ is~$(\eta(m+1)-1)$-connected.
\end{proposition}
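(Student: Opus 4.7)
The plan is to mirror the proof of Corollary~\ref{cor:desc_link_conn} and apply Quillen's Theorem~9.1 from~\cite{quillen78} to the surjective simplicial map $\pi\colon \elpbraigecpx_n(\Gamma_m)\to\matcharc(\Gamma_m)$. Three ingredients are needed: the $(\eta(m+1)-1)$-connectivity of $\matcharc(\Gamma_m)$; a $(k-1)$-connectivity bound on $\pi^{-1}(\sigma)$ for every $k$-simplex $\sigma$ of $\matcharc(\Gamma_m)$; and a $(\eta(m+1)-k-2)$-connectivity bound on $\lk_{\matcharc(\Gamma_m)}(\sigma)$.

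The first input is Theorem~\ref{thrm:matching_arc_conn}. For the link bound, a $k$-simplex $\sigma=\{\alpha_0,\dots,\alpha_k\}$ in $\matcharc(\Gamma_m)$ consists of $k+1$ pairwise disjoint arcs whose endpoints correspond to $k+1$ pairwise disjoint edges of $\Gamma_m$, and $\lk_{\matcharc(\Gamma_m)}(\sigma)$ is isomorphic to $\matcharc(\Gamma')$, where $\Gamma'$ is obtained from $\Gamma_m$ by deleting every edge that shares a vertex with one of those $k+1$ arc-edges. Since each vertex of $L_n$ has degree at most $2$, at most $3(k+1)$ edges are deleted, so $\Gamma'$ has at least $m-3k-3$ edges. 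Theorem~\ref{thrm:matching_arc_conn} then gives that the link is $(\eta(m-3k-2)-1)$-connected; since $m-3k-4\ge m-4k-5$, the routine floor inequality
\[
\eta(m-3k-2)-1 \;=\; \Big\lfloor\tfrac{m-3k-4}{4}\Big\rfloor - 1 \;\ge\; \Big\lfloor\tfrac{m-4k-5}{4}\Big\rfloor - 1 \;=\; \eta(m+1) - k - 2
\]
is immediate for all $k\ge 0$, giving exactly the bound required.

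The main work is the fiber computation, which requires pure-braid analogs of Lemma~\ref{lem:spx_fiber} and Proposition~\ref{prop:fibers}. The target is the join decomposition
\[
\pi^{-1}(\sigma) \;=\; \Bigast_{j=0}^{k}\pi^{-1}(\alpha_j),
\]
from which one concludes that $\pi^{-1}(\sigma)$ is $k$-spherical, hence $(k-1)$-connected. The inclusion ``$\subseteq$'' is immediate from how $\pi$ respects faces. For ``$\supseteq$'', given vertices $[(p_j,e_j)]\in\pi^{-1}(\alpha_j)$, the edges $e_j\defeq\{i_j,i_j+1\}$ are pairwise disjoint in $L_n$ (since the $\alpha_j$ are pairwise disjoint even at endpoints), so $\Gamma''\defeq\{e_0,\dots,e_k\}$ is a valid elementary merge pattern. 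Unwinding the face relation in $\elpbraigecpx_n$ and using that $PB_n^{(J_{\Gamma''})}\le PB_n^{(\{i_j\})}$ absorbs into the dangling subgroup for each single edge, the task reduces to producing a single pure braid $r\in PB_n$ with $p_j^{-1}r\in PB_n^{(\{i_j\})}$ for every $j$; that is, an $r$ that simultaneously carries each standard arc $\sigma_{\{i_j\}}$ to the arc prescribed by $p_j$.

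The chief obstacle is producing this braid $r$. Unlike Lemma~\ref{lem:spx_fiber}, which exploits transitivity of the full braid group on $k$-simplices of $\elbraigecpx_n$ to normalize one braige to a standard form, in the pure setting only $PB_n$ is available and one must invoke the change-of-coordinates principle for the pure mapping class group of the $n$-punctured disc: any two labeled systems of pairwise disjoint arcs with the same endpoint labels are related by a pure mapping class. This supplies $r$, establishes the join decomposition, and lets Quillen's Theorem~9.1 yield the desired $(\eta(m+1)-1)$-connectivity of $\elpbraigecpx_n(\Gamma_m)$.
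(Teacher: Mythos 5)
Your architecture coincides with the paper's: apply Quillen's Theorem~9.1 to $\pi\colon \elpbraigecpx_n(\Gamma_m)\to\matcharc(\Gamma_m)$, feed in Theorem~\ref{thrm:matching_arc_conn}, bound the links (your floor computation is fine), and prove the join decomposition of the fibers. The gap is in the final step, where you produce the braid $r$. You correctly reduce the join decomposition to finding $r\in\bigcap_j p_j\,PB_n^{(\{i_j\})}$, but your gloss ``that is, an $r$ that simultaneously carries each standard arc $\sigma_{\{i_j\}}$ to the arc prescribed by $p_j$'' is not an equivalent condition. Carrying $\sigma_{\{i_j\}}$ to $\alpha_j=(\sigma_{\{i_j\}})p_j^{-1}$ only says that $p_j^{-1}r$ lies in the arc stabilizer $PB_n^{\{i_j\}}=\Stab_{PB_n}(\sigma_{\{i_j\}})$, and this strictly contains the clone subgroup $PB_n^{(\{i_j\})}$: the full twist of strands $i_j$ and $i_j+1$ about each other stabilizes the isotopy class of the arc but is not a clone braid (its two-strand sub-braid is nontrivial in $PB_2$). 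Indeed, this is exactly why the vertex fiber $\pi^{-1}(\alpha_j)\cong PB_n^{\{i_j\}}/PB_n^{(\{i_j\})}$ has more than one point; the points of the fiber are distinguished by a ``twisting'' datum that the arc $\alpha_j$ does not see. The change-of-coordinates principle therefore only matches the arcs, i.e.\ places $p_j^{-1}r$ in the stabilizers, and does not show that the cosets $p_jPB_n^{(\{i_j\})}$ have a common element. The step can be repaired --- take an $r_0$ from change of coordinates and then correct it by powers of the pairwise commuting twists about curves enclosing the puncture pairs $\{i_j,i_j+1\}$, each of which lies in $PB_n^{(\{i_{j'}\})}$ for every $j'\ne j$ and so adjusts the $j$-th coset membership without disturbing the others --- but as written the argument does not establish the join decomposition, only that some choice of one vertex per fiber spans a simplex.

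For comparison, the paper proves ``$\supseteq$'' by induction on skeleta, reducing to a claim about a single vertex $[(p,E)]$ versus a single simplex $[(q,\Gamma)]$ whose $\pi$-images share a simplex of $\matcharc(\Gamma_m)$. After left-translating so that $p=\id$, the hypothesis that the arc system $(\Gamma)q^{-1}$ can be chosen disjoint from the standard arc $E$, even at endpoints, is used to dangle $(q,\Gamma)$ by an element of its own dangling subgroup $PB_n^{(J_\Gamma)}$ to a representative in which the two strands at the endpoints of $E$ are clones; then $[(q,\Gamma\cup E)]$ is the required common simplex. That local argument never has to intersect several cosets of clone subgroups, which is precisely where your version breaks down.
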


When $\Gamma_m=L_n$, this is just Corollary~\ref{cor:pure_desc_lk_conn}. Indeed the proof here is more or less the same, but we will repeat it for convenience.

\begin{proof}
 By Theorem~\ref{thrm:matching_arc_conn} $\matcharc(\Gamma_m)$ is~$(\eta(m+1)-1)$-connected. Let $\sigma=\{\alpha_0,\dots,\alpha_k\}$ be a $k$-simplex in~$\matcharc(\Gamma_m)$. The link $\lk(\sigma)$ of $\sigma$ in $\matcharc(\Gamma_m)$ is isomorphic to~$\matcharc(\Gamma')$ for $\Gamma'$ a subgraph of $\Gamma_m$ with at least $m-3(k+1)$ edges, so $\lk(\sigma)$ is $(\eta(m-3(k+1)+1)-1)$-connected, and hence $(\eta(m+1)-k-2)$-connected. It now suffices by \cite[Theorem~9.1]{quillen78} to prove that the fiber~$\pi^{-1}(\sigma)$ is $(k-1)$-connected (here we treat a simplex as a closed cell). Indeed, we will prove that $\pi^{-1}(\sigma)$ is the join of the fibers $\pi^{-1}(\alpha_i)$ of the vertices $\alpha_i$ of $\sigma$. See also Proposition~\ref{prop:fibers}.

 Let $\fiberjoin \defeq \bigjoin_{i=0}^k \pi^{-1}(\alpha_i)$ be the join of the vertex fibers. Clearly $\pi^{-1}(\sigma)\subseteq \fiberjoin$. Also, the~$0$-skeleton of $\fiberjoin$ is contained in $\pi^{-1}(\sigma)$. Now suppose that the same is true of the~$r$-skeleton for some $r\ge 0$. An~$(r+1)$-simplex in~$\fiberjoin$ is the join of a~$0$-simplex and an~$r$-simplex, both of which are contained in~$\pi^{-1}(\sigma)$. It now suffices to prove the following claim.

 \emph{Claim}: Let $[(p,E)]$ be a vertex in $\elpbraigecpx_n(\Gamma_m)$, so $p\in PB_n$ and $E$ is a one-edge subgraph of~$\Gamma_m$. Let $[(q,\Gamma)]$ be a simplex in $\elpbraigecpx_n(\Gamma_m)$ such that $\pi([(q,\Gamma)])$ does not contain $\pi([(p,E)])$ but does share a simplex with $\pi([(p,E)])$ in~$\matcharc(\Gamma_m)$. Then~$[(q,\Gamma)]$ shares a simplex with $[(p,E)]$ in $\elpbraigecpx_n(\Gamma_m)$.

 This hypothesis is rephrased in terms of arcs as: $(\Gamma)q^{-1}$ shares a simplex with~$(E)p^{-1}$. By acting from the left with~$PB_n$, we can assume without loss of generality that~$p=\id$, so we have $\pi([(p,E)])=E$. Let $\{\beta_0,\dots,\beta_\ell\} \defeq (\Gamma)q^{-1}$, chosen so that~$E$ is disjoint from the~$\beta_i$, even at endpoints (remember we are in~$\matcharc(\Gamma_m)$, not just $\restrarc(\Gamma_m)$). This is possible by the hypothesis, and implies that the dangling equivalence class $[(q,\Gamma)]$ contains a representative in which the~$(j+1)^{\text{st}}$ strand is a clone of the~$j^{\text{th}}$ strand, where $j$ and $j+1$ are the endpoints of the edge of~$E$. We can assume $(q,\Gamma)$ itself is such a representative, in which case the dangling flat braige $[(q,\Gamma\cup E)]$ is a simplex of $\elpbraigecpx_n(\Gamma_m)$ containing $[(q,\Gamma)]$ and $[(p,E)]$, proving the claim.
\end{proof}

It might be possible to mimic this proof using $\pi \colon \pbraigecpx_n(\Gamma) \to \restrarc(\Gamma)$ instead, and get the connectivity of $\pbraigecpx_n(L_n)$ right away, but the downside is that the fibers are not joins of vertex fibers. Hence one would have to do extra work to show that fibers have the right connectivity.

To calculate the connectivity of~$\pbraigecpx_n(\Gamma_m)$, we will use a similar procedure as for~$\restrarc(\Gamma_m)$. Namely, we will build up from~$\elpbraigecpx_n(\Gamma_m)$ to~$\pbraigecpx_n(\Gamma_m)$ using discrete Morse theory. A~$k$-simplex in~$\pbraigecpx_n(\Gamma_m)$ is a dangling equivalence class of a pair~$(p,\Gamma)$, for~$p\in PB_n$ and~$\Gamma$ a subgraph of~$\Gamma_m$ with~$k+1$ edges. Let~$r(\Gamma)$ be the number of vertices that are endpoints of an edge in~$\Gamma$. Then define the \emph{defect} $d(p,\Gamma)$ to be~$2(k+1)-r(\Gamma)$. Extend these definitions to the dangling equivalence classes, and observe that $\elpbraigecpx_n(\Gamma_m)$ is the~$d=0$ sublevel set of~$\pbraigecpx_n(\Gamma_m)$. We now apply Morse theory, as before.

\begin{proposition}\label{prop:conn_r_braige_cpx}
 $\pbraigecpx_n(\Gamma_m)$ is $(\eta(m+1)-1)$-connected.
\end{proposition}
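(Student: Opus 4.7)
The plan is to mimic the proof of Proposition~\ref{prop:conn_r_arc_cpx} essentially verbatim, using $\elpbraigecpx_n(\Gamma_m)$ in place of $\matcharc(\Gamma_m)$ and invoking Proposition~\ref{prop:braiges_to_arcs} for the base connectivity. By Proposition~\ref{prop:braiges_to_arcs}, the sublevel set $\elpbraigecpx_n(\Gamma_m) = \pbraigecpx_n(\Gamma_m)^{d=0}$ is already $(\eta(m+1)-1)$-connected, so by \cite[Corollary~2.6]{bestvina97} it suffices to show that for every simplex $\sigma \in \pbraigecpx_n(\Gamma_m)$ with $d(\sigma) > 0$, the descending link $\dlk(\sigma)$ with respect to $h(\sigma) = (d(\sigma), -\dim \sigma)$ is $(\eta(m+1)-2)$-connected.

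Fix a $k$-simplex $\sigma = [(p,\Gamma)]$ with $d(\sigma) > 0$; necessarily $k \ge 1$, since a single edge has defect~$0$. As in the arc case, $\dlk(\sigma)$ decomposes as the join of a down-link (faces $\sigma' < \sigma$ with $d(\sigma') < d(\sigma)$) and an up-link (cofaces $\widetilde{\sigma} > \sigma$ with $d(\widetilde{\sigma}) = d(\sigma)$), since adding edges cannot strictly decrease defect. The down-link analysis is formally identical to the one for $\restrarc(\Gamma_m)$: letting $\sigma_0$ be the subface consisting of edges disjoint from all others, the complement of the down-link in $\partial \sigma$ is either empty (giving a $(k-1)$-sphere) or a contractible cone with cone point $\sigma_0$ (giving a contractible down-link). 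In either case the down-link is $(k-2)$-connected, and we may assume the sphere case, so that every edge of $\Gamma$ shares a vertex with some other edge, forcing $r(\Gamma) \le 2k+1$.

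The one new ingredient is the analysis of the up-link. Cofaces with the same defect are obtained by adding edges of $\Gamma_m$ that are pairwise disjoint and each disjoint from $\Gamma$; respecting the dangling equivalence by $PB_n^{(J_\Gamma)}$, I would identify the up-link with $\elpbraigecpx_{n'}(\Gamma')$, where $n' = n - r(\Gamma)$ and $\Gamma'$ is the subgraph of $\Gamma_m$ obtained by deleting every edge sharing a vertex with $\Gamma$. Since each of the $r(\Gamma)$ used vertices lies in at most two edges of $L_n$, $\Gamma'$ has at least $m - 2r(\Gamma) \ge m - 4k - 2$ edges, so Proposition~\ref{prop:braiges_to_arcs} gives that the up-link is $(\eta(m-4k-1)-1)$-connected.

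Taking the join, $\dlk(\sigma)$ is $\bigl((k-2)+(\eta(m-4k-1)-1)+2\bigr) = (k + \eta(m-4k-1) - 1)$-connected. A short case analysis, writing $m - 1 = 4q + r$ with $r \in \{0,1,2,3\}$, shows
\[
 k + \Bigl\lfloor \tfrac{m-4k-3}{4} \Bigr\rfloor \ge \Bigl\lfloor \tfrac{m-1}{4} \Bigr\rfloor - 1,
\]
so $\dlk(\sigma)$ is $(\eta(m+1)-2)$-connected, as required. The main obstacle is bookkeeping rather than geometry: verifying that the up-link really is isomorphic to $\elpbraigecpx_{n'}(\Gamma')$ requires carefully tracking how the dangling subgroup $PB_n^{(J_\Gamma)}$ of the simplex $\sigma$ interacts with the additional cloning encoded by edges in $\Gamma'$. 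Once that identification is in place, the proof parallels the arc-complex argument line by line.
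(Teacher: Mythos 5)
Your proposal is correct and follows essentially the same route as the paper: Morse theory on the defect function, reducing to the descending link, which splits as the join of a down-link (a $(k-1)$-sphere or a cone) and an up-link identified with a smaller elementary flat braige complex whose connectivity comes from Proposition~\ref{prop:braiges_to_arcs}. The only difference is bookkeeping: the paper bounds the number of deleted edges by $2(k+1)$ (counting $j+2$ edges per $j$-edge component of $\Gamma$), whereas you use the cruder bound $2r(\Gamma)\le 4k+2$; your floor-function inequality still closes, since the $k$'s cancel and $\bigl\lfloor\frac{m-3}{4}\bigr\rfloor\ge\bigl\lfloor\frac{m-5}{4}\bigr\rfloor$.
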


\begin{proof}
 By Proposition~\ref{prop:braiges_to_arcs}, $\elpbraigecpx_n(\Gamma_m)$ is $(\eta(m+1)-1)$-connected. Mimicking the proof of Proposition~\ref{prop:conn_r_arc_cpx}, it suffices to prove that for $\sigma \in \pbraigecpx_n(\Gamma_m) \setminus \elpbraigecpx_n(\Gamma_m)$, the descending link~$\dlk(\sigma)$ is $(\eta(m+1)-2)$-connected. Let~$\sigma$ be such a~$k$-simplex, say~$\sigma=[(p,\Gamma)]$. The down-link is either~$S^{k-1}$, or contractible if~$\Gamma$ has an isolated edge. Suppose there is no such isolated edge, so the down-link is~$S^{k-1}$. Now, the up-link is obtained by dangling and then adding extra edges to the graph, such that the new edges are disjoint from~$\Gamma$ and from each other. Since~$\Gamma$ has no isolated edges, there are at most~$2(k+1)$ edges of~$\Gamma_m$ that share an endpoint with an edge of~$\Gamma$. Hence the up-link of~$\sigma$ is isomorphic to~$\elpbraigecpx_\ell(\Gamma_{m-2k-2})$ for some~$\ell$, which is $(\eta(m-2k-1)-1)$-connected. The calculation from the proof of 
Proposition~\ref{prop:conn_r_arc_cpx} now tells us that $\dlk(\sigma)$ is $(\eta(m+1)-2)$-connected.
\end{proof}

\begin{corollary}\label{cor:conn_r_lin_braige_cpx}
 $\pbraigecpx_n(L_n)$ is $(\eta(n)-1)$-connected.\qed
\end{corollary}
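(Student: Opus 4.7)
The plan is to simply specialize Proposition~\ref{prop:conn_r_braige_cpx} to the case $\Gamma_m = L_n$. Recall that under the notational convention of this appendix, $L_n$ is the linear graph on $n$ vertices and hence has $n-1$ edges, so taking $m = n-1$ we have $\Gamma_m = L_n$ is a subgraph of itself with $m = n-1$ edges. Proposition~\ref{prop:conn_r_braige_cpx} then gives that $\pbraigecpx_n(L_n)$ is $(\eta(m+1)-1)$-connected, i.e.\ $(\eta(n)-1)$-connected, which is exactly the stated bound.

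There is no real obstacle here: all the work has already been done upstream. The substantive ingredient is Proposition~\ref{prop:braiges_to_arcs}, which transfers the connectivity from $\matcharc(\Gamma_m)$ (Theorem~\ref{thrm:matching_arc_conn}) to $\elpbraigecpx_n(\Gamma_m)$ via the projection $\pi$ and Quillen's fiber theorem, together with the Morse-theoretic bootstrap in Proposition~\ref{prop:conn_r_braige_cpx} that passes from the elementary subcomplex to the full flat braige complex. Once those are in hand, the corollary is a one-line specialization and requires no further argument.
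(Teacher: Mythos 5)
Your proposal is correct and matches the paper exactly: the corollary is stated with a \qed precisely because it is the immediate specialization of Proposition~\ref{prop:conn_r_braige_cpx} to $\Gamma_m = L_n$ with $m = n-1$ edges, giving $(\eta(m+1)-1) = (\eta(n)-1)$-connectivity. Your handling of the appendix's notational convention for $L_n$ is also correct.
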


\begin{corollary}\label{cor:braige_gen}
 $\CC(PB_n,\braigefam_n)$ is $(\eta(n)-1)$-connected, and hence $\braigefam_n$ is~$\eta(n)$-generating for~$PB_n$. \qed
\end{corollary}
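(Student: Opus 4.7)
The proof is essentially a two-line assembly of results that have already been proved in the appendix. My plan is to simply invoke Corollary~\ref{cor:nerve_to_braiges}, which gives a $PB_n$-equivariant simplicial isomorphism $\CC(PB_n,\braigefam_n) \cong \pbraigecpx_n(L_n)$, and then apply Corollary~\ref{cor:conn_r_lin_braige_cpx}, which states that $\pbraigecpx_n(L_n)$ is $(\eta(n)-1)$-connected. This immediately yields the connectivity claim. The ``hence'' statement then follows directly from Definition~\ref{def:high_gen}, since $n$-generation of a family is defined to be $(n-1)$-connectedness of the associated coset complex.

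Since all the hard work has been done in the preceding results (the identification of the coset complex with the flat braige complex via the fundamental-domain criterion of Proposition~\ref{prop:fund_dom_general}, the passage from flat braiges to arcs via the projection $\pi$, the Morse-theoretic buildup from $\elpbraigecpx_n(L_n)$ to $\pbraigecpx_n(L_n)$, and the underlying connectivity of $\matcharc(L_n)$ from Theorem~\ref{thrm:matching_arc_conn}), there is no real obstacle here. The only thing to watch is that the statement of Corollary~\ref{cor:conn_r_lin_braige_cpx} uses the convention (fixed in Section~\ref{sec:arcs}) that $L_n$ has $n$ vertices and $n-1$ edges, which is consistent with the bound $\eta(n) = \lfloor (n-2)/4\rfloor$ stated for this corollary. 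So the proof is a single compound sentence, and I would present it simply as: by Corollary~\ref{cor:nerve_to_braiges} the coset complex $\CC(PB_n,\braigefam_n)$ is isomorphic to $\pbraigecpx_n(L_n)$, which is $(\eta(n)-1)$-connected by Corollary~\ref{cor:conn_r_lin_braige_cpx}, so $\braigefam_n$ is $\eta(n)$-generating by Definition~\ref{def:high_gen}. \qed
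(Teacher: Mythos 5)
Your proof is correct and is exactly the argument the paper intends: the corollary is left as immediate from Corollary~\ref{cor:nerve_to_braiges} and Corollary~\ref{cor:conn_r_lin_braige_cpx}, precisely mirroring the earlier Corollary~\ref{cor:arc_gen}. Your note about the appendix's convention for $L_n$ and the corresponding definition of $\eta$ is also the right detail to check.
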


\begin{example}\label{ex:clones_hi_gen}
 For $n\ge 6$, $\CC(PB_n,\braigefam_n)$ is connected, so $PB_n$ has a generating set in which each generator features at least one cloned strand. Indeed, the standard generating set from Section~1.3.1 of \cite{kassel08} satisfies this property for~$n\ge 6$, and fails for $n<6$. For~$n\ge 10$, $\CC(PB_n,\braigefam_n)$ is simply connected, so~$PB_n$ is~$2$-generated by $\braigefam_n$. Hence there exists a presentation for~$PB_n$ in which every generator features a cloned strand, and the relations all arise from relations in the subgroups of braids with a cloned strand. Again we note that the standard presentation works precisely in this range.
\end{example}

We conclude by showing that the families $\braigefam_n^s$ for $s\in\N$, defined in Definition~\ref{def:restrictive_fams}, are highly generating as well. Just like in the arc case, for $s>1$ the coset complex $\CC(PB_n,\braigefam_n^s)$ is obtained up to homotopy equivalence from $\CC(PB_n,\braigefam_n^{s-1})$ by removing the open stars of vertices, i.e, cosets $pPB_n^{(J)}$ for~$|J|=s-1$.

\begin{lemma}[Links in $\pbraigecpx_n(\Gamma_m)$]\label{lem:braige_lks}
 Let $\sigma$ be a $k$-simplex in $\pbraigecpx_n(\Gamma_m)$ for $\Gamma_m$ as above (with~$m$ edges). Then the link~$\lk_{\pbraigecpx_n(\Gamma_m)}(\sigma)$ is $(\eta(m-k)-1)$-connected.
\end{lemma}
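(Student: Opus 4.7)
My approach will parallel that of Lemma~\ref{lem:restrarc_lks}, but the rigidity of flat braiges permits a substantial shortcut: rather than building the link up piece by piece via a Morse filtration by deviation, I will exhibit the link directly as isomorphic to a smaller flat braige complex, with no analog of the nontrivial kernel~$K$ that appeared in the arc setting (because a merge in a flat braige has no ``Dehn twist around it'' the way an arc does).

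First, using that $PB_n$ acts transitively on dangling flat pure braiges with a fixed underlying graph, I reduce to the case $\sigma = [(\id,\Gamma)]$ with $\Gamma$ a $(k+1)$-edge subgraph of $\Gamma_m$. Set $L \defeq \lk_{\pbraigecpx_n(\Gamma_m)}(\sigma)$. The plan is to construct a simplicial isomorphism $R \colon L \to \pbraigecpx_{n-(k+1)}(\Gamma_m^{\mathrm{quo}})$, where $\Gamma_m^{\mathrm{quo}} \subseteq L_{n-(k+1)}$ is obtained from $\Gamma_m \setminus \Gamma$ by identifying the endpoints of each edge of $\Gamma$. Because $\Gamma_m \subseteq L_n$ has maximum degree two, no edge of $\Gamma_m \setminus \Gamma$ becomes a loop under this quotient, so $\Gamma_m^{\mathrm{quo}}$ has exactly $m-(k+1)$ edges.

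The map $R$ is built using Lemma~\ref{lem:braige_stabs}: the stabilizer $\Stab_{PB_n}(\sigma) = PB_n^{(J_\Gamma)}$ is the (injective) image of the cloning map $\kappa_{I_{J_\Gamma}} \colon PB_{n-(k+1)} \hookrightarrow PB_n$. Given $[(q,\Gamma')] \in L$, the condition that $[(q,\Gamma')]$ spans a simplex with $\sigma$ translates, after unwinding the face relation in Definition~\ref{def:dangling}, to $q \in PB_n^{(J_\Gamma)} \cdot PB_n^{(J_{\Gamma'})}$; hence one can dangle $q$ to a representative lying in $PB_n^{(J_\Gamma)}$, and I set $R([(q,\Gamma')]) \defeq [(\kappa_{I_{J_\Gamma}}^{-1}(q),\, \Gamma'^{\mathrm{quo}})]$. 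Once $R$ is established as a simplicial bijection, the lemma follows immediately from Proposition~\ref{prop:conn_r_braige_cpx} applied to $\Gamma_m^{\mathrm{quo}}$, yielding the claimed $(\eta(m-k)-1)$-connectivity.

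The main point requiring care will be well-definedness of $R$ on dangling classes. The representative $q \in PB_n^{(J_\Gamma)}$ is unique up to right multiplication by $PB_n^{(J_\Gamma)} \cap PB_n^{(J_{\Gamma'})}$, which must be shown to equal $PB_n^{(J_{\Gamma \cup \Gamma'})}$ (this uses $J_\Gamma \cap J_{\Gamma'} = \emptyset$, which holds because distinct edges of $L_n$ have distinct left endpoints) and to correspond under $\kappa_{I_{J_\Gamma}}^{-1}$ precisely to the dangling subgroup $PB_{n-(k+1)}^{(J_{\Gamma'^{\mathrm{quo}}})}$ on the target side. Once this intersection is pinned down, the remaining verifications---inverting $R$ via cloning, and checking that face relations on both sides correspond under the obvious bijection between subgraphs of $\Gamma_m \setminus \Gamma$ and subgraphs of $\Gamma_m^{\mathrm{quo}}$---are formal.
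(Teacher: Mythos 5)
Your proposal is correct and follows essentially the same route as the paper: both exhibit $\lk_{\pbraigecpx_n(\Gamma_m)}(\sigma)$ as isomorphic to $\pbraigecpx_{n-(k+1)}(\Gamma_{m-(k+1)})$ by reducing to $\sigma=[(\id,\Gamma)]$, dangling the braid into $\Stab_{PB_n}(\sigma)=PB_n^{(J_\Gamma)}$, applying the inverse of the cloning map, and retracting the edges of $\Gamma$ to points, with the key observation (made explicit in your well-definedness check) that uncloning is unique --- the essential difference from the arc case. The connectivity then follows from Proposition~\ref{prop:conn_r_braige_cpx} exactly as in the paper.
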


\begin{proof}
 Links in the flat braige case are nicer than links in the arc case, since they are actually isomorphic to smaller dangling flat braige complexes. In the arc case, namely in the proof of Lemma~\ref{lem:restrarc_lks}, we related a given link to a smaller arc complex, via a map that was not an isomorphism. In the present case, we claim that~$\lk_{\pbraigecpx_n(\Gamma_m)}(\sigma)$ is just isomorphic to $\pbraigecpx_{n-(k+1)}(\Gamma_{m-(k+1)})$, for~$\Gamma_{m-(k+1)}$ a graph with $m-(k+1)$ edges, and then the connectivity result is immediate. Say $\sigma=[(p,\Gamma_{k+1})]$ for~$\Gamma_{k+1}$ a subgraph of~$\Gamma_m$ with $k+1$ edges. Let $L\defeq \lk_{\pbraigecpx_n(\Gamma_m)}(\sigma)$.  The simplices in $L$ are dangling flat braiges of the form $\tau=[(pq,\Gamma)]$, where $q\in PB_n^{(J_{\Gamma_{k+1}})}$ and~$\Gamma$ is a subgraph of $\Gamma_m$ having no edges in common with $\Gamma_{k+1}$. The first condition ensures that~$\tau$ and $\sigma$ share a simplex, namely $[(pq,\Gamma\cup\Gamma_{k+1})]$, and the 
second condition ensures that~$\tau$ and~$\sigma$ are disjoint.  Acting from the left with $PB_n$, we can assume~$p=\id$. We have a map $\phi \colon L \to \pbraigecpx_{n-(k+1)}(\Gamma_{m-(k+1)})$, where $\Gamma_{m-(k+1)}$ is the graph with $n-(k+1)$ vertices that is obtained from $\Gamma_m$ by retracting each edge of~$\Gamma_{k+1}$ to a point.  The map $\phi$ sends $\tau=[(q,\Gamma)]$ to $[(q',\Gamma')]$, where $\Gamma'$ is the image of $\Gamma$ under the retraction $\Gamma_m\to \Gamma_{m-(k+1)}$, and $q'$ is the preimage of~$q$ under the cloning map $\kappa_{J_{\Gamma_{k+1}}}$. See Figure~\ref{fig:unclone} for an example. Since~$q'$ is uniquely determined by~$q$, we have an inverse~$\phi^{-1}$, induced by the cloning map. (This is the essential difference from the arc case, that there is only one way to ``blow up'' a braige via cloning.) Since $\phi$ and $\phi^{-1}$ are of course simplicial maps, we conclude that $\phi$ is a simplicial isomorphism, and the result follows.
\end{proof}

\begin{figure}[t]
\centering
\begin{tikzpicture}[line width=0.8pt, yscale=0.5]
  \draw
   (2,0) -- (2,-0.5) to [out=-90, in=90, looseness=1] (1,-3)   (2,-3) to [out=-90, in=90, looseness=1] (1,-6);
  \draw[white, line width=4pt]
   (1,0) -- (1,-0.5) to [out=-90, in=90, looseness=1] (2,-3)   (1,-3) to [out=-90, in=90, looseness=1] (2,-6);
  \draw
   (1,0) -- (1,-0.5) to [out=-90, in=90, looseness=1] (2,-3)   (1,-3) to [out=-90, in=90, looseness=1] (2,-6)
   (3,0) -- (3,-0.5) to [out=-90, in=90, looseness=1] (5,-3);
  \draw[white, line width=4pt]
   (5,0) -- (5,-0.5) to [out=-90, in=90, looseness=1] (4,-3)   (4,0) -- (4,-0.5) to [out=-90, in=90, looseness=1] (3,-3);
  \draw
   (5,0) -- (5,-0.5) to [out=-90, in=90, looseness=1] (4,-3)   (4,0) -- (4,-0.5) to [out=-90, in=90, looseness=1] (3,-3)   (3,-3) to [out=-90, in=90, looseness=1] (4,-6)   (4,-3) to [out=-90, in=90, looseness=1] (5,-6);
  \draw[white, line width=4pt]
    (5,-3) to [out=-90, in=90, looseness=1] (3,-6);
  \draw
    (5,-3) to [out=-90, in=90, looseness=1] (3,-6);
  \draw[dashed, red]
   (4,-6) -- (5,-6);
  \draw[red]
   (2,-6) -- (3,-6)   (3,-6) -- (4,-6);
  \node at (6.5,-3) {$\stackrel{\phi}{\longmapsto}$};

 \begin{scope}[xshift=7cm]
  \draw
   (2,0) -- (2,-0.5) to [out=-90, in=90, looseness=1] (1,-3)   (2,-3) to [out=-90, in=90, looseness=1] (1,-6);
  \draw[white, line width=4pt]
   (1,0) -- (1,-0.5) to [out=-90, in=90, looseness=1] (2,-3)   (1,-3) to [out=-90, in=90, looseness=1] (2,-6);
  \draw
   (1,0) -- (1,-0.5) to [out=-90, in=90, looseness=1] (2,-3)   (1,-3) to [out=-90, in=90, looseness=1] (2,-6)
   (3,0) -- (3,-0.5) to [out=-90, in=90, looseness=1] (4,-3);
  \draw[white, line width=4pt]
   (4,0) -- (4,-0.5) to [out=-90, in=90, looseness=1] (3,-3);
  \draw
   (4,0) -- (4,-0.5) to [out=-90, in=90, looseness=1] (3,-3)   (3,-3) to [out=-90, in=90, looseness=1] (4,-6);
  \draw[white, line width=4pt]
    (4,-3) to [out=-90, in=90, looseness=1] (3,-6);
  \draw
    (4,-3) to [out=-90, in=90, looseness=1] (3,-6);
  \draw[red]
   (2,-6) -- (3,-6)   (3,-6) -- (4,-6);
 \end{scope}
\end{tikzpicture}
\caption{The map $\phi$ takes an element of $\lk_{\pbraigecpx_5(L_5)}(\sigma)$ to an element of $\pbraigecpx_4(L_4)$. Here $\sigma$ is $[(\id,E_4)]$, for $E_4$ the subgraph with a single edge indicated by the dashed line.}
\label{fig:unclone}
\end{figure}
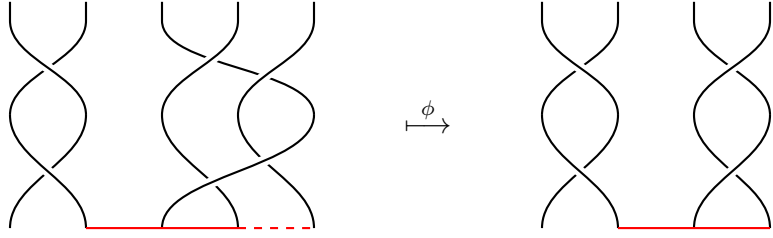

\begin{proposition}\label{prop:restrictive_braige_conn}
 For $s\in\N$, $\CC(PB_n,\braigefam_n^s)$ is $(\eta(n-(s-1))-1)$-connected, and hence~$\braigefam_n^s$ is $\eta(n-(s-1))$-generating for $PB_n$.
\end{proposition}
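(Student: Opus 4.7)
The plan is to mirror the proof of Proposition~\ref{prop:restrictive_arc_conn}, replacing the restricted arc complex with the flat pure braige complex and Lemma~\ref{lem:restrarc_lks} with Lemma~\ref{lem:braige_lks}. I would proceed by induction on $s$, with the base case $s=1$ being Corollary~\ref{cor:braige_gen}.

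For the inductive step, I would first invoke the same general machinery from \cite{abels93} that is implicit in the arc case: $\CC(PB_n,\braigefam_n^s)$ is obtained from $\CC(PB_n,\braigefam_n^{s-1})$ up to homotopy equivalence by removing the open stars of the vertices $pPB_n^{(J_\Gamma)}$ with $|J_\Gamma|=s-1$. Combined with a standard Mayer--Vietoris argument, this reduces the problem to showing that for each such $\Gamma$, the link of $PB_n^{(J_\Gamma)}$ in $\CC(PB_n,\braigefam_n^{s-1})$ is $(\eta(n-(s-1))-1)$-connected.

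Next, using the correspondence from Corollary~\ref{cor:nerve_to_braiges}, I would identify this link with the link of the simplex $[(\id,\Gamma)]$ in $\pbraigecpx_n(L_n)$. Since $\Gamma$ has $s-1$ edges, $[(\id,\Gamma)]$ is an $(s-2)$-simplex in $\pbraigecpx_n(\Gamma_{n-1})$ (setting $m=n-1$). Applying Lemma~\ref{lem:braige_lks} with $m=n-1$ and $k=s-2$ then yields that this link is $\bigl(\eta((n-1)-(s-2))-1\bigr)=\bigl(\eta(n-(s-1))-1\bigr)$-connected, completing the argument.

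The main obstacle will be justifying the first reduction: $\CC(PB_n,\braigefam_n^{s-1})$ is a kind of truncation of $\pbraigecpx_n(L_n)$ rather than literally a subcomplex, so identifying its links with links in $\pbraigecpx_n(L_n)$ requires some care. However, this is precisely the same issue that is navigated in the arc case of Proposition~\ref{prop:restrictive_arc_conn}, and the same reasoning should carry over verbatim to the flat braige setting. Once this is in place, the explicit link identification and the application of Lemma~\ref{lem:braige_lks} are routine.
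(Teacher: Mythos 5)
Your proposal is correct and follows essentially the same route as the paper: reduce to the connectivity of links of vertices removed in passing from $\CC(PB_n,\braigefam_n^{s-1})$ to $\CC(PB_n,\braigefam_n^s)$, identify these with links of $(s-2)$-simplices $[(\id,\Gamma)]$ in $\pbraigecpx_n(L_n)$, and apply Lemma~\ref{lem:braige_lks} with $m=n-1$, $k=s-2$. The arithmetic $\eta((n-1)-(s-2))=\eta(n-(s-1))$ matches the paper's.
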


\begin{proof}
 As in the proof of Proposition~\ref{prop:restrictive_arc_conn}, it suffices to prove that for~$\Gamma$ with~$s-1$ edges, the link of the $(s-2)$-simplex $[(\id,\Gamma)]$ in $\pbraigecpx_n(L_n)$ is $(\eta(n-(s-1))-1)$-connected. Since $L_n$ has $n-1$ edges, this follows from Lemma~\ref{lem:braige_lks}.
\end{proof}

\begin{example}\label{ex:clones_restrictive_hi_gen}
 To generalize the previous example, we have that for any~$n\ge 6$, $\braigefam_n^{n-5}$ is~$1$-generating for~$PB_n$. This means that $PB_n$ has a set of generators such that in each generator, all but~$5$ strands are clones (indeed the standard generators have this property). Similarly for $n\ge 10$,~$\braigefam_n^{n-9}$ is~$2$-generating for~$PB_n$, so $PB_n$ has a presentation in which each relation can be realized by using only~$9$ non-clone strands. Again, the standard presentation fits the bill.
\end{example}

\begin{example}\label{ex:arcs_restrictive_hi_gen}
 In the situation of arcs, the \emph{swing presentation} for $PB_n$, described in Section~4 of \cite{margalit09}, provides an explicit example of $\arcfam_n^{n-5}$ being~$1$-generating for $n\ge 6$ and $\arcfam_n^{n-9}$ being~$2$-generating for $n\ge 10$. In this presentation the generators are Dehn twists, each of which must stabilize at least one arc of the form $\sigma_j$, as soon as $n\ge 6$. Each relation in \cite[Theorem~4.10]{margalit09} (specifically the second presentation) is a product of Dehn twists, and for $n\ge 10$ this product stabilizes at least one arc of the form $\sigma_j$. See Figure~\ref{fig:swings} for an example.
\end{example}

\begin{figure}[t]
\centering
\begin{tikzpicture}[line width=0.7pt]
  \draw[line width=1pt]
   (0,0) -- (2,0) -- (3,-1.732) -- (2,-3.464) -- (0,-3.464);
  \draw[dashed, line width=1pt]
   (0,-3.464) -- (-1,-1.732);
  \filldraw
   (0,0) circle (1.5pt)   (2,0) circle (1.5pt)   (3,-1.732) circle (1.5pt)   (2,-3.464) circle (1.5pt)   (0,-3.464) circle (1.5pt)   (-1,-1.732) circle (1.5pt);
  \draw[red]
   (1.9,-1.7) ellipse (0.3in and 0.8in);

 \begin{scope}[xshift=7cm]
  \draw[line width=1pt]
   (0,0) -- (1,0) -- (1.809,-.588) -- (2.118,-1.539)   (1.809,-2.49) -- (1,-3.078) -- (0,-3.078) -- (-.809,-2.49) -- (-1.118,-1.539) -- (-0.809,-.588);
  \draw[dashed, line width=1pt]
   (2.118,-1.539) -- (1.809,-2.49);
  \filldraw
   (0,0) circle (1.5pt)   (1,0) circle (1.5pt)   (1.809,-.588) circle (1.5pt)   (2.118,-1.539) circle (1.5pt)   (1.809,-2.49) circle (1.5pt)   (1,-3.078) circle (1.5pt)   (0,-3.078) circle (1.5pt)   (-.809,-2.49) circle (1.5pt)   (-1.118,-1.539) circle (1.5pt)   (-0.809,-.588) circle (1.5pt);
  \draw[red, rotate=-45]
   (1.6,-0.8) ellipse (0.63in and 0.9in);
  \draw[blue, rotate=-52]
   (1.5,-0.5) ellipse (0.2in and 0.75in);
  \draw[green, rotate=-18]
   (0.95,-2.6) ellipse (0.45in and 0.08in);
  \draw[teal, rotate=-18]
   (1.9,-1.3) ellipse (0.1in and 0.6in);
 \end{scope}
\end{tikzpicture}
\caption{With $6$ points, each generator must stabilize an arc. With $10$ points, each relation must stabilize an arc. The dashed lines indicate the arcs stabilized in the examples.  The relation pictured here is a \emph{lantern relation}, as in Figure~12 of \cite{margalit09}.}
\label{fig:swings}
\end{figure}
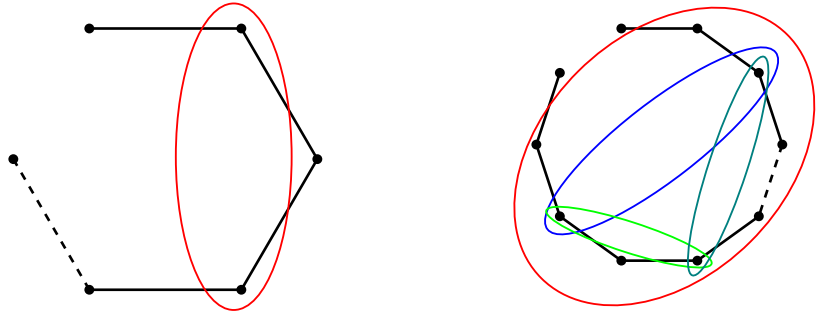


\bibliographystyle{alpha}



\bigskip

\begin{footnotesize}
\parindent0pt

\textsc{Department of Mathematics, Bielefeld University, 33501
Bielefeld, Germany}
\par\nopagebreak
\emph{E-mail address:} \texttt{bux\_2009@kubux.net}

\bigskip

\textsc{Department of Mathematics, Bielefeld University, 33501
Bielefeld, Germany}
\par\nopagebreak
\emph{E-mail address:} \texttt{mfluch@math.uni-bielefeld.de}

\bigskip

\textsc{Department of Mathematics, Bielefeld University, 33501
Bielefeld, Germany}
\par\nopagebreak
\emph{E-mail address:} \texttt{marco.marschler@math.uni-bielefeld.de}

\bigskip

\textsc{Mathematical Institute, University of M\"unster, 
  48149 M\"unster, Germany}
\par\nopagebreak
\emph{E-mail address:} \texttt{s.witzel@uni-muenster.de}

\bigskip

\textsc{Department of Mathematical Sciences, Binghamton University, Binghamton, NY 13902}\par\nopagebreak
\emph{E-mail address:} \texttt{zaremsky@math.binghamton.edu}
\end{footnotesize}

\end{document}